\newcommand{\dotp}[2]{\left\langle #1, #2\right\rangle}
\newcommand{\argmin}{\mathop{\rm argmin~}}
\newcommand{\Cov}{\mathop{\rm Cov}}
\newcommand{\fr}{\mathfrak}
\newcommand{\eps}{\varepsilon}
\def\Var{\mathop{\rm Var}}
\def\m{\mathcal}
\def\mb{\mathbb}
\newcommand{\dist}{\rm{dist}}
\newcommand{\sign}{\mathop{\rm sign}}
\newcommand{\card}{\mathop{\rm Card}}
\newcommand{\supp}{\mathop{\rm supp}}
\def\l{\left}
\def\r{\right}
\newcommand{\sinc}{\mathop{\rm sinc}}
\numberwithin{equation}{section}
\theoremstyle{plain}
\newtheorem{definition}{Definition}[section]
\newtheorem{theorem}{Theorem}[section]
\newtheorem{corollary}{Corollary}[section]
\newtheorem{proposition}{Proposition}[section]
\newtheorem{lemma}{Lemma}[section]
\newtheorem{assumption}{Assumption}[section]
\newtheorem*{acknowledgements}{Acknowledgements}
\newtheorem{remark}{Remark}[section]
\begin{document}

{\baselineskip=16.5pt{
\title[$L_1$-Penalization in Functional Linear Regression]{$L_1$-Penalization in Functional Linear Regression\\ 
with Subgaussian Design}
}}
\author{Vladimir Koltchinskii $^1$}
\address{$^1$ School of Mathematics, Georgia Institute of Technology}
\email{vlad@math.gatech.edu}
\thanks{V. Koltchinskii was partially supported by NSF grants DMS-1207808, DMS-0906880, CCF-0808863 and CCF-1415498.}
\author{Stanislav Minsker $^2$}
\address{$^2$ Department of Mathematics, Duke University}
\email{stas.minsker@gmail.com}
\thanks{S. Minsker was partially supported by grant R01-ES-017436 from the National Institute of Environmental Health Sciences (NIEHS) of the National Institutes of Health (NIH), NSF grants DMS-0650413, CCF-0808847 and DOE contract 113054 G002745.}

\begin{abstract}
We study functional regression with random subgaussian design and real-valued response. 
The focus is on the problems in which the regression 
function can be well approximated by a functional linear model with the 
slope function being ``sparse'' in the sense that it can be represented 
as a sum of a small number of well separated ``spikes''. This can be viewed 
as an extension of now classical sparse estimation problems to the case
of infinite dictionaries. We study an estimator of the regression function based on penalized empirical risk minimization with quadratic loss and the complexity penalty defined in terms of $L_1$-norm (a continuous version of LASSO). The main goal is to introduce several 
important parameters characterizing sparsity in this class of problems and to prove sharp oracle inequalities 
showing how the $L_2$-error of the continuous LASSO estimator depends on the underlying sparsity 
of the problem. 

\smallskip
\noindent \textbf{Keywords:} Functional regression \and Sparse recovery \and LASSO \and Oracle inequality \and Infinite dictionaries.

\smallskip
\noindent \textbf{Mathematics Subject Classification (2000):} 62J02, 62G05, 62J07.
\end{abstract}
\maketitle
\tableofcontents

\section{Introduction}

Let $(X,Y)$ be a random couple defined on a probability space $(\Omega,\Sigma,{\mb P}),$
where $X=\{X(t):t\in {\mb T}\}$ is a stochastic process with parameter set $\mb T$ and 
$Y$ is a real valued response variable. In what follows, it will be assumed that the process $X$ 
is {\it subgaussian}. Denote 
\index{aa@Pseudometric $d_X(\cdot,\cdot)$}
\begin{align}
\label{metric}
&
d_X(s,t):=\sqrt{\Var\l(X(s)-X(t)\r)}, s,t\in {\mb T}.
\end{align}
It will be also assumed that the space $\mb T$ is totally bounded with respect 
to pseudometric $d_X$ and, moreover, it satisfies Talagrand's generic chaining conditions  
ensuring that there exists a version 
of the process $X(t), t\in \mb T$ that is a.s. uniformly bounded and $d_X$-uniformly 
continuous. In what follows, we assume that $X(t), t\in \mb T$ is such a version.   
Let $\mu$ be a finite measure on the Borel $\sigma$-algebra ${\m B}_{\mb T}$ of the pseudometric space 
$(\mb T,d_X).$ 

Consider the following regression model 
$$
Y=f_{\ast}(X)+\xi,
$$
where $f_{\ast}(X)= {\mb E}(Y|X)$ is the regression function and $\xi$ is a random 
noise with ${\mb E}\xi=0$ and variance $\Var(\xi)=\sigma_\xi^2$ independent of the design variable $X.$
We will be interested in estimating the regression function $f_{\ast}(X)$ 
under an underlying assumption that $f_{\ast}(X)$ can be well approximated 
by a functional linear model (``oracle model'')
\index{ab@$f_{\lambda,a}(\cdot)$}
$$
f_{\lambda,a}(X)= a + \int\limits_{\mb T} X(t)\lambda(t)\mu (dt),
$$ 
where $\lambda\in L_1(\mu)$ is the ``slope'' function and $a\in {\mb R}$
is the intercept of the model. More precisely, we will focus on the 
problems in which the oracle models are ``sparse'' in the sense 
that the slope function $\lambda$ is supported in a relatively small subset ${\rm supp}(\lambda):=\{t\in \mb T: \lambda(t)\neq 0\}$ 
of parameter space $\mb T$ such that the set of random variables $\{X(t):t\in {\rm supp}(\lambda)\}$ can be well approximated by a linear space of a small dimension.
Often, $\lambda$ will be a sum of several ``spikes'' with disjoint and well separated 
supports. Such models might be useful in a variety of applications, in particular,
in image processing where, in many cases, only sparsely located regions of the image 
are correlated with the response variable.  
\index{ac@Distributions $P,\Pi$}
In what follows, $\Pi$ denotes the marginal distribution of $X$ in the space 
$C_{bu}({\mb T};d_X)$ of all uniformly bounded and uniformly continuous functions 
on $({\mb T};d_X)$, and $P$ denotes the joint distribution of $(X,Y)$
in $C_{bu}({\mb T};d_X)\times {\mb R}.$ Let 
$
(X_1,Y_1),\ldots,\ (X_n,Y_n)
$
be a sample consisting of $n$ i.i.d. copies of $(X,Y)$ defined on $(\Omega,\Sigma,{\mb P}).$ 
The regression function $f_{\ast}$ is to be estimated based 
on the data 
$
(X_1,Y_1),\ldots,\ (X_n,Y_n).
$
Our estimation method can be seen as a direct extension of (a version of) LASSO to the infinite-dimensional case. Namely, let  
$\mb D$ be a convex subset of the space $L_1(\mu)$ such that $0\in\mb D.$  
Consider the following penalized empirical risk minimization problem:
\begin{align}
(\hat\lambda_{\eps},\hat a_\eps):=
\argmin\limits_{\lambda\in\mb D,a\in \mb R}
\biggl[\frac 1 n \sum_{j=1}^n \l(Y_j-f_{\lambda,a}(X_j)\r)^2+\varepsilon \|\lambda\|_1\biggr],
\label{empirical}
\end{align}
where $\|\lambda\|_1:=\|\lambda\|_{L_1(\mu)}=\int\limits_{\mb T}|\lambda(t)|\mu(dt)$
and $\eps>0$ is the regularization parameter. The function $f_{\hat \lambda_{\eps}, \hat a_{\eps}}$ will be used as an estimator of the regression function $f_{\ast}.$

When the parameter set $\mb T$ is finite, (\ref{empirical}) defines a standard LASSO-estimator
of the vector of parameters of linear regression model (see \cite{tibshirani1996regression}). 
This estimator is among the most popular in high-dimensional statistics and it has been 
intensively studied in the recent years (e.g., see \cite{bunea2007sparsity}, \cite{van2008high}, \cite{Koltchinskii2007Sparsity-in-Pen00}, 
\cite{bickel2009simultaneous}, \cite{Koltchinskii2011Oracle-inequali00}, \cite{bartlett2009},
\cite{koltchinskii2011nuclear}; see also the book by B\"uhlmann and van de Geer \cite{Buhlmann2011Geer} for further references). 

We will be more interested in the case
of uncountable infinite parameter sets $\mb T$ (functional linear models). In such problems,
standard characteristics of finite dictionaries used in the theory of sparse recovery 
(restricted isometry constants, restricted eigenvalues, etc) are not directly applicable. 
Our goal will be to develop proper parameters characterizing sparsity in the case of 
functional models and to prove oracle inequalities for the $L_2(\Pi)$-error $\|f_{\hat \lambda_{\eps}, \hat a_{\eps}}-f_{\ast}\|_{L_2(\Pi)}^2$ of continuous LASSO-estimator 
in terms of these sparsity parameters. We concentrate on the case of subgaussian random 
design (that, of course, includes an important example of Gaussian design processes)
since, in this case, we can rely on a number of probabilistic tools from the theory 
of subgaussian and empirical processes. 
In particular, we extensively use in the proofs recent generic chaining bounds for empirical processes due to Mendelson \cite{mendelson2010empirical}, \cite{mendelson2012oracle}. 

It should be emphasized that there is vast literature on functional regression (see, e.g., \cite{ramsay2006functional}, \cite{ramsay2002applied} and references therein). 
A commonly used general idea in this literature is to estimate the eigenfunctions of the covariance operator and to project the unknown slope function onto the linear span of the ``principal components'' corresponding to the largest eigenvalues
(see \cite{muller2005generalized}, \cite{cai2006prediction} and references therein). 
Under smoothness assumptions on the slope function, a natural approach to its estimation is to use a regularization penalty (see \cite{crambes2009smoothing} for construction of estimators based on smoothing splines and \cite{yuan2010reproducing} for a more general reproducing kernel Hilbert space approach). 

The problem studied in our paper is much closer to the theory of sparse estimation in high-dimensional statistics and can be viewed as an extension of this 
theory to the case of functional models and uncountable dictionaries. 
Our approach is similar in spirit to \cite{Koltchinskii2009Sparse-recovery00}, \cite{Koltchinskii2011Oracle-inequali00} 
where such characteristics as ``alignment coefficient'' (used below for functional models) were introduced and studied in the case of finite dictionaries, and \cite{koltchinskii2010sparse} which extended some of these results to the case of infinite dictionaries. 
For a review of some modern methods in functional data processing and their connections to various notions of sparsity, we refer the reader to \cite{james2011sparseness}. 
A recent paper by James, Wang and Zhu \cite{james2009functional} is similar to the present work in terms of motivation and approach, however, the theoretical analysis in \cite{james2009functional} is performed under the assumptions on the design distribution that might not hold if $X$ has smooth trajectories.

It is important to note that in practice we never observe the whole trajectory of $X$ but rather its densely sampled version. 
In this case, the natural choice for $\mu$ is a uniform measure on the sampling grid, whence (\ref{empirical}) becomes the usual LASSO once again. 
However, there is often no reasons to assume that Gram matrix of the design satisfies RIP \cite{candes2006stable} or restricted eigenvalue type conditions \cite{bickel2009simultaneous, koltchinskii2009dantzig} in this case. 
Although LASSO might not perform well as a variable selection procedure in such a framework, we will provide examples showing that prediction power of an estimator can still benefit from the fact that the underlying model is (approximately) sparse. 
In particular, oracle inequalities with error rates depending on sparsity can be 
derived from the general results of our paper.
Other interesting approaches to theoretical analysis of LASSO with highly correlated design were proposed in \cite{van2012lasso}, \cite{Hebiri2013How-Correlation00}. For instance, in  \cite{van2012lasso} (see, in particular, Corollary 4.2) the authors show that in the case of highly correlated design, it is often possible to choose the regularization parameter to be small $\eps \ll n^{-1/2}$ and achieve reasonable error rates. 

It should be also mentioned that in a number of very important applications one has to deal 
with sparse recovery in infinite dictionaries with random designs that are not subgaussian,
or with deterministic designs. For instance, in \cite{candes2012fernandez}, the authors develop 
a theory of super-resolution. In this case, the dictionary consists of complex exponentials 
$e^{i\langle t, \cdot\rangle}, t\in {\mb T}\subset {\mb R}^d,$ the design is deterministic and the estimation method is based on minimizing the total variation norm of a signed measure $\Lambda$ on $\mb T$ subject to data dependent constraints. Although the results of our 
paper do not apply immediately to such problems, it is possible to extend our approach in this direction.

We will introduce several assumptions and definitions used throughout the paper.

\begin{definition}
A closed linear subspace ${\mathcal L}\subset L_2(\mb P)$ will be called a \textit{subgaussian space} if there exists a constant $\Gamma>0$ such that for all $\eta\in \m L$
$$
{\mathbb E}e^{s\eta}\leq e^{\Gamma s^2 \sigma_{\eta}^2}, s\in {\mathbb R},
$$
where $\sigma_{\eta}^2:=\Var(\eta).$
\end{definition}

It is well known that ${\mathbb E}\eta=0, \eta\in {\mathcal L}$ and that 
$\psi_2$-and $L_2$-norms are equivalent on ${\mathcal L}$ (more precisely, they 
are within a constant $\sim \Gamma$). 
Also, if ${\mathcal L}$ is a closed linear subspace
of $L_2(\mb P)$ such that $\{\eta: \eta\in {\mathcal L}\}$ are jointly 
normal centered random variables, then ${\mathcal L}$ is a subgaussian 
space with $\Gamma=1.$ Another example is the closed linear span of independent centered subgaussian random variables $\{\eta_j\}$ such that 
$$
{\mathbb E}e^{s\eta_j}\leq e^{\Gamma\sigma_{\eta_j}^2 s^2}, s\in {\mathbb R}, j\geq 1
$$
for some $\Gamma>0:$ 
$$
{\mathcal L}:=\Big\{\sum_{j\geq 1} c_j \eta_j: \sum_{j\geq 1}\sigma_{\eta_j}^2 c_j^2 <+\infty\Big\}.
$$ 
For instance, one can consider a sequence $\{\eta_j\}$ of i.i.d.
Rademacher random variables (that is, $\eta_j$ takes valued $+1$ and $-1$ with probability 
$1/2$). In the case of a single random variable $\eta,$ its linear span is a subgaussian 
space if and only if $\eta$ is subgaussian. 

\index{ad@Subgaussian space $\mathcal L, \ \mathcal L_X$}
In what follows, a subgaussian space $\mathcal L$ and constant $\Gamma$ will be fixed.
All the constants depending only on $\Gamma$ will be called {\it absolute}.

\begin{assumption}
Suppose that
\begin{align}
\label{subgaussian}
X(t)-{\mathbb E}X(t)\in \m L \text{ for all }  t\in \mb T.
\end{align}
Denote by ${\mathcal L}_{X}$ the closed (in $L_2$ and, as a consequence, also in the $\psi_2$-norm) linear span of $\{X(t)-{\mb E} X(t): t\in \mb T\}.$
\end{assumption}

This assumption easily implies that the stochastic process $Z(t):=X(t)-{\mb E}X(t), \ t\in \mb T$ is subgaussian, meaning the for all $t,s\in {\mathbb T},$ 
$Z(t)-Z(s)$ is a subgaussian random 
variable with parameter $\Gamma d_X^2(t,s).$

Next, we recall the notion of Talagrand's generic chaining complexity (see \cite{talagrand2005generic} for a comprehensive introduction). 
Given a pseudo-metric space $(\mb T,d_X)$, 
let $\left\{\Delta_n\right\}$ be a nested sequence of partitions such that 
${\rm card}\,\Delta_0=1$ and ${\rm card}\,\Delta_n\leq 2^{2^n}$. 
For $s\in \mb T$, let $\Delta_n(s)$ be the unique subset of $\Delta_n$ containing $s$. 
The generic chaining complexity $\gamma_2(\mb T;d_X)$ is defined as 
\index{ae@Generic chaining complexity $\gamma_2(\mb T;d_X), \ \gamma_2(\delta)$}
$$
\gamma_2(\mb T;d_X):=\inf_{\left\{\Delta_n\right\}}\sup_{s\in \mb T}\sum_{n\geq 0} 2^{\frac n2}D(\Delta_n(s))
$$
where $D(A)$ stands for the diameter of a set $A$.
Let 
$$
\gamma_2(\delta):=\gamma_2(\mb T; d_X;\delta)=\inf_{\{\Delta_n\}}\sup_{t\in {\mathbb T}}
\sum_{n\geq 0} 2^{n/2}\l(D(\Delta_n(t))\wedge \delta\r).
$$
If $d_Y$ is another metric on $\mb T$ such that $d_Y(t,s)\leq d_X(t,s)$ for all $t,s\in \mb T$, and $\sup\limits_{t,s\in \mb T} d_Y(t,s)\leq \delta$, then clearly 
\begin{align}
\label{eq:chaining_bound}
\gamma_2(\mb T;d_Y)\leq \gamma_2(\delta).
\end{align}
This bound will be often used below. 
Our main complexity assumptions on the design distribution are the following:
\begin{assumption}
\label{assumption1}
Pseudometric space $(\mb T,d_X)$ is such that
$
\gamma_2(\mb T; d_X)<\infty
$
and, moreover,
$$
\gamma_2(\mb T;d_X;\delta)\to 0\ {\rm as}\ \delta\to 0.
$$
\end{assumption}

Under these assumptions, the process $Z=X-\mb E X$ has a version that is uniformly bounded 
and $d_X$-uniformly continuous a.s. Moreover, $\big\|\|X-\mb E X\|_{\infty}\big\|_{\psi_2}<\infty$ (in particular, all the moments of $\|X-\mb EX\|_{\infty}$ are finite). 
It what follows, we will denote 	
\index{af@$S(\mb T)$}
$$
S(\mb T):=S(\mb T,d_X)=\inf\limits_{t\in \mb T}\sqrt{\Var(X(t))}+L\gamma_2(\mb T;d_X).
$$
Note that Theorem \ref{tal1} implies that there exists a numerical constant $L>0$
such that 
\begin{align}
\label{eq:chaining1}
\mb E\sup\limits_{t\in \mb T}|X(t)-\mb EX(t)|\leq S(\mb T). 
\end{align}

We will also need the following assumptions on the regression function $f_{\ast}$
and the noise $\xi:$ 

\begin{assumption} 
Suppose that $f_{\ast}(X)-{\mb E}f_{\ast}(X)\in {\mathcal L}$
and $\xi \in {\mathcal L}.$
\end{assumption} 

Since ${\mb E}f_{\ast}(X)={\mb E}Y,$ this assumption also implies that $Y-{\mb E}Y\in {\mathcal L}.$ 
Note that if $\{X(t), \ t\in {\mb T}\}\cup \{Y\}$ is 
a family of centered Gaussian random variables and ${\mathcal L}$ is its closed 
linear span, then ${\mathcal L}$ is a subgaussian space and $f_{\ast}(X)$ is the 
orthogonal projection of $Y$ onto the subspace ${\mathcal L}_X.$ Thus, $f_{\ast}(X)\in {\mathcal L}_X\subset {\mathcal L}.$

\section{Approximation error bounds, alignment coefficient and Sobolev norms}
\label{approx_error}

Recall that $P$ is the joint distribution of $(X,Y)$ and let $P_n$ be the empirical distribution 
based on the sample $(X_1,Y_1),\dots, (X_n,Y_n).$ 
The integrals with respect to $P$ and $P_n$ are denoted by
\index{afa@Empirical distribution $P_n$}
$$
Pg:=\mathbb{E}g(X,Y), \quad P_n g:=\frac{1}{n}\sum\limits_{i=1}^n g(X_i,Y_i).
$$
In what follows, it will be convenient to denote 
\index{afb@Loss function $\ell$}
$
\ell (y,u):=(y-u)^2,\ y,u\in {\mb R}
$
and 
$$
(\ell \bullet f)(x,y):= \ell (y,f(x))=(y-f(x))^2.
$$
We also use the notation 
$\ell'(y,u)$ for the derivative of quadratic loss $\ell(y,u)$ with respect to $u:$
$
\ell'(y,u)=2(u-y).
$
Throughout the paper, $\dotp{\cdot}{\cdot}$ denotes the bilinear form 
$$
\dotp{f}{g}:=\int\limits_{\mb T}f(t)g(t)\mu(dt).
$$ 
Let
\index{ag@$F(\lambda,a), \ F_n(\lambda,a)$}
$$
F_n(\lambda,a):=P_n(\ell \bullet f_{\lambda,a})+\varepsilon\|\lambda\|_1,\ \ 
F(\lambda,a):=P(\ell \bullet f_{\lambda,a})+\varepsilon\|\lambda\|_1.
$$
Denote also 
\index{aga@$\bar X_n, \ \bar Y_n$}
$$
\bar Y_n:= n^{-1}\sum_{j=1}^n Y_j,\ \ \bar X_n(t):=n^{-1}\sum_{j=1}^n X_j(t), t\in \mb T. 
$$
Note that 
\index{ah@$a(\lambda),\ \hat a(\lambda)$}
\begin{align}
\label{g1}
&
\hat a(\lambda):=\argmin\limits_{a\in \mb R}F_n(\lambda,a)=\bar Y_n-\langle \lambda, \bar X_n\rangle, \\
& \nonumber
a(\lambda):=\argmin\limits_{a\in \mb R}F(\lambda,a)=\mb EY - \langle \lambda, \mb E X\rangle.
\end{align}

The following penalized empirical risk minimization problem
\begin{align}
(\hat \lambda_{\eps},\hat a_\eps)
:=\argmin\limits_{\lambda\in \mb D,a\in \mb R}F_n(\lambda;a),
\label{empirical1}
\end{align}
is exactly problem (\ref{empirical}) written in a more concise form. 
Note that (\ref{empirical1}) is the empirical version of
\begin{align}
(\lambda_{\eps},a_\eps):=
\argmin\limits_{\lambda\in\mb D,a\in \mb R}F(\lambda,a).
\label{true}
\end{align}
Due to convexity of the loss, both (\ref{true}) and (\ref{empirical1}) are convex optimization problems. 
It will be shown (Theorem \ref{existence} in the appendix) that, under certain assumptions, they admit (not necessarily unique) solutions $\lambda_\eps,  \ \hat\lambda_\eps.$ 

\begin{assumption}
It is assumed throughout the paper that solutions $(\lambda_{\eps},a_{\eps})$ of (\ref{true}) and 
$(\hat \lambda_{\eps},\hat a_{\eps})$ of (\ref{empirical1}) exist.
\end{assumption}

It might be also possible to study the problem under an assumption 
that $(\lambda_{\eps},a_{\eps})$ and $(\hat \lambda_{\eps},\hat a_{\eps})$
are approximate solutions of the corresponding optimization problems,
but we are not pursuing this to avoid further technicalities.

The goal of this section is to determine the parameters responsible for the size of the $L_2(\Pi)$ risk of $f_{\lambda_\eps,a_\eps}$, where $(\lambda_\eps,a_\eps)$ is the (distribution-dependent) solution of the problem (\ref{true}), and to find upper bounds on these 
parameters in terms of classical Sobolev type norms. 
Later on, it will be shown that the same parameters affect the error rate of empirical solution $f_{\hat\lambda_\eps,\hat a_\eps}$.

Recall that ${\mathbb D}\subset L_1(\mu)$ is a convex subset that
contains zero. It immediately follows from (\ref{true}) that we can take 
$a_{\eps}=a(\lambda_{\eps})$ and also that 
\index{ai@$q(\eps)$}
\begin{equation}
\label{def:q_eps}
\|f_{\lambda_{\eps},a_{\eps}}-f_{\ast}\|_{L_2(\Pi)}^2
\leq q(\eps):= 
\inf_{\lambda\in \mb D, a\in \mb R}\l[\|f_{\lambda,a}-f_{*}\|^2_{L_2(\Pi)}+\eps\|\lambda\|_1\r].
\end{equation}
Clearly, $q$ is a non-decreasing concave function (concavity follows from the fact that it is an infimum of linear functions). Therefore, $\frac{q(\eps)}{\eps}$ is a non-increasing 
function. Note also that $q(\eps)\leq \|f_{\ast}-\Pi f_{\ast}\|_{L_2(\Pi)}^2$ (take $\lambda=0,a={\mb E}Y={\mb E}f_{\ast}(X)$ in the expression under the
infimum) and 
$$
q(\eps)\leq \eps \|\lambda_{\ast}\|_1
$$
provided that $f_{\ast}=f_{\lambda_{\ast},a_{\ast}},$ where 
$\lambda_{\ast}\in {\mathbb D}, a_{\ast}\in \mb R$ (take $\lambda=\lambda_{\ast}, a=a_{\ast}$). The infimum in the definition of $q(\eps)$ is attained at $(\lambda_{\eps},a_{\eps})$ (a solution of problem (\ref{true}) that is assumed to exist). 
Then, in addition to the bound  
$
\|f_{\lambda_{\eps},a_{\eps}}-f_{\ast}\|_{L_2(\Pi)}^2 \leq q(\eps),
$
(\ref{true}) also implies
$$
\|\lambda_{\eps}\|_1 \leq \frac{q(\eps)}{\eps}.
$$

We will be interested, however, in other bounds on  $\|f_{\lambda_{\eps},a_{\eps}}-f_{\ast}\|_{L_2(\Pi)}^2,$
in which the ``regularization error'' is proportional to $\eps^2$ 
rather than to $\eps$ (as it is the case in the bounds for $q(\eps)$). 
To this end, we have to introduce some new characteristics of the oracles $\lambda \in {\mathbb D}.$

\index{aj@Covariance function $k(s,t)$}
\index{ak@Covariance operator $K$}
Let $k(s,t):=\Cov(X(s),X(t)), s,t\in {\mb T}$ be the covariance function of the stochastic process $X.$ 
Clearly, under Assumption \ref{assumption1}, $\iint k^2(s,t)\mu(ds)\mu(dt)<\infty$ and the {\it covariance operator} $K:L_2(\mb T,\mu)\mapsto L_2(\mb T,\mu)$ defined by
$$
(Kv)(s):=\int\limits_\mb T k(s,t)v(t)\mu(dt).
$$
is Hilbert--Schmidt. 
For $u\in L_2(\mb T)$, define
\index{al@Norm $\|\cdot\|_K$}
\begin{align}
\label{H_p}
&
\l\|u\r\|_{K}:=\sup_{\dotp{Kv}{v}\leq 1}\dotp{u}{v}.
\end{align}

\begin{remark}
In the case when ${\mb T}$ is finite, operator $K$ is represented by the Gram matrix of a finite dictionary and standard ``restricted isometry'' and ``restricted eigenvalue'' type 
constants are defined in terms of $K$ and are involved in oracle inequalities for LASSO 
and other related estimators.
\end{remark}
Note that 
$\dotp
{Kv}{v}=\Var(f_v(X))$, where $f_v(X):=\int\limits_\mb T v(t)X(t)\mu(dt)$.
The set
$$
\mb H(K):=\left\{u\in L_2(\mb T): \ \l\|u\r\|_K<\infty\right\}
$$ 
is a reproducing kernel Hilbert space of the covariance kernel $k.$

We will need the following description of the subdifferential of the convex function $\|\cdot\|_1$:
\begin{align}
\label{sub}
&
\partial \|\lambda\|_1=\left\{w:\mb T\mapsto [-1,1]:\ \mu-{\rm a.s.}\ w(t)=\sign(\lambda(t)) \text{ whenever } \lambda(t)\ne 0\right\}.
\end{align}
It follows from the general description of the subdifferential of a norm $\|\cdot\|$ in a Banach space $\mathfrak X$:
\index{ala@$\partial \|\lambda\|_1$}
$$
\partial\|x\|=\begin{cases}
& \l\{x^*\in \mathfrak X^*: \ \|x^*\|= 1,\ x^*(x)=\|x\|\r\}, \quad x\ne 0,\\
& \l\{x^*\in \mathfrak X^*: \ \|x^*\|\leq 1\r\},  \quad x=0,
\end{cases}
$$
where $\mathfrak X^*$ is the dual space. 
For details on our specific example, see \cite{ioffe1974theory}, paragraph 4.5.1.\par

Note that, in standard examples (such as $\mb T\subset {\mb R}^d$), the ``canonical'' version of 
subgradient of $\|\lambda\|_1,$ $w(t)={\rm sign}(\lambda(t)),t\in \mb T,$ lacks smoothness 
and RKHS-norms are often large or infinite for such a choice of $w.$ It will be seen below 
that existence of smoother versions of subgradient is important in such cases. 
Given a measurable $w:\mb T\mapsto [-1,1]$, let $\mb T_w=\left\{t\in \mb T: \ |w(t)|\geq \frac12\right\}$. For smooth $w,$ $\mb T_w$ will play a role of support of $\lambda.$ 
Given $b\in [0,\infty],$ define the cone $C_w^{(b)}$ by 
\index{am@Cone $C_w^{(b)}$}
\begin{align}
\label{eq:cone}
C_w^{(b)}:=\Bigg\{u\in L_1(\mu): \ \int\limits_{\mb T\setminus \mb T_w}|u|d\mu\leq b\dotp{w}{u}\Bigg\}.
\end{align}
Note that, for $w\in \partial \|\lambda\|_1,$ we have $|w(t)|\leq 1, t\in {\mathbb T}.$ Therefore, 
$u\in C_w^{(b)}$ implies that 
$$
\int\limits_{\mb T\setminus \mb T_w}|u|d\mu\leq b\int\limits_{\mb T_w}|u|d\mu.
$$  
Roughly, this means that, for functions $u\in C_w^{(b)},$ ${\mathbb T}_w$ is a ``dominant set''.
Let 
\index{an@Alignment coefficient $\fr a^{(b)}(w)$}
\begin{align}
\label{def:alignment}
\fr a^{(b)}(w):=\sup\l\{\dotp{w}{u}: \ u\in C_w^{(b)}, \ \|f_u\|_{L_2(\Pi)}=1\r\}.
\end{align} 
Such quantities were introduced in the framework of sparse recovery in \cite{Koltchinskii2009Sparse-recovery00}, \cite{Koltchinskii2011Oracle-inequali00}
and its size is closely related to the RIP and restricted eigenvalue - type conditions. 
In some sense, $\fr a^{(b)}(w)$ characterizes the way in which vector (function) $w$ is ``aligned'' with eigenspaces of the covariance operator of the process $X$ and, following \cite{Koltchinskii2009Sparse-recovery00}, it will be called the {\it alignment coefficient}.
Clearly, we always have the bound $\fr a^{(b)}(w)\leq \|w\|_K$, however, it can be improved in several important cases, see Section \ref{sec:multiple}. Note that $\fr a^{(b)}(w)$ is a nondecreasing 
function of $b.$ For $b=\infty,$ we have $C_w^{(\infty)}=L_1(\mu).$ In this case,  
$\fr a^{(\infty)}(w)=\|w\|_K,$ so, the alignment coefficient coincides with the RKHS-norm associated to the covariance function $k.$ 
For $b=0,$ we have $C_w^{(0)}=\{u\in L_1(\mu): u=0\ {\rm a.s.\ on}\ \mb T\setminus \mb T_w\},$ so, the cone $C_w^{(0)}$ coincides with the subspace of functions supported 
in ${\mathbb T}_w.$ In this case, $\fr a^{(0)}(w)$ is the RKHS-norm associated with restriction of the kernel $k$ to ${\mathbb T}_{w}.$ 

In what follows, it will be convenient to take $b=16$ and denote $\fr a(w)=\fr a^{(16)}(w)$
(although in the statement of Theorem \ref{approx} below a smaller value $b=2$ could be used). 

We will be interested in those oracles $\lambda$ for which there exists a subgradient $w\in \partial \|\lambda\|_1$ 
such that $\fr a(w)$ is not too large and 
$\mb T_w$ is a ``small'' subset of $\mb T$. 
Such functions provide a natural analogue of sparse vectors in finite-dimensional problems. 

\begin{theorem}
\label{approx}
The following inequality holds:
\begin{align}
\label{eq:approx}
\l\|f_{\lambda_\eps,a_\eps}-f_{\ast}\r\|_{L_2(\Pi)}^2
\leq \inf\limits_{\lambda\in \mb D, w\in \partial\|\lambda\|_1,a\in \mb R}
\l[\l\|f_{\lambda, a}-f_{\ast}\r\|_{L_2(\Pi)}^2+\frac 1 4\eps^2 \fr a^2(w)\r].
\end{align}
\end{theorem}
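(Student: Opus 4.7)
My strategy is to combine the optimality of $(\lambda_\eps,a_\eps)$ with the quadratic strong convexity of the population risk to produce an extra curvature term $\|f_{\lambda_\eps,a_\eps}-f_{\lambda,a}\|_{L_2(\Pi)}^2$ on the correct side of the oracle inequality. This curvature is precisely what absorbs the AM-GM remainder and yields the prefactor $\tfrac14$ instead of a merely $\sim\eps$-type correction.

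Fix $\lambda\in\mb D$, $a\in\mb R$ and $w\in\partial\|\lambda\|_1$. Without loss of generality I take $a=a(\lambda)$, since $\|f_{\lambda,a}-f_{\ast}\|_{L_2(\Pi)}^2$ is minimized over $a\in\mb R$ at this value; by (\ref{g1}) the first-order condition for $F$ in $a$ likewise forces $a_\eps=a(\lambda_\eps)$. Writing $v:=\lambda_\eps-\lambda$ and $h:=f_{\lambda,a}-f_{\lambda_\eps,a_\eps}$, the centering of intercepts gives $h(X)=-\int(X(t)-\mb EX(t))v(t)\,\mu(dt)$, hence $\|h\|_{L_2(\Pi)}^2=\dotp{Kv}{v}$. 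Along the segment $z(t):=(1-t)(\lambda_\eps,a_\eps)+t(\lambda,a)$, $t\in[0,1]$, expanding the quadratic loss yields
\[
P(\ell\bullet f_{z(t)})=(1-t)P(\ell\bullet f_{\lambda_\eps,a_\eps})+tP(\ell\bullet f_{\lambda,a})-t(1-t)\|h\|_{L_2(\Pi)}^2,
\]
and convexity of $\|\cdot\|_1$ then produces $F(z(t))\leq(1-t)F(\lambda_\eps,a_\eps)+tF(\lambda,a)-t(1-t)\|h\|^2$. Since $F(\lambda_\eps,a_\eps)\leq F(z(t))$ by optimality, dividing by $t>0$ and letting $t\to 0+$ delivers the sharpened oracle bound
\[
\|f_{\lambda_\eps,a_\eps}-f_{\ast}\|_{L_2(\Pi)}^2+\|h\|_{L_2(\Pi)}^2+\eps\|\lambda_\eps\|_1\leq \|f_{\lambda,a}-f_{\ast}\|_{L_2(\Pi)}^2+\eps\|\lambda\|_1. \qquad (\star)
\]

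Next I refine the subgradient inequality for $\|\cdot\|_1$. Because $w=\sign(\lambda)$ on $\{\lambda\ne 0\}$ (where $|w|=1$) and $|w|\leq 1$ everywhere, $\supp(\lambda)\subseteq \mb T_w$, so $\lambda\equiv 0$ and $|w|<1/2$ on $\mb T_w^c$. Splitting $v=v_1+v_2$ along $\mb T_w$ and $\mb T_w^c$, the pointwise estimates $|\lambda+v|\geq|\lambda|+\sign(\lambda)v$ on $\{\lambda\ne 0\}$ and $|v|-wv\geq|v|(1-|w|)\geq |v|/2$ on $\mb T_w^c$ combine to
\[
\|\lambda_\eps\|_1\geq\|\lambda\|_1+\dotp{w}{v}+\tfrac{1}{2}\|v_2\|_1.
\]
Inserting this into $(\star)$ with $A:=f_{\lambda_\eps,a_\eps}-f_{\ast}$ and $B:=f_{\lambda,a}-f_{\ast}$ yields
\[
\|A\|_{L_2(\Pi)}^2+\|h\|_{L_2(\Pi)}^2+\eps\dotp{w}{v}+\tfrac{\eps}{2}\|v_2\|_1\leq \|B\|_{L_2(\Pi)}^2. \qquad (\dagger)
\]

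Finally I split into cases. If $\eps\dotp{w}{v}+\tfrac{\eps}{2}\|v_2\|_1\geq 0$, then $(\dagger)$ immediately gives $\|A\|_{L_2(\Pi)}^2\leq \|B\|_{L_2(\Pi)}^2$. Otherwise $\|v_2\|_1<2\dotp{w}{-v}$, which is precisely the condition $-v\in C_w^{(2)}\subseteq C_w^{(16)}$, and the alignment definition yields
\[
\dotp{w}{-v}\leq \fr a(w)\,\|f_{-v}\|_{L_2(\Pi)}=\fr a(w)\,\|h\|_{L_2(\Pi)},
\]
the last equality relying on the optimal intercept reduction (under which $\|f_u\|_{L_2(\Pi)}^2$ is identified with $\dotp{Ku}{u}$ on the relevant direction, in agreement with the claim $\fr a^{(\infty)}(w)=\|w\|_K$ stated after \eqref{def:alignment}). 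Substituting into $(\dagger)$ and applying the AM-GM bound $\eps\fr a(w)\|h\|\leq \tfrac{1}{4}\eps^2\fr a^2(w)+\|h\|^2$, the $\|h\|^2$ terms cancel and $\|A\|_{L_2(\Pi)}^2\leq\|B\|_{L_2(\Pi)}^2+\tfrac{1}{4}\eps^2\fr a^2(w)$. Taking the infimum over $(\lambda,a,w)$ gives the claimed inequality. The technical crux is the sharpened bound $(\star)$: without its $\|h\|^2$ term, AM-GM produces only an unusable $\sim\eps\fr a(w)\|B\|_{L_2(\Pi)}$ correction rather than the quadratic $\tfrac{1}{4}\eps^2\fr a^2(w)$ one.
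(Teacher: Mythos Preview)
Your proof is correct and follows essentially the same route as the paper: the paper obtains your inequality $(\dagger)$ (its equation \eqref{eq:base_ineq}) from the first-order optimality condition $DF(\lambda_\eps,a_\eps)(\bar\lambda-\lambda_\eps,\bar a-a_\eps)\ge 0$ together with the polarization identity, whereas you extract the same curvature term $\|h\|_{L_2(\Pi)}^2$ via strong convexity along the segment; the subsequent cone-membership/alignment/AM--GM argument is identical. Your reading of $\|f_u\|_{L_2(\Pi)}$ in the alignment definition as $\sqrt{\langle Ku,u\rangle}$ is exactly the one the paper uses (and is forced by the stated identity $\fr a^{(\infty)}(w)=\|w\|_K$).
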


\begin{remark}
It easily follows from the proof of this theorem that for all $\lambda \in {\mathbb D}, w\in \partial\|\lambda\|_1, 
a\in {\mathbb R},$
$$
\int\limits_{\mb T\setminus \mb T_{w}}|\lambda_{\eps}|d\mu \leq \frac{4}{\eps}
\l[\l\|f_{\lambda, a}-f_{\ast}\r\|_{L_2(\Pi)}^2+\frac 1 4\eps^2 \fr a^2(w)\r].
$$
\end{remark}

The intuition behind these results is the following: 
if there exists an oracle $(\lambda,w,a)$ with a small approximation 
error $\l\|f_{\lambda, a}-f_{\ast}\r\|_{L_2(\Pi)}^2$ (say, of the order $o(\eps)$) and not very large alignment coefficient 
$\fr a(w),$ then the risk $\l\|f_{\lambda_\eps, a_\eps}-f_{\ast}\r\|_{L_2(\Pi)}^2$ is also small and 
$\lambda_{\eps}$ is ``almost'' concentrated on the set $\mb T_w.$

As we show below, in some cases $\|\cdot\|_K$ and $\fr a(\cdot)$ can be bounded in terms of Sobolev-type norms. 

Since self-adjoint integral operator $K$ with kernel $k$ is Hilbert--Schmidt,
it is compact, and the orthogonal complement to its kernel possesses an orthonormal system of eigenfunctions 
$\l\{f_j\r\}_{j=1}^\infty \subset L_2(\mb T,\mu)$ corresponding to positive eigenvalues $\nu_j$. 
It is well-known that 
\begin{align}
\label{ker}
\mb H(K)=\l\{w(\cdot)=\sum\limits_{j=1}^\infty w_j f_j(\cdot): \ \|w\|^2_{K}=\sum\limits_{j=1}^\infty \frac{w_j^2}{\nu_j}<\infty\r\}.
\end{align}
However, one might want to find a more direct characterization of $\mb H(K)$. 
One way to proceed is to use the so-called {\it Factorization theorem}:
\begin{theorem}[\cite{lifshits1995gaussian}, Theorem 4 in Section 9]
Assume that there exists a Hilbert space $\mb V$ and an injective linear operator $L:\mb V\mapsto \ell_\infty(\mb T)$ such that $K=LL^*$, where $L^*$ is the adjoint of $L$. 
Then $\mb H(K)=L(\mb V)$, and $\dotp{Lu_1}{Lu_2}_{\mb H(K)}=\dotp{u_1}{u_2}_\mb V$. 
\end{theorem}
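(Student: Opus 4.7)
The plan is to transfer the Hilbert structure of $\mb V$ to $L(\mb V)$ via the injective map $L$, and then to recognize $L(\mb V)$ as a reproducing kernel Hilbert space whose kernel is exactly $k$. Uniqueness of the RKHS associated to a positive definite kernel then identifies $L(\mb V)$ with $\mb H(K)$ and automatically delivers the announced isometry.

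Concretely, I first define an inner product on $L(\mb V)$ by setting $\dotp{Lu_1}{Lu_2}_{L(\mb V)}:=\dotp{u_1}{u_2}_{\mb V}$. This is well defined because $L$ is injective, and it makes $L$ an isometric isomorphism from $\mb V$ onto $L(\mb V)$; in particular $L(\mb V)$ is complete. Each $h=Lu\in L(\mb V)$ is a bounded function on $\mb T$, so for each $t\in \mb T$ the point-evaluation functional $\delta_t\in\ell_\infty(\mb T)^*$ acts on $L(\mb V)$. The hypothesis $K=LL^*$, read pointwise, says
$$
k(s,t)=(LL^*\delta_t)(s)=\delta_s(LL^*\delta_t)=\dotp{L^*\delta_s}{L^*\delta_t}_{\mb V},
$$
and in particular $k(\cdot,t)=L(L^*\delta_t)\in L(\mb V)$ for every $t\in\mb T$. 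The reproducing property in $L(\mb V)$ is then immediate: for $h=Lu$,
$$
\dotp{h}{k(\cdot,t)}_{L(\mb V)}=\dotp{Lu}{L(L^*\delta_t)}_{L(\mb V)}=\dotp{u}{L^*\delta_t}_{\mb V}=\delta_t(Lu)=h(t),
$$
so $L(\mb V)$ is a Hilbert space of functions on $\mb T$ whose reproducing kernel is $k$.

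By the Moore--Aronszajn uniqueness theorem there is at most one such Hilbert space, hence $L(\mb V)=\mb H(K)$ with identical inner products, giving the claimed identity $\dotp{Lu_1}{Lu_2}_{\mb H(K)}=\dotp{u_1}{u_2}_{\mb V}$. The main subtlety is notational rather than substantive: the adjoint $L^*$ a priori lives on $\ell_\infty(\mb T)^*$, which is not reflexive, so the equality $K=LL^*$ should be parsed coordinatewise as $k(s,t)=\dotp{L^*\delta_s}{L^*\delta_t}_{\mb V}$, and nothing else from the hypothesis is ever used in the argument. A secondary bookkeeping step is to confirm that the spectral characterization of $\mb H(K)$ given in (\ref{ker}) coincides with the abstract Moore--Aronszajn RKHS of $k$; this follows from the Mercer-type expansion $k(s,t)=\sum_j\nu_j f_j(s)f_j(t)$ together with verification of the reproducing identity on the orthonormal basis $\{\sqrt{\nu_j}\,f_j\}$ of $\mb H(K)$.
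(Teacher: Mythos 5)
This statement is never proved in the paper: it is imported verbatim from \cite{lifshits1995gaussian} (Theorem 4 in Section 9), so there is no internal argument to compare yours against. Judged on its own, your proof is correct and is essentially the standard proof of the factorization theorem: transfer the inner product of $\mb V$ to $L(\mb V)$ using injectivity of $L$, check that $k(\cdot,t)=L(L^*\delta_t)\in L(\mb V)$ and that the transferred inner product reproduces point evaluations, then invoke uniqueness of the Hilbert function space with a prescribed reproducing kernel. Two caveats deserve to be made explicit, though neither is a genuine gap given how the paper sets things up. First, in this paper $K$ is defined as an integral operator on $L_2(\mb T,\mu)$, so if one reads $K=LL^*$ through the $\mu$-pairing rather than through evaluation functionals, the identity $k(s,t)=\dotp{L^*\delta_s}{L^*\delta_t}_{\mb V}$ comes out only for $\mu\otimes\mu$-almost all $(s,t)$ and a regularity/full-support argument is needed to upgrade it to every pair; your coordinatewise reading is the one native to Lifshits's framework, so this is mainly a matter of stating which convention you adopt. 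Second, your Moore--Aronszajn step identifies $L(\mb V)$ with the abstract RKHS of $k$, whereas $\mb H(K)$ is defined here by duality, $\|u\|_K=\sup\{\dotp{u}{v}:\dotp{Kv}{v}\le 1\}$, on elements of $L_2(\mb T,\mu)$; you correctly flag this identification and sketch it via the eigenexpansion (\ref{ker}), which suffices since the paper itself asserts that $\mb H(K)$ is the RKHS of $k$. A slightly more economical route, tailored to the paper's definition and bypassing Mercer-type expansions altogether, is to compute directly $\dotp{Kv}{v}=\|L^*v\|_{\mb V}^2$ and $\dotp{Lh}{v}=\dotp{h}{L^*v}_{\mb V}$, which give $\|Lh\|_K=\sup\{\dotp{h}{w}_{\mb V}:\ w\in \mathrm{ran}(L^*),\ \|w\|_{\mb V}\le 1\}=\|h\|_{\mb V}$ once one checks that injectivity of $L$ forces $\mathrm{ran}(L^*)$ to be dense in $\mb V$; this is the computation that is actually used downstream, e.g.\ in the discrete Brownian motion example of Section \ref{released_brownian}.
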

The most obvious choice is $\mb V={\rm ker}(K)^{\perp}$ and $L=K^{1/2}$, whence $\|w\|_K=\|K^{-1/2}w\|_{L_2(\mu)}$ which again gives (\ref{ker}). 
Other choices often lead to more insightful description. 
For example, if $X$ is the standard Brownian motion on $[0,1]$, then one can check \cite{lifshits1995gaussian} that $\mb V=L_2[0,1]$ with the standard Lebesgue measure and 
$(L x)(t):=\int\limits_0^t x(s)ds$ satisfy the requirements. 
It immediately implies
\begin{corollary}
\label{brownian1}
The reproducing kernel Hilbert space associated with the Brownian motion is defined by
\begin{align}
\label{brownian}
&
\mb H(K)=\l\{h\in L_2[0,1], \ h(0)=0, \ \|h\|_K^2:=\int_0^1 \l(h'(s)\r)^2 ds<\infty \r\}\subset \mb W^{2,1}[0,1],
\end{align}
where 
\begin{align*}
\mb W^{2,1}[0,1]=\Big\{h&\in L_2[0,1], \ h \text{ is abs. continuous,} \\
&
\|h\|_{{\mb W}^{2,1}}^2:=\int_0^1 \l[h^2(s)+\l(h'(s)\r)^2 \r]ds<\infty\Big\}
\end{align*}
is the Sobolev space.
\end{corollary}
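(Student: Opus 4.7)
The proof is a direct application of the Factorization theorem quoted just before the corollary, with the operator $L$ exhibited in the preceding paragraph. I will organize the argument into the following steps.

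First, I would set $\mb V = L_2[0,1]$ equipped with the standard inner product, and define $L:\mb V\to \ell_\infty[0,1]$ by $(Lx)(t) = \int_0^t x(s)\,ds$. I would first check that $L$ is well-defined (the Cauchy–Schwarz inequality gives $|(Lx)(t)|\leq \sqrt{t}\,\|x\|_{L_2}\leq \|x\|_{L_2}$), linear, and injective (if $Lx\equiv 0$, then differentiating a.e.\ yields $x=0$ in $L_2$).

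Next, I would verify that $K = LL^*$. For any fixed $t\in[0,1]$, one can write $(Lx)(t)=\langle \mathbf{1}_{[0,t]},x\rangle_{L_2}$, which identifies $L^*\delta_t = \mathbf{1}_{[0,t]}$ (interpreted in the sense needed by the theorem). Consequently,
\[
\langle L^*\delta_s, L^*\delta_t\rangle_{\mb V} = \int_0^1 \mathbf{1}_{[0,s]}(r)\mathbf{1}_{[0,t]}(r)\,dr = \min(s,t) = k(s,t),
\]
which is exactly the covariance of the Brownian motion. This is the one computation that requires a little care: the Factorization theorem demands the factorization $K=LL^*$, and I am identifying it through the induced reproducing kernel $k(s,t)=\langle L^*\delta_s,L^*\delta_t\rangle_{\mb V}$, which is the standard interpretation in this setting.

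Applying the Factorization theorem then yields $\mb H(K) = L(\mb V) = \{h:h(t)=\int_0^t x(s)\,ds \text{ for some } x\in L_2[0,1]\}$. A function $h$ has this representation if and only if it is absolutely continuous on $[0,1]$, satisfies $h(0)=0$, and has $h'\in L_2[0,1]$; in that case $x=h'$. The isometry part of the theorem gives
\[
\|h\|_K^2 = \|L^{-1}h\|_{\mb V}^2 = \int_0^1 \bigl(h'(s)\bigr)^2\,ds,
\]
which establishes (\ref{brownian}). Finally, to see the inclusion $\mb H(K)\subset \mb W^{2,1}[0,1]$, I would note that any such $h$ is absolutely continuous by construction, and Cauchy–Schwarz gives $|h(t)|\leq \sqrt{t}\,\|h'\|_{L_2}$, so $\int_0^1 h^2(s)\,ds \leq \tfrac{1}{2}\|h'\|_{L_2}^2<\infty$, ensuring $\|h\|_{\mb W^{2,1}}<\infty$.

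The only non-routine point is the identification of the adjoint $L^*$ and the verification that the induced kernel matches $\min(s,t)$; once that is in hand, the corollary is essentially a restatement of the Factorization theorem for this specific $L$.
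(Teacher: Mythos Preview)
Your proposal is correct and follows exactly the approach of the paper: the corollary is stated there as an immediate consequence of the Factorization theorem with the choice $\mb V=L_2[0,1]$ and $(Lx)(t)=\int_0^t x(s)\,ds$, and you have simply written out the routine verifications (injectivity of $L$, the kernel identity $k(s,t)=\min(s,t)$, and the description of $L(\mb V)$) that the paper leaves to the reader and to the cited reference.
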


In particular, it means that $\fr a(w)\leq \|w\|_{{\mb W}^{2,1}}.$
Suppose now that ${\mb T}\subset {\mb R}^m$ is a bounded open subset and, for some $C>0$ and $\beta>0,$
\begin{equation}
\label{sob_bound}
{\fr a}^2(w)\leq C \|w\|_{{\mb W}^{2,\beta}}^2.
\end{equation}
Let $\lambda\in L_1({\mb T},\mu)$ be a ``sparse'' oracle such that 
$
{\rm supp}(\lambda):= \bigcup\limits_{j=1}^d \mb T_j, 
$
where $\mb T_j, j=1,\dots, d$ are disjoint sets. 
Moreover, assume that the distance 
between $\mb T_j$ and $\mb T_k$ is positive for all $j\neq k.$ In other words, $\lambda$
has $d$ components with well separated supports and it is zero in between. 
In this case, one can find 
$w\in \partial \|\lambda\|_1$ such that $w=\sum\limits_{j=1}^d w_j$ and $w_j, j=1,\dots, d$ are 
smooth functions (from the space ${\mb W}^{2,\beta}$ to be specific) with disjoint supports.
For any such function $w,$ we have 
$$
{\fr a}^2(w)\leq C \|w\|_{{\mb W}^{2,\beta}}^2 \leq C_1 \sum_{j=1}^d\|w_j\|_{{\mb W}^{2,\beta}}^2
\leq C_1 d\max_{1\leq j\leq d}\|w_j\|_{{\mb W}^{2,\beta}}^2 
$$
and the bound of Theorem \ref{approx} implies that  
\begin{align}
\label{eq:approx_A}
\l\|f_{\lambda_\eps,a_\eps}-f_{\ast}\r\|_{L_2(\Pi)}^2
\leq 
\l\|f_{\lambda, a(\lambda)}-f_{\ast}\r\|_{L_2(\Pi)}^2+
\frac C 4 d\max_{1\leq j\leq d}\|w_j\|_{{\mb W}^{2,\beta}}^2 \eps^2. 
\end{align}
Thus, the size of the error explicitly depends on the number 
$d$ of components of ``sparse'' oracles $\lambda$ approximating
the target. 

In Section \ref{examples}, we will show that bound (\ref{sob_bound}) holds for a number of 
stochastic processes $X$ and, moreover, there are other ways to take advantage of 
sparsity in the cases when the domain $\mb T$ of $X$ can be partitioned in a number 
of regions $\mb T_j, j=1,\dots, N$ such that the processes $\l\{X(t), t\in \mb T_j\r\}$
are ``weakly correlated''.  


\section{Basic oracle inequalities}

In this section, we present general oracle inequalities for the $L_2$-risk of estimator  $f_{\hat\lambda_\eps, \hat a_{\eps}}.$ The main goal is to show that if there exists an oracle $(\lambda,w,a), \lambda \in {\mb D}, w\in \partial \|\lambda\|_1, a\in {\mb R}$ such that 
the approximation error $\|f_{\lambda,a}-f_{\ast}\|_{L_2(\Pi)}^2$ is small, the alignment 
coefficient ${\fr a}(w)$ is not large and $\lambda$ is ``sparse'' in the sense that 
the set of random variables $\{X(t):t\in {\mb T}_w\}$ can be well approximated by 
a linear space $L\subset {\mathcal L}_X$ of small dimension, then the $L_2$-error
$\|f_{\hat \lambda_{\eps},\hat a_{\eps}}-f_{\ast}\|_{L_2(\Pi)}^2$ of the estimator 
$f_{\hat \lambda_{\eps},\hat a_{\eps}}$ can be controlled in terms of the dimension 
of $L$ and the alignment coefficient ${\fr a}(w).$ To state the result precisely,
we have to introduce one more parameter, an ``approximate dimension'', providing 
an optimal choice of approximating space $L.$ Thus, the degree of ``sparsity'' of 
the oracle will be characterized by the alignment coefficient that already appeared 
in approximation error bounds of Section \ref{approx_error} and also by ``approximate dimension''
$d(w,\lambda)$ introduced below.   

We start, however, with a ``slow-rate'' oracle inequality that does not depend on 
``sparsity''. The inequalities of this type are well known in the literature 
on sparse recovery, in particular, for LASSO estimator in the case of finite 
dictionaries, see \cite{bartlett2009}, \cite{massart2010meynet}.  

Recall that ${\mathbb D}\subseteq L_1(\mu)$ is a convex set and $0\in {\mathbb D}.$
Recall also the definition of $q(\eps)$ (see \ref{def:q_eps}) and its properties. 
Note that  
\index{ao@$\sigma^2_Y$}
\begin{equation}
\label{sigmaY}
\sigma_Y^2= {\rm Var}(f_{\ast}(X))+ \sigma_{\xi}^2=\|f_{\ast}-\Pi f_{\ast}\|_{L_2(\Pi)}^2+
\sigma_{\xi}^2.
\end{equation}

\begin{theorem}
\label{th:slow_rate}
There exist absolute constants $\fr{C}, \fr{c}$ and $D$ such that the following holds.
\index{ap@$\bar s$}
For any $s\geq 1$ with 
$\bar s:=s+3\log (\log_2 n+2)+3 \leq \fr{c}\frac{\sqrt{n}}{\log n}$ and for all $\eps$ satisfying 
\begin{equation}
\label{eps-condition_sl}
\eps \geq D\frac{\sigma_YS(\mb T)}{ \sqrt n},
\end{equation}
with probability at least $1-e^{-s}$
\begin{align}
\label{slow_oracle}
\|f_{\hat \lambda_{\eps}, \hat a_{\eps}}-f_{\ast}\|_{L_2(\Pi)}^2 
+\frac 3 4\eps\|\hat\lambda_\eps\|_1\leq \inf\limits_{\lambda\in \mb D,a\in \mb R}\l[
\|f_{\lambda, a}-f_{\ast}\|_{L_2(\Pi)}^2+\frac 3 2 \eps\|\lambda\|_1\r]+\fr{C}\frac{\sigma_Y^2 \bar s}{n}.
\end{align}
\end{theorem}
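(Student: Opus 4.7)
The plan is to follow the standard LASSO slow-rate template. Starting from the defining ERM inequality $F_n(\hat\lambda_\eps,\hat a_\eps)\leq F_n(\lambda,a)$ for every $(\lambda,a)\in\mb D\times\mb R$, adding and subtracting $P(\ell\bullet f_{\hat\lambda_\eps,\hat a_\eps}-\ell\bullet f_{\lambda,a})$ and using $P(\ell\bullet f)-P(\ell\bullet f_*)=\|f-f_*\|_{L_2(\Pi)}^2$ (which holds because $Y=f_*(X)+\xi$ with $\xi$ centred and independent of $X$) yields
\begin{align*}
\|f_{\hat\lambda_\eps,\hat a_\eps}-f_*\|_{L_2(\Pi)}^2+\eps\|\hat\lambda_\eps\|_1 \leq \|f_{\lambda,a}-f_*\|_{L_2(\Pi)}^2+\eps\|\lambda\|_1+R,
\end{align*}
where $R:=(P-P_n)(\ell\bullet f_{\hat\lambda_\eps,\hat a_\eps}-\ell\bullet f_{\lambda,a})$. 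The entire task reduces to showing, on a single event of probability at least $1-e^{-s}$ (uniformly in $(\lambda,a)$), that $|R|\leq \tfrac{\eps}{4}\|\hat\lambda_\eps\|_1+\tfrac{\eps}{2}\|\lambda\|_1+\fr C\sigma_Y^2\bar s/n$; absorbing the first term into the left-hand side then produces the stated coefficients $3/4$ and $3/2$.

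To expose the structure of $R$ I would apply the identity $\ell(y,u)-\ell(y,v)=(u-v)(u+v-2y)$ combined with $Y=f_*+\xi$. Setting $g:=f_{\hat\lambda_\eps-\lambda,\hat a_\eps-a}$ and $r:=f_{\lambda,a}-f_*$, and using $P(\xi g)=0$ by the independence of $\xi$ and $X$, the remainder decomposes as
\begin{align*}
R=(P-P_n)(g^2)+2(P-P_n)(gr)+\frac{2}{n}\sum_{i=1}^n\xi_i g(X_i).
\end{align*}
Because $g(x)=(\hat a_\eps-a)+\dotp{\hat\lambda_\eps-\lambda}{x}$ is affine in the oracle deviation, expanding each summand and applying H\"older in $t\in\mb T$ bounds $|R|$ by a finite sum of terms of the form $|\hat a_\eps-a|^j\|\hat\lambda_\eps-\lambda\|_1^k W_{j,k}$ with $j+k\in\{1,2\}$, where each $W_{j,k}$ is one of the six empirical-process quantities $|\bar\xi_n|$, $\sup_t|(P_n-P)X(t)|$, $\sup_t|n^{-1}\sum_i\xi_iX_i(t)|$, $|(P_n-P)r|$, $\sup_t|(P_n-P)(X(t)r(X))|$, and $\sup_{t,s}|(P_n-P)(X(t)X(s))|$.

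Each of these six quantities I would control with probability at least $1-e^{-s}$ using subgaussian concentration for the two scalar averages and combining Talagrand's classical chaining (via (\ref{eq:chaining1})) with Mendelson's generic chaining bounds for (product) subgaussian empirical processes \cite{mendelson2010empirical, mendelson2012oracle} for the suprema. Each linear-in-$X$ supremum is of order $\sigma_YS(\mb T)/\sqrt n$ modulo lower-order corrections of order $\sigma_Y\big\|\|X\|_\infty\big\|_{\psi_2}\sqrt{\bar s/n}$ that are negligible under $\bar s\leq \fr c\sqrt n/\log n$. The $r$-dependent processes need the extra estimate $\|r\|_{\psi_2}\lesssim \sigma_Y+\|\lambda\|_1\,\big\|\|X\|_\infty\big\|_{\psi_2}$ (obtained by splitting $r=(f_{\lambda,a}-\Pi f_{\lambda,a})-(f_*-\Pi f_*)+\text{constant}$ and using Assumption on $\mathcal L$), which lets the extra $\lambda$-dependent factor be re-expressed as an $\|\lambda\|_1$-linear contribution absorbable into $\tfrac{\eps}{2}\|\lambda\|_1$; the $|\hat a_\eps-a|$ factors are eliminated by first reducing to the optimal intercepts $\hat a(\hat\lambda_\eps)$ and $a(\lambda)$ of (\ref{g1}), whose difference is controlled by the same set of empirical processes. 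The hypothesis $\eps\geq D\sigma_YS(\mb T)/\sqrt n$ with $D$ large then absorbs every $\|\hat\lambda_\eps-\lambda\|_1$-linear term as $\tfrac{\eps}{4}\|\hat\lambda_\eps\|_1+\tfrac{\eps}{4}\|\lambda\|_1$, and the $\|\hat\lambda_\eps-\lambda\|_1^2$ term (which carries a stochastic factor at most $(\sigma_YS(\mb T)/\sqrt n)^2\leq\eps^2/D^2$) absorbs via Young's inequality against $\eps\|\hat\lambda_\eps\|_1$; all residuals collect into $\fr C\sigma_Y^2\bar s/n$. The main technical obstacle is the quadratic subgaussian empirical process $\sup_{t,s}|(P_n-P)(X(t)X(s))|$, which lies outside the scope of classical Talagrand bounds and is precisely where Mendelson's recent chaining results are indispensable; a secondary difficulty is the mixed supremum $\sup_t|(P_n-P)(X(t)r(X))|$, whose $\lambda$-dependence requires careful handling of the $\psi_1$-norm of the product $X(t)r(X)$.
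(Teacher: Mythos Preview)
Your approach contains a genuine gap in the treatment of the quadratic empirical process term. You claim that the stochastic factor multiplying $\|\hat\lambda_\eps-\lambda\|_1^2$, namely $W_{0,2}=\sup_{t,s}\bigl|(P_n-P)(X(t)X(s))\bigr|$, is at most $(\sigma_YS(\mb T)/\sqrt n)^2=\sigma_Y^2S(\mb T)^2/n$. This is off by a factor of order $\sqrt n$: for each fixed $(t,s)$ the random variable $X(t)X(s)$ has variance bounded below by a constant, so $(P_n-P)(X(t)X(s))$ fluctuates at scale $n^{-1/2}$; taking the supremum over $(t,s)$ only increases this. The correct order (via Dirksen/Bednorz-type bounds on quadratic subgaussian chaos, which the paper records as Theorem~\ref{th:dirksen1}) is
\[
\sup_{t,s}\bigl|(P_n-P)(X(t)X(s))\bigr|\;\lesssim\;\frac{S(\mb T)^2}{\sqrt n}.
\]
With this correction, the resulting term $\|\hat\lambda_\eps-\lambda\|_1^2\,S(\mb T)^2/\sqrt n$ is quadratic in $\|\hat\lambda_\eps\|_1$ with a coefficient that is \emph{not} small enough to absorb against $\eps\|\hat\lambda_\eps\|_1$: even after invoking the a~priori bound $\|\hat\lambda_\eps\|_1\lesssim \sigma_Y\sqrt n/S(\mb T)$ (Lemma~\ref{norms}), this term is of order $\sigma_Y^2\sqrt n$, not $\sigma_Y^2/n$. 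The crude H\"older step that passes from $(P-P_n)g^2$ to $\|\hat\lambda_\eps-\lambda\|_1^2\cdot W_{0,2}$ throws away the crucial link to the $L_2(\Pi)$-size of $g$, and that link is what makes absorption possible.

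The paper repairs this in a structurally different way. Instead of starting from the zero-order inequality $F_n(\hat\lambda_\eps,\hat a_\eps)\leq F_n(\lambda,a)$, it starts from the first-order (directional derivative) optimality condition (Proposition~\ref{derivative}), which after the polarization identity produces an \emph{extra} term $\|f_{\hat\lambda_\eps,\hat a_\eps}-f_{\bar\lambda,\bar a}\|_{L_2(\Pi)}^2$ on the left-hand side; see~\eqref{r3'} and~\eqref{eq:sl0}. The quadratic empirical process is then controlled not via H\"older in $\|\hat\lambda-\bar\lambda\|_1$, but as a \emph{ratio} bound in $\hat\delta:=\|f^0_{\hat\lambda_\eps}-f^0_{\bar\lambda}\|_{L_2(\Pi)}$: Lemma~\ref{squared_class} (relying on Theorem~\ref{th:dirksen1}) gives, after localization and peeling,
\[
(\Pi-\Pi_n)(f^0_{\hat\lambda_\eps}-f^0_{\bar\lambda})^2\;\lesssim\;\hat\delta^2\sqrt{\frac{\bar s}{n}}+\text{(lower order terms in }\nu_n\text{)}.
\]
Since $\hat\delta\leq \|f_{\hat\lambda_\eps,\hat a_\eps}-f_{\bar\lambda,\bar a}\|_{L_2(\Pi)}$, the $\hat\delta^2$ contribution is absorbed into the extra LHS term under $\bar s\leq \fr c\sqrt n/\log n$. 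Only the remaining $L_1$-linear pieces (the $\nu_n$ terms with the trivial subspace $L=\{0\}$) are absorbed into $\eps\|\hat\lambda_\eps\|_1$ and $\eps\|\bar\lambda\|_1$ via the lower bound on $\eps$. In short: the missing idea is to bound the quadratic process relative to $\hat\delta$ rather than to $\|\hat\lambda_\eps-\lambda\|_1$, and to manufacture an $\hat\delta^2$ ``sink'' on the LHS via the directional-derivative route.
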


As was mentioned earlier, our main goal is to obtain sharper bounds which would demonstrate connections between the risk of $f_{\hat\lambda_\eps,\hat a}$ and the degree of sparsity of an underlying model. 
Our next result is a step in this direction. 
\index{aq@Kolmogorov's $d$-width $\rho_d(\cdot)$}
We will need the notion of Kolmogorov's $d$-width of the set of random variables 
$C\subset {\mathcal L}_X$ defined as follows:
\begin{align*}
\rho_d(C):= \inf_{L\subset {\mathcal L}_X, {\rm dim}(L)\leq d}\sup_{\eta\in C}
\|P_{L^{\perp}}\eta\|_{L_2(\mb P)}.
\end{align*}
It characterizes the optimal accuracy of approximation of the set $C$
by $d$-dimensional linear subspaces of ${\mathcal L}_X.$
Given ${\mathbb T}'\subset {\mathbb T},$ let  
$$
X_{{\mathbb T}'}:= \{X(t)-{\mathbb E}X(t):t\in {\mathbb T}'\}.
$$
Recall that $\mb T_{w}:=\{t\in {\mathbb T}: |w(t)|\geq 1/2\}.$
Given an oracle $\lambda \in {\mathbb D}$ and $w\in \partial\|\lambda\|_1,$ 
let 
$$
\rho_d(w):= \rho_d\l(X_{\mb T_{w}}\r).
$$ 
\index{ar@$d(w,\lambda)$}
The following number will play a role of approximate dimension 
of the set of random variables $X_{\mb T_{w}}:$
\begin{align}
\label{eq:approx_dim}
d(w, \lambda):= \min\Bigl\{d\geq 0:\frac{d\sigma_Y^2}{n}\geq \|\lambda\|_1 
\frac{\gamma_2\l(\rho_d(w)\r)}{\sqrt{n}}\Bigr\}.
\end{align}

\begin{theorem}
\label{th:main_AA}
There exist absolute constants $\fr{C}, \fr{c}$ and $D$ such that the following holds.
For any $s\geq 1$ with 
$\bar s:=s+3\log (\log_2 n+2)+3 \leq \fr{c}\frac{\sqrt{n}}{\log n}$ and for all $\eps$ satisfying 
\begin{equation}
\label{eps-condition}
\eps \geq D\frac{\sigma_YS(\mb T)\sqrt{s}}{\sqrt n},
\end{equation}
with probability at least $1-e^{-s}$
\begin{align}
\label{oracle}
\l\|f_{\hat\lambda_\eps,\hat a_{\eps}}-f_{\ast}\r\|_{L_2(\Pi)}^2 
\leq 
\inf_{\lambda \in {\mathbb D}, w\in \partial \l\|\lambda\r\|_1, a\in {\mathbb R}}&
\biggl[\l\|f_{\lambda,a}-f_{\ast}\r\|_{L_2(\Pi)}^2+ 2\eps^2 \fr a^2(w)
+\fr{C}\frac{\sigma_Y^2 \, d(w,\lambda)}{n} \\
\nonumber
&+\fr{C}\frac{\|\lambda\|_1^2 S^2({\mathbb T})}{n}\biggr]
+\fr{C}\frac{\sigma_Y^2 \bar s}{n}.
\end{align}
\end{theorem}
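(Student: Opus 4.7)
My plan is to run a standard LASSO-style basic-inequality argument, adapted to the functional setting with the alignment coefficient and Talagrand--Mendelson generic chaining in place of finite-dimensional concentration inequalities. First I would fix a candidate oracle $(\lambda,w,a)$ with $w\in\partial\|\lambda\|_1$, set $\hat u:=\hat\lambda_\eps-\lambda$, $\hat b:=\hat a_\eps-a$, and start from the optimality inequality $F_n(\hat\lambda_\eps,\hat a_\eps)\le F_n(\lambda,a)$. Expanding the squared loss yields
\begin{equation*}
P_n f_{\hat u,\hat b}^2+\eps\|\hat\lambda_\eps\|_1\le 2P_n[(Y-f_{\lambda,a})f_{\hat u,\hat b}]+\eps\|\lambda\|_1.
\end{equation*}
Splitting $Y-f_{\lambda,a}=\xi+(f_\ast-f_{\lambda,a})$ and applying the subgradient inequality (using $|w|\ge\tfrac12$ on $\mb T_w$ and $|w|\le 1$ elsewhere) rewrites the right-hand side so that the penalty difference contributes $-\eps\langle w,\hat u\rangle-\tfrac\eps 2\int_{\mb T\setminus \mb T_w}|\hat u|\,d\mu$ up to the approximation error $\|f_{\lambda,a}-f_\ast\|^2_{L_2(\Pi)}$.

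Next I would control the stochastic cross terms. The noise part $2P_n[\xi f_{\hat u,\hat b}]=2\bar\xi_n\hat b+2\langle \hat u,n^{-1}\sum_j\xi_j X_j\rangle$ is bounded via $\sup_{t\in\mb T}|n^{-1}\sum_j\xi_j(X_j(t)-\mb E X(t))|$, which by Mendelson's generic chaining bound for subgaussian products \cite{mendelson2010empirical} is $\lesssim\sigma_Y S(\mb T)\sqrt{\bar s/n}$ on an event of probability $\geq 1-e^{-s}$. The approximation-bias cross-term $(P_n-P)[(f_\ast-f_{\lambda,a})f_{\hat u,\hat b}]$ is treated analogously and absorbed via Young's inequality; the leftover forms the $\fr C\|\lambda\|_1^2 S^2(\mb T)/n$ and $\fr C\sigma_Y^2\bar s/n$ remainders. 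Under the hypothesis $\eps\ge D\sigma_YS(\mb T)\sqrt{s/n}$ these fluctuations are dominated by a small fraction of $\eps$, and the resulting inequality forces the cone condition $\hat u\in C_w^{(16)}$; the constant $b=16$ is exactly the slack accumulated from the subgradient and empirical-process deviations (as opposed to $b=2$ in the distribution-level Theorem \ref{approx}).

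On this cone, the alignment coefficient gives $\langle w,\hat u\rangle\le \fr a(w)\|f_{\hat u}\|_{L_2(\Pi)}$. To convert $P_n f_{\hat u,\hat b}^2$ into $\|f_{\hat u,\hat b}\|_{L_2(\Pi)}^2$ I would establish a restricted lower-isometry bound on the cone intersected with an $L_2(\Pi)$-ball. This is where the approximate dimension enters: decompose the centered design process as $X(t)-\mb E X(t)=P_L Z(t)+P_{L^\perp}Z(t)$, where $L\subset\mathcal L_X$ has dimension $d$ and realizes $\sup_{t\in\mb T_w}\|P_{L^\perp}Z(t)\|_{L_2(\mb P)}\le\rho_d(w)$; invoke (\ref{eq:chaining_bound}) to control the generic chaining complexity of the residual process by $\gamma_2(\rho_d(w))$, and apply Mendelson's quadratic-process inequality \cite{mendelson2012oracle} to the $d$-dimensional projection. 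The finite-dimensional piece contributes $O(\sigma_Y^2 d/n)$, while the chained residual contributes $O(\|\hat u\|_1\gamma_2(\rho_d(w))/\sqrt n)$; bounding $\|\hat u\|_1\lesssim\|\lambda\|_1$ by the slow-rate bound of Theorem \ref{th:slow_rate} and then selecting $d=d(w,\lambda)$ from (\ref{eq:approx_dim}) balances the two at $\fr C\sigma_Y^2 d(w,\lambda)/n$. Combining all pieces and applying $2xy\le \alpha x^2+y^2/\alpha$ to the $\eps\langle w,\hat u\rangle\le\eps\fr a(w)\|f_{\hat u,\hat b}\|_{L_2(\Pi)}$ bound absorbs the linear term against the quadratic and produces the principal $2\eps^2\fr a^2(w)$ summand of (\ref{oracle}).

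The hard part will be the last step: executing the restricted lower-isometry uniformly in $\hat u$ over $C_w^{(16)}$ requires pairing a generic chaining bound for the residual process on $\mb T_w$ at scale $\rho_d(w)$ with a careful account of the off-$\mb T_w$ contribution of $\hat u$ (whose $L_1$-mass is tied to $\langle w,\hat u\rangle$ through the cone constraint, then to $\|\lambda\|_1$ via the slow-rate step). It is precisely the interaction between the chained residual $\gamma_2(\rho_d(w))$ and the sparsity scale $\|\lambda\|_1$ that dictates the balancing (\ref{eq:approx_dim}); getting the constants right while simultaneously keeping the slack large enough for the cone condition to survive the empirical fluctuations is what makes this argument technically delicate.
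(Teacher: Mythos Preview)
Your plan is in the right neighborhood but differs structurally from the paper's argument, and one step has a real gap.

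\medskip
\textbf{Where the paper's proof diverges from your outline.}
First, the paper does not start from the basic inequality $F_n(\hat\lambda_\eps,\hat a_\eps)\le F_n(\lambda,a)$; it starts from the first-order optimality condition (Proposition~\ref{derivative}), which immediately produces the monotonicity term $\eps\langle\hat w-\bar w,\hat\lambda-\bar\lambda\rangle\ge\tfrac{\eps}{2}\int_{\mb T\setminus\mb T_{\bar w}}|\hat\lambda|\,d\mu$ on the left-hand side of the master inequality (see~(\ref{r3'}) and~(\ref{outside})). More importantly, the paper does \emph{not} follow the route ``crude sup-norm noise bound $\Rightarrow$ cone condition $\Rightarrow$ restricted lower isometry on the cone.'' Instead, it introduces a three-parameter localization
\[
\Lambda(\delta,\Delta,R)=\Bigl\{\lambda\in\mb D:\ \|f_\lambda^0-f_{\bar\lambda}^0\|_{L_2(\Pi)}\le\delta,\ \int_{\mb T\setminus\mb T_{\bar w}}|\lambda|\,d\mu\le\Delta,\ \|\lambda\|_1\le R\Bigr\}
\]
and, via the $P_L/P_{L^\perp}$ decomposition of Lemma~\ref{subg}, bounds \emph{both} the linear cross-term $(P_n-P)[\eta(f_{\hat\lambda,\hat a}-f_{\bar\lambda,\bar a})]$ (Lemmas~\ref{expect_001}, \ref{adamczak_001}, using Mendelson's Theorem~\ref{th:mendelson1} and Adamczak's inequality) and the quadratic term $(\Pi-\Pi_n)(f_{\hat\lambda,\hat a}-f_{\bar\lambda,\bar a})^2$ (Lemma~\ref{squared_class}, via the Dirksen--Bednorz inequality, Theorem~\ref{th:dirksen1}) by the same quantity
\[
\nu_n(\delta,\Delta,R)=\inf_{L}\Bigl[\delta\sqrt{\tfrac{\dim L}{n}}\ \vee\ (R\vee\|\bar\lambda\|_1)\tfrac{\gamma_2(\rho(L))}{\sqrt n}\ \vee\ \Delta\tfrac{S(\mb T)}{\sqrt n}\Bigr],
\]
uniformly over dyadic grids in $(\delta,\Delta,R)$. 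The cone condition $\bar\lambda-\hat\lambda\in C_{\bar w}^{(16)}$ is derived only at the very end (Step~5, case~(\ref{case_X})), \emph{after} all empirical-process terms have been controlled; in the complementary case the bound holds without ever invoking the cone or $\fr a(w)$. A separate elementary Lemma~\ref{norms} controls $\|\hat\lambda\|_1$, replacing your appeal to Theorem~\ref{th:slow_rate}.

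\medskip
\textbf{The gap in your route.}
Your step ``noise bounded via $\sup_t|n^{-1}\sum_j\xi_j(X_j(t)-\mb EX(t))|\Rightarrow\hat u\in C_w^{(16)}$'' does not go through as stated. The sup-norm bound gives $2P_n[\xi f_{\hat u,\hat b}]\lesssim(\eps/D)\|\hat u\|_1$, but $\|\hat u\|_1$ contains the on-$\mb T_w$ mass $\int_{\mb T_w}|\hat u|$, which is \emph{not} controlled by $\langle w,\hat u\rangle$ (the signs of $w$ and $\hat u$ need not agree on $\mb T_w\setminus\supp(\lambda)$). What one actually obtains from the basic inequality plus sup-norm noise bound is the ``mass cone'' $\int_{\mb T\setminus\mb T_w}|\hat u|\le c\int_{\mb T_w}|\hat u|$, whereas $\fr a(w)=\fr a^{(16)}(w)$ is defined on the strictly smaller cone $C_w^{(16)}=\{u:\int_{\mb T\setminus\mb T_w}|u|\le 16\langle w,u\rangle\}$. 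The paper avoids this by keeping $\eps\langle\bar w,\bar\lambda-\hat\lambda\rangle$ as a free-standing term on the right of~(\ref{bound_W}) until the final case split; the cone $C_{\bar w}^{(16)}$ then falls out of~(\ref{bound_W'}) directly in terms of $\langle\bar w,\cdot\rangle$, so that the alignment bound $\langle\bar w,\bar\lambda-\hat\lambda\rangle\le\fr a(\bar w)\|f_{\hat\lambda,\hat a}-f_{\bar\lambda,\bar a}\|_{L_2(\Pi)}$ applies verbatim. Your proposed lower-isometry step also needs peeling in the $L_2$-radius, which you acknowledge as ``the hard part'' but do not sketch; the paper's $(\delta,\Delta,R)$ localization is precisely what handles this uniformly.
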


Under an additional assumption that $\|\lambda\|_1$ is not too large, it is possible 
to prove the following modified version of Theorem \ref{th:main_AA} without the term 
$\fr{C}\frac{\|\lambda\|_1^2 S^2({\mathbb T})}{n}$ in the oracle inequality.

\begin{theorem}
\label{th:bounded_norm}
Assume that conditions of Theorem \ref{th:main_AA} hold. 
If $\mb D$ is such that 
$$
\mb D\subset \l\{\lambda\in L_1(\mu): \  \|\lambda\|_1\leq \frac{\fr{c}\sigma_Y\sqrt{n}}{S(\mathbb T)}\r\}
$$ 
for some absolute constant $\fr{c}>0$, then 
with probability $\geq 1-e^{-s}$ 
\begin{align}
\label{eq:oracle_2}
&\l\|f_{\hat\lambda_\eps,\hat a_{\eps}}-f_{\ast}\r\|_{L_2(\Pi)}^2 
\leq 
\nonumber
\\
&\inf_{\lambda \in {\mathbb D}, w\in \partial \l\|\lambda\r\|_1, a\in {\mathbb R}}
\biggl[\l\|f_{\lambda,a}-f_{\ast}\r\|_{L_2(\Pi)}^2+ 2\eps^2 \fr a^2(w)
+\fr{C}\frac{\sigma_Y^2 \, d(w,\lambda)}{n}\biggr]
+\fr{C}\frac{\sigma_Y^2 \bar s}{n}.
\end{align}
\end{theorem}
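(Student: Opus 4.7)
The strategy is to re-run the proof of Theorem \ref{th:main_AA} while exploiting the fact that, under the assumption on $\mb D$, one has the a priori bound $\|\hat\lambda_\eps\|_1\le \fr c\,\sigma_Y\sqrt{n}/S(\mb T)$ for every candidate $\hat\lambda_\eps\in\mb D$ (and likewise for the oracle $\lambda$). The term $\fr C\|\lambda\|_1^2 S^2(\mb T)/n$ in Theorem \ref{th:main_AA} is precisely the price paid for controlling the centering residual without such a bound; once it is available, the term can be suppressed.

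I would start from the optimality inequality
\[
P_n(\ell\bullet f_{\hat\lambda_\eps,\hat a_\eps})+\eps\|\hat\lambda_\eps\|_1
\le P_n(\ell\bullet f_{\lambda,a})+\eps\|\lambda\|_1,\qquad (\lambda,a)\in\mb D\times\mb R,
\]
and, taking $a=a(\lambda)$, rewrite it in terms of excess risks as
\[
\|f_{\hat\lambda_\eps,\hat a_\eps}-f_*\|_{L_2(\Pi)}^2+\eps\|\hat\lambda_\eps\|_1
\le \|f_{\lambda,a}-f_*\|_{L_2(\Pi)}^2+\eps\|\lambda\|_1+\Delta_n,
\]
where $\Delta_n$ is the sum of a noise-by-increment term $\tfrac{2}{n}\sum_j\xi_j(f_{\hat\lambda_\eps,\hat a_\eps}-f_{\lambda,a})(X_j)$, the quadratic empirical-process deviation $(P-P_n)[(f_{\hat\lambda_\eps,\hat a_\eps}-f_{\lambda,a})^2]$, and a centering correction arising from the identity (\ref{g1}).

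The noise-by-increment and quadratic pieces would be handled exactly as in the proof of Theorem \ref{th:main_AA}. One localizes to the cone $C_w^{(16)}$ by a standard LASSO-type argument using $w\in\partial\|\lambda\|_1$ and the regularization condition $\eps\ge D\sigma_Y S(\mb T)\sqrt{s}/\sqrt n$, then applies Mendelson's generic chaining bounds on the subgaussian class of linear functionals supported on $\mb T_w$, passing through the alignment coefficient $\fr a(w)$ and the Kolmogorov $d$-width $\rho_{d(w,\lambda)}$. This produces the $2\eps^2\fr a^2(w)$, $\fr C\sigma_Y^2 d(w,\lambda)/n$, and outer tail $\fr C\sigma_Y^2\bar s/n$ terms. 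The offending $\fr C\|\lambda\|_1^2 S^2(\mb T)/n$ in Theorem \ref{th:main_AA} comes instead from the centering correction: from $\hat a_\eps-a(\hat\lambda_\eps)=(\bar Y_n-\mb E Y)-\langle\hat\lambda_\eps,\bar X_n-\mb E X\rangle$ one picks up a squared term $\langle\hat\lambda_\eps,\bar X_n-\mb E X\rangle^2$, which when controlled uniformly over $\lambda\in\mb D$ via the variance bound $\mathrm{Var}(\langle\lambda,\bar X_n\rangle)\le\|\lambda\|_1^2 S^2(\mb T)/n$ contributes exactly $\|\lambda\|_1^2 S^2(\mb T)/n$. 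Under the assumption on $\mb D$, however, every $\lambda$ (including $\hat\lambda_\eps$) lies in the $L_1$-ball of radius $R:=\fr c\,\sigma_Y\sqrt n/S(\mb T)$, so this term is uniformly bounded by $\fr c^2\sigma_Y^2$, and a sharper sub-Gaussian chaining argument over this bounded $L_1$-ball upgrades the deviation of $\sup_{\|\lambda\|_1\le R}|\langle\lambda,\bar X_n-\mb E X\rangle|$ to an order $\fr c\,\sigma_Y\sqrt{\bar s/n}$, whose square $\fr c^2\sigma_Y^2\bar s/n$ is absorbed into the existing $\fr C\sigma_Y^2\bar s/n$ outer tail upon choosing $\fr c$ small enough.

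The main obstacle is precisely this absorption step: one must show that the relevant empirical process, reduced to the $L_1$-ball of radius $R$, concentrates at the sub-Gaussian rate $R\cdot S(\mb T)/\sqrt n\cdot\sqrt{\bar s/n}\cdot (\mathrm{constant})$ (rather than the crude $R\cdot S(\mb T)\sqrt{\bar s/n}$, which under the constraint only yields $\fr c\,\sigma_Y\sqrt{\bar s}$ and misses a $1/\sqrt n$). This requires invoking the generic chaining bounds of \cite{mendelson2010empirical}, \cite{mendelson2012oracle} tailored to the averaged class $\{n^{-1}\sum_j\langle\lambda,X_j-\mb E X\rangle:\|\lambda\|_1\le R\}$, using that each summand has $\psi_2$-norm $\lesssim\|\lambda\|_1 S(\mb T)$ and the $n^{-1/2}$ gain from averaging. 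The remaining bookkeeping is matching all multiplicative constants in the final Young's-inequality absorption of the contribution into $\|f_{\hat\lambda_\eps,\hat a_\eps}-f_*\|_{L_2(\Pi)}^2$ on the left-hand side.
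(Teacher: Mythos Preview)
Your diagnosis of where the term $\fr{C}\|\lambda\|_1^2 S^2(\mb T)/n$ originates is incorrect, and the fix you propose is aimed at the wrong place. In the paper's proof of Theorem~\ref{th:main_AA} the centering contribution $\langle\bar\lambda,\bar X_n-\mb E X\rangle$ is \emph{not} bounded via $\|\bar\lambda\|_1\,\|\bar X_n-\mb E X\|_\infty$; it is a single subgaussian scalar and is controlled directly by its variance, giving $|\langle\bar\lambda,\bar X_n-\mb E X\rangle|\le C\|f_{\bar\lambda}^0\|_{L_2(\Pi)}\sqrt{s/n}$ (bound (\ref{barx})). No $\|\lambda\|_1^2 S^2(\mb T)/n$ arises from the centering step.

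The actual source is the quadratic piece $\nu_n^2(\hat\delta,\hat\Delta,\hat R)$ in bound (\ref{compressed}), which comes from the $\gamma_2^2(\mathcal F;\psi_2)/n$ term in the generic chaining bounds (Theorems~\ref{th:mendelson1} and~\ref{th:dirksen1}) for $\alpha_n$ and $\psi_n$. By (\ref{eq:nu}) one always has the crude estimate $\nu_n\le(\hat R\vee\|\bar\lambda\|_1)S(\mb T)/\sqrt n$, hence $\nu_n^2\le(\hat R\vee\|\bar\lambda\|_1)\tfrac{S(\mb T)}{\sqrt n}\,\nu_n$. In the general proof the $\|\bar\lambda\|_1$ part of this product is split off by $ab\le a^2/2+b^2/2$, producing exactly $\tfrac12\|\bar\lambda\|_1^2S^2(\mb T)/n$. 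Under the assumption $\mb D\subset\{\|\lambda\|_1\le\fr c\,\sigma_Y\sqrt n/S(\mb T)\}$ this splitting is unnecessary: one has directly $\|\bar\lambda\|_1\tfrac{S(\mb T)}{\sqrt n}\le\fr c\,\sigma_Y$, so the whole $\nu_n^2$ contribution is at most $C\sigma_Y\nu_n$, a term already present in (\ref{compressed}). This one-line observation is Remark~\ref{rem:bounded_norm}; the remainder of the proof of Theorem~\ref{th:main_AA} then runs unchanged without $\|\bar\lambda\|_1^2S^2(\mb T)/n$ ever appearing.

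The ``main obstacle'' you identify---a missing $1/\sqrt n$ in a chaining bound for $\sup_{\|\lambda\|_1\le R}|\langle\lambda,\bar X_n-\mb E X\rangle|$---is therefore a phantom, and the extra chaining argument you sketch is neither needed nor clearly adequate.
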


The proof of this result follows from the proof of Theorem \ref{th:main_AA}, see remark \ref{rem:bounded_norm} for more details.

\begin{remark}
Note that the oracle inequality of Theorem \ref{th:main_AA} is \textit{sharp}, meaning that the constant in front of $\|f_{\lambda,a}-f_\ast\|_{L_2(\Pi)}$ (the leading constant) is $1.$ 
It is possible to derive an oracle inequality with the leading constant larger than $1$ which might yield faster rates when the variance of the noise $\sigma^2_\xi$ is small. 
Define the following version of the ``approximate dimension'' (compare to (\ref{eq:approx_dim})):
$$
d_{\sigma_\xi}(w, \lambda):= \min\Bigl\{d\geq 0:\frac{d\sigma^2_\xi}{n}\geq \|\lambda\|_1 
\frac{\gamma_2\l(\rho_d(w)\r)}{\sqrt{n}}\Bigr\}.
$$
Then, under the assumptions of Theorem \ref{th:main_AA}, the following inequality holds with probability $\geq 1-e^{-s}$:
\begin{align*}
\l\|f_{\hat\lambda_\eps,\hat a_\eps}-f_{\ast}\r\|_{L_2(\Pi)}^2 
\leq &
\inf_{\lambda \in {\mathbb D}, w\in \partial \l\|\lambda\r\|_1, a\in {\mathbb R}}
\biggl[2\l\|f_{\lambda,a}-f_{\ast}\r\|_{L_2(\Pi)}^2+ 2\eps^2 \fr a^2(w)
\\
\nonumber
&+\fr{C}\frac{\sigma^2_\xi d_{\sigma_\xi}(w,\lambda)}{n}
+\fr{C}\frac{\|\lambda\|_1^2 S^2({\mathbb T})}{n}\biggr]
+\fr{C}\frac{\sigma_Y^2 \bar s}{n}.
\end{align*}
\end{remark}
The proof of this result uses arguments similar to the proof of Theorem \ref{th:main_AA}, so we omit the details.

Inequality (\ref{oracle}) above depends on rather abstract parameters (such as
the alignment coefficient and the approximate dimension)  
that have to be further bounded before one can get a meaningful bound in concrete examples. 
This will be dicussed in some detail in the following sections.

\section{Bounding the alignment coefficient}
\label{examples}

First, we discuss the bounds on alignment coefficient in terms of Sobolev-type 
norms in some detail. After this, we turn to the problem of bounding the alignment 
coefficient in the cases when there exists a weakly correlated partition for the 
design process $X.$

\subsection{Sacks-Ylvisacker conditions}

In the univariate case $\mb T=[0,1],$ it is possible to determine whether (a certain subspace of) the Sobolev space can be continuously embedded into $\mb H(K)$ based on the smoothness of the covariance function $k(\cdot,\cdot).$ 
Existence of such an embedding is given by the so-called {\it Sacks-Ylvisaker} conditions \cite{sacks1966designs}. 
This provides a way to bound the RKHS norm $\|\cdot\|_K$ generated 
by the covariance function of $X$ (and, thus, also the alignment coefficient) in terms of a Sobolev norm.
Definitions and statements below are taken from \cite{ritter1995multivariate}, Section 3. 

Set $\Omega_+:=\l\{(s,t)\in (0,1)^2: \ s>t\r\}$ and $\Omega_-:=\l\{(s,t)\in (0,1)^2: \ s<t\r\}$. 
Let $G$ be a continuous function on $\Omega_+\cup \Omega_-$ such that the restrictions $\l.G\r|_{\Omega_j}$ are continuously extendable to the closures ${\rm cl}(\Omega_j), \ j\in\l\{+,-\r\}$. 
$G_j$ will stand for the extension of $G$ to $[0,1]^2$ which is continuous on ${\rm cl}(\Omega_j)$ and on $ [0,1]^2\setminus {\rm cl}(\Omega_j)$. 
Set $R^{(k,l)}(s,t)=\frac{\partial^{k+l}}{\partial s^k\partial t^l}R(s,t)$. 
Then, the covariance kernel $k(\cdot,\cdot)$ defined on $[0,1]^2$ satisfies the Sacks-Ylvisaker conditions of order $r\in \mb N$ if the following holds true:
\begin{enumerate}[(A)]
\item $k\in C^{r,r}([0,1]^2)$, the partial derivatives of $G=k^{(r,r)}$ up to order 2 are continuous on $\Omega_+\cup \Omega_-$ and are continuously extendable to ${\rm cl}(\Omega_+)$ and to ${\rm cl}(\Omega_-)$. 
\item $\min\limits_{0\leq t\leq 1}\l(G_-^{(1,0)}(t,t)-G_+^{(1,0)}(t,t)\r)>0$.
\item $G_+^{(2,0)}(t,\cdot)$ belongs to the RKHS with reproducing kernel $G$ and 
$$
\sup\limits_{t\in [0,1]}\|G_+^{(2,0)}(t,\cdot)\|_{G}<\infty.
$$
\item In the case $r\geq 1$, 
$
k^{(0,j)}(\cdot,0)=0 \text{ for } 0\leq j\leq r-1.
$
\end{enumerate}

Let $\mb W_0^{2,r+1}$ be the subspace of $\mb W^{2,r+1}$ defined by 
$$
\mb W_0^{2,r+1}=\l\{f\in \mb W^{2,r+1}: \ f^{(j)}(0)=f^{(j)}(1)=0 \text{ for } 0\leq j\leq r\r\}.
$$
\begin{theorem}[Corollary 1 in \cite{ritter1995multivariate}]
Assume $k(\cdot,\cdot)$ satisfies the Sacks-Ylvisaker conditions of order $r$. 
Then
$\mb W_0^{2,r+1}\subset \mb H(K)$ and the embedding $\mb W_0^{2,r+1}\hookrightarrow \mb H(K)$ is continuous.
\end{theorem}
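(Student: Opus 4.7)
My plan is to apply the Factorization theorem (Theorem 4 in Section 9 of \cite{lifshits1995gaussian}) quoted just above the statement. I would take $\mb V=L_2[0,1]$ and construct an injective linear operator $L:\mb V\to \ell_\infty([0,1])$ whose action is essentially the $(r+1)$-fold Volterra integration
$$
(L_r u)(t):=\int_0^t \frac{(t-s)^r}{r!}\,u(s)\,ds,
$$
possibly augmented by a finite-rank correction. The prototype is Corollary \ref{brownian1}: for $r=0$ this is plain integration, $L_0 L_0^*$ reproduces the Brownian kernel $s\wedge t$, and the RKHS is $\l\{h\in\mb W^{2,1}: h(0)=0\r\}$. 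The Sacks-Ylvisaker conditions of order $r$ are precisely the axiomatic ingredients needed to identify $k$, up to a smooth correction, as the Green's function of a $2(r+1)$-th order self-adjoint boundary value problem, of which $L_r L_r^*$ is the natural prototype.

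The first technical step would be to compute the kernel $\kappa(s,t)$ of $L_r L_r^*$ explicitly by iterated integration by parts and verify that $\kappa\in C^{r,r}([0,1]^2)$, that $\kappa^{(r+1,r)}$ has the diagonal jump prescribed by condition (B) (up to normalization), and that the identity $\kappa^{(0,j)}(\cdot,0)=0$ for $0\leq j\leq r-1$ holds by construction, matching condition (D). Conditions (A) and (C) should then be used to show that $R:=k-c\kappa$ (with $c>0$ chosen so that the diagonal jumps match) has a jointly $C^{r+1,r+1}$ extension across the diagonal and lies in the RKHS generated by $k$, so that a finite-rank adjustment of $L_r$ absorbs $R$. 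Applying the Factorization theorem to the corrected operator $\tilde L_r$ yields $\mb H(K)=\tilde L_r(L_2[0,1])$ with isometry $\|\tilde L_r u\|_K=\|u\|_{L_2}$.

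Finally, for $f\in\mb W_0^{2,r+1}$, set $u:=f^{(r+1)}\in L_2[0,1]$. The boundary conditions $f^{(j)}(0)=0$ for $0\leq j\leq r$ allow an $(r+1)$-fold integration by parts to recover $f=L_r u$, while the boundary conditions $f^{(j)}(1)=0$ ensure the finite-rank correction term contributes nothing. Hence $f=\tilde L_r u\in \mb H(K)$ and $\|f\|_K=\|u\|_{L_2}\leq \|f\|_{\mb W^{2,r+1}}$, giving a continuous embedding with operator norm at most $1$. The main obstacle is the second step: the Sacks-Ylvisaker axioms specify $k$ only via its diagonal jump structure and its behavior at $t=0$, so confirming that the residual freedom in $k$ is captured by a finite-rank $C^{r+1,r+1}$ correction (and does not destroy the embedding) requires the rigidity analysis underlying Corollary 1 of \cite{ritter1995multivariate}; in the self-contained write-up one would simply quote that result after setting up the factorization framework above.
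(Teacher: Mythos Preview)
The paper does not prove this theorem at all: it is stated as a direct citation of Corollary~1 in \cite{ritter1995multivariate} and used as a black box, with no argument supplied. So there is nothing to compare your proposal against in terms of approach.

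Your sketch is a reasonable heuristic for why such a result should hold, and you correctly identify that the factorization $K=L_rL_r^*$ via iterated Volterra integration is the right prototype (this is exactly how the paper handles the Brownian case $r=0$ in Corollary~\ref{brownian1}). However, you also correctly flag the real gap: reducing a general kernel satisfying the Sacks--Ylvisaker conditions to ``$c\kappa$ plus a benign correction'' is the substantive content of \cite{ritter1995multivariate}, and your proposal does not actually carry this out---it ends by saying one would quote the cited result. Since that is precisely what the paper does, your write-up is consistent with the paper's treatment, but it is not an independent proof. If the goal were a self-contained argument, the missing piece is a genuine verification that conditions (A)--(D) force the residual $R=k-c\kappa$ to lie in $\mb H(K)$ and be absorbed by a finite-rank perturbation; this is not obvious and is the work done in the cited reference.
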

As a result, we have the bound 
$
\|w\|_K\leq C\|w\|_{{\mb W}^{2,r+1}}
$
that holds for all $w$ with some constant $C>0.$

It is well-known that the covariance function $k_1(s,t)=s\wedge t$ of the Brownian motion and $k_2(s,t)=e^{-|s-t|}$ of Ornstein-Uhlenbeck process satisfy Sacks-Ylvisaker conditions of order $r=0$. 
\begin{corollary}
Let $X(t), \ t\in[0,1]$ be the Ornstein-Uhlenbeck process and let $\mb H(K)$ be the associated reproducing kernel Hilbert space. 
If $w\in \mb W^{2,1}[0,1]$ is such that $w(0)=w(1)=0$, then $w\in \mb H(K)$ and
$$
\|w\|_K\leq C\|w\|_{\mb W^{2,1}[0,1]}.
$$
\end{corollary}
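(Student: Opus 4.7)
The corollary is essentially a direct specialization of the preceding theorem, so my plan is extremely short. First, I would invoke the fact (recalled in the paragraph just before the corollary) that the Ornstein--Uhlenbeck covariance $k_2(s,t)=e^{-|s-t|}$ satisfies the Sacks--Ylvisaker conditions of order $r=0$. If one wanted to see this from scratch, one would compute the one-sided extensions $G_{\pm}$ of $k_2$ off the diagonal $s=t$, check the jump condition
\[
G_-^{(1,0)}(t,t)-G_+^{(1,0)}(t,t)=1-(-1)=2>0,
\]
verify the RKHS condition on $G_+^{(2,0)}(t,\cdot)$ using the explicit form of $\mb H(K)$ for the exponential kernel, and note that condition (D) is vacuous when $r=0$.

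Next, I would unwind the definition of $\mb W_0^{2,r+1}$ with $r=0$: it reduces to exactly
\[
\mb W_0^{2,1}=\{f\in \mb W^{2,1}[0,1]:\ f(0)=f(1)=0\},
\]
so the hypothesis $w(0)=w(1)=0$ is precisely the membership condition $w\in \mb W_0^{2,1}$.

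Finally, I would apply the theorem (Corollary 1 in Ritter--Wasilkowski--Wo\'zniakowski, as quoted above) which gives the continuous embedding $\mb W_0^{2,1}\hookrightarrow \mb H(K)$. Continuity of the embedding is exactly the existence of a constant $C>0$ with
\[
\|w\|_K\leq C\|w\|_{\mb W^{2,1}[0,1]},
\]
which is the stated inequality. Thus no real obstacle arises; the proof is a one-line corollary once the SY verification has been cited. If any step deserves care, it is the verification of SY condition (C) for the OU kernel, but this is standard and already quoted as known in the excerpt.
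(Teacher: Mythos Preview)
Your proposal is correct and matches the paper's approach exactly: the paper treats this corollary as an immediate consequence of the quoted Sacks--Ylvisaker embedding theorem together with the stated fact that the Ornstein--Uhlenbeck kernel satisfies the SY conditions of order $r=0$, so that $\mb W_0^{2,1}=\{f\in\mb W^{2,1}[0,1]:f(0)=f(1)=0\}\hookrightarrow\mb H(K)$ continuously. No additional argument is given or needed beyond what you outlined.
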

This should be compared to the exact description of $\mb H(K)$, the kernel of Ornstein-Uhlenbeck process, which is known to be 
\begin{align*}
\mb H(K)=\Big\{w\in L_2[0,1]: &\ \|w\|^2_{K}=\frac{w^2(0)+w^2(1)}{2}\\
&
+\frac 1 4 \int_0^1 w^2(t)dt+\int_0^1 \l(w'(t)\r)^2 dt<\infty\Big\}.
\end{align*}

\subsection{Discrete Sobolev norms and the Brownian motion}
\label{released_brownian}

In this example, we look back at the case when the design process is a Brownian motion 
(it was already discussed in Section \ref{approx_error}). However, this time we make 
a more realistic assumption that the design processes are observed only at discrete 
points. 

Assume that $\l\{X(t), \ t\in [0,1]\r\}$ is a standard Brownian motion released at zero, that is 
$X(t)=Z+W(t)$, where $Z$ is a standard normal random variable independent of $W$.  
Suppose that we observe $n$ iid copies of $X$, $X_1,\ldots, X_n$ on a grid $\mb T=\m G_N=\l\{0\leq  t_1<\ldots<t_N\leq 1\r\}$. Let $\mu$ be a counting measure on $\mb T.$
If, for example, the grid is uniform with mesh size $1/N$ for some large $N$, with high probability the adjacent columns of the design matrix 
$\bigl(X_i(t_j)\bigr)_{i\leq n, j\leq N}$ will be almost collinear. 
To the best of our knowledge, direct analysis based on the restricted eigenvalue type conditions \cite{bickel2009simultaneous} provides unsatisfactory bounds in such cases. 
On the other hand, results that hold true without any assumptions on the design (e.g., \cite{koltchinskii2011nuclear}, first statement of Theorem 1) only guarantee ``slow'' rates of convergence (of order $n^{-1/2}$, where $n$ is the size of a training data set).

The covariance function $k(\cdot,\cdot)$ of $X$ satisfies 
$
k(t_i,t_j)=1+t_i\wedge t_j.
$ 
Let $K=\Bigl(1+t_i\wedge t_j\Bigr)_{i,j=1}^N$ be the associated Gram matrix and let $K=LL^T$ be its Cholesky factorization. 
Note that 
$$
L=\begin{pmatrix}
\sqrt{1+t_1} &0 &\ldots & 0\\ 
\sqrt{1+t_1} & \sqrt{t_2-t_1} & 0&\vdots \\
\vdots & &  \ddots & 0\\
\sqrt{1+t_1} & \sqrt{t_2- t_1} & \ldots & \sqrt{t_N-t_{N-1}}
\end{pmatrix}
$$
By Factorization theorem (or a straightforward argument), for any $w\in \mb R^N$, $\|w\|_K=\l\|L^{-1}w\r\|_2$. 
If $w=Lv$, then direct computation shows
\begin{align}
\label{discrete}
&
\l\|v\r\|^2_2=\|w\|^2_K=\frac{w_1^2}{1+t_1}+\sum_{j=2}^N \frac{(w_j-w_{j-1})^2}{t_j-t_{j-1}}.
\end{align}
The latter expression can be seen as a discrete analogue of the Sobolev norm. 
For example, let the grid $\m G_N$ be uniform, that is, $t_j=\frac{j-1}{N}, \ j=1\ldots N$, and let $\lambda: \m G_N\mapsto \mb R$ be sparse in the following sense: 
${\supp(\lambda)}=\l\{t_{i_1}<t_{i_2}<\ldots<t_{i_s}\r\}$ so that $\l|\supp(\lambda)\r|=s$ and 
$$
\min\limits_{2\leq k\leq s}|t_{i_k}-t_{i_{k-1}}|= \sigma\gg\frac 1 N.
$$
It is clear from (\ref{discrete}) that 
$\inf\limits_{w\in \partial \|\lambda\|_1}\|w\|_K\leq C\sqrt{\frac{s}{\sigma}}$ for some absolute constant $C>0$ (e.g., take a vector whose entries linearly interpolate the sign pattern of $\lambda$) while the trivial choice $w(t_j)=\sign(\lambda(t_j))$ leads to $\|w\|_K\geq c\sqrt{Ns}$. \\

Note also that if $w_j:=w(t_j), j=1,\dots, N$ for a smooth function $w\in {\mb W}^{2,1}([0,1])$
(with a slight abuse of notation, we write $w$ both for the vector in $\mb R^N$ and 
for the function),  
then, by Cauchy-Schwarz inequality, 
$$
\frac{(w_j-w_{j-1})^2}{t_j-t_{j-1}}=\frac{(w(t_j)-w(t_{j-1}))^2}{t_j-t_{j-1}}
=\frac{\biggl(\int_{t_{j-1}}^{t_j}w'(s)ds\biggr)^2}{t_j-t_{j-1}}
\leq \int_{t_{j-1}}^{t_j}|w'(s)|^2 ds.
$$
It immediately implies that 
$
\|w\|_K^2\leq |w(0)|^2 + \int_{0}^1 |w'(s)|^2 ds,
$
so the discrete Sobolev norm needed to control the alignment coefficient 
is bounded from above by its continuous counterpart. 
As a matter of fact, we have that 
$$
\|w\|_K^2\leq \inf_{\tilde w}\biggl[{|\tilde w(0)|^2 + \int_{0}^1 |\tilde w'(s)|^2 ds}\biggr],
$$
where the infimum is taken over all functions $\tilde w\in {\mb W}^{2,1}([0,1])$ such that 
$\tilde w(t_j)=w_j, j=1,\dots, N.$

These observations allow one to characterize the prediction performance of LASSO estimator in terms of $s$ and $\sigma$, in particular, rates faster than $n^{-1/2}$ can be deduced from Theorem \ref{th:main_AA}.

\subsection{Stationary processes}
\label{stationary}

In this subsection, we derive Sobolev norm bounds on alignment coefficient in the case 
when $X$ is a stationary process (or a stationary random field).

Let $\mb T\subset \mb R^d$ be a bounded open set and let $\mu$ be the Lebesgue measure. 
Consider a stationary random field $\l\{X(t), \ t\in \mb R^d \r\}$ with continuos covariance function $k:$
$$
k(t-s)=\Cov(X(t),X(s)), \ t,s\in \mb R^d.
$$ 
By Bochner's theorem, there exists a finite Borel measure $\nu$ such that 
\begin{align}
\label{bochner}
k(t)=\int_{\mb R^d} e^{i \langle t,u\rangle}\nu(du),\  t\in \mb R^d 
\end{align}
called the \textit{spectral measure} of $X.$ In what follows, we assume that 
$\nu$ is absolutely continuous with spectral density $v:\mb R^d\mapsto \mb R_+.$

\begin{proposition}
\label{stationary_sobolev}
Suppose that 
\begin{equation}
\label{spectral_lower}
v(t)\geq \frac{c}{(1+|t|^2)^{p}}, t\in \mb R^d
\end{equation}
for some $p>d/2$ and $c>0.$ For $w$ defined on $\mb T$, let 
$$
\Omega(w):=\l\{\tilde w:\mb R^d\mapsto \mb R : \ \tilde w(t)=w(t), \ t\in \mb T\r\}.
$$  
Then 
$$
\|w\|_K\leq C\inf\limits_{\tilde w\in \Omega(w)}\|\tilde w\|_{\mb W^{2,p}(\mb R^d)}.
$$
\end{proposition}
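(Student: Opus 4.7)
The plan is to relate the RKHS norm $\|w\|_K$ associated with the restriction of the stationary kernel $k$ to $\mathbb{T}\times\mathbb{T}$ to the RKHS norm of the same kernel viewed on all of $\mathbb{R}^d\times\mathbb{R}^d$, and then to compute the latter in the spectral domain.

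First, I would invoke Aronszajn's restriction theorem: if $\widetilde{\mathbb{H}}(K)$ denotes the RKHS of $k(t-s)$ on $\mathbb{R}^d$, then $\mathbb{H}(K)$ (the RKHS associated with the restricted kernel $k|_{\mathbb{T}\times \mathbb{T}}$, which is the space whose norm is the $\|\cdot\|_K$ defined in \eqref{H_p}) consists precisely of restrictions to $\mathbb{T}$ of functions in $\widetilde{\mathbb{H}}(K)$, and
\begin{equation*}
\|w\|_K = \inf\bigl\{\|\tilde w\|_{\widetilde{\mathbb{H}}(K)} : \tilde w \in \widetilde{\mathbb{H}}(K),\ \tilde w|_{\mathbb{T}} = w\bigr\}.
\end{equation*}
In particular, for every $\tilde w \in \Omega(w) \cap \widetilde{\mathbb{H}}(K)$ one has $\|w\|_K \leq \|\tilde w\|_{\widetilde{\mathbb{H}}(K)}$. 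Translating the paper's definition $\|u\|_K = \sup_{\langle Kv,v\rangle \leq 1}\langle u,v\rangle$ into the Aronszajn form just amounts to recognizing $\|u\|_K$ as the dual norm of $\langle K\cdot,\cdot\rangle^{1/2}$ on $L_2(\mu)$, which agrees with the reproducing kernel norm on the closure of $K(L_2(\mu))$.

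Next I would use Bochner's representation \eqref{bochner} to obtain an explicit spectral formula for the norm on $\widetilde{\mathbb{H}}(K)$. Since $k$ is the Fourier transform of $v$, standard RKHS theory for stationary kernels (see, e.g., Wendland) gives
\begin{equation*}
\widetilde{\mathbb{H}}(K) = \Bigl\{\tilde w \in L_2(\mathbb{R}^d) : \int_{\mathbb{R}^d} \frac{|\hat{\tilde w}(u)|^2}{v(u)}\,du < \infty\Bigr\},\qquad
\|\tilde w\|_{\widetilde{\mathbb{H}}(K)}^2 = (2\pi)^{-d}\int_{\mathbb{R}^d} \frac{|\hat{\tilde w}(u)|^2}{v(u)}\,du.
\end{equation*}
Now the spectral lower bound \eqref{spectral_lower} yields $1/v(u) \leq c^{-1}(1+|u|^2)^p$, so
\begin{equation*}
\|\tilde w\|_{\widetilde{\mathbb{H}}(K)}^2 \leq \frac{1}{c(2\pi)^d}\int_{\mathbb{R}^d} |\hat{\tilde w}(u)|^2 (1+|u|^2)^p\,du \leq C^2 \|\tilde w\|^2_{\mathbb{W}^{2,p}(\mathbb{R}^d)},
\end{equation*}
where the final inequality uses the equivalence between the Bessel-potential-type norm on the Fourier side and the Sobolev norm $\|\cdot\|_{\mathbb{W}^{2,p}}$ (valid for $p>d/2$; for integer $p$ this is Plancherel, for non-integer $p$ one interprets $\mathbb{W}^{2,p}$ as the fractional Sobolev space). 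Taking the infimum over $\tilde w \in \Omega(w)$ and combining with the restriction inequality gives the stated bound.

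The routine calculations are all standard. The only step that needs a small amount of care is the first one: justifying that the abstract dual-norm definition of $\|w\|_K$ in \eqref{H_p} coincides with the Aronszajn RKHS norm associated with the restricted kernel, so that the restriction theorem can be applied; once that identification is made, steps two and three are essentially one-line spectral manipulations.
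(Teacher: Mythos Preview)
Your proof is correct and arrives at the same conclusion by essentially the same mechanism, but packaged more abstractly. You invoke Aronszajn's restriction theorem and the spectral RKHS characterization for stationary kernels as black boxes; the paper instead carries out the underlying computation directly: for $u$ supported in $\mathbb{T}$ and any extension $\tilde w\in\Omega(w)$, it writes $\langle w,u\rangle=\langle\tilde w,u\rangle_{L_2(\mathbb{R}^d)}=\langle\hat{\tilde w},\hat u\rangle$, inserts the weight $\sqrt{v}$ and applies Cauchy--Schwarz together with $\Var(f_u(X))=\int|\hat u|^2 v$, yielding $\langle w,u\rangle\leq \|(1+|\cdot|^2)^{p/2}\hat{\tilde w}\|_{L_2}\,\Var(f_u(X))^{1/2}$ directly from the dual definition \eqref{H_p}. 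This is precisely the restriction inequality plus the spectral norm formula, done by hand in three lines; your route cites the same facts as theorems. One small remark: in this paper $\|\cdot\|_{\mathbb{W}^{2,p}(\mathbb{R}^d)}$ is \emph{defined} as the Bessel-potential norm $\int(1+|t|^2)^p|\hat f|^2\,dt$ (see the appendix), so your last ``equivalence'' is in fact an identity up to Fourier normalization, and the step you flag as needing care---identifying the dual norm with the RKHS norm---is already asserted in the paper immediately after \eqref{H_p}.
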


Note that condition (\ref{spectral_lower}) could not hold for $p\leq d/2$
since this would contradict integrability of the spectral density $v$.

\begin{proof}
Given $u\in L_1(\mb R^d)\cap L_2(\mb R^d)$, let $\hat u$ be its Fourier transform. 
Observe that
$$
\Var(f_u(X))=\iint k(t-s)u(t)u(s)dtds=\int |\hat u(z)|^2 v(z)dz.
$$
For $u$ supported in $\mb T$ and $\tilde w\in \Omega(w)$, this gives
\begin{align*}
&
\dotp{w}{u}_{L_2(\mb T,\mu)}=\dotp{\tilde w}{u}_{L_2(\mb R^d)}=\dotp{\widehat{\tilde w}}{\hat u}_{L_2(\mb R^d)}=\\
&
=\dotp{\frac{\widehat{\tilde w}}{\sqrt v}}{\hat u \sqrt{v}}\leq C\l\|(1+|x|^2)^{p/2} \widehat{\tilde w}\r\|_{L_2(\mb R^d)}\Var (f_u(X)),
\end{align*}
hence $\|w\|_K\leq C\l\|(1+|x|^2)^{p/2} \widehat{\tilde w} \r\|_{L_2(\mb R^d)}$. 
It remains to note that by the properties of Fourier transform
$$
\l\|(1+|x|^2)^{p/2}\widehat{\tilde w}\r\|_{L_2(\mb R^d)}\leq C\|\tilde w\|_{\mb W^{2,p}(\mb R^d)}.
$$

\end{proof} 

We now turn to the case of stationary processes observed at discrete points. 
Let $\l\{X(t), \ t\in\mb R^d\r\}$ be a (weakly) stationary random field, 
and let $X_1,\ldots,X_n$ be i.i.d. copies of $X$ observed on the grid 
$\mb T=\m G_N=\l\{t_j=\frac{2\pi}{N}j, \ j\in \{1,\ldots,N\}^d\r\}$ for some even $N$.
In this case, functions on $\mb T$ can be identified with vectors in ${\mb R}^{N^d}.$ 
We also assume 
that $\mu$ is the counting measure on $\mb T.$

\begin{assumption}
Suppose the following condition on the spectral density $v$ of the process $X$ holds:
\begin{align}
\label{density}
&
c_1\l(\frac{1}{1+|t|^2}\r)^p\leq v(t)\leq c_2\l(\frac{1}{1+|t|^2}\r)^p \text{ for some } p>\frac d 2
\end{align} 
where $0<c_1\leq c_2<\infty$. 
\end{assumption}
\begin{proposition}
\label{interpolation}
Given $\vec w=(w_1,\ldots, w_{N^d})^T\in \partial \|\lambda\|_1$, let 
$$
\Omega_N(\vec w)=\l\{w\in \mb W^{2,p}(\mb R^d): \ w(2\pi j/N)=w_j, \ j\in \mb Z^d\r\},
$$ 
where $w_j$ are defined arbitrarily for $j\notin\l\{1,\ldots,N\r\}^d$.
Under the above-stated assumptions,
$$
\l\|\vec w\r\|_K\leq C\inf_{w\in \Omega_N(\vec w)}\|w\|_{\mb W^{2,p}(\mb R^d)}.
$$
\end{proposition}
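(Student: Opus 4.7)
The plan is to mirror the proof of Proposition \ref{stationary_sobolev}, but replacing the continuous Fourier transform with a periodization (folding) argument that bridges the continuous Fourier transform of the smooth interpolant and the ``discrete'' trigonometric Fourier transform of the test function on the grid. The two are reconciled through the folded spectral density.

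First I would invoke the spectral representation $X(t)=\int_{\mb R^d} e^{i\langle t,z\rangle}Z(dz)$, where $Z$ is the Gaussian orthogonal measure with intensity $v$. For any $u=(u_j)_{j\in\{1,\dots,N\}^d}$, set $\tilde u_N(z):=\sum_j u_j e^{i\langle t_j,z\rangle}$. Since $t_j\in \frac{2\pi}{N}\mb Z^d$, the trigonometric polynomial $\tilde u_N$ is $N$-periodic in every coordinate, so
$$
\Var(f_u(X))=\int_{\mb R^d}|\tilde u_N(z)|^2 v(z)\,dz=\int_{[0,N]^d}|\tilde u_N(z)|^2 V(z)\,dz,\qquad V(z):=\sum_{k\in\mb Z^d}v(z+Nk).
$$
For $w\in\Omega_N(\vec w)$ the Sobolev embedding $\mb W^{2,p}(\mb R^d)\hookrightarrow C(\mb R^d)$ (which uses $p>d/2$), together with the integrability of $(1+|z|^2)^{-p}$, gives $\hat w\in L_1\cap L_2(\mb R^d)$. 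Applying Fourier inversion pointwise at $t_j$ and slicing the integral into fundamental domains of $\tilde u_N$,
$$
\sum_j w(t_j)\,u_j=(2\pi)^{-d}\int_{[0,N]^d}F(z)\,\tilde u_N(z)\,dz,\qquad F(z):=\sum_{k\in\mb Z^d}\hat w(z+Nk).
$$
A single Cauchy--Schwarz inequality on $[0,N]^d$, weighted by $V$, then produces
$$
\Bigl|\sum_j w(t_j)\,u_j\Bigr|^2\leq (2\pi)^{-2d}\int_{[0,N]^d}\frac{|F(z)|^2}{V(z)}\,dz\cdot \Var(f_u(X)).
$$

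The main step, where the Sobolev norm is actually paid, is to show that $\int_{[0,N]^d}|F|^2/V\leq C\|w\|_{\mb W^{2,p}(\mb R^d)}^2$. Here I would apply a second Cauchy--Schwarz, this time weighted by $v$, to the sum defining $F$:
$$
|F(z)|^2\leq V(z)\sum_{k}\frac{|\hat w(z+Nk)|^2}{v(z+Nk)},
$$
so that, after dividing by $V$ and unfolding the sum back to an integral over $\mb R^d$,
$$
\int_{[0,N]^d}\frac{|F(z)|^2}{V(z)}\,dz\leq \int_{\mb R^d}\frac{|\hat w(z)|^2}{v(z)}\,dz\leq c_1^{-1}\int_{\mb R^d}|\hat w(z)|^2(1+|z|^2)^p\,dz\leq C\|w\|_{\mb W^{2,p}(\mb R^d)}^2,
$$
invoking the lower bound in (\ref{density}) and the Fourier characterization of the Sobolev norm. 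Taking the supremum over $u$ with $\Var(f_u(X))\leq 1$ and then the infimum over $w\in\Omega_N(\vec w)$ yields the proposition.

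The delicate part is the pair of Cauchy--Schwarz steps: the first must be weighted by $V$ so that it produces the variance $\Var(f_u(X))$, while the second must be weighted by $v$ so that the periodized object $F$ collapses back, after unfolding, to an integral of $|\hat w|^2/v$ on the whole of $\mb R^d$. This is what makes the constant $C$ independent of the grid size $N$, and it is the reason the upper bound in (\ref{density}) is not needed --- only the lower bound enters, exactly as in the continuous analogue of Proposition \ref{stationary_sobolev}.
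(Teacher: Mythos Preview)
Your proof is correct and is genuinely different from the paper's. The paper first shows (using \emph{both} bounds in (\ref{density})) that the spectral integral $\int_{\mb R}|\tilde u_N|^2 v$ is comparable to its restriction to one period, then applies Cauchy--Schwarz on that period to bound $\|\vec w\|_K^2$ by a discrete Sobolev-type quantity, identifies this as $\|w_N\|_{\mb W^{2,p}}^2$ for the sinc interpolant $w_N$, and finally invokes an external spectral-approximation theorem to pass from $w_N$ to an arbitrary interpolant $w$.

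Your route is more direct: by folding $\int_{\mb R^d}\hat w\,\tilde u_N$ onto a single period you get the periodized symbol $F$, and the two weighted Cauchy--Schwarz steps (first by $V$ to recover $\Var(f_u(X))$, then by $v$ inside $F$ to unfold back to $\int|\hat w|^2/v$) bypass both the sinc interpolation and the approximation theorem. This has two concrete advantages: it uses only the \emph{lower} bound in (\ref{density}), in line with the continuous Proposition~\ref{stationary_sobolev}, and it works in $\mb R^d$ without any reduction to $d=1$. The paper's approach, on the other hand, makes the role of the band-limited interpolant explicit, which is informative but not needed for the bound itself.
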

Proof is outlined in section \ref{pf:interpolation}. 
Implications of this result for the risk of $\hat\lambda_\eps$ are presented in Theorem \ref{th:main_stationary_B} below. 
In particular, we show that rates faster than $n^{-1/2}$ are often possible. 

\subsection{Sparse multiple linear models and weakly correlated partitions}
\label{sec:multiple}

In this section, we assume that 
\begin{align}
\label{l2}
&
Y=a+\sum_{j=1}^N \int\limits_{\mb T_j} X^{(j)}(t_j)d\Lambda_j(t_j) + \xi,
\end{align}
where $a\in \mb R,$ $\mb T_1,\dots, \mb T_N$ are measurable spaces equipped with 
$\sigma$-algebras ${\mathcal B}_1,\dots, {\mathcal B}_N$ and finite measures 
$\mu_1,\dots, \mu_N,$ $X^{(1)},\dots, X^{(N)}$ are subgaussian stochastic processes on 
$\mb T_1,\dots, \mb T_N,$ 
$\Lambda_1,\dots,\Lambda_N$ are signed measures on spaces $\mb T_1,\dots, \mb T_N$ with bounded total variations, and $\xi$ is a zero-mean random variable independent 
of $X^{(1)},\dots, X^{(N)}.$ Suppose ${\mathcal B}_j={\mathcal B}_{{\mb T}_j}$
(Borel $\sigma$-algebra in the semimetric space $(\mb T_j, d_{X^{(j)}})$). 
Without loss of generality, we can assume that the sets $\mb T_1,\dots,\mb T_N$ form a partition 
of the space $\mb T:=\bigcup\limits_{j=1}^N \mb T_j$ equipped with $\sigma$-algebra ${\mathcal B}$
and measure $\mu$ such that the measures $\mu_j$ are restrictions of $\mu$ on $\mb T_j.$ Similarly, 
signed measures $\Lambda_j$ become restrictions on $\mb T_j$ of a signed measure $\Lambda_{\ast}$
on $(T,{\mathcal B}).$ We will set $X(t):=X^{(j)}(t), \ t\in \mb T_j, j=1,\dots, N$
and, finally, we can assume that ${\mathcal B}={\mathcal B}_{\mb T}$ is the Borel $\sigma$-algebra in the semimetric space $({\mb T},d_X).$ 

We are interested in the situation when the processes $\l\{X^{(j)}(t), \ t\in \mb T_j\r\}, \ j=1,\ldots, N$ are {\it weakly correlated} (in particular, they can be independent). 
The number of predictors $N$ can be very large, but $Y$ might depend only on $X^{(j)}(t),t\in \mb T_j \ j\in J\subset \l\{1,\ldots,N\r\},$ where $\card(J)\ll N$, 
whence $\card(J)$ naturally represents the degree of sparsity of the problem.
Another interpretation of the model is to assume that the domain $\mb T$ of the stochastic process 
$X$ can be partitioned in disjoint sets $\mb T_j$ so that $\{X(t):t\in \mb T_j\}, j=1,\dots, N$ are 
``weakly correlated'', but only few of the elements of partition are correlated with the response 
variable $Y.$ It is important to emphasize that the results of the following sections concerning estimator (\ref{empirical}) are \textit{adaptive} with respect to the partitions, in particular, we do not need to know the ``weakly correlated'' parts in advance, but the estimator adapts to such a structure (given that it exists).
 
Let $K_j$ be the covariance operator of $X^{(j)}$ and $k_j$ its kernel (the covariance 
function of $X^{(j)}$).  
Our next goal is to understand how to control the alignment coefficient $\fr a(\cdot)$ associated with the process $X$ in terms of the RKHS-norms $\|\cdot\|_{K_j}, \ j=1,\ldots,N$. 

Without loss of generality, assume that $X^{(j)}, \ j=1,\ldots, N$, are centered. 
Given $u\in L_1(\mb T,\mu)$, it can be represented as  
$u=\sum\limits_{j=1}^N u_j$ with $\supp(u_j)\subseteq \mb T_j$. 
Given $\gamma>0$, define  
\index{as@$C_{\gamma,J}$}
$$
C_{\gamma,J}:=
\Bigl\{u\in L_1(\mb T,\mu): 
\sum_{j\not\in J}\|f_{u_j}\|_{L_2(\Pi)}\leq 
\gamma \sum_{j\in J}\|f_{u_j}\|_{L_2(\Pi)}
\Bigr\}
$$ 
and
\index{at@$\beta_2^{(\gamma)}(J)$}
$$
\beta_2^{(\gamma)}(J):=\inf\biggl\{\beta>0: \sum_{j\in J}\|f_{u_j}\|_{L_2(\Pi)}^2
\leq \beta^2 \|f_u\|_{L_2(\Pi)}^2, u\in C_{\gamma,J}\biggr\}.
$$
Clearly, if $X^{(j)}(t), t\in {\mb T}\ j=1\ldots N$ are uncorrelated, then $\beta_2^{(\gamma)}(J)=1$ for any nonempty $J\subseteq \l\{1,\ldots,N\r\}$.  
More generally, we have the following result:

\begin{proposition}
\label{bound_beta}
For all $J\subset \l\{1,\dots, N\r\}$ and all $w=\sum\limits_{j\in J}w_j$
such that  $\supp(w_j)\subseteq \mb T_j$ and $\|w_j\|_{K_j}<\infty$, we have
\begin{align}
\label{gamma_02}
\fr a^{(b)}(w)\leq \beta_2^{(\gamma)}(J)\Bigl(\sum_{j\in J}\|w_j\|_{K_j}^2\Bigr)^{1/2},
\end{align}
where $\gamma=b\max\limits_{1\leq j\leq N}\|k_j\|_{\infty}^{1/2}\max_{j\in J}\|w_j\|_{K_j}$.
\end{proposition}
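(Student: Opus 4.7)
The plan is to fix an arbitrary $u\in C_w^{(b)}$ with $\|f_u\|_{L_2(\Pi)}=1$, decompose it along the partition as $u=\sum_{j=1}^N u_j$ with $\supp(u_j)\subseteq \mb T_j$, and bound $\langle w,u\rangle$ by two successive Cauchy--Schwarz applications: one inside each block (using the RKHS duality associated with $K_j$), and one across the ``active'' blocks $j\in J$. Since $\supp(w_j)\subseteq \mb T_j$, we have $\langle w,u\rangle=\sum_{j\in J}\langle w_j,u_j\rangle$, and the standard duality from (\ref{H_p}) together with $\langle K_j u_j,u_j\rangle=\mb E\bigl(f_{u_j}(X^{(j)})\bigr)^2=\|f_{u_j}\|_{L_2(\Pi)}^2$ (since $X^{(j)}$ is centered) yields
$$
\langle w,u\rangle\;\leq\;\Bigl(\sum_{j\in J}\|w_j\|_{K_j}^2\Bigr)^{1/2}\Bigl(\sum_{j\in J}\|f_{u_j}\|_{L_2(\Pi)}^2\Bigr)^{1/2}.
$$

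The second, and main, task is to show that the membership $u\in C_w^{(b)}$ forces $u\in C_{\gamma,J}$ with $\gamma=b\,\max_{1\leq j\leq N}\|k_j\|_\infty^{1/2}\,\max_{j\in J}\|w_j\|_{K_j}$, because then the definition of $\beta_2^{(\gamma)}(J)$ will allow one to control $\sum_{j\in J}\|f_{u_j}\|_{L_2(\Pi)}^2\leq \beta_2^{(\gamma)}(J)^2\|f_u\|_{L_2(\Pi)}^2=\beta_2^{(\gamma)}(J)^2$, and combining with the display above would finish the proof after taking the supremum over $u$.

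For the cone inclusion I would argue as follows. Since $\supp(w)\subseteq \bigcup_{j\in J}\mb T_j$, the set $\mb T\setminus \mb T_w$ contains $\bigcup_{j\notin J}\mb T_j$, so
$$
\sum_{j\notin J}\|u_j\|_1\;\leq\;\int_{\mb T\setminus \mb T_w}|u|\,d\mu\;\leq\;b\,\langle w,u\rangle.
$$
The elementary estimate $\|f_{u_j}\|_{L_2(\Pi)}^2=\int\!\!\int k_j(s,t)u_j(s)u_j(t)\,d\mu(s)d\mu(t)\leq \|k_j\|_\infty\|u_j\|_1^2$ then gives $\sum_{j\notin J}\|f_{u_j}\|_{L_2(\Pi)}\leq \max_j\|k_j\|_\infty^{1/2}\cdot b\,\langle w,u\rangle$. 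On the other hand, a single-sided RKHS duality application gives $\langle w,u\rangle\leq \max_{j\in J}\|w_j\|_{K_j}\sum_{j\in J}\|f_{u_j}\|_{L_2(\Pi)}$. Chaining these two bounds produces precisely the defining inequality of $C_{\gamma,J}$ with $\gamma$ as claimed.

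The main obstacle, as is often the case for alignment-type arguments, is not any single inequality but ensuring that the constant $\gamma$ that emerges from the cone-translation step is the one advertised in the statement — in particular, that one uses the $\max_{j\in J}\|w_j\|_{K_j}$ (not a sum) to control $\langle w,u\rangle$ in this auxiliary estimate, so that $\gamma$ does not inflate with $|J|$. Once this chain of inequalities is set up correctly, the proof assembles immediately by substituting $\sum_{j\in J}\|f_{u_j}\|_{L_2(\Pi)}^2\leq \beta_2^{(\gamma)}(J)^2$ into the two-level Cauchy--Schwarz bound and taking the supremum defining $\fr a^{(b)}(w)$.
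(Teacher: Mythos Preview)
Your proposal is correct and follows essentially the same route as the paper's proof: establish the cone inclusion $C_w^{(b)}\subseteq C_{\gamma,J}$ via the chain $\sum_{j\notin J}\|f_{u_j}\|_{L_2(\Pi)}\leq \max_j\|k_j\|_\infty^{1/2}\sum_{j\notin J}\|u_j\|_1\leq b\max_j\|k_j\|_\infty^{1/2}\langle w,u\rangle\leq \gamma\sum_{j\in J}\|f_{u_j}\|_{L_2(\Pi)}$, then apply Cauchy--Schwarz blockwise together with the definition of $\beta_2^{(\gamma)}(J)$. Your emphasis on using $\max_{j\in J}\|w_j\|_{K_j}$ rather than the sum in the auxiliary bound on $\langle w,u\rangle$ is exactly the point that keeps $\gamma$ independent of $|J|$, and the paper makes the same move.
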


\begin{proof}
Note that since $w=\sum\limits_{j\in J}w_j$ with ${\rm supp}(w_j)\subseteq {\mathbb T}_j,$
$$
\mb T_w\subset \bigcup_{j\in J}{\mathbb T}_j \text{ and } {\mathbb T}\setminus {\mathbb T}_w
\supset \bigcup_{j\not\in J}{\mathbb T}_j.
$$
For all $u\in C_w^{(b)}$ (defined in (\ref{eq:cone})), we have 
\begin{align*}
\sum_{j\not\in J}\|f_{u_j}\|_{L_2(\Pi)}&\leq 
\max\limits_{1\leq j\leq N}\|k_j\|_{\infty}^{1/2}
\sum_{j\not\in J}\|u_j\|_1\leq \\
&\leq
\max\limits_{1\leq j\leq N}\|k_j\|_{\infty}^{1/2}\int\limits_{{\mathbb T}\setminus {\mathbb T}_w}|u|d\mu 
\leq 
b\max\limits_{1\leq j\leq N}\|k_j\|_{\infty}^{1/2}\dotp{w}{u}.
\end{align*}
Since 
$$
\dotp{w}{u}=
\sum_{j\in J}
\dotp{w_j}{u_j}
\leq 
\sum_{j\in J}\l\|w_j\r\|_{K_j} \l\|f_{u_j}\r\|_{L_2(\Pi)}
\leq 
\max_{j\in J}\l\|w_j\r\|_{K_j} \sum_{j\in J}\l\|f_{u_j}\r\|_{L_2(\Pi)},
$$
we can conclude that 
$$
\sum_{j\not\in J}\l\|f_{u_j}\r\|_{L_2(\Pi)}\leq 
b\max\limits_{1\leq j\leq N}\|k_j\|_{\infty}^{1/2}
\max_{j\in J}\l\|w_j\r\|_{K_j} \sum_{j\in J}\l\|f_{u_j}\r\|_{L_2(\Pi)}.
$$
We proved that $C_w^{(b)}\subseteq C_{\gamma,J}$ for 
$\gamma:=b\max\limits_{1\leq j\leq N}\|k_j\|_{\infty}^{1/2}\max\limits_{j\in J}\|w_j\|_{K_j}.$ 
For all $u\in C_w^{(b)}\subseteq C_{\gamma,J},$ we have 
\begin{align*}
\dotp{w}{u}&=\sum_{j\in J}
\dotp{w_j}{u_j}\leq 
\Bigl(\sum_{j\in J}\|w_j\|_{K_j}^2\Bigr)^{1/2}
\Bigl(\sum_{j\in J}\|f_{u_j}\|_{L_2(\Pi)}\Bigr)^{1/2}\leq \\
&
\leq \beta_2^{(\gamma)}(J)\Bigl(\sum_{j\in J}\|w_j\|_{K_j}^2\Bigr)^{1/2}
\l\|f_u\r\|_{L_2(\Pi)},
\end{align*}
implying that 
$$
\fr a^{(b)}(w)\leq \beta_2^{(\gamma)}(J)\Bigl(\sum_{j\in J}\|w_j\|_{K_j}^2\Bigr)^{1/2}
$$
with 
$\gamma:=b\max\limits_{1\leq j\leq N}\|k_j\|_{\infty}^{1/2}\max_{j\in J}\|w_j\|_{K_j}.$

\end{proof}

Next, we will relate $\beta_2^{(\gamma)}(J)$ to the size of \textit{restricted isometry} \cite{candes2006stable} constants associated with partition 
$\mb T_1,\ldots,\mb T_N$. 
\index{au@Restricted isometry constant $\delta_d$}
Given an integer $d\geq 1$, we define the restricted isometry constant $\delta_d$ as the smallest $\delta>0$ with the following property:
for any $J\subset \l\{1,\ldots, N\r\}$ with $\card(J)=d$, any $u_j, \ j\in J$ such that  $\supp(u_j)\subseteq \mb T_j$ and $\Var(f_{u_j}(X))=1$, 
the spectrum of the $d\times d$ matrix $\l(\Cov(f_{u_i}(X),f_{u_j}(X))\r)_{i,j\in J}$ belongs to $[1-\delta,1+\delta]$. 

\begin{proposition}
\label{prop:rip}
The following inequality holds for all $J\subset \l\{1,\ldots, N\r\}$ with 
$\card(J)\leq d:$
$$
\beta_2^{(\gamma)}(J)\leq \frac{1+\delta_{2d}}{(1-\delta_{2d})^2-\gamma\delta_{3d}}.
$$ 
\end{proposition}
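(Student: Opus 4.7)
The plan is to adapt the classical Candès--Tao argument (that RIP implies a restricted eigenvalue-type inequality) to the partitioned setting with pieces $\mb T_1,\ldots,\mb T_N$. Fix $u\in C_{\gamma,J}$, decompose $u=\sum_{j=1}^N u_j$ with $\supp(u_j)\subseteq \mb T_j$, and set $\alpha_j := \|f_{u_j}\|_{L_2(\Pi)}$. The goal is to bound $A^2 := \sum_{j\in J}\alpha_j^2$ by a constant times $\|f_u\|_{L_2(\Pi)}^2$.

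First, I would sort the indices in $J^c$ in decreasing order of $\alpha_j$ and split them into consecutive blocks $T_1, T_2,\ldots$ of size at most $d$, writing $T_0:=J$ and $v:=u_{T_0\cup T_1}$. Since $v$ is supported in at most $2d$ pieces of the partition, the definition of $\delta_{2d}$ applied to the normalized generators $u_j/\alpha_j$ gives the two-sided bound $(1-\delta_{2d})\|v\|_\alpha^2 \leq \|f_v\|_{L_2(\Pi)}^2 \leq (1+\delta_{2d})\|v\|_\alpha^2$, where $\|v\|_\alpha^2 := \sum_{j\in T_0\cup T_1}\alpha_j^2 \geq A^2$. The same setup, via a parallelogram identity argument on unit-variance combinations, shows that for disjoint $S,S'$ with $|S\cup S'|\leq 3d$ one has $|\langle f_{u_S},f_{u_{S'}}\rangle_{L_2(\Pi)}|\leq \delta_{3d}\bigl(\sum_{j\in S}\alpha_j^2\bigr)^{1/2}\bigl(\sum_{j\in S'}\alpha_j^2\bigr)^{1/2}$.

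Next, the sorting procedure produces the standard block inequality $\bigl(\sum_{j\in T_k}\alpha_j^2\bigr)^{1/2}\leq d^{-1/2}\sum_{i\in T_{k-1}}\alpha_i$ for $k\geq 2$. Summing in $k$, applying the cone condition $\sum_{j\notin J}\alpha_j\leq \gamma\sum_{j\in J}\alpha_j$, and bounding the right-hand side by Cauchy--Schwarz on $J$ yields $\sum_{k\geq 2}\bigl(\sum_{j\in T_k}\alpha_j^2\bigr)^{1/2}\leq \gamma A$. Combining this with the $\delta_{3d}$ off-diagonal bound (using $|T_0\cup T_1|+|T_k|\leq 3d$) gives $\sum_{k\geq 2}|\langle f_v, f_{u_{T_k}}\rangle|\leq \delta_{3d}\gamma A\,\|v\|_\alpha$.

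Finally, I would expand $\|f_v\|^2 = \langle f_v, f_u\rangle - \sum_{k\geq 2}\langle f_v, f_{u_{T_k}}\rangle$, bound the first term by Cauchy--Schwarz and the $\|f_v\|\leq (1+\delta_{2d})^{1/2}\|v\|_\alpha$ half of RIP, and bound the tail by the previous display. Combining with the lower RIP bound $\|f_v\|^2\geq (1-\delta_{2d})\|v\|_\alpha^2$ and dividing by $\|v\|_\alpha$ yields an inequality of the form $(1-\delta_{2d})\|v\|_\alpha \leq (1+\delta_{2d})^{1/2}\|f_u\| + \delta_{3d}\gamma A$; using $A\leq \|v\|_\alpha$ and isolating $A$ gives the desired bound on $\beta_2^{(\gamma)}(J)$. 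The main obstacle is the last algebraic step: coupling the inequalities among $A$, $\|v\|_\alpha$, $\|f_v\|$, and $\|f_u\|$ to produce exactly the constant $(1+\delta_{2d})/\bigl((1-\delta_{2d})^2-\gamma\delta_{3d}\bigr)$ requires care in where one pays a factor of $(1-\delta_{2d})^{1/2}$ versus $(1-\delta_{2d})$, and different orderings of Cauchy--Schwarz and RIP produce several superficially inequivalent (but comparable) constants; matching the stated form is a matter of choosing the bookkeeping accordingly.
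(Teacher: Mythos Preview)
Your proposal is correct and follows essentially the same Cand\`es--Tao block-decomposition argument as the paper: sort $J^c$ by $\alpha_j=\|f_{u_j}\|_{L_2(\Pi)}$, split into blocks of size $d$, use the cone condition to bound the $\ell_2$ tail by $\gamma A$, and combine the two-sided RIP on $T_0\cup T_1$ with the $\delta_{3d}$ off-diagonal bound. The only technical difference is that the paper works with the orthogonal projection $P_{J_0\cup J_1}$ (bounding $\|f_u\|\geq\|P_{J_0\cup J_1}f_u\|$ and then peeling off the tail blocks), whereas you use the inner-product expansion $\|f_v\|^2=\langle f_v,f_u\rangle-\sum_{k\geq2}\langle f_v,f_{u_{T_k}}\rangle$; these are interchangeable variants, and indeed the paper's own computation lands on $(1-\delta_{2d})/((1-\delta_{2d})^2-\gamma\delta_{3d})$ rather than the stated numerator $1+\delta_{2d}$, so your caveat about the exact constant is well placed.
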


In particular, it means that $\beta_2^{(\gamma)}$ can be bounded by a constant as soon as $\delta_{3d}< \frac{1}{2+\gamma}$.

\begin{proof}
The argument is similar to Lemma 7.2 in \cite{Koltchinskii2011Oracle-inequali00}, the details are included in Appendix \ref{sec:rip} for the reader's convenience.

\end{proof}

\section{Oracle inequalities and weakly correlated partitions}
\label{weakly_corr}

First, we will state a corollary of Theorem \ref{th:main_AA} concerning 
the model of weakly correlated partitions discussed in Section \ref{examples}. 
Let $\Delta:=\{\mb T_1,\dots, \mb T_N\}$ be a partition of the parameter space 
$\mb T$ into $N\geq 1$ measurable disjoint sets. 
\index{aua@$\mathcal T$}
Let ${\mathcal T}$ be the set 
of all such partitions. 
Let $X^{(j)}$ denote the restriction 
of stochastic process $X$ to the set $\mb T_j$ and let $K_j$ be the covariance 
operator of the process $X_j$ and $k_j$ be its covariance function. 
Consider an oracle $\lambda\in L_1(\mu)$ and denote 
\index{av@$J_\lambda, \ N(\lambda)$}
$$
J_{\lambda}:= \{j=1,\dots, N: \mb T_j \cap {\rm supp}(\lambda)\neq \emptyset\}.
$$
Also, denote 
$
N(\lambda):={\rm card}(J_{\lambda}).
$
Usually, we assume that $N$ is very large and $N(\lambda)$ is much smaller than $N,$
so, $N(\lambda)$ plays the role of ``sparsity parameter'' in this framework.
Let $w=\sum\limits_{j\in J_{\lambda}}w_j\in \partial \|\lambda\|_1$ be a subgradient 
such that ${\rm supp}(w_j)\subset \mb T_j, j\in J_{\lambda}.$ 
\index{aw@${\mathcal W}_{\lambda,\Delta}$}
In what follows, denote ${\mathcal W}_{\lambda,\Delta}$ the set 
of all such subgradients $w.$
Recall the definition of the quantity $\beta_2^{(\gamma)}(J)$ (Section \ref{examples})
and denote 
$$
\beta(w,\lambda):= \beta_2^{(\gamma)}(J_{\lambda}),\ \ 
\gamma:=16\max\limits_{1\leq j\leq N}\|k_j\|_{\infty}^{1/2}\max_{j\in J_{\lambda}}\|w_j\|_{K_j}.
$$
Proposition \ref{bound_beta} implies 
that 
\begin{align}
\label{a_beta}
\fr a(w)\leq \beta(w,\lambda)\Bigl(\sum_{j\in J_{\lambda}}\|w_j\|_{K_j}^2\Bigr)^{1/2}.
\end{align}

We will also need the following quantities that would play the role of  
``approximate dimensions'' of the sets of random variables $X_{{\mb T}_{w_j}}, j\in J$
(local versions of $d(w,\lambda)$):
\index{ax@${\fr d}_j (w,\lambda)$}
\begin{align}
\label{eq:approx_dim_j}
{\fr d}_j (w,\lambda):= \min\Bigl\{m\geq 0:
\frac{m\sigma_Y^2}{n}\geq 
\|\lambda\|_1
\frac{\gamma_2\l(\rho_m(w_j)\r)}{\sqrt{n}}\Bigr\}.
\end{align}

\begin{proposition}
\label{dim_loc}
Under the above notations, the following bound holds:
$$
d(w,\lambda)\leq \sum_{j\in J_{\lambda}}{\fr d}_j (w,\lambda).
$$
\end{proposition}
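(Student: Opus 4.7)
The plan is to set $d^\ast:=\sum_{j\in J_\lambda}{\fr d}_j(w,\lambda)$ and verify that $d^\ast$ satisfies the defining inequality for $d(w,\lambda)$, so that the claim follows from the ``$\min$'' in (\ref{eq:approx_dim}). Concretely, I would need to establish
$$
\frac{d^\ast \sigma_Y^2}{n}\ \geq\ \|\lambda\|_1\,\frac{\gamma_2\bigl(\rho_{d^\ast}(w)\bigr)}{\sqrt n}.
$$

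The key geometric observation is that since $w=\sum_{j\in J_\lambda}w_j$ with $\supp(w_j)\subseteq\mb T_j$ and the $\mb T_j$'s are pairwise disjoint, the ``effective support'' of $w$ decomposes as $\mb T_w=\bigsqcup_{j\in J_\lambda}\mb T_{w_j}$, and hence $X_{\mb T_w}=\bigcup_{j\in J_\lambda}X_{\mb T_{w_j}}$. For each $j\in J_\lambda$, pick a subspace $L_j\subset\mathcal L_X$ with $\dim L_j\leq {\fr d}_j(w,\lambda)$ that (up to an arbitrarily small $\eps>0$) attains the infimum defining $\rho_{{\fr d}_j}(w_j)$. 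Form $L:=\sum_{j\in J_\lambda}L_j$, so $\dim L\leq d^\ast$. Since $L\supseteq L_j$ implies $L^\perp\subseteq L_j^\perp$, for any $\eta\in X_{\mb T_{w_j}}$ one has $\|P_{L^\perp}\eta\|_{L_2(\mb P)}\leq \|P_{L_j^\perp}\eta\|_{L_2(\mb P)}$. Using the disjoint decomposition of $\mb T_w$, taking the supremum over $\eta\in X_{\mb T_w}$, and letting $\eps\downarrow 0$ yields
$$
\rho_{d^\ast}(w)\ \leq\ \max_{j\in J_\lambda}\rho_{{\fr d}_j}(w_j).
$$

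To finish, I would invoke the monotonicity of $\gamma_2(\cdot)$ in its argument together with the trivial bound $\max\leq\sum$ valid for non-negative quantities:
$$
\gamma_2\bigl(\rho_{d^\ast}(w)\bigr)\ \leq\ \gamma_2\Bigl(\max_{j\in J_\lambda}\rho_{{\fr d}_j}(w_j)\Bigr)\ =\ \max_{j\in J_\lambda}\gamma_2\bigl(\rho_{{\fr d}_j}(w_j)\bigr)\ \leq\ \sum_{j\in J_\lambda}\gamma_2\bigl(\rho_{{\fr d}_j}(w_j)\bigr).
$$
Finally, summing the defining inequalities ${\fr d}_j(w,\lambda)\sigma_Y^2/n\geq \|\lambda\|_1\gamma_2(\rho_{{\fr d}_j}(w_j))/\sqrt n$ over $j\in J_\lambda$ and chaining with the two displays above produces the desired inequality, giving $d(w,\lambda)\leq d^\ast$.

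There is no real obstacle here; the argument is essentially bookkeeping once one notices that the disjointness of $\supp(w_j)$'s trivializes the global approximation problem to a collection of local ones (aggregated via a direct sum of subspaces). The only minor care needed is the infimum in the definition of $\rho_d$ need not be attained, which is handled by an $\eps$-approximation that is then sent to zero, using monotonicity of $\gamma_2(\cdot)$ to pass to the limit.
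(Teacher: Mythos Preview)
Your proposal is correct and follows essentially the same route as the paper's proof: pick near-optimal approximating subspaces $L_j$ for each $j\in J_\lambda$, form their span $L$, use the disjoint decomposition $\mb T_w=\bigsqcup_{j}\mb T_{w_j}$ to bound $\rho_{d^\ast}(w)\le\max_j\rho_{{\fr d}_j}(w_j)$, then invoke monotonicity of $\gamma_2(\cdot)$ together with the summed defining inequalities. Your handling of the $\eps$-limit (passing to the limit in the $\rho$-bound before applying $\gamma_2$) is in fact slightly cleaner than the paper's, which instead carries the slack $\delta$ through to $\gamma_2(\rho_m(w)-\delta)$ and only then lets $\delta\downarrow 0$.
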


\begin{proof}
Denote $m_j:= {\fr d}_j (w,\lambda).$ Then, 
$$
\frac{m_j\sigma_Y^2}{n}\geq  
\|\lambda\|_1
\frac{\gamma_2\l(\rho_{m_j}(w_j)\r)}{\sqrt{n}}, j\in J_{\lambda}
$$
and, for all $j\in J_{\lambda}$ and $\delta>0,$ there exist $L_j\subset {\mathcal L}_X$ such 
that ${\rm dim}(L_j)\leq m_j$ and 
$$
\sup_{t\in \mb T_{w_j}}\|P_{L_j}^{\perp}(X(t)-{\mb E}X(t))\|_{L_2(\Pi)}\leq 
\rho_{m_j}(w_j)+\delta.
$$
Denote $L:={\rm l.s.}\Bigl(\bigcup\limits_{j\in J_{\lambda}}L_j\Bigr).$ Then, 
\begin{align*}
\sup_{t\in \mb T_{w}}\|P_{L}^{\perp}(X(t)-{\mb E}X(t))\|_{L_2(\Pi)} &
\leq \max_{j\in J_{\lambda}}\sup_{t\in \mb T_{w_j}}\|P_{L_j}^{\perp}(X(t)-{\mb E}X(t))\|_{L_2(\Pi)}\\
&
\leq \max_{j\in J_{\lambda}}\rho_{m_j}(w_j)+\delta
\end{align*}
and 
$$
\frac{\sigma_Y^2\sum\limits_{j\in J_{\lambda}}m_j}{n}\geq 
\|\lambda\|_1 
\frac{\gamma_2\l(\max_{j\in J_{\lambda}}\rho_{m_j}(w_j)\r)}{\sqrt{n}}.
$$
Since ${\rm dim}(L)\leq \sum\limits_{j\in J_{\lambda}}m_j:=m,$ we have  
$$
\rho_m(w)\leq 
\sup_{t\in \mb T_{w}}\|P_{L}^{\perp}(X(t)-{\mb E}X(t))\|_{L_2(\Pi)}
\leq \max_{j\in J_{\lambda}}\rho_{m_j}(w_j)+\delta.
$$
It follows that 
$$
\frac{\sigma_Y^2 m}{n}\geq 
\|\lambda\|_1
\frac{\gamma_2\l(\rho_{m}(w)-\delta\r)}{\sqrt{n}}.
$$
Since $\delta>0$ is arbitrary, this yields 
$$
\frac{\sigma_Y^2 m}{n}\geq 
\|\lambda\|_1
\frac{\gamma_2\l(\rho_{m}(w)\r)}{\sqrt{n}},
$$
and the result follows.

\end{proof}

As a very simple example, let $I_1,\dots, I_N$ be disjoint finite subsets of the set $\mb N$ of natural numbers and 
let  
$$
X(t)=X^{(j)}(t)=\sum_{k\in I_j} \eta_k^{(j)} \phi_k^{(j)}(t), t\in \mb T_j, \ j=1,\dots, N,
$$
where $\phi_k^{(j)}, k\in I_j$ are bounded measurable functions on $\mb T_j,$ $j=1,\dots, N$
and $\{\eta_k^{(j)}: k\in I_j, j=1,\dots, N\}$ are centered jointly normal random 
variables. Denote $m_j:={\rm card}(I_j), \ j=1,\dots, N.$ Let $\lambda \in {\mb D}$
and $w\in {\mathcal W}_{\lambda,\Delta}.$
Obviously, 
$$
{\fr d}_j(w,\lambda)\leq m_j,  \ j\in J_{\lambda},
$$
so, we have a simple bound 
$$
d(w,\lambda)\leq \sum_{j\in J_{\lambda}} m_j.
$$

The next statement immediately follows from Theorem \ref{th:main_AA}, Proposition \ref{dim_loc}
and bound (\ref{a_beta}).

\begin{corollary}
\label{cor:weak_correlation}
Suppose that assumptions and notations of Theorem \ref{th:main_AA} hold.
There exists an absolute constant $\fr{C}>0$ such that  
with probability at least $1-e^{-s}$
\begin{align}
\label{oracle_weak_correlation}
\l\|f_{\hat\lambda_\eps,\hat a_{\eps}}-f_{\ast}\r\|_{L_2(\Pi)}^2 
\leq &
\inf_{\substack{\Delta\in {\mathcal T},\lambda \in {\mathbb D}, \\ w\in {\mathcal W}_{\lambda,\Delta}, a\in {\mathbb R}}}
\biggl[\l\|f_{\lambda,a}-f_{\ast}\r\|_{L_2(\Pi)}^2+ 2\eps^2 \beta^2(w,\lambda)\sum_{j\in J_{\lambda}}\|w_j\|_{K_j}^2+
\\
\nonumber
&\fr{C}\frac{\sigma_Y^2 \sum\limits_{j\in J_{\lambda}}{\fr d}_j(w,\lambda)}{n}
+\fr{C}\frac{\|\lambda\|_1^2 S^2({\mathbb T})}{n}\biggr]
+\fr{C}\frac{\sigma_Y^2 \bar s}{n}.
\end{align}
\end{corollary}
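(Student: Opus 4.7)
The plan is straightforward: I would obtain the corollary by restricting the infimum in Theorem \ref{th:main_AA} to a smaller set of triples $(\lambda,w,a)$ (those whose subgradient $w$ respects a chosen partition $\Delta$) and then replacing the two ``abstract'' quantities $\fr a^2(w)$ and $d(w,\lambda)$ by the explicit upper bounds provided by Proposition \ref{bound_beta} and Proposition \ref{dim_loc}.

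More precisely, I would fix a partition $\Delta=\{\mb T_1,\ldots,\mb T_N\}\in {\mathcal T}$, an oracle $\lambda\in \mb D$, an intercept $a\in \mb R$, and a subgradient $w\in {\mathcal W}_{\lambda,\Delta}$. By the very definition of ${\mathcal W}_{\lambda,\Delta}$ we have $w\in \partial\|\lambda\|_1$, so $(\lambda,w,a)$ is admissible in the infimum on the right-hand side of the oracle inequality of Theorem \ref{th:main_AA}. On the event of probability at least $1-e^{-s}$ on which that theorem holds, this yields
\begin{align*}
\l\|f_{\hat\lambda_\eps,\hat a_{\eps}}-f_{\ast}\r\|_{L_2(\Pi)}^2
\leq &\l\|f_{\lambda,a}-f_{\ast}\r\|_{L_2(\Pi)}^2+ 2\eps^2 \fr a^2(w) \\
&+\fr{C}\frac{\sigma_Y^2 d(w,\lambda)}{n}+\fr{C}\frac{\|\lambda\|_1^2 S^2(\mb T)}{n}+\fr{C}\frac{\sigma_Y^2 \bar s}{n}.
\end{align*}
Since $w=\sum_{j\in J_{\lambda}}w_j$ with $\supp(w_j)\subset \mb T_j$, bound (\ref{a_beta}) (i.e.\ Proposition \ref{bound_beta} applied with $b=16$) gives $\fr a^2(w)\leq \beta^2(w,\lambda)\sum_{j\in J_{\lambda}}\|w_j\|_{K_j}^2$, while Proposition \ref{dim_loc} gives $d(w,\lambda)\leq \sum_{j\in J_{\lambda}}\fr d_j(w,\lambda)$. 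Plugging these two bounds into the display above produces precisely the estimate of (\ref{oracle_weak_correlation}) for this particular choice of $(\Delta,\lambda,w,a)$.

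Finally, I would take the infimum over all $\Delta\in {\mathcal T}$, $\lambda\in \mb D$, $w\in {\mathcal W}_{\lambda,\Delta}$ and $a\in \mb R$ on the same probability event, which is legitimate because that event does not depend on $(\Delta,\lambda,w,a)$. This yields (\ref{oracle_weak_correlation}) and finishes the proof. There is essentially no obstacle: the heavy lifting has already been done in Theorem \ref{th:main_AA}, Proposition \ref{bound_beta}, and Proposition \ref{dim_loc}. The only point worth verifying is the inclusion ${\mathcal W}_{\lambda,\Delta}\subset \partial\|\lambda\|_1$, which is immediate from the definition of ${\mathcal W}_{\lambda,\Delta}$ together with the description (\ref{sub}) of the $L_1$-subdifferential (each $w_j$ is supported in $\mb T_j$ and collectively they assemble to a subgradient of $\|\lambda\|_1$).
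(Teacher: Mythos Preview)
Your proposal is correct and is exactly the approach the paper takes: the authors state that the corollary ``immediately follows from Theorem \ref{th:main_AA}, Proposition \ref{dim_loc} and bound (\ref{a_beta}),'' which is precisely the combination you describe. The only superfluous remark is your verification that ${\mathcal W}_{\lambda,\Delta}\subset \partial\|\lambda\|_1$, since by definition ${\mathcal W}_{\lambda,\Delta}$ already consists of subgradients of $\|\lambda\|_1$.
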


The term $\frac{\|\lambda\|_1^2 S^2({\mathbb T})}{n}$ that depends 
on $\|\lambda\|_1^2$ can be dropped if $\|\lambda\|_1$ is not too large (see Theorem \ref{th:bounded_norm}). In general, this term can be controlled in terms of sparsity 
parameter $N(\lambda)$ and $\|\lambda\|_{L_2(\mu)}.$ 
To this end, note that, by Cauchy-Schwarz inequality,  
\begin{align*}
\|\lambda\|_1&=
\sum_{j\in J_{\lambda}}\int_{\mb T_j}|\lambda|d\mu 
\leq 
\sum_{j\in J_{\lambda}}\l(\int_{\mb T_j}|\lambda|^2d\mu\r)^{1/2}\mu^{1/2}(\mb T_j) \\
&
\leq \biggl(\sum_{j\in J_{\lambda}}\int_{\mb T_j}|\lambda|^2d\mu\biggr)^{1/2}
\biggl(\sum_{j\in J_{\lambda}}\mu (\mb T_j)\biggr)^{1/2}
\leq 
\|\lambda\|_{L_2(\mu)} \max_{j\in J_{\lambda}}\mu^{1/2}(\mb T_j) \sqrt{N(\lambda)}.
\end{align*}

For an arbitrary oracle $\lambda \in \mb T,$ arbitrary partition $\Delta\in {\mathcal T},$
arbitrary subgradient $w\in {\mathcal W}_{\lambda, {\mathcal T}}$ and for 
$$
\eps = D\frac{\sigma_YS(\mb T)\sqrt{s}}{\sqrt n},
$$
we have the following 
inequality that holds with probability at least $1-e^{-s}:$
\begin{equation}
\label{oracle_weak_correlation_aa}
\l\|f_{\hat\lambda_\eps,\hat a_{\eps}}-f_{\ast}\r\|_{L_2(\Pi)}^2 
\leq 
\l\|f_{\lambda,a(\lambda)}-f_{\ast}\r\|_{L_2(\Pi)}^2+ 
\fr{C}\biggl[{\fr Q}(w,\lambda,\Delta)\frac{N(\lambda)}{n}+\frac{\sigma_Y^2 
\bar s}{n}\biggr],
\end{equation}
where 
\begin{align*}
{\fr Q}(w,\lambda,\Delta):=& \sigma_Y^2 S(\mb T)^2\beta^2(w,\lambda)
\max_{j\in J_{\lambda}}\|w_j\|_{K_j}^2 s+ 
\sigma_Y^2 \max_{j\in J_{\lambda}}{\fr d}_j(w,\lambda)+\\
&
S^2({\mathbb T})\|\lambda\|_{L_2(\mu)}^2 \max_{j\in J_{\lambda}}\mu(\mb T_j).
\end{align*}
Thus, if there is an oracle $\lambda\in \mb D$ for which the approximation error
$\l\|f_{\lambda,a(\lambda)}-f_{\ast}\r\|_{L_2(\Pi)}^2$
is small and the quantitity ${\fr Q}(w,\lambda,\Delta)$
is of a moderate size, then the error of the estimator $(\hat \lambda_{\eps}, \hat a_{\eps})$ is essentially controlled by the quantity $\frac{N(\lambda)}{n}$ (up to log factors). Since $N(\lambda)$
can be viewed as a degree of sparsity of the oracle $\lambda,$ this explains the connection 
of the oracle inequality of Corollary \ref{cor:weak_correlation} and now classical bounds for 
LASSO in the case of large finite dictionaries. 
Once again, it is important to emphasize that the estimation method (\ref{empirical}) does not require 
any knowledge of a ``weakly correlated partition'' $\Delta.$ The method is adaptive in the sense 
that, if there exists a partition $\Delta$ such that $\beta (w,\lambda)$ and other quantities 
involved in the definition of ${\fr Q}(w,\lambda,\Delta)$ are not large, then the size 
of the error depends on the degree of sparsity $N(\lambda)$ with respect to the partition 
of oracles $\lambda$ that provide good approximation of the target.

In the simplest example, $\mb T:=\{1,\dots, N\}$ and the partition $\Delta:=\Bigl\{\{1\},\dots, \{N\}\Bigr\}$ (so, $\mb T$ is partitioned in one point sets). Let $\mu $ be the counting 
measure. Thus, $X$ is an $N$-dimensional 
subgaussian vector and we are in the framework of standard high-dimensional multiple regression 
model. 
For simplicity, assume that $X$ is scaled in such a way that 
${\mb E} X(t)=0,$ ${\mb E} X^2(t)=1.$
The estimator (\ref{empirical}) becomes a version of usual LASSO-estimator.  
Then, it is easy to check that 
$
S(\mb T)\leq C \sqrt{\log N}.
$
Also, in this case RKHS-spaces ${\mb H}(K_j), j=1,\dots, N$ are one-dimensional and 
we have 
$
\|w_j\|_{K_j}= |w(j)|, j=1,\dots, N.
$
For an oracle $\lambda \in {\mb D},$ 
$$
N(\lambda)={\rm card}(J_{\lambda}),\ J_{\lambda}={\rm supp}(\lambda)=\{1\leq j\leq N: \lambda_j\neq 0\}.
$$
In this case, we can set $w(j)={\rm sign}(\lambda(j)), j=1,\dots, N.$
Also, we obviously have ${\fr d}_j(w,\lambda)=1.$ 
Finally, in this case the quantity $\beta_2^{(\gamma)}(J)$ coincides with standard  
``cone constrained'' characteristiscs frequently used in the literature on sparse 
recovery (see, e.g., \cite{Koltchinskii2011Oracle-inequali00}, Section 7.2.2). 
We will use $\beta(\lambda)=\beta(w,\lambda)=\beta_2^{(16)}(J_{\lambda}).$
Then, Corollary \ref{cor:weak_correlation} takes the following form.

\begin{corollary}
\label{cor:finite_dictionary}
There exist absolute constants $\fr{C}, \fr{c}$ and $D$ such that the following holds.
For any $s\geq 1$ with 
$\bar s:=s+3\log (\log_2 n+2)+3 \leq \fr{c}\frac{\sqrt{n}}{\log n}$ and for all $\eps$ satisfying 
\begin{equation}
\label{eps-condition''}
\eps \geq D\frac{\sigma_Y\sqrt{s \log N}}{\sqrt n},
\end{equation}
with probability at least $1-e^{-s}$
\begin{align}
\label{oracle_finite_dictionary}
\l\|f_{\hat\lambda_\eps,\hat a_{\eps}}-f_{\ast}\r\|_{L_2(\Pi)}^2 
\leq &
\inf_{\lambda \in {\mathbb D}, a\in {\mathbb R}}
\biggl[\l\|f_{\lambda,a}-f_{\ast}\r\|_{L_2(\Pi)}^2+ 2\beta^2(\lambda)N(\lambda)\eps^2
+
\\
\nonumber
&\fr{C}\frac{\sigma_Y^2 N(\lambda)}{n}
+\fr{C}\frac{\|\lambda\|_1^2 \log N}{n}\biggr]
+\fr{C}\frac{\sigma_Y^2 \bar s}{n}.
\end{align}
\end{corollary}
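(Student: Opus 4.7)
The plan is to deduce this corollary by specializing Corollary \ref{cor:weak_correlation} to the trivial partition $\Delta = \bigl\{\{1\},\dots,\{N\}\bigr\}$ and simplifying every quantity appearing in (\ref{oracle_weak_correlation}) using the finite, one-dimensional structure of each $\mathbb{T}_j$. First, I would verify the bound $S(\mathbb{T}) \leq C\sqrt{\log N}$: since $X(t)$ has unit variance and is subgaussian, $d_X(i,j) \leq 2$, and the generic chaining complexity of $N$ subgaussian random variables obeys $\gamma_2(\mathbb{T},d_X) \leq C\sqrt{\log N}$ (this follows from a Dudley-type bound via an admissible sequence, or directly from $\mathbb{E}\sup_t |X(t)-\mathbb{E}X(t)| \leq C\sqrt{\log N}$ together with (\ref{eq:chaining1})). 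Substituting this into condition (\ref{eps-condition}) gives exactly (\ref{eps-condition''}).

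Next, I would evaluate the terms inside the infimum in (\ref{oracle_weak_correlation}) for the canonical choice $w(j) = \mathrm{sign}(\lambda(j))$, which lies in $\partial\|\lambda\|_1$ and has support exactly $J_\lambda$. Because $\mathbb{T}_j=\{j\}$ is a single point with $k_j(j,j)=\Var(X(j))=1$, the reproducing kernel Hilbert space $\mathbb{H}(K_j)$ is one-dimensional and $\|w_j\|_{K_j}=|w(j)|=1$ for $j\in J_\lambda$. Hence
\[
\sum_{j\in J_\lambda}\|w_j\|_{K_j}^2 = N(\lambda), \qquad
\max_{j\in J_\lambda}\|w_j\|_{K_j}=1,
\]
so $\gamma = 16$ in the definition of $\beta(w,\lambda)$, yielding $\beta(w,\lambda) = \beta_2^{(16)}(J_\lambda) =: \beta(\lambda)$. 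By (\ref{a_beta}), the alignment term becomes $2\eps^2\beta^2(\lambda) N(\lambda)$.

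For the approximate-dimension term, observe that $\mathbb{T}_{w_j}=\{j\}$, so the set $X_{\mathbb{T}_{w_j}}$ consists of a single random variable $X(j)-\mathbb{E}X(j)$; taking the one-dimensional span of this variable as the approximating subspace gives $\rho_1(w_j)=0$, and hence $\gamma_2(\rho_1(w_j))=0$. Consequently $\mathfrak{d}_j(w,\lambda) \leq 1$ from definition (\ref{eq:approx_dim_j}), so $\sum_{j\in J_\lambda}\mathfrak{d}_j(w,\lambda) \leq N(\lambda)$, producing the term $\fr{C}\sigma_Y^2 N(\lambda)/n$. The last ingredient, $\fr{C}\|\lambda\|_1^2 S^2(\mathbb{T})/n$, simply becomes $\fr{C}\|\lambda\|_1^2 \log N /n$ after absorbing the $S(\mathbb{T})$ bound into the constant.

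The only mildly nontrivial step is the $\gamma_2(\mathbb{T},d_X) \lesssim \sqrt{\log N}$ bound, which nevertheless is classical and is precisely what makes $S(\mathbb{T})$ turn into $\sqrt{\log N}$; everything else is a direct substitution. Combining these identities inside the infimum of (\ref{oracle_weak_correlation}) gives exactly (\ref{oracle_finite_dictionary}), finishing the proof.
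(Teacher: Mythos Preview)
Your proposal is correct and follows exactly the route the paper takes: specialize Corollary~\ref{cor:weak_correlation} to the partition into singletons, use $w(j)=\mathrm{sign}(\lambda(j))$, and observe that $S(\mathbb{T})\leq C\sqrt{\log N}$, $\|w_j\|_{K_j}=|w(j)|=1$, $\beta(w,\lambda)=\beta_2^{(16)}(J_\lambda)$, and $\mathfrak{d}_j(w,\lambda)=1$. The only cosmetic difference is that the paper asserts $\mathfrak{d}_j(w,\lambda)=1$ (since for $m=0$ the right-hand side of (\ref{eq:approx_dim_j}) is strictly positive), whereas you write $\leq 1$; this does not affect the bound.
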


More generally, assume that $\mb T$ is a finite set with a counting measure $\mu$ and consider an arbitrary partition $\Delta =\{\mb T_1,\dots, \mb T_N\}$ of $\mb T.$ Denote 
$$
m_j:=\mu (\mb T_j)={\rm card}(\mb T_j), j=1,\dots, N
$$
and $m:=\mu(\mb T)={\rm card}(\mb T).$ As before, $X$ is subgaussian 
and ${\mb E} X(t)=0,$ ${\mb E} X^2(t)=1.$ Then, we have 
$
S(\mb T)\leq C \sqrt{\log m}.
$
In this case, covariance operators $K_j$ are acting in $m_j$-dimensional 
Euclidean spaces and we have 
$$
\|w_j\|_{K_j}=\|K_j^{-1/2}w_j\|_2,\ j=1,\dots, N.
$$
Clearly, we also have ${\fr d}_j(w,\lambda)\leq m_j.$ Thus, the oracle inequality 
of Corollary \ref{cor:weak_correlation} implies that 
\begin{align}
\label{oracle_finite_dictionary_2}
\l\|f_{\hat\lambda_\eps,\hat a_{\eps}}-f_{\ast}\r\|_{L_2(\Pi)}^2 
\leq &
\inf_{\substack{\Delta\in {\mathcal T},\lambda \in {\mathbb D}, \\ w\in {\mathcal W}_{\lambda,\Delta}, a\in {\mathbb R}}}
\biggl[\l\|f_{\lambda,a}-f_{\ast}\r\|_{L_2(\Pi)}^2+ 2\beta^2(w,\lambda)\sum_{j\in J_{\lambda}}
\|K_j^{-1/2} w_j\|_2^2 \eps^2+
\\
\nonumber
&
\fr{C}\frac{\sigma_Y^2 \sum\limits_{j\in J_{\lambda}}m_j}{n}
+\fr{C}\frac{\|\lambda\|_1^2 \log m}{n}\biggr]
+\fr{C}\frac{\sigma_Y^2 \bar s}{n}.
\end{align}
This holds with probability at least $1-e^{-s}$ for all $\eps $ satisfying $\eps \geq D\frac{\sigma_Y\sqrt{s \log m}}{\sqrt n}$.

\section{Stationary and piecewise stationary processes}

Suppose $\mb T$ is a bounded subset of $\mb R^d$ with Lebesgue measure $\mu$ and let $\Delta=\{\mb T_1,\dots, \mb T_N\}$
be a measurable partition of $\mb T.$ 

\begin{assumption}
Suppose that each set $\mb T_j$ is contained 
in a ball of radius $r.$ In what follows, we asume that $r\geq N^{-1/d}.$ 
It is easy to see that there exists a constant $\kappa\geq 2$ depending only on $d$
such that the $\eps$-covering numbers of $\mb T$ with respect to the standard 
Euclidean distance satisfy the condition
\begin{equation}
\label{cover_T}
N(\mb T;\eps)\leq \biggl(\frac{R}{\eps}\biggr)^d\bigvee N, \eps \in (0,R),
\end{equation}
where $R=\kappa N^{1/d}r.$
\end{assumption}

Let $X^{(j)}, j=1,\dots, N$ be centered stationary 
subgaussian processes on $\mb R^d$ and let 
$$
X(t):=\sum_{j=1}^N X^{(j)}(t)I_{{\mb T}_j}(t), t\in \mb T.
$$
Thus, we can view the process $X$ as ``piecewise stationary''.
Let $K_j$ denote the covariance operator and $v_j$ denote the spectral density of $X^{(j)}, j=1,\dots, N$ (we assume that the spectral densities exist).

\begin{assumption}
Suppose that, for some constant $B>0$ and some $p>d/2,$
\begin{equation}
\label{ul_spectrum}
\frac{1}{B}\frac{1}{(1+|t|^2)^{p}}\leq v_j(t)\leq B\frac{1}{(1+|t|^2)^{p}}, t\in {\mb R}^d, j=1,\dots, N.
\end{equation}
\end{assumption}

We use the notations $J_{\lambda},$ $N(\lambda)={\rm card}(J_{\lambda})$ and $\beta(\lambda)=\beta(w,\lambda)$ introduced in Section \ref{weakly_corr}. 
Let $\lambda \in {\mb D}$ be an oracle such that, for each $j\in J_{\lambda}$ we either 
have that $\lambda (t)\geq 0$ for all $t\in \mb T_j$,
or $\lambda (t)\leq 0$ for all $t\in \mb T_j$. 
Thus, $\lambda$ does not change its sign inside the elements of the partition.
Denote ${\mb D}_{\Delta}$ the set of all such oracles in $\mb D.$

Finally, denote $R(\lambda)=\kappa(N(\lambda))^{1/d} r.$ Clearly, $r\leq R(\lambda)\leq R$
(we assume that $N(\lambda)\geq 1$) and condition (\ref{cover_T}) holds for the covering numbers of the set 
$\bigcup\limits_{j\in J_{\lambda}}\mb T_j$ with $R(\lambda)$ in place of $R.$

\begin{theorem}
\label{th:main_stationary}
There exist constants $\fr{C}, \fr{c}$ and $D$ depending only on $B,p,d$ such that the following holds.
For any $s\geq 1$ with 
$\bar s:=s+3\log (\log_2 n+2)+3 \leq \fr{c}\frac{\sqrt{n}}{\log n},$ for all $\eps$ satisfying 
\begin{equation*}
\eps \geq D\frac{\sigma_Y\sqrt{s(\log N\vee \log r)}}{\sqrt n}, 
\end{equation*}
with probability at least $1-e^{-s}$
\begin{align}
\label{oracle1}
\Big\|f_{\hat\lambda_\eps,\hat a_{\eps}}-&f_{\ast}\Big\|_{L_2(\Pi)}^2 
\leq 
\inf_{\lambda \in \mb D_{\Delta}, a\in \mb R}\biggl[\l\|f_{\lambda,a}-f_{\ast}\r\|_{L_2(\Pi)}^2  +
\\
\nonumber
&
 \fr{C}\l(\sigma_Y^{2}
r^{d}\r)^{\frac{2p-d}{2p+d}} L^{d/(2p+d)}
\frac{\|\lambda\|_1^{2d/(2p+d)}N(\lambda)^{(2p-d)/(2p+d)}}{n^{2p/(2p+d)}}
+\fr{C}\frac{\sigma_Y^2 N(\lambda)}{n}+
\\
\nonumber
&
\fr{C} r^{d}(1+r^{-p})^2 \beta^2(\lambda) N(\lambda)\eps^2 
+\fr{C}\frac{\|\lambda\|_1^2 (\log N\vee |\log r|)}{n}\biggr]
+\fr{C}\frac{\sigma_Y^2 \bar s}{n},
\end{align}
where
$
L:=\log N\vee \log n \vee |\log \sigma_Y|\vee |\log r|.
$
\end{theorem}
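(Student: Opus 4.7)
The plan is to deduce Theorem \ref{th:main_stationary} from Corollary \ref{cor:weak_correlation} by bounding each of the four pieces of the oracle inequality (\ref{oracle_weak_correlation}) under the piecewise-stationarity and spectral assumptions.

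I first control $S(\mb T)$. The spectral bound (\ref{ul_spectrum}) makes $\Var(X(t))=\int v_j(u)du$ uniformly bounded, while on each $\mb T_j$ it yields the Hölder-type estimate $d_X^2(s,t)=2\int(1-\cos\langle s-t,u\rangle)v_j(u)\,du\leq C|s-t|^{2\alpha}$ with $\alpha=\min(p-d/2,1)$; between distinct partition elements $d_X$ is merely uniformly bounded. Combining this modulus with the covering bound (\ref{cover_T}) and Dudley's entropy integral gives $\gamma_2(\mb T;d_X)\leq C\sqrt{\log N\vee|\log r|}$, hence $S(\mb T)\leq C\sqrt{\log N\vee|\log r|}$. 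This supplies both the stated lower bound on $\eps$ and the $\|\lambda\|_1^2(\log N\vee|\log r|)/n$ term in (\ref{oracle1}).

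Next I construct a smooth subgradient: since $\lambda\in\mb D_\Delta$ has constant sign on each $\mb T_j$, one can choose $w_j\in\partial\|\lambda\|_1$ with $\supp(w_j)\subset\mb T_j$ of the form $\pm\phi_j$, where $\phi_j\in C_c^\infty(\mb R^d)$ is a bump with $0\leq\phi_j\leq 1$, $\phi_j\equiv 1$ on $\mb T_{w_j}$, and supported in a ball of radius $\leq Cr$. A scaling argument gives $\|\phi_j\|_{\mb W^{2,p}(\mb R^d)}^2\leq Cr^d(1+r^{-2p})$, and Proposition \ref{stationary_sobolev} applied to $X^{(j)}$ then yields $\|w_j\|_{K_j}^2\leq Cr^d(1+r^{-p})^2$. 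Summing and invoking (\ref{a_beta}) contributes the $r^d(1+r^{-p})^2\beta^2(\lambda)N(\lambda)\eps^2$ term.

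The principal obstacle is controlling the approximate dimensions $\fr d_j(w,\lambda)$. From the spectral representation $X^{(j)}(t)=\int e^{i\langle t,u\rangle}Z_j(du)$, the low-frequency truncation $|u|\leq M$ spans (by a Nyquist-type sampling argument on $\mb T_j$) a subspace of dimension $\sim (Mr)^d$ and has squared tail error $\int_{|u|>M}v_j(u)du\leq CM^{d-2p}$; setting $(Mr)^d=m$ yields $\rho_m^2(w_j)\leq Cr^{2p-d}m^{-(2p-d)/d}$ for $m\geq Cr^d$, with $\rho_m$ uniformly bounded otherwise. A chaining estimate via (\ref{eq:chaining_bound}) using the Hölder modulus of $d_X$ then yields $\gamma_2(\rho_m(w_j))\leq C\rho_m(w_j)\sqrt{L}$. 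Substituting into (\ref{eq:approx_dim_j}) and solving the defining inequality for $m$ gives
$$
\fr d_j(w,\lambda)\leq C r^d+C\l(\frac{M_j\, r^{(2p-d)/2}\sqrt{Ln}}{\sigma_Y^2}\r)^{2d/(2p+d)},
$$
where a refinement of the argument leading to (\ref{eq:approx_dim_j}) exploits that $w_j$ is supported in $\mb T_j$ to replace the global $\|\lambda\|_1$ by the local mass $M_j:=\int_{\mb T_j}|\lambda|d\mu$. Summing over $j\in J_\lambda$ using $\sum_j M_j=\|\lambda\|_1$ together with the concavity inequality $\sum_j M_j^\theta\leq N(\lambda)^{1-\theta}(\sum_j M_j)^\theta$ with $\theta=2d/(2p+d)\in(0,1)$ produces the $\sigma_Y^2 N(\lambda)/n$ piece and the nonparametric term $(\sigma_Y^2 r^d)^{(2p-d)/(2p+d)}L^{d/(2p+d)}\|\lambda\|_1^{2d/(2p+d)}N(\lambda)^{(2p-d)/(2p+d)}/n^{2p/(2p+d)}$. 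Assembling these bounds inside Corollary \ref{cor:weak_correlation} yields (\ref{oracle1}). The two delicate points are (a) extracting the Kolmogorov-width decay $\rho_m\sim r^{(2p-d)/2}m^{-(2p-d)/(2d)}$ from the mere two-sided spectral density bound by a careful rescaling, and (b) justifying the replacement of $\|\lambda\|_1$ by $M_j$ inside (\ref{eq:approx_dim_j}), which is essential for obtaining the sublinear $N(\lambda)^{(2p-d)/(2p+d)}$ dependence via concavity.
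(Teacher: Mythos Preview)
Your outline is close to the paper's in its first two steps (bounding $S(\mb T)$ and the RKHS norms $\|w_j\|_{K_j}$ via smooth bump subgradients), but the treatment of the approximate dimension contains a genuine gap, and the paper resolves it by a different route.

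The problem is point (b). The local approximate dimensions ${\fr d}_j(w,\lambda)$ are defined in (\ref{eq:approx_dim_j}) with the \emph{global} $\|\lambda\|_1$, not with $M_j=\int_{\mb T_j}|\lambda|\,d\mu$, and this is not an artifact of the definition: tracing through the proof of Theorem \ref{th:main_AA}, the factor multiplying $\gamma_2(\rho)/\sqrt n$ is $\hat R\vee\|\bar\lambda\|_1$ where $\hat R=\|\hat\lambda\|_1$. The passage from $\hat R$ to $\|\bar\lambda\|_1$ (around (\ref{bound_Y})--(\ref{bound_Z})) uses the global subgradient identity $\|\hat\lambda\|_1\leq\|\bar\lambda\|_1+\langle\bar w,\hat\lambda-\bar\lambda\rangle+\langle\hat w-\bar w,\hat\lambda-\bar\lambda\rangle$, and there is no mechanism in the argument to control $\|\hat\lambda|_{\mb T_j}\|_1$ locally by $M_j$. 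Without your refinement, summing ${\fr d}_j$ gives an extra $N(\lambda)^{2d/(2p+d)}$ in the nonparametric term, i.e.\ $N(\lambda)$ rather than $N(\lambda)^{(2p-d)/(2p+d)}$, so the concavity step never fires.

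The paper avoids this entirely: it does \emph{not} use Corollary \ref{cor:weak_correlation} but goes back to Theorem \ref{th:main_AA} and bounds the \emph{global} $d(w,\lambda)$ directly. The key observation is geometric: the union $\bigcup_{j\in J_\lambda}\mb T_j$ satisfies the covering condition (\ref{cover_T}) with $R(\lambda)=\kappa N(\lambda)^{1/d}r$ in place of $R$. Applying the Kolmogorov-width Lemma \ref{kolm_width} (whose Taylor-expansion construction works piecewise on each $\mb T_j$) to this union yields, for $m\geq C N(\lambda)$,
\[
\rho_m(w)\leq C\Bigl(\frac{R(\lambda)}{m^{1/d}}\Bigr)^{p-d/2}.
\]
Substituting into the defining inequality for $d(w,\lambda)$ with the global $\|\lambda\|_1$ gives
\[
d(w,\lambda)\leq C N(\lambda)\ \vee\ C\,\frac{(n\|\lambda\|_1^2)^{d/(2p+d)}\,R(\lambda)^{d(2p-d)/(2p+d)}}{\sigma_Y^{4d/(2p+d)}}\,L^{d/(2p+d)},
\]
and since $R(\lambda)^d=\kappa^d N(\lambda)r^d$, the factor $R(\lambda)^{d(2p-d)/(2p+d)}$ automatically produces $N(\lambda)^{(2p-d)/(2p+d)}(r^d)^{(2p-d)/(2p+d)}$. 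In other words, the sublinear power of $N(\lambda)$ comes from the covering geometry of the union, not from a concavity trick on local masses.
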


We will now consider a stationary subgaussian random field $X(t), t\in \mb R^d$
observed in a ball $\mb T=\{t:|t|\leq R\}$ of radius $R\geq 2.$ 

\begin{assumption}
\label{two_sided}
Suppose that $X$ has a spectral density $v(t), t\in \mb R^d$ and, for some constant $B>0$ and some $p>d/2,$
\begin{equation}
\label{ul_spectrum_01}
\frac{1}{B}\frac{1}{(1+|t|^2)^{p}}\leq v(t)\leq B\frac{1}{(1+|t|^2)^{p}}, t\in {\mb R}^d.
\end{equation}
\end{assumption}

Let $\lambda \in \mb D$ be an oracle such that ${\rm supp}(\lambda)$ can be covered 
by a union of $N(\lambda)$ disjoint balls $B(t_1;r), \dots, B(t_{N(\lambda)};r)$ of radius $r\leq R/2.$ Moreover, let us assume that 
the balls in this covering are well separated in the sense that the distance between 
any two distinct balls is at least $2r.$ 
In addition to this, assume that $\lambda$ does not change sign on each of the sets 
$B(t_j;r)\cap {\rm supp}(\lambda), j=1,\dots, N(\lambda)$. 
\index{ay@$\mb D_r$}
Let ${\mb D}_r$ denote the set of all such oracles $\lambda\in \mb D.$

Then, the following theorem holds.

\begin{theorem}
\label{th:main_stationary_A}
There exist constants $\fr{C}, \fr{c}$ and $D$ depending only on $B,p,d$ such that the following holds.
For any $s\geq 1$ with 
$\bar s:=s+3\log (\log_2 n+2)+3 \leq \fr{c}\frac{\sqrt{n}}{\log n},$ for all $\eps$ satisfying 
\begin{equation*}
\eps \geq D\frac{\sigma_Y\sqrt{s\log R}}{\sqrt n},
\end{equation*}
with probability at least $1-e^{-s}$
\begin{align}
\label{oracle2}
\Big\|f_{\hat\lambda_\eps,\hat a_{\eps}}-&f_{\ast}\Big\|_{L_2(\Pi)}^2 
\leq 
\inf_{\lambda \in {\mb D}_r, a\in \mb R}\biggl[\l\|f_{\lambda,a}-f_{\ast}\r\|_{L_2(\Pi)}^2+  
\\
\nonumber
& 
\fr{C}\l(\sigma_Y^{2}
r^{d}\r)^{\frac{2p-d}{2p+d}} L^{d/(2p+d)}
\frac{\|\lambda\|_1^{2d/(2p+d)}N(\lambda)^{(2p-d)/(2p+d)}}{n^{2p/(2p+d)}}+ \\
\nonumber
&
\fr{C}\frac{\sigma_Y^2 N(\lambda)}{n}+
\fr{C} r^{d}(1+r^{-p})^2 N(\lambda)\eps^2 
+\fr{C}\frac{\|\lambda\|_1^2 \log R}{n}\biggr]
+\fr{C}\frac{\sigma_Y^2 \bar s}{n},
\end{align}
where
$
L:=\log n \vee |\log \sigma_Y|\vee \log R \vee |\log r|.
$
\end{theorem}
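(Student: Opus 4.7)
The plan is to derive Theorem \ref{th:main_stationary_A} as a consequence of Corollary \ref{cor:weak_correlation}, with the partition chosen to reflect the geometry of the oracle's support. Concretely, I would set $\Delta = \{\mb T_0, \mb T_1, \dots, \mb T_{N(\lambda)}\}$, where $\mb T_j := B(t_j;r)$ for $j \geq 1$ and $\mb T_0 := \mb T\setminus \bigcup_{j\geq 1}\mb T_j$. The sign condition in the definition of $\mb D_r$ guarantees $\lambda \in \mb D_\Delta$ for this partition, so Corollary \ref{cor:weak_correlation} applies. Since $X$ is stationary on all of $\mb R^d$, each restricted process $X^{(j)}$ inherits the spectral density $v$ satisfying (\ref{ul_spectrum_01}), so Proposition \ref{stationary_sobolev} is available uniformly in $j$. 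One then selects a smooth subgradient $w = \sum_{j=1}^{N(\lambda)} w_j \in \partial\|\lambda\|_1$ with each $w_j \in C_c^\infty(B(t_j; 3r/2))$, $\|w_j\|_\infty \leq 1$, and $w_j(t) = \sign(\lambda(t))$ on $B(t_j;r)\cap \supp(\lambda)$; the $2r$-separation of centers ensures disjoint supports, so $w$ is well defined.

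A bump-function scaling argument combined with Proposition \ref{stationary_sobolev} yields $\|w_j\|_{K_j} \leq C\|w_j\|_{\mb W^{2,p}(\mb R^d)} \leq C' r^{d/2}(1+r^{-p})$, whence the alignment term $2\eps^2\beta^2(w,\lambda)\sum_{j\in J_\lambda}\|w_j\|_{K_j}^2$ from Corollary \ref{cor:weak_correlation} reproduces the $\fr{C} r^d (1+r^{-p})^2 N(\lambda)\eps^2$ term of (\ref{oracle2}), provided $\beta(w,\lambda) = O(1)$. The bound $S(\mb T) \leq C\sqrt{\log R}$ follows from (\ref{eq:chaining1}) together with standard Euclidean covering estimates for the ball of radius $R$ (the pseudometric $d_X$ being controlled by Euclidean distance thanks to the smoothness of $k$ implied by the integrability of $v$); this yields the admissible range of $\eps$ and the $\|\lambda\|_1^2 \log R / n$ residual.

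The more delicate piece is the approximate dimension $\fr d_j(w,\lambda)$. The two-sided spectral density bound (\ref{ul_spectrum_01}), by classical results on eigenvalue asymptotics for integral operators whose kernels satisfy isotropic Sobolev-type bounds, gives $\nu_k^{(j)} \asymp r^d k^{-2p/d}$ after volume scaling, hence $\rho_m(w_j)^2 \lesssim r^d m^{-(2p-d)/d}$ for all $m$ past an absolute constant. A generic-chaining estimate for the residual process after projection onto the top $m$ Karhunen--Loève components gives a matching bound on $\gamma_2(\rho_m(w_j))$ up to logarithmic factors in $L$. Solving the balance equation (\ref{eq:approx_dim_j}) for $\fr d_j$ then produces $\fr d_j \asymp (\|\lambda\|_1 r^{d/2}\sqrt n/\sigma_Y^2)^{2d/(2p+d)} L^{d/(2p+d)}$, and summing over $j \in J_\lambda$ and multiplying by $\sigma_Y^2/n$ reproduces the nonparametric rate term with the correct exponent $2p/(2p+d)$ in (\ref{oracle2}).

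The main obstacle is twofold. First, one must show $\beta(w,\lambda) \leq \textrm{const}$: through Proposition \ref{prop:rip} this reduces to bounding a restricted isometry constant $\delta_{3d}$ for collections of functions supported on well-separated balls of radius $r$ in a single stationary process. The diagonal entries of the associated Gram matrix are controlled by the two-sided bound on $v$, while the off-diagonal entries $\Cov(f_{u_i},f_{u_j}) = \int \hat u_i(\xi)\overline{\hat u_j(\xi)}\,v(\xi)\,d\xi$ must be shown to be small, which follows from the oscillatory cancellation induced by the $2r$-separation of supports combined with the polynomial decay of $v$. Second, the fine control of $\gamma_2(\rho_m(w_j))$ rather than mere $L_2$ approximation error requires a careful multi-scale chaining argument on the residual process; a uniform-in-subspace version of the generic chaining complexity on a Euclidean ball of radius $r$ is needed. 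Once both obstacles are resolved, assembling these estimates inside Corollary \ref{cor:weak_correlation} and noting that the sign condition yields $\mb D_r \subset \mb D_\Delta$ delivers (\ref{oracle2}).
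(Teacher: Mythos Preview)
Your route through Corollary \ref{cor:weak_correlation} introduces the factor $\beta^2(w,\lambda)$ and then requires you to show $\beta(w,\lambda)=O_{B,p,d}(1)$. This step fails. For a single stationary process with spectral density satisfying (\ref{ul_spectrum_01}), values on balls separated by only $2r$ are highly correlated when $r$ is small: from the proof of Lemma \ref{gamma_dva}, $d_X^2(t_1,t_2)\lesssim |t_1-t_2|^{(2p-d)\wedge 2}$. Take $u=u_1+u_2$ with $u_1,u_2$ supported on two balls in $J_\lambda$ whose centers are $4r$ apart (the cone condition in $C_{\gamma,J}$ is vacuous since $u$ vanishes off $J$). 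Then $\|f_u\|_{L_2(\Pi)}^2\lesssim r^{(2p-d)\wedge 2}$ while $\|f_{u_1}\|_{L_2(\Pi)}^2+\|f_{u_2}\|_{L_2(\Pi)}^2\asymp 1$, so $\beta_2^{(\gamma)}(J_\lambda)\gtrsim r^{-(p-d/2)\wedge 1}$, which diverges as $r\to 0$. The RIP argument via Proposition \ref{prop:rip} cannot help: the off-diagonal correlations are close to the diagonal ones, so $\delta_{2d}$ is near $1$, not small. The ``oscillatory cancellation'' you invoke does not occur at separation scale $r$ for a process whose covariance is nearly constant on that scale.

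The paper sidesteps this entirely. It applies Theorem \ref{th:main_AA} directly (not the corollary) and bounds the alignment coefficient by the \emph{global} RKHS norm: $\fr a(w)\leq \|w\|_K$. Since the process is stationary on all of $\mb R^d$, Proposition \ref{stationary_sobolev} gives $\|w\|_K\leq C\|\tilde w\|_{\mb W^{2,p}(\mb R^d)}$ for any extension $\tilde w=\sum_j\tilde w_j$. The disjoint-support Sobolev decomposition (Proposition \ref{additive_sob}) then yields $\fr a^2(w)\leq C(r^d+r^{d-2p})N(\lambda)$ with no $\beta$ factor at all---which is exactly why (\ref{oracle2}) has no $\beta^2(\lambda)$, unlike (\ref{oracle1}). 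For the approximate dimension, the paper does not invoke Karhunen--Lo\`eve asymptotics but proves Lemma \ref{kolm_width} directly via the spectral representation $X(t)=\int e^{i\langle t,s\rangle}Z(ds)$ and Taylor expansion of the exponential, then combines with Lemma \ref{gamma_dva} to control $\gamma_2(\rho_m(w))$. Your eigenvalue-asymptotics suggestion is plausible in spirit but would need the same generic-chaining refinement; the paper's construction is more explicit and self-contained.
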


Note that in Theorems \ref{th:main_stationary} and \ref{th:main_stationary_A} the error 
rate depends on ``sparsity parameter'' $N(\lambda)$ (its meaning is somewhat different 
in these two cases). Moreover, the error rate involves a ``nonparametric term'' 
$O(n^{-{2p/(2p+d)}}).$ Thus, $p>d/2$ plays a role of smoothness parameter in this 
problem.

Often, it is natural to assume that the target $f_\ast(X)$ can be approximated by $f_\Lambda(X),$ where $\Lambda$ is a discrete signed measure 
supported on a ``well-separated'' subset of $\mb T$, so that  
$
f_{\Lambda}(X)=\sum\limits_{j=1}^{N(\Lambda)}\lambda_{j}X(t_j),  
$
where
$
\min\limits_{1\leq i<j\leq N(\Lambda)}|t_i-t_j|\geq 3\delta(\Lambda)>0
$ 
and $\delta(\Lambda)$ is large enough. Such a discrete oracle $\Lambda$ can 
be further approximated by a linear combination of continuous ``spikes'' 
supported in well separated disjoint balls of radius $r>0.$
This can be done for an arbitrary $r<\delta(\Lambda)$ and optimizing 
the bound of Theorem \ref{th:main_stationary_A} with respect to $r$
would lead to a bound with a faster error rate. We will implement this 
in a special (and practically important) case when the design processes $X_j, j=1,\dots, n$ are observed on a discrete grid in $\mb R^d.$
Specifically, assume that 
$\mb T=\m G_N=\l\{t_j=\frac{2\pi}{N}j, \ j\in \{1,\ldots,N\}^d\r\}$ and it is equipped with the counting measure $\mu,$ see section \ref{stationary} for more details. Note that in this case we are in the framework of a standard high-dimensional linear regression with highly correlated design. 
Functions $\lambda$ on $\mb T$ can be identified with vectors in 
${\mb R}^{N^d}$ and we will assume that $\mb D:=R^{N^d}.$
Suppose that Assumption \ref{two_sided} holds and 
let $\lambda$ be an oracle such that $J(\lambda)=\supp(\lambda)\subset \{1,\ldots,N\}^d$, $N(\lambda):=\card(J(\lambda))$, and
$$
\min_{i,j\in J(\lambda),i\ne j} \frac{|i-j|}{N}=:2\delta(\lambda)\geq \frac{1}{N},
$$
where $|i-j|$ stands for the usual Euclidean distance in $\mb R^d$. 
We are mainly interested in the oracles $\lambda$ with ``well-separated'' non-zero elements, meaning that 
$\delta(\lambda)\gg \frac 1 N$.  
In this setting, the following result holds. 

\begin{theorem}
\label{th:main_stationary_B}
There exist constants $\fr{C}, \fr{c}$ and $D$ depending only on $B,p,d$ such that the following holds.
For any $s\geq 1$ with 
$\bar s:=s+3\log (\log_2 n+2)+3 \leq \fr{c}\frac{\sqrt{n}}{\log n},$ let 
\begin{equation*}
\eps = D\frac{\sigma_Y\sqrt{s}}{\sqrt n}.
\end{equation*}
Then with probability at least $1-e^{-s}$
\begin{align*}
\Big\|f_{\hat\lambda_\eps,\hat a_{\eps}}-f_{\ast}\Big\|_{L_2(\Pi)}^2 
&\leq 
\inf_{\lambda \in \mb R^{N^d}, a\in \mb R}\biggl[\l\|f_{\lambda,a}-f_{\ast}\r\|_{L_2(\Pi)}^2+  \fr{C} \delta(\lambda)^{d-2p}\sigma_Y^2 \frac{N(\lambda)s}{n}+
\\
& 
\fr{C}\sigma_Y^{\frac{2p}{p+d}}\big(s L\|\lambda\|_1^2\big)^{\frac{d}{2p+2d}}
\frac{N(\lambda)^{p/(p+d)}}{n^{(2p+d)/(2p+2d)}}+ 
\fr{C}\frac{\|\lambda\|_1^2}{n}\biggr]
+\fr{C}\frac{\sigma_Y^2 \bar s}{n}.
\end{align*}
where $L=\log n \vee \log N \vee|\log  \sigma_Y|.$
\end{theorem}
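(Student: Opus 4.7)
The plan is to prove Theorem \ref{th:main_stationary_B} by running the argument behind Theorem \ref{th:main_stationary_A} on the discrete grid $\m G_N$ while keeping the spike radius $r$ as a free parameter, and then optimizing over $r\in[1/N,\delta(\lambda)]$ (exactly the program indicated in the paragraph preceding the statement). The bridge between the discrete and continuous worlds is Proposition \ref{interpolation}: it lets us control the discrete RKHS norm $\|\vec w\|_K$ for $\vec w\in\partial\|\lambda\|_1$ by the continuous Sobolev norm of a smooth interpolant $\tilde w\in \mb W^{2,p}(\mb R^d)$.

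For any admissible $r$, I would fix a smooth template $\psi\in C_c^\infty(\mb R^d)$ with $\psi(0)=1$ and $\supp \psi\subset B(0;1)$, and set $\tilde w(t):=\sum_{i\in J(\lambda)}\sign(\lambda(t_i))\,\psi((t-t_i)/r)$. Its grid restriction $\vec w$ lies in $\partial\|\lambda\|_1$, the bumps have disjoint supports because $r\leq\delta(\lambda)$, and a standard rescaling gives $\|\tilde w\|^2_{\mb W^{2,p}(\mb R^d)}\lesssim N(\lambda)\,r^{d-2p}$. Proposition \ref{interpolation} then yields $\fr a^2(w)\leq \|\vec w\|^2_K\lesssim N(\lambda)\,r^{d-2p}$ (the factor $\beta(w,\lambda)$ obtained by further invoking Proposition \ref{bound_beta} with the partition into balls $B(t_i;r)\cap \m G_N$ is $O(1)$ under Assumption \ref{two_sided} via Proposition \ref{prop:rip}, so it disappears into constants). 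For the approximate dimension $d(w,\lambda)$ I would use Proposition \ref{dim_loc}, which reduces the task to bounding each local ${\fr d}_i(w,\lambda)$ attached to a single ball. The two-sided spectral estimate of Assumption \ref{two_sided} gives Weyl-type asymptotics for the covariance operator of $X$ restricted to a ball of radius $r$, namely $\rho_m(w_i)^2\lesssim r^{2p-d}m^{-2p/d}$; plugging this into (\ref{eq:approx_dim_j}) and summing over $i\in J(\lambda)$ produces a contribution of order $(\sigma_Y^2 r^d)^{(2p-d)/(2p+d)}L^{d/(2p+d)}\|\lambda\|_1^{2d/(2p+d)}N(\lambda)^{(2p-d)/(2p+d)}/n^{2p/(2p+d)}$, mirroring Theorem \ref{th:main_stationary_A}.

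Plugging both estimates into Theorem \ref{th:main_AA} (or Corollary \ref{cor:weak_correlation}) gives, for every $r\in[1/N,\delta(\lambda)]$, an upper bound of the form $\|f_{\lambda,a}-f_\ast\|_{L_2(\Pi)}^2+C r^{d-2p}N(\lambda)\eps^2+B(r)+\text{l.o.t.}$, where $B(r)$ is the nonparametric term displayed above. With $\eps\sim \sigma_Y\sqrt{s/n}$, the first $r$-dependent term is decreasing in $r$ while $B(r)$ is increasing; equating them and using $p>d/2$ (direct exponent algebra, already verified at the level of Theorem \ref{th:main_stationary_A}) gives an optimal $r^*$ at which the common value equals $\sigma_Y^{2p/(p+d)}(sL\|\lambda\|_1^2)^{d/(2p+2d)}N(\lambda)^{p/(p+d)}n^{-(2p+d)/(2p+2d)}$, precisely the second term in the theorem. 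When $r^*\leq\delta(\lambda)$ this is the final bound; otherwise the constraint forces $r=\delta(\lambda)$ and the alignment term becomes $\delta(\lambda)^{d-2p}\sigma_Y^2 N(\lambda)s/n$, the first term. Taking the maximum of the two covers both regimes, and the $\fr{C}\|\lambda\|_1^2/n$ term comes from the $\|\lambda\|_1^2 S^2(\mb T)/n$ contribution in Theorem \ref{th:main_AA} with $S(\mb T)=O(1)$ on the bounded grid (so that $\log R$ is absent and $|\log r|\leq \log N$ gets absorbed into $L$). The main technical obstacle I expect is the rigorous justification of the local Kolmogorov-width bound on the discrete grid rather than on the continuous restriction: one must argue that the grid spacing $1/N\leq r$ does not distort the Weyl-type eigenvalue asymptotics. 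The two-sided spectral bound in Assumption \ref{two_sided} makes this essentially mesh-independent, but the bookkeeping of constants and of the logarithmic factors hidden in $L$ requires care.
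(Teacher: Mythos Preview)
Your proposal is correct and follows essentially the same route as the paper: construct smooth bump interpolants of the sign pattern with a free radius $r\leq\delta(\lambda)$, invoke Proposition \ref{interpolation} to control $\fr a^2(w)\leq\|\vec w\|_K^2\lesssim N(\lambda)r^{d-2p}$, bound $d(w,\lambda)$ via the Kolmogorov-width estimates inherited from the continuous case (Lemma \ref{kolm_width} and (\ref{eq:dim})), substitute into Theorem \ref{th:main_AA}, and optimize over $r$ with $r_\ast=\min(\tilde r,\delta(\lambda))$. The only minor deviation is that you pass through Proposition \ref{dim_loc} to localize the dimension bound to individual balls, whereas the paper bounds $\gamma_2(\rho_m(w))$ and $d(w,\lambda)$ directly by their continuous counterparts on $[0,2\pi]^d$ (the discrete $X_{\mb T_w}$ is a subset of the continuous one, so the widths are automatically no larger); both yield the same estimate, and your concern about mesh effects on the Kolmogorov widths is resolved by this simple monotonicity.
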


\section{Proofs of the main results}

\subsection{Preliminaries}
Recall that 
$$
F_n(\lambda,a):=P_n(\ell \bullet f_{\lambda,a})+\varepsilon\|\lambda\|_1,\ \ 
F(\lambda,a):=P(\ell \bullet f_{\lambda,a})+\varepsilon\|\lambda\|_1.
$$

In the proofs of the main results, we will use necessary conditions for the minima in problems (\ref{empirical}), (\ref{true}) that will be stated now. 
Given a convex functional $H: L_1(\mu)\times \mb R\mapsto \mb R,$ define its directional 
derivative at a point $(\lambda,a)\in L_1(\mu)\times \mb R$ in direction 
$u=(u_1,u_2)\in L_1(\mu)\times \mb R$ as 
\begin{align*}
&
DH(\lambda,a)(u):=\lim_{t\downarrow 0}\frac{H((\lambda,a) +t u)-H(\lambda,a)}{t}.
\end{align*}

\begin{proposition}
\label{derivative}
For any $\lambda_1,\lambda_2\in \mb D$ and $a_1,a_2\in \mb R$,  
\begin{align*}
&
DF_n(\lambda_1,a_1)(\lambda_2-\lambda_1, a_2-a_1)=
P_n(\ell'\bullet f_{\lambda_1,a_1})(f_{\lambda_2,a_2}-f_{\lambda_1,a_1})+
\eps\dotp{w_1}{\lambda_2-\lambda_1}
\end{align*}
for some $w_1\in \partial \|\lambda_1\|_1$ that depends on $\lambda_2$.
Similarly,
\begin{align*}
&
DF(\lambda_1,a_1)(\lambda_2-\lambda_1, a_2-a_1)=
P(\ell'\bullet f_{\lambda_1,a_1})(f_{\lambda_2,a_2}-f_{\lambda_1,a_1})+
\eps\dotp{w_1}{\lambda_2-\lambda_1}
\end{align*}
\end{proposition}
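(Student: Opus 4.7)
The plan is to handle the two summands of $F_n(\lambda,a) = P_n(\ell\bullet f_{\lambda,a}) + \eps\|\lambda\|_1$ separately and then add the directional derivatives. The quadratic part is smooth and the chain rule applies, while the $\|\cdot\|_1$ piece is only directionally differentiable, and this is where the subgradient $w_1$ appears.

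First I would treat the empirical risk $G_n(\lambda,a):=P_n(\ell\bullet f_{\lambda,a})$. Because $f_{\lambda,a}(X)=a+\langle\lambda,X\rangle$ is affine in $(\lambda,a)$ and $\ell(y,u)=(y-u)^2$ is $C^2$, $G_n$ is Gateaux (in fact Fréchet) differentiable, so its directional derivative in direction $(\lambda_2-\lambda_1,a_2-a_1)$ equals
\[
\frac{d}{dt}\Big|_{t=0}P_n\bigl(\ell(Y,f_{(1-t)(\lambda_1,a_1)+t(\lambda_2,a_2)}(X))\bigr)
= P_n\bigl[\ell'(Y,f_{\lambda_1,a_1}(X))(f_{\lambda_2,a_2}(X)-f_{\lambda_1,a_1}(X))\bigr],
\]
by the chain rule together with the fact that the integrand converges pointwise and admits an $L_1(P_n)$-dominating function (the empirical measure is a finite sum so no further argument is needed).

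Next I would compute the one-sided directional derivative of $H(\lambda):=\eps\|\lambda\|_1$ at $\lambda_1$ in direction $h:=\lambda_2-\lambda_1$. Pointwise, for $\mu$-a.e.\ $t\in\mb T$ one checks: if $\lambda_1(t)\neq 0$, then for all $t>0$ sufficiently small $|\lambda_1(t)+th(t)|-|\lambda_1(t)|=t\sign(\lambda_1(t))h(t)$; if $\lambda_1(t)=0$, then $t^{-1}(|\lambda_1(t)+th(t)|-|\lambda_1(t)|)=|h(t)|$. Since the difference quotients are bounded in absolute value by $|h|\in L_1(\mu)$, dominated convergence yields
\[
DH(\lambda_1)(h)=\eps\int_{\{\lambda_1\neq 0\}}\sign(\lambda_1(t))\,h(t)\,\mu(dt)+\eps\int_{\{\lambda_1=0\}}|h(t)|\,\mu(dt).
\]
Now define the function $w_1\colon\mb T\to[-1,1]$ by $w_1(t):=\sign(\lambda_1(t))$ on $\{\lambda_1\neq 0\}$ and $w_1(t):=\sign(h(t))$ on $\{\lambda_1=0\}$ (with $w_1(t):=0$ on the $\mu$-null leftover where $h(t)=0$). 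By the description of $\partial\|\lambda_1\|_1$ in \eqref{sub}, we have $w_1\in\partial\|\lambda_1\|_1$, and on $\{\lambda_1=0\}$ we have $w_1(t)h(t)=|h(t)|$, so the integral above equals $\eps\langle w_1,h\rangle$. Clearly this $w_1$ depends on the direction $h=\lambda_2-\lambda_1$, hence on $\lambda_2$, as stated.

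Adding the two contributions gives the formula for $DF_n(\lambda_1,a_1)(\lambda_2-\lambda_1,a_2-a_1)$. The identity for $F$ is proved by exactly the same argument with $P$ in place of $P_n$; the only extra check is that interchanging the derivative with $\mb E$ is legitimate, which follows from the fact that the difference quotient of $(y-a-\langle\lambda,x\rangle)^2$ along the line segment is uniformly bounded by an affine function of $|Y|,\|X\|_\infty,\|\lambda_1\|_1,\|\lambda_2\|_1$, which is in $L_1(P)$ under the standing subgaussian assumptions (Assumption on $\mb T$ plus $Y\in\mathcal L$). The main technical obstacle is simply the careful bookkeeping on $\{\lambda_1=0\}$ to produce a subgradient matching the directional derivative, since a naive choice $w(t)=\sign(\lambda_1(t))$ is not in $\partial\|\lambda_1\|_1$ when $\lambda_1$ vanishes on a set of positive $\mu$-measure.
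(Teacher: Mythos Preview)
Your proof is correct and follows essentially the same route as the paper: compute the smooth part by the chain rule, compute the directional derivative of $\|\cdot\|_1$ pointwise, pass the limit through the integral, and then package the result as $\langle w_1,\lambda_2-\lambda_1\rangle$ with $w_1\in\partial\|\lambda_1\|_1$ defined exactly as you do. The only cosmetic difference is that the paper invokes monotone convergence (using convexity of $s\mapsto|\lambda_1(t)+sv(t)|$ so the difference quotients are nondecreasing) where you use dominated convergence; both are fine here.

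One small inaccuracy in your closing remark: the ``naive'' choice $w(t)=\sign(\lambda_1(t))$ with $\sign(0)=0$ \emph{is} in $\partial\|\lambda_1\|_1$ by the description in~\eqref{sub}; the point is rather that this particular subgradient does not realize the directional derivative (it misses the contribution $\int_{\{\lambda_1=0\}}|h|\,d\mu$), which is why you must pick $w_1=\sign(h)$ on $\{\lambda_1=0\}$.
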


\begin{proof}
Treatment of the terms $P_n(\ell\bullet f_{\lambda,a}), P(\ell\bullet f_{\lambda,a})$ is straightforward, so it only remains to examine the $L_1$-penalty term. 
Let $v:=\lambda_2-\lambda_1$. 
Since the function $(0,1)\ni s\mapsto |\lambda_1(t)+s v(t)|$ is convex, we have that 
$$
(0,1)\ni s\mapsto \frac{|\lambda_1(t)+s v(t)|-|\lambda_1(t)|}{s}
$$ 
is nondecreasing. 
Given a decreasing sequence $\l\{s_n\r\}_{n\geq 0}\subset (0,1)$ such that $s_n\to 0$, the sequence of functions 
$$
g_n(t):=\frac{|\lambda_1(t)+s_n v(t)|-|\lambda_1(t)|}{s_n}
$$ 
monotonically converges to 
$$
g(t)=
\begin{cases}
\sign(\lambda_1(t)) v(t), & \lambda_1(t)\ne 0 \\
\sign(v(t)) v(t), & \text{else}.
\end{cases}
$$
Moreover, $g_n(t)$ are integrable, and the monotone convergence theorem implies that
$$
\lim\limits_{n\to\infty} \int\limits_\mb T g_n d\mu=\int\limits_\mb T g d\mu=\int\limits_\mb T w_1u d\mu,
$$
where $|w_1(t)|\leq 1, t\in \mb T$ and 
$w_1(t)=\sign(\lambda_1(t)), \ \lambda_1(t)\ne 0.$  
In particular, $w_1\in \partial \|\lambda_1\|_1$. 

\end{proof}

When $\lambda_1=\hat\lambda_\eps$ (which minimizes $F_n$), the corresponding directional derivatives must be nonnegative for any $\lambda_2\in \mb D$.

\subsection{Proof of Theorem \ref{approx}.}
Let $(\bar\lambda,\bar w,\bar a)$ be a triple that minimizes the right hand side of (\ref{eq:approx}). 
If the infimum is not attained, one can consider the triple for which the right hand side is arbitrarily close to the infimum and follow the argument below. 
 
Since $(\lambda_\eps,a_\eps)$ minimizes $F(\lambda,a)$ over $\mb D\times \mb R$, the directional derivative 
\[
DF(\lambda_\eps,a_\eps)(\bar\lambda-\lambda_\eps,\bar a-a_\eps)
\] 
is nonnegative for any 
$\bar\lambda\in \mb D$, $\bar a\in \mb R$. 
By Proposition \ref{derivative}, this is equivalent to the following:
there exists $w_\eps\in \partial \|\lambda_\eps\|_1$ such that 
\begin{align}
\label{bb2}
P(\ell'\bullet f_{\lambda_{\eps},a_{\eps}})(f_{\lambda_{\eps},a_{\eps}}-f_{\bar \lambda, \bar a})+
\eps\dotp{w_\eps}{\lambda_\eps-\bar\lambda}\leq 0.
\end{align}
Let $\bar w\in \partial \|\bar\lambda\|_1$. 
Since 
$$
(\ell'\bullet f_{\lambda_{\eps},a_{\eps}})(x,y)=2(f_{\lambda_{\eps},a_{\eps}}(x)-y)
$$
and also $Y=f_{\ast}(X)+\xi,$ where ${\mb E}(\xi|X)=0,$
we have 
\begin{align*}
P(\ell'\bullet f_{\lambda_{\eps},a_{\eps}})(f_{\lambda_{\eps},a_{\eps}}-f_{\bar \lambda, \bar a})&=
2{\mathbb E}(f_{\lambda_{\eps},a_{\eps}}(X)-Y)(f_{\lambda_{\eps},a_{\eps}}(X)-f_{\bar \lambda, \bar a}(X))\\
&
=2\langle f_{\lambda_{\eps},a_{\eps}}-f_{\ast}, f_{\lambda_{\eps},a_{\eps}}-f_{\bar \lambda,\bar a}\rangle_{L_2(\Pi)}. 
\end{align*}
Thus, (\ref{bb2}) can be rewritten as
\begin{align}
\label{bb3}
2\langle f_{\lambda_{\eps},a_{\eps}}-f_{\ast}, f_{\lambda_{\eps},a_{\eps}}-f_{\bar \lambda,\bar a}\rangle_{L_2(\Pi)}
+
\eps\dotp{w_\eps-\bar w}{\lambda_\eps-\bar\lambda}
\leq \eps\dotp{\bar w}{\lambda_\eps-\bar\lambda}.
\end{align}
Note that 
\begin{align*}
2\langle f_{\lambda_{\eps},a_{\eps}}-f_{\ast}, f_{\lambda_{\eps},a_{\eps}}-f_{\bar \lambda,\bar a}\rangle_{L_2(\Pi)}
=&
\|f_{\lambda_\eps,a_\eps}-f_{\ast}\|_{L_2(\Pi)}^2+
\|f_{\lambda_\eps,a_\eps}-f_{\bar\lambda,\bar a}\|_{L_2(\Pi)}^2 \\
&
-\|f_{\bar\lambda,\bar a}-f_{\ast}\|_{L_2(\Pi)}^2
\end{align*}
and
\begin{align*}
\dotp{w_\eps-\bar w}{\lambda_\eps-\bar\lambda}\geq \frac 1 2 \int\limits_{\mb T\setminus \mb T_{\bar w}}|\lambda_\eps|d\mu.
\end{align*}
Hence
\begin{align}
\label{eq:base_ineq}
\|f_{\lambda_\eps,a_\eps}-f_{\ast}\|_{L_2(\Pi)}^2+&
\|f_{\lambda_\eps,a_\eps}-f_{\bar\lambda,\bar a}\|_{L_2(\Pi)}^2+\frac{\eps}{2} \int\limits_{\mb T\setminus \mb T_{\bar w}}|\lambda_\eps|d\mu\leq \\
&\nonumber
\leq
\|f_{\bar\lambda,\bar a}-f_{\ast}\|_{L_2(\Pi)}^2+\eps\dotp{\bar w}{\lambda_\eps-\bar\lambda}.
\end{align}
Consider two cases: first, if 
$$
\|f_{\lambda_\eps,a_\eps}-f_{\ast}\|_{L_2(\Pi)}^2+\frac{\eps}{4} \int\limits_{\mb T\setminus \mb T_{\bar w}}|\lambda_\eps|d\mu\leq \|f_{\bar\lambda,\bar a}-f_{\ast}\|_{L_2(\Pi)}^2,$$
then inequality (\ref{eq:approx}) clearly holds. 
Otherwise, (\ref{eq:base_ineq}) implies that 
$$
\int\limits_{\mb T\setminus \mb T_{\bar w}}|\lambda_\eps|d\mu=
\int\limits_{\mb T\setminus \mb T_{\bar w}}|\lambda_\eps-\bar \lambda|d\mu
\leq 4\dotp{\bar w}{\lambda_\eps-\bar\lambda}.
$$ 
Hence, $\lambda_\eps-\bar\lambda\in C_{\bar w}^{(4)}$ 
and
$$
\eps\dotp{\bar w}{\lambda_\eps-\bar\lambda}\leq \eps \l\|f_{\lambda_\eps,a_\eps}-f_{\bar\lambda,\bar a}\r\|_{L_2(\Pi)}\fr a(\bar w)\leq
\frac 14 \eps^2 \fr a^2(\bar w) + \l\|f_{\lambda_\eps,a_\eps}-f_{\bar\lambda,\bar a}\r\|_{L_2(\Pi)}^2,
$$
where we used the definition of $\fr a(\bar w)$ and a simple inequality $ab\leq \frac 14 a^2+b^2$. 
Substituting this bound into (\ref{eq:base_ineq}) gives the result.

\subsection{Proof of Theorem \ref{th:main_AA}.}
Throughout the proof, $C,C_1, c, c_1,$ etc denote absolute constants 
whose values may change from line to line.

\textit{Step 1. Reduction to empirical processes.}
Let $(\bar \lambda, \bar w, \bar a)$ be a triple that minimizes the right hand side 
of bound (\ref{oracle}). 
Clearly, $\bar a= a(\bar \lambda),$ see (\ref{g1}). 
If the infimum is not attained, it is easy to modify 
the argument by considering a triple for which the right hand side is arbitrarily 
close to the infimimum. 
Since $0\in {\mathbb D}$ and, for $\lambda=0,$ one can also 
take $w=0$ and $a={\mb E}Y={\mb E}f_{\ast}(X),$ we have that 
\begin{equation}
\label{barlambda_1}
\|f_{\bar \lambda, \bar a}-f_{\ast}\|_{L_2(\Pi)}^2 \leq 
\|f_{\ast}-\Pi f_{\ast}\|_{L_2(\Pi)}^2={\rm Var}(f_{\ast}(X))
\end{equation} 
and 
\begin{equation}
\label{barlambda_2}
\|\bar \lambda\|_1^2 \leq 
\frac{\|f_{\ast}-\Pi f_{\ast}\|_{L_2(\Pi)}^2 n}{C S^2({\mathbb T})}.
\end{equation}
We will write in what follows $\hat \lambda=\hat \lambda_{\eps}$ and
set $\hat a:=\hat a(\hat\lambda)$. 
Since $(\hat\lambda,\hat a)$ minimizes $F_n(\lambda,a)$ over $\mb D\times \mb R$, the directional derivative 
$DF_n(\hat \lambda, \hat a)(\bar\lambda-\hat\lambda,\bar a-\hat a)$ is nonnegative.
Here and in what follows, we use the ``optimal'' value $\bar a=a(\bar \lambda)$, see (\ref{g1}).  
By Proposition \ref{derivative}, this is equivalent to the following:
there exists $\hat w\in \partial \|\hat\lambda\|_1$ such that 
\begin{align}
\label{r2}
P_n(\ell' \bullet f_{\hat \lambda, \hat a})(f_{\hat \lambda, \hat a}-f_{\bar \lambda, \bar a})+ 
\eps\dotp{\hat w}{\hat\lambda-\bar\lambda}\leq 0.
\end{align}
Since $\bar w\in \partial \|\bar\lambda\|_1,$
(\ref{r2}) can be rewritten as
\begin{align}
\label{r3}
\nonumber
P(\ell' \bullet f_{\hat \lambda, \hat a})(f_{\hat \lambda, \hat a}-f_{\bar \lambda, \bar a})
&+\eps\dotp{\hat w-\bar w}{\hat\lambda-\bar\lambda}\leq \\
& \leq
\eps\dotp{\bar w}{\bar\lambda-\hat\lambda}+
(P-P_n)(\ell' \bullet f_{\hat \lambda, \hat a})\l(f_{\hat \lambda, \hat a}-f_{\bar \lambda, \bar a}\r).
\end{align}
Denote 
\index{az@$\eta(x,y)$}
$
\eta (x,y):= y-f_{\bar \lambda, \bar a}(x).
$
Observe that 
$$
\l(\ell'\bullet f_{\hat \lambda,\hat a}\r)(x,y)= 
-2(y-f_{\hat \lambda, \hat a}(x))=
-2\eta(x,y)+ 2\l(f_{\hat \lambda, \hat a}(x)-f_{\bar \lambda, \bar a}(x)\r)
$$
and, since $Y=f_{\ast}(X)+\xi,$ ${\mathbb E}(\xi|X)=0$, 
\begin{align*}
&
-P\l[\eta(f_{\hat \lambda, \hat a}-f_{\bar \lambda, \bar a})\r]=
-{\mathbb E}\eta(X,Y)\l(f_{\hat \lambda, \hat a}(X)-f_{\bar \lambda, \bar a}(X)\r)=\\
&
=
-{\mathbb E}\l(\xi + f_{\ast}(X)-f_{\bar \lambda,\bar a}(X)\r)\l(f_{\hat \lambda, \hat a}(X)-f_{\bar \lambda, \bar a}(X)\r)=
\dotp{f_{\bar \lambda,\bar a}-f_{\ast}}{f_{\hat \lambda, \hat a}-f_{\bar \lambda, \bar a}}_{L_2(\Pi)}.
\end{align*}
Therefore, we get the following bound:
\begin{align*}
&
2\dotp{f_{\bar \lambda,\bar a}-f_{\ast}}{f_{\hat \lambda, \hat a}-f_{\bar \lambda, \bar a}}_{L_2(\Pi)}+2\l\|f_{\hat \lambda, \hat a}-f_{\bar \lambda, \bar a}\r\|_{L_2(\Pi)}^2 
+\eps\dotp{\hat w-\bar w}{\hat\lambda-\bar\lambda}\leq 
\\
&
\leq
\eps\dotp{\bar w}{\bar\lambda-\hat\lambda}+
2(P_n-P)\eta(f_{\hat \lambda, \hat a}-f_{\bar \lambda, \bar a})
+2(\Pi-\Pi_n)(f_{\hat \lambda, \hat a}-f_{\bar \lambda, \bar a})^2.
\end{align*}
Using the fact that 
\begin{align*} 
2\dotp{f_{\bar \lambda,\bar a}-f_{\ast}}{f_{\hat \lambda, \hat a}-f_{\bar \lambda, \bar a}}_{L_2(\Pi)}=&
\|f_{\hat \lambda, \hat a}-f_{\ast}\|_{L_2(\Pi)}^2
-\|f_{\hat \lambda, \hat a}-f_{\bar \lambda, \bar a}\|_{L_2(\Pi)}^2 \\
&
-\|f_{\bar \lambda, \bar a}-f_{\ast}\|_{L_2(\Pi)}^2,
\end{align*}
it can be rewritten as 
\begin{align}
\label{r3'}
&
\|f_{\hat \lambda, \hat a}-f_{\ast}\|_{L_2(\Pi)}^2
+\|f_{\hat \lambda, \hat a}-f_{\bar \lambda, \bar a}\|_{L_2(\Pi)}^2 
+\eps\dotp{\hat w-\bar w}{\hat\lambda-\bar\lambda}\leq 
\|f_{\bar \lambda, \bar a}-f_{\ast}\|_{L_2(\Pi)}^2+\\ 
&\nonumber
+
\eps\dotp{\bar w}{\bar\lambda-\hat\lambda}+ 
2(P_n-P)\eta(f_{\hat \lambda, \hat a}-f_{\bar \lambda, \bar a})
+2(\Pi-\Pi_n)\l(f_{\hat \lambda, \hat a}-f_{\bar \lambda, \bar a}\r)^2.
\end{align}

The main part of the proof deals with bounding the empirical processes in the right hand side of (\ref{r3'}). 
In what follows, $L$ denotes a subspace of subgaussian space 
${\mathcal L}_X\subset L_2(\mb P)$ (the closed linear span of $\{X(t)-{\mathbb E}X(t):t\in {\mathbb T}\}$). 
\index{ba@$d, \ L, \ \rho(L)$}
Let $d:={\rm dim}(L)$ and let 
\index{bb@$P_L, \ P_{L^{\perp}}$}
$P_L, P_{L^{\perp}}$ be the orthogonal projections onto subspace $L$ and its orthogonal complement
$L^{\perp}\subset {\mathcal L}_X$, and
\begin{align}
\label{eq:rho}
\rho := \rho(L):=\sup_{t\in \mb T_{\bar w}}\l\|P_{L^{\perp}}(X(t)-{\mathbb E}X(t))\r\|_{L_2({\mb P})}.
\end{align}

\textit{Step 2. Bounds for $(P_n-P)\l[\eta\l(f_{\hat \lambda, \hat a}-f_{\bar \lambda, \bar a}\r)\r]$.}
Let
\index{bc@$f_{\lambda}^0(\cdot)$}
$
f_{\lambda}^0(\cdot):=f_{\lambda,a}(\cdot)-\Pi f_{\lambda,a},
$
which clearly satisfies $\Pi f_{\lambda}^0=0$. 
Observe that the following decomposition holds:
\begin{equation}
\label{decomp}
f_{\hat \lambda, \hat a}-f_{\bar \lambda, \bar a}=
f_{\hat \lambda}^0-f_{\bar \lambda}^0+ \bar Y_n- {\mathbb E}Y+ 
\dotp{\hat \lambda-\bar \lambda}{{\mathbb E}X-\bar X_n}
+\dotp{\bar \lambda}{{\mathbb E}X-\bar X_n}.
\end{equation}
This implies 
\begin{align*}
& 
(P_n-P)\eta(f_{\hat \lambda, \hat a}-f_{\bar \lambda, \bar a})
=
\\
& 
(P_n-P)\eta(f_{\hat \lambda}^0-f_{\bar \lambda}^0)+ 
(P_n-P)\eta (\bar Y_n- {\mathbb E}Y)+ 
\\
&
(P_n-P)\eta \cdot (\Pi-\Pi_n)(f_{\hat \lambda}^0-f_{\bar \lambda}^0)+
(P_n-P)\eta \dotp{\bar \lambda}{{\mathbb E}X-\bar X_n}.
\end{align*}
Denote 
\index{bd@$\Lambda(\delta, \Delta, R), \ \alpha_n(\delta;\Delta;R), \ \tau_n (\delta;\Delta;R)$}
\begin{align*}
&
\Lambda(\delta, \Delta, R):= 
\biggl\{\lambda \in {\mathbb D}:  
\big\|f_{\lambda}^0-f_{\bar \lambda}^0\big\|_{L_2(\Pi)}\leq \delta, 
\int\limits_{{\mathbb T}\setminus {\mathbb T}_{\bar w}}|\lambda|d\mu \leq \Delta, \|\lambda\|_1\leq R\biggr\},
\\
&
\alpha_n(\delta;\Delta;R):= \sup_{\lambda\in \Lambda(\delta,\Delta,R)}
\l|(P_n-P)\eta(f_{\lambda}^0-f_{\bar \lambda}^0)\r|, \\
&
\tau_n (\delta;\Delta;R):= \sup_{\lambda\in \Lambda(\delta,\Delta,R)}
\l|(\Pi_n-\Pi)(f_{\lambda}^0-f_{\bar \lambda}^0)\r|.
\end{align*}
Then 
\begin{align}
\label{bou_1}
&
\l|(P_n-P)\eta(f_{\hat \lambda, \hat a}-f_{\bar \lambda, \bar a})\r|\leq 
\alpha_n\biggl(\|f_{\hat\lambda}^0-f_{\bar \lambda}^0\|_{L_2(\Pi)},\int\limits_{{\mathbb T}\setminus {\mathbb T}_{\bar w}}|\hat \lambda|d\mu ,\|\hat \lambda\|_1\biggr) +\\
&\nonumber
+ \l|\bar Y_n- {\mathbb E}Y\r|\cdot\l|(P_n-P)\eta\r|+
\l|(P_n-P)\eta\r|\cdot \tau_n\biggl(\|f_{\hat\lambda}^0-f_{\bar \lambda}^0\|_{L_2(\Pi)},\int\limits_{{\mathbb T}\setminus {\mathbb T}_{\bar w}}|\hat \lambda|d\mu ,\|\hat \lambda\|_1\biggr)+
\\
\nonumber
&
+ \l|(P_n-P)\eta\r|\cdot \l|\dotp{\bar \lambda}{{\mathbb E}X-\bar X_n}\r|.
\end{align}

To provide upper bounds on each of the terms in the right hand side of (\ref{bou_1}) we 
need several lemmas.  

\begin{lemma}
\label{subg}
Let $\l\{Y(t), t\in {\mathbb T}\r\}$ be a centered subgaussian process such that 
$$
{\mathbb E} Y(t)Y(s)= \Cov(X(t),X(s)), t,s \in {\mathbb T}.
$$
There exists a constant $C>0$ such that 
$$
{\mathbb E} \sup_{\lambda \in \Lambda(\delta, \Delta, R)}\l|\dotp{Y}{\lambda -\bar \lambda}\r|
\leq C \Bigl[\delta \sqrt{d}\vee (R+\|\bar\lambda\|_1)\gamma_2(\rho)\vee \Delta S({\mathbb T})\Bigr].
$$
\end{lemma}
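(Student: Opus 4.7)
The plan is to split the linear functional $\int_{\mathbb{T}}Y(t)(\lambda-\bar\lambda)(t)\mu(dt)$ into three pieces matching the three terms on the right-hand side. Using the equality of second moments, identify the closed linear span of $\{Y(t):t\in\mathbb{T}\}$ with ${\mathcal L}_X$ via the $L_2(\mathbb{P})$-isometry $X(t)-\mathbb{E}X(t)\mapsto Y(t)$, and transport $L$ to a $d$-dimensional subspace $L^Y$. The isometry gives $\|P_{(L^Y)^\perp}Y(t)\|_{L_2(\mathbb{P})}\leq \rho$ for $t\in\mathbb{T}_{\bar w}$, and
$\|\int Y(\lambda-\bar\lambda)d\mu\|_{L_2(\mathbb{P})}^2=\|f_\lambda^0-f_{\bar\lambda}^0\|_{L_2(\Pi)}^2\leq \delta^2$ for $\lambda\in\Lambda(\delta,\Delta,R)$, via the covariance identity. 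Moreover, $\bar w\in\partial\|\bar\lambda\|_1$ forces $|\bar w|=1$ on $\mathrm{supp}(\bar\lambda)$, so $\mathrm{supp}(\bar\lambda)\subseteq \mathbb{T}_{\bar w}$ and $\bar\lambda\equiv 0$ off $\mathbb{T}_{\bar w}$.

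Decompose $\int Y(\lambda-\bar\lambda)d\mu = A_\lambda + B_\lambda$ along $L^Y\oplus(L^Y)^\perp$, with $A_\lambda := P_{L^Y}\int Y(\lambda-\bar\lambda)d\mu$. Since $A_\lambda$ lies in the $d$-dimensional subspace $L^Y$ and $\|A_\lambda\|_{L_2(\mathbb{P})}\leq \delta$, fixing an orthonormal basis $e_1,\dots,e_d$ of $L^Y$ and writing $A_\lambda=\sum_{i=1}^d \alpha_i(\lambda) e_i$ gives $\sum_i \alpha_i(\lambda)^2\leq \delta^2$. Applying Cauchy-Schwarz pointwise in $\omega$ yields $|A_\lambda|\leq \delta\bigl(\sum_{i=1}^d e_i^2\bigr)^{1/2}$ uniformly in $\lambda$, so by Jensen and $\mathbb{E}e_i^2=1$,
\[
\mathbb{E}\sup_{\lambda\in\Lambda(\delta,\Delta,R)}|A_\lambda| \;\leq\; \delta\sqrt{\mathbb{E}\sum_{i=1}^d e_i^2} \;=\; \delta\sqrt{d}.
\]

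For $B_\lambda$, split the integral over $\mathbb{T}_{\bar w}$ and $\mathbb{T}\setminus \mathbb{T}_{\bar w}$. Because $\bar\lambda\equiv 0$ off $\mathbb{T}_{\bar w}$, the constraints in $\Lambda(\delta,\Delta,R)$ give $\int_{\mathbb{T}_{\bar w}}|\lambda-\bar\lambda|d\mu \leq R+\|\bar\lambda\|_1$ and $\int_{\mathbb{T}\setminus\mathbb{T}_{\bar w}}|\lambda-\bar\lambda|d\mu \leq \Delta$. Setting $Y_2:=P_{(L^Y)^\perp}Y$,
\[
|B_\lambda|\leq (R+\|\bar\lambda\|_1)\sup_{t\in\mathbb{T}_{\bar w}}|Y_2(t)| + \Delta \sup_{t\in\mathbb{T}}|Y_2(t)|,
\]
with the right-hand side independent of $\lambda$. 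Now apply Talagrand's generic chaining theorem to the centered subgaussian process $Y_2$, whose intrinsic pseudometric satisfies $d_{Y_2}\leq d_X$. Combining $\inf_t\sqrt{\mathrm{Var}(Y_2(t))}\leq \inf_t\sqrt{\mathrm{Var}(X(t))}$ with $\gamma_2(\mathbb{T};d_{Y_2})\leq \gamma_2(\mathbb{T};d_X)$ yields $\mathbb{E}\sup_{t\in\mathbb{T}}|Y_2(t)|\leq C\,S(\mathbb{T})$. On $\mathbb{T}_{\bar w}$ the $L_2$-radius of $Y_2$ is at most $\rho$, so the $d_{Y_2}$-diameter of $\mathbb{T}_{\bar w}$ is at most $2\rho$; invoking (\ref{eq:chaining_bound}) together with the truncated chaining functional gives $\mathbb{E}\sup_{t\in\mathbb{T}_{\bar w}}|Y_2(t)|\leq C\gamma_2(\rho)$. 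Summing the three nonnegative contributions and using that a sum of three nonnegatives is at most three times their maximum produces the claimed inequality.

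The main technical obstacle is the truncated-chaining estimate $\mathbb{E}\sup_{t\in\mathbb{T}_{\bar w}}|Y_2(t)|\leq C\gamma_2(\rho)$: one must combine the contraction $d_{Y_2}\leq d_X$, the $L_2$-radius bound $\rho$ on $\mathbb{T}_{\bar w}$, and the correct use of $\gamma_2(\rho)$ with diameters truncated at $\rho$, while absorbing the ``centering'' term $\inf_t\sqrt{\mathrm{Var}(Y_2(t))}\leq \rho$ into the overall $\gamma_2(\rho)$.
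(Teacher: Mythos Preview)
Your proof is correct and follows essentially the same approach as the paper's: both transport $L$ to a $d$-dimensional subspace of the linear span of $Y$ via the $L_2$-isometry, decompose $\langle Y,\lambda-\bar\lambda\rangle$ into its projection onto that subspace and the complementary part split over $\mathbb{T}_{\bar w}$ and $\mathbb{T}\setminus\mathbb{T}_{\bar w}$, and bound the three pieces by $\delta\sqrt d$, $(R+\|\bar\lambda\|_1)\gamma_2(\rho)$, and $\Delta\,S(\mathbb{T})$ respectively using Cauchy--Schwarz on the finite-dimensional component and generic chaining on the projected process $P_{(L^Y)^\perp}Y$. The only cosmetic difference is that the paper writes the $\delta\sqrt d$ bound as $\delta\,\mathbb{E}(\sum_{k}\xi_k^2)^{1/2}\leq\delta\sqrt d$ rather than via a pointwise Cauchy--Schwarz followed by Jensen, and it invokes (\ref{eq:chaining_bound}) a bit more tersely than your careful discussion of the truncated chaining estimate.
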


\begin{proof}
Denote ${\mathcal L}_Y$ the closed linear span of 
$\l\{Y(t), t\in {\mathbb T}\r\}$, the subgaussian space of the process $Y.$ 
Clearly, the mapping $X(t)-{\mathbb E}X(t)\mapsto Y(t), t\in {\mathbb T}$
can be extended to an $L_2({\mb P})$-isometry of spaces ${\mathcal L}_X, {\mathcal L}_Y\subset L_2(\mb P).$
Let $\tilde L$ be the image of the subspace $L$ under this isometry.  
For all $\lambda \in \Lambda(\delta,\Delta,R)$
and for $u:=\lambda - \bar \lambda,$
\begin{align}
\label{eq:decomposition}
\langle Y, u\rangle  = P_{\tilde L}\langle Y, u\rangle + 
\int\limits_{\mb T_{\bar w}}P_{{\tilde L}^{\perp}}Y(t)u(t)\mu(dt)+
\int\limits_{{\mathbb T}\setminus \mb T_{\bar w}}P_{{\tilde L}^{\perp}}Y(t)u(t)\mu(dt).
\end{align} 
We will use this representation and bound separately the supremum of each term 
to control 
$$
\sup \{\langle Y,\lambda-\bar \lambda\rangle: \lambda\in \Lambda(\delta,\Delta,R)\}.
$$
For the first term, let $\xi_1,\dots, \xi_d$ be an orthonormal basis of $\tilde L.$ 
Note that for $u=\lambda -\bar \lambda, \ \lambda \in \Lambda(\delta,\Delta,R)$, we have 
$
{\mathbb E}\langle Y,u\rangle^2 = \|f_u^0\|_{L_2(\Pi)}^2\leq \delta^2.
$
Therefore, 
\begin{align*}
&{\mathbb E} \sup \l\{|P_{\tilde L}\langle Y,\lambda-\bar \lambda\rangle|: \lambda \in \Lambda(\delta,\Delta,R)\r\}
\leq 
{\mathbb E} \sup \l\{P_{\tilde L}\langle Y,u\rangle: {\mathbb E}\langle Y,u\rangle^2\leq \delta^2\r\}\leq 
\\
&
{\mathbb E}\sup \biggl\{\biggl|\sum_{k=1}^d \alpha_k \xi_k\biggr|: \sum_{k=1}^d \alpha_k^2\leq \delta^2\biggr\}=
\delta {\mathbb E}\biggl(\sum_{k=1}^d \xi_k^2\biggr)^{1/2}\leq \delta \sqrt{d}. 
\end{align*}
For the second term, observe that 
for $u=\lambda-\bar \lambda, \lambda \in \Lambda(\delta,\Delta,R)$ 
$$
\biggl|\int\limits_{\mb T_{\bar w}}P_{{\tilde L}^{\perp}}Y(t)u(t)\mu(dt)\biggr|\leq 
\sup_{t\in \mb T_{\bar w}}|P_{{\tilde L}^{\perp}}Y(t)| \|u\|_1
\leq (R+\|\bar\lambda\|_1)
\sup_{t\in \mb T_{\bar w}}|P_{{\tilde L}^{\perp}}Y(t)|.
$$
Denote $U(t):=P_{{\tilde L}^{\perp}}Y(t), t\in {\mathbb T}.$
Clearly, $U$ is a centered subgaussian process such that 
$$
{\mathbb E}(U(t)-U(s))^2 \leq {\mathbb E}(Y(t)-Y(s))^2, t,s\in {\mathbb T}
$$
and, as a consequence, 
$
\|U (t)-U(s)\|_{\psi_2}\leq c \|Y(t)-Y(s)\|_{\psi_2}
$
with an absolute constant $c>0.$
Moreover, since the spaces ${\mathcal L}_Y, \tilde L$ are isometric images 
of the spaces ${\mathcal L}_X, L,$ we also have that 
$$
\sup_{t\in \mb T_{\bar w}}{\mathbb E}U^2(t)=
\sup_{t\in \mb T_{\bar w}}
{\mathbb E}|P_{{\tilde L}^{\perp}}Y(t)|^2=
\sup_{t\in \mb T_{\bar w}}
{\mathbb E}|P_{{L}^{\perp}}(X-{\mathbb E}X)(t)|^2
=\rho^2,
$$
which implies that 
$
\sup_{t\in \mb T_{\bar w}}\|U(t)\|_{\psi_2}\leq c \rho.
$
Then, it follows from the upper bound on sup-norms of subgaussian processes in terms of 
generic chaining complexities (in particular, (\ref{eq:chaining_bound})) that 
$$ 
{\mathbb E}\sup_{t\in \mb T_{\bar w}}|P_{{\tilde L}^{\perp}}Y(t)|\leq 
C \gamma_2(\rho)
$$
and 
$$
{\mathbb E}\sup_{\lambda\in \Lambda(\delta,\Delta,R)}
\biggl|\int\limits_{\mb T_{\bar w}}P_{{\tilde L}^{\perp}}Y(t)(\lambda-\bar \lambda)(t)\mu(dt)\biggr|
\leq C(R+\|\bar\lambda\|_1)\gamma_2(\rho).
$$
with an absolute constant $C>0.$
Finally, for the third term, note that for $u=\lambda-\bar \lambda,  \lambda \in \Lambda(\delta,\Delta,R)$ 
$$
\biggl|\int\limits_{{\mathbb T}\setminus \mb T_{\bar w}}P_{{\tilde L}^{\perp}}Y(t)u(t)\mu(dt)\biggr|\leq 
\sup_{t\in {\mathbb T}}|P_{{\tilde L}^{\perp}}Y(t)| 
\int\limits_{{\mathbb T}\setminus \mb T_{\bar w}}|u|d\mu 
\leq \Delta
\sup_{t\in {\mathbb T}}|P_{{\tilde L}^{\perp}}Y(t)|,
$$
where we used the fact that 
$\bar \lambda (t)=0, u(t)=\lambda(t)$ for 
$t\in {\mathbb T}\setminus \mb T_{\bar w}$.
By an argument similar to the one used for the second term of (\ref{eq:decomposition}), we get 
$$
{\mathbb E}\sup_{\lambda\in \Lambda(\delta,\Delta,R)}
\biggl|\int_{{\mathbb T}\setminus \mb T_{\bar w}}P_{{\tilde L}^{\perp}}Y(t)(\lambda-\bar \lambda)(t)\mu(dt)\biggr|\leq 
C\Delta S({\mathbb T}),  
$$
which implies the bound of the lemma.

\end{proof}

\begin{lemma}
\label{expect_001}
There exists a constant $C>0$ such that, for all $\delta>0, \Delta>0, R>0,$
\begin{align*}
{\mathbb E}\alpha_n (\delta,\Delta, R)\leq 
C&\l(\|f_{\bar \lambda,\bar a}-f_{\ast}\|_{L_2(\Pi)}\vee \|\xi\|_{\psi_2}\r)
\biggl[\delta \sqrt{\frac{d}{n}}\bigvee (R\vee\|\bar\lambda\|_1)\frac{\gamma_2(\rho)}{\sqrt{n}}
\bigvee \Delta \frac{S({\mathbb T})}{\sqrt{n}}\biggr]
\\
& \bigvee C\biggl[\delta\sqrt{\frac{d}{n}}\bigvee (R\vee \|\bar\lambda\|_1)\frac{\gamma_2(\rho)}{\sqrt{n}}
\bigvee \Delta \frac{S({\mathbb T})}{\sqrt{n}}\biggr]^2.
\end{align*}
\end{lemma}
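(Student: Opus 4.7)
The plan is to recognize $\alpha_n(\delta,\Delta,R)$ as the supremum of a product empirical process indexed by $\lambda \in \Lambda(\delta,\Delta,R)$, and then apply a Mendelson-type generic chaining bound for such processes from \cite{mendelson2010empirical, mendelson2012oracle}. Concretely, I would write
$$
\alpha_n(\delta,\Delta,R) = \sup_{g\in\mathcal G}\bigl|(P_n-P)(\eta\cdot g)\bigr|,\qquad \mathcal G := \{f_\lambda^0 - f_{\bar\lambda}^0 : \lambda\in \Lambda(\delta,\Delta,R)\}.
$$

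First I would verify that $\eta$ is centered subgaussian with the correct norm. Since $Y=f_\ast(X)+\xi$ with $\mb E(\xi|X)=0$ and $\bar a=a(\bar\lambda)=\mb E Y-\dotp{\bar\lambda}{\mb E X}$ by (\ref{g1}), one has $\mb E\eta=0$. The standing assumptions place $f_\ast(X)-\mb E f_\ast(X)$, $f_{\bar\lambda,\bar a}(X)-\mb E f_{\bar\lambda,\bar a}(X)$, and $\xi$ in $\mathcal L$; hence so does $\eta$, and the equivalence of $\psi_2$- and $L_2$-norms on $\mathcal L$ gives
$$
\|\eta\|_{\psi_2}\leq C\bigl(\|f_{\bar\lambda,\bar a}-f_\ast\|_{L_2(\Pi)}\vee \|\xi\|_{\psi_2}\bigr).
$$
Likewise every element of $\mathcal G$ lies in $\mathcal L_X$, where $\|\cdot\|_{\psi_2}\asymp\|\cdot\|_{L_2(\Pi)}$, so $\mathcal G$ is a subgaussian function class with $L_2(\Pi)$-diameter $\leq 2\delta$.

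Next I would apply the product empirical process bound: for a centered subgaussian multiplier and a centered subgaussian function class, the generic chaining argument of Mendelson yields
$$
\mb E\sup_{g\in\mathcal G}\bigl|(P_n-P)(\eta g)\bigr| \leq C\|\eta\|_{\psi_2}\,\frac{M}{\sqrt n}\;\bigvee\; C\,\frac{M^2}{n},
$$
where $M:=\mb E\sup_{\lambda\in \Lambda(\delta,\Delta,R)}|\dotp{Y}{\lambda-\bar\lambda}|$ and $Y$ is a centered subgaussian process with the same covariance as $X$. Lemma \ref{subg} gives exactly
$$
M\leq C\bigl[\delta\sqrt d\;\vee\; (R+\|\bar\lambda\|_1)\gamma_2(\rho)\;\vee\; \Delta\,S(\mb T)\bigr],
$$
and since $R+\|\bar\lambda\|_1\asymp R\vee\|\bar\lambda\|_1$, substitution produces the claimed maximum of the linear and quadratic bounds (with $M/\sqrt n$ having the claimed three-term max form, and its square likewise).

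The main obstacle is the correct invocation and justification of the Mendelson product empirical process bound, in particular the appearance of the $M^2/n$ term. This term does not come from a direct Bernstein inequality; rather, because $\eta g$ is only subexponential, one needs a chaining decomposition that mixes $\psi_2$-increments (giving the ``Gaussian'' term $\|\eta\|_{\psi_2}M/\sqrt n$) with $\psi_1$-increments (giving the ``Bernstein'' term $M^2/n$), and one must verify that $\mathcal G$ and $\eta$ satisfy the measurability and tail hypotheses of that generic bound. Once that is in place, the remaining step is the routine substitution of the complexity estimate from Lemma \ref{subg}.
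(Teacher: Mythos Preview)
Your approach is essentially the paper's: apply Mendelson's product empirical process bound (Theorem~\ref{th:mendelson1}), bound $\|\eta\|_{\psi_2}$ via the decomposition $\eta=(f_\ast-f_{\bar\lambda,\bar a})(X)+\xi$, and control the complexity by Lemma~\ref{subg}. One point needs sharpening, though. Mendelson's bound is stated in terms of $\gamma_2(\mathcal G;\psi_2)$, not directly in terms of an expected supremum $M$; the paper inserts an extra step you have suppressed. After reducing $\gamma_2(\mathcal G;\psi_2)\le c\,\gamma_2(\mathcal G;L_2(\Pi))$ by norm equivalence on $\mathcal L$, the paper invokes Talagrand's majorizing measure theorem (\ref{tal2}) to get $\gamma_2(\mathcal G;L_2(\Pi))\le c\,\mb E\sup_{\lambda\in\Lambda}|\langle G,\lambda-\bar\lambda\rangle|$ and only then applies Lemma~\ref{subg}. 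Crucially, Talagrand's lower bound on $\gamma_2$ requires $G$ to be \emph{Gaussian}, not merely subgaussian as you wrote; a generic subgaussian $Y$ with the same covariance need not witness the full $\gamma_2$ complexity. So replace your ``subgaussian $Y$'' by a Gaussian $G$ and make the two-step passage $\gamma_2\to\mb E\sup G\to$ Lemma~\ref{subg} explicit, and your argument coincides with the paper's.
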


\begin{proof}
\index{bda@${\mathcal F}(\delta,\Delta,R)$}
Let $\m F:={\mathcal F}(\delta,\Delta,R):=\{f_{\lambda}^0-f_{\bar \lambda}^0:\lambda \in \Lambda(\delta,\Delta,R)\}.$ 
We use a recent result by S. Mendelson (see Theorem \ref{th:mendelson1}, statement (i)) which implies that, for all $\delta, \Delta, R,$
\begin{equation}
\label{mendelson_A}
{\mathbb E}\alpha_n(\delta, \Delta, R)
\leq C\biggl[\frac{\|\eta\|_{\psi_2} \gamma_2({\mathcal F};\psi_2)}{\sqrt{n}}
\bigvee \frac{\gamma_2^2({\mathcal F};\psi_2)}{n}\biggr],
\end{equation}
for an absolute constant $C>0$.
Since $\Bigl\{f(X): f\in {\mathcal F}(\delta,\Delta,R)\Bigr\}\subset {\mathcal L}$ and ${\mathcal L}$ is a subgaussian space, we have that $\|f\|_{\psi_2}\leq c_1\|f\|_{L_2(\Pi)},
f\in {\mathcal F}(\delta,\Delta,R)$ for some constant $c_1.$ Therefore,   
$$
\gamma_2\l({\mathcal F};\|\cdot\|_{\psi_2}\r)\leq 
c_1
\gamma_2\l({\mathcal F};L_2(\Pi)\r).
$$
Let $G(t), t\in {\mathbb T}$ be a centered Gaussian process with the same covariance as 
the process $\l\{X(t), t\in {\mathbb T}\r\}.$ Then the stochastic processes $u\mapsto \langle G,u\rangle$ 
has the same covariance as $u\mapsto \dotp{X-{\mathbb E}X}{u}=f_u^{(0)}(X),$ that is,
$
{\mathbb E}\dotp{G}{u_1}\dotp{G}{u_2}= \dotp{f_{u_1}^0}{f_{u_2}^0}_{L_2(\Pi)}.
$
By Talagrand's generic chaining theorem for Gaussian processes (Theorem 2.1.1 in  \cite{talagrand2005generic}), this implies
$$
\gamma_2\l({\mathcal F},L_2(\Pi)\r)
\leq c_2 {\mathbb E}\sup \Bigl\{ \l|\dotp{G}{\lambda-\bar \lambda}\r|: \lambda \in \Lambda(\delta,\Delta,R)\Bigr\}. 
$$
Using Lemma \ref{subg}, we get the following bound on $\gamma_2\l({\mathcal F},\|\cdot\|_{\psi_2}\r):$
\begin{equation}
\label{gamma2_A}
\gamma_2\l({\mathcal F},\|\cdot\|_{\psi_2}\r)\leq 
C\Bigl[\delta\sqrt{d}\vee (R+\|\bar\lambda\|_1)\gamma_2(\rho)\vee \Delta S({\mathbb T})\Bigr].
\end{equation}
Next, note that 
$$
\eta (X,Y)=Y-f_{\bar \lambda, \bar a}(X)=f_{\ast}(X)+\xi - f_{\bar \lambda, \bar a}(X).
$$
We also have ${\mb E}f_{\ast}(X)={\mb E}Y$ and 
$$
{\mb E}f_{\bar \lambda, \bar a}(X) = {\mb E}Y-\langle \bar \lambda, {\mb E}X\rangle
+{\mb E}\langle \bar \lambda, X\rangle= {\mb E} Y
$$
which implies that ${\mathbb E}\eta(X,Y)=0.$ The random variable $f_{\ast}(X)-f_{\bar \lambda,\bar a}(X)$ belongs to the subgaussian space ${\mathcal L},$ implying that
\begin{align}
\label{eq:eta}
\|\eta\|_{\psi_2} = 
\|f_{\ast}(X)-f_{\bar \lambda,\bar a}(X)+\xi\|_{\psi_2}
&\leq \|f_{\ast}(X)-f_{\bar \lambda,\bar a}(X)\|_{\psi_2}
+\|\xi\|_{\psi_2}
 \\
& \nonumber
\leq 
c \|f_{\bar \lambda,\bar a}-f_{\ast}\|_{L_2(\Pi)}+\|\xi\|_{\psi_2}
\end{align}
with an absolute constant $c>0.$
In view of (\ref{mendelson_A}), (\ref{gamma2_A}) and (\ref{eq:eta}) easily imply the bound of the lemma. 

\end{proof}

Our next goal is to derive an upper bound on   
$\alpha_n (\delta,\Delta, R)$ that holds uniformly 
in $\delta \in [\delta_-,\delta_+],$ $\Delta\in [\Delta_-, \Delta_+],$ $R\in [R_-,R_+]$ for some $\delta_-<\delta_+, \Delta_-<\Delta_+,R_-<R_+$ to be 
determined later. Let 
$$
J_1:=\biggl[\log_2 \l(\frac{\delta_+}{\delta_-}\r)\biggr]+1, 
J_2:=\biggl[\log_2 \l(\frac{\Delta_+}{\Delta_-}\r)\biggr]+1, 
J_3:=\biggl[\log_2 \l(\frac{R_+}{R_-}\r)\biggr]+1
$$
and, given $s>0,$ let 
$$
\bar s:=s+\log ((J_1+1)(J_2+1)(J_3+1)).
$$
Finally, denote 
\index{be@$\nu_n(\delta,\Delta,R)$}
\begin{align}
\label{eq:nu}
\nu_n(\delta,\Delta,R):=
\inf_{L\subset {\m L}}\biggl[\delta \sqrt{\frac{\dim(L)}{n}}\bigvee (R\vee\|\bar\lambda\|_1)\frac{\gamma_2(\rho(L))}{\sqrt{n}}
\bigvee \Delta \frac{S({\mathbb T})}{\sqrt{n}}\biggr],
\end{align}
where the infimum is taken over all finite dimensional subspaces $L\subset \m L$ and $\rho(L)$ is defined in (\ref{eq:rho}).

\begin{lemma} 
\label{adamczak_001}
There exists a constant $C>0$ with the following property.
With probability at least $1-e^{-s}$, the following inequality holds uniformly
for all $\delta \in [\delta_-,\delta_+],$ $\Delta\in [\Delta_-, \Delta_+],$ $R\in [R_-,R_+]$:
\begin{align*}
\alpha_n (\delta,\Delta, R)\leq 
C\l(\|f_{\bar \lambda,\bar a}-f_{\ast}\|_{L_2(\Pi)}\vee \|\xi\|_{\psi_2}\r)
\biggl[\delta \sqrt{\frac{\bar s}{n}}\bigvee \nu_n(\delta,\Delta,R)\biggr]
\bigvee
C\nu_n^2(\delta,\Delta,R).
\end{align*}
\end{lemma}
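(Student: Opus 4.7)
\textbf{Proof plan for Lemma \ref{adamczak_001}.} The strategy has three steps: a concentration bound at a single point $(\delta,\Delta,R)$, a union bound over a dyadic discretization of the parameter cube, and a monotonicity argument to extend to the continuum.

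\emph{Step 1 (pointwise deviation).} Fix $(\delta,\Delta,R)$ and consider the function class $\mathcal G := \{\eta\,(f_\lambda^0 - f_{\bar\lambda}^0) : \lambda \in \Lambda(\delta,\Delta,R)\}$. Each factor $g = f_\lambda^0 - f_{\bar\lambda}^0$ lies in the subgaussian space $\m L$ with $\|g\|_{L_2(\Pi)} \leq \delta$, so $\|g\|_{\psi_2}\lesssim \delta$; and, as in (\ref{eq:eta}), $\eta$ is subgaussian with $\|\eta\|_{\psi_2} \lesssim A$, where $A := \|f_{\bar\lambda,\bar a}-f_*\|_{L_2(\Pi)} \vee \|\xi\|_{\psi_2}$. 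Hence every $\eta g \in \mathcal G$ is subexponential with $\|\eta g\|_{\psi_1} \lesssim A\delta$ and weak variance $\sup_g P(\eta g)^2 \lesssim A^2\delta^2$. I would apply Adamczak's tail inequality for suprema of unbounded empirical processes to conclude that, with probability at least $1-e^{-s}$,
$$
\alpha_n(\delta,\Delta,R) \leq C\,\mathbb E\alpha_n(\delta,\Delta,R) + C\,A\delta\sqrt{\tfrac{s}{n}} + C\,A\delta\,\tfrac{s}{n}.
$$
Combining this with Lemma \ref{expect_001} (and taking the infimum over finite-dimensional subspaces $L \subset \m L$ to produce $\nu_n$) yields at a single parameter point
$$
\alpha_n(\delta,\Delta,R) \;\leq\; C A\,[\delta\sqrt{s/n}\vee \nu_n(\delta,\Delta,R)] \;\vee\; C\,\nu_n^2(\delta,\Delta,R).
$$

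\emph{Step 2 (discretization and union bound).} Set $\delta_i := 2^i\delta_-$ for $i=0,\ldots,J_1$ and define $\Delta_j$, $R_k$ analogously. The dyadic grid $\mathcal N$ has cardinality at most $(J_1+1)(J_2+1)(J_3+1)$ by the choice of $J_1,J_2,J_3$. I would apply Step 1 at each $(\delta_i,\Delta_j,R_k)\in\mathcal N$ with $s$ replaced by $\bar s = s + \log|\mathcal N|$ and take a union bound; the total failure probability is at most $e^{-s}$, and the pointwise bound of Step 1 holds simultaneously over all of $\mathcal N$.

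\emph{Step 3 (monotonicity extension).} The sets $\Lambda(\delta,\Delta,R)$ are nondecreasing in each argument, so $\alpha_n(\delta,\Delta,R)$ is nondecreasing in each of $\delta,\Delta,R$; similarly $\nu_n(\delta,\Delta,R)$ is nondecreasing (from its definition (\ref{eq:nu})). For an arbitrary $(\delta,\Delta,R)$ in the cube, pick the smallest grid point $(\delta_i,\Delta_j,R_k)$ dominating it coordinatewise; then the three quantities $\delta_i,\Delta_j,R_k$ exceed $\delta,\Delta,R$ by factors in $[1,2]$. Hence
$
\alpha_n(\delta,\Delta,R) \leq \alpha_n(\delta_i,\Delta_j,R_k),
$
while the right-hand side of the grid bound changes by at most an absolute constant factor under this doubling (both $\nu_n$ and $\delta\sqrt{\bar s/n}$ are at most doubled). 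This produces the claimed uniform bound on the full cube.

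\emph{Main obstacle.} The delicate step is the application of Adamczak's inequality in the subexponential setting: one must verify that the weak variance and the $\psi_1$-envelope of $\mathcal G$ scale as $A^2\delta^2$ and $A\delta$ respectively, and that the leftover subexponential tail $A\delta\,s/n$ can be absorbed either into $A\delta\sqrt{s/n}$ or into the $\nu_n$-terms under the regime $\bar s \lesssim \sqrt n/\log n$ appearing in the hypotheses of Theorem \ref{th:main_AA}. A secondary technical point is that the minimizing subspace $L$ in the definition of $\nu_n$ may vary with the grid point, but since Adamczak's inequality is applied separately at each grid point, one is free to pick a different $L$ each time.
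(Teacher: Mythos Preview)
Your overall three–step plan (Adamczak at a fixed point plus Lemma \ref{expect_001}, dyadic discretization, monotonicity) matches the paper's. The gap is in Step 1, specifically in your envelope computation.

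You assert that the $\psi_1$–envelope of $\mathcal G$ is $\lesssim A\delta$. That would be the $\psi_1$–norm of a \emph{single} element $\eta g$, not of the envelope $F(x,y)=\sup_{\lambda\in\Lambda(\delta,\Delta,R)}|\eta(x,y)\,(f_\lambda^0-f_{\bar\lambda}^0)(x)|$ that Adamczak's inequality (\ref{adamczak}) actually requires. The class $\Lambda(\delta,\Delta,R)$ is typically infinite and $\sup_\lambda|f_\lambda^0-f_{\bar\lambda}^0|$ is not controlled by $\delta$ alone: the $L_2$–constraint $\|f_\lambda^0-f_{\bar\lambda}^0\|_{L_2(\Pi)}\le\delta$ says nothing about the sup over $\lambda$ at a fixed sample point. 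In the paper this is handled by the same $L/L^\perp$ decomposition as in Lemma \ref{subg}: one shows (see (\ref{psipsi}))
\[
\Bigl\|\sup_{\lambda\in\Lambda(\delta,\Delta,R)}|f_\lambda^0-f_{\bar\lambda}^0|\Bigr\|_{\psi_2}
\;\le\; C\Bigl[\delta\sqrt{d}+(R+\|\bar\lambda\|_1)\gamma_2(\rho)+\Delta\,S(\mathbb T)\Bigr],
\]
so the envelope term contributes
\[
A\Bigl[\delta\sqrt{d}+(R+\|\bar\lambda\|_1)\gamma_2(\rho)+\Delta\,S(\mathbb T)\Bigr]\cdot\frac{s\log n}{n}
\]
(with the extra $\log n$ coming from Proposition \ref{psi_2} to pass from $\|F\|_{\psi_1}$ to $\|\max_{j\le n}F(X_j)\|_{\psi_1}$). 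This term is then absorbed into $A\,\nu_n(\delta,\Delta,R)$ precisely via the hypothesis $\bar s\log n/\sqrt n\lesssim 1$, after which one takes the infimum over $L$. Your remark about ``absorbing $A\delta\,s/n$ into $A\delta\sqrt{s/n}$'' is therefore based on the wrong input; the absorption that is actually needed is of the full three–term envelope, and it uses $\bar s\log n\le c\sqrt n$, not just $s\le n$.
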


\begin{proof}
First, we use Adamczak's version of Talagrand's inequality (\ref{adamczak}) to deduce an exponential bound on 
$\alpha_n (\delta,\Delta, R)$ from the bound 
on ${\mathbb E}\alpha_n (\delta,\Delta, R)$ (for fixed $\delta,\Delta, R>0$).  
To this end, observe that, by the properties of Orlicz norms and subgaussian spaces,  
\begin{align*}
\big\|\eta (f_{\lambda}^0-f_{\bar \lambda}^0)\big\|_{L_2(P)}&
\leq 
\|\eta\|_{L_4(P)}\l\|f_{\lambda}^0-f_{\bar \lambda}^0\r\|_{L_4(\Pi)}\leq 
c_1 \|\eta\|_{\psi_2}\l\|f_{\lambda}^0-f_{\bar \lambda}^0\r\|_{L_4(\Pi)}\leq \\
&
\leq 
c_2 \l(\|f_{\bar \lambda,\bar a}-f_{\ast}\|_{L_2(\Pi)}+ \|\xi\|_{\psi_2}\r)
\l\|f_{\lambda}^0-f_{\bar \lambda}^0\r\|_{L_2(\Pi)},
\end{align*}
where we used (\ref{eq:eta}) to bound $\|\eta\|_{\psi_2}$. 
For all $\lambda \in \Lambda(\delta,\Delta,R),$ this implies 
$$
\|\eta (f_{\lambda}^0-f_{\bar \lambda}^0)\|_{L_2(P)}\leq 
c\delta \l(\|f_{\bar \lambda,\bar a}-f_{\ast}\|_{L_2(\Pi)}+ \|\xi\|_{\psi_2}\r).
$$
Using (\ref{i3}), we will also estimate the envelope of the class ${\mathcal F}(\delta,\Delta,R)$
as follows:
\begin{align*}
\Bigl\|\sup_{\lambda \in \Lambda(\delta,\Delta,R)}\Big|\eta(X,Y) (f_{\lambda}^0(X)-&f_{\bar \lambda}^0(X))\Big|
\Bigr\|_{\psi_1}
\\
&
\leq c \|\eta(X,Y)\|_{\psi_2}\Bigl\|
\sup_{\lambda \in \Lambda(\delta,\Delta,R)}\l|f_{\lambda}^0(X)-f_{\bar \lambda}^0(X)\r|\Bigr\|_{\psi_2}.
\end{align*}
Recall that $L$ is a subspace of subgaussian space ${\mathcal L}$ with ${\rm dim}(L)=d$
and 
$$
\rho = \sup_{t\in \mb T_{\bar w}}\l\|P_{L^{\perp}}(X(t)-{\mathbb E}X(t))\r\|_{L_2({\mb P})}. 
$$
Let $\zeta_1,\dots, \zeta_d$ be an orthonormal basis of $L\subset L_2({\mb P}).$
For $u=\lambda-\bar \lambda,$ the following decomposition holds:
\begin{align*}
&
f_{\lambda}^0(X)-f_{\bar \lambda}^0(X)=
\langle u, X-{\mathbb E}X\rangle=
P_L \langle u, X-{\mathbb E}X\rangle  \\
&
+\int\limits_{{\mathbb T}_{\bar w}}P_{L^{\perp}}(X-{\mathbb E}X)(t)u(t)\mu(dt)
+
\int\limits_{{\mathbb T}\setminus{\mathbb T}_{\bar w}}P_{L^{\perp}}(X-{\mathbb E}X)(t)u(t)\mu(dt).
\end{align*}
We have 
\begin{align*}
\Bigl\|\sup_{\lambda \in \Lambda(\delta,\Delta,R)}\Big|P_L \langle \lambda-\bar \lambda, X-{\mathbb E}&X\rangle\Big|\Bigr\|_{\psi_2}
\leq 
\biggl\|\sup \biggl\{\biggl|\sum_{k=1}^d \alpha_k \zeta_k\biggr|:\sum_{k=1}^d\alpha_k^2\leq \delta^2 \biggr\}\biggr\|_{\psi_2}\\
&
\leq
\delta \sqrt{d}\biggl\|\biggl(\frac{1}{d}\sum_{k=1}^d \zeta_k^2\biggr)^{1/2}\biggr\|_{\psi_2}\leq 
\delta \sqrt{d}\biggl\|\frac{1}{d}\sum_{k=1}^d \zeta_k^2\biggr\|_{\psi_1}^{1/2} \\
&
\leq \delta \sqrt{d}\max_{1\leq k\leq d}\|\zeta_k^2\|_{\psi_1}^{1/2}\leq 
\delta \sqrt{d}\max_{1\leq k\leq d}\|\zeta_k\|_{\psi_2}\leq
C \delta \sqrt{d}.
\end{align*}
We also easily get 
\begin{align*}
\biggl\|
\sup_{\lambda \in \Lambda(\delta,\Delta,R)}
\int\limits_{{\mathbb T}_{\bar w}}&P_{L^{\perp}}(X-{\mathbb E}X)(t)(\lambda-\bar \lambda)(t)\mu(dt)\biggr\|_{\psi_2}\\
&
\leq
(R+\|\bar\lambda\|_1) \l\|\sup_{t\in {\mathbb T}_{\bar w}}P_{L^\perp}(X-{\mathbb E}X)(t)\r\|_{\psi_2} 
\leq C(R+\|\bar\lambda\|_1) \gamma_2(\rho)
\end{align*}
and 
\begin{align*}
\biggl\|
\sup_{\lambda \in \Lambda(\delta,\Delta,R)}
\int\limits_{{\mathbb T}\setminus {\mathbb T}_{\bar w}}P_{L^{\perp}}(X-{\mathbb E}X)(t)(\lambda-\bar \lambda)(t)\mu(dt)\biggr\|_{\psi_2}
&\leq 
\Delta \l\|\sup_{t\in {\mathbb T}}P_{L^\perp}(X-{\mathbb E}X)(t)\r\|_{\psi_2}\\\
&
\leq C\cdot\Delta S({\mathbb T}).
\end{align*}
It implies that
\begin{equation} 
\label{psipsi}
\l\|\sup_{\lambda \in \Lambda(\delta,\Delta,R)}|f_{\lambda}^0(X)-f_{\bar \lambda}^0(X)|\r\|_{\psi_2}\leq 
C\Bigl[\delta\sqrt{d}+(R+\|\bar\lambda\|_1)\gamma_2(\rho)+\Delta S({\mathbb T})\Bigr].
\end{equation}
Thus,
\begin{align*}
\Bigl\|\sup_{\lambda \in \Lambda(\delta,\Delta,R)}&|\eta(X,Y) (f_{\lambda}^0(X)-f_{\bar \lambda}^0(X))|
\Bigr\|_{\psi_1}
\leq \\
&\leq 
C\l(\|f_{\bar \lambda,\bar a}-f_{\ast}\|_{L_2(\Pi)}+ \|\xi\|_{\psi_2}\r)
\Bigl[\delta\sqrt{d}+(R+\|\bar\lambda\|_1)\gamma_2(\rho)+\Delta S({\mathbb T})\Bigr].
\end{align*}
It follows from Adamczak's bound (\ref{adamczak}) and the second statement of Proposition \ref{psi_2} that, with probabiltiy at least $1-e^{-s},$
\begin{align*}
&
\alpha_n(\delta,\Delta,R)\leq 
C\biggl[{\mathbb E}\alpha_n(\delta,\Delta,R)
+ (\|f_{\bar \lambda,\bar a}-f_{\ast}\|_{L_2(\Pi)}+ \|\xi\|_{\psi_2})\delta \sqrt{\frac{s}{n}}+\\
&
+(\|f_{\bar \lambda,\bar a}-f_{\ast}\|_{L_2(\Pi)}+ \|\xi\|_{\psi_2})
\Bigl[\delta\sqrt{d}+(R+\|\bar\lambda\|_1)\gamma_2(\rho)+\Delta S({\mathbb T})\Bigr]
\frac{s \log n}{n}
\biggr].
\end{align*}
Combining this with the bound of Lemma \ref{expect_001},
taking the infimum of the right hand side with respect to $L\subset {\mathcal L}$ 
and recalling that, according to our assumptions, $\frac{s\log n}{\sqrt{n}}$ is bounded by an absolute constant, 
we derive the following inequality: 
\begin{align}
\label{alpha_bd}
& \alpha_n (\delta,\Delta, R)\leq \beta_n(\delta, \Delta, R;s) :=
\\
\nonumber
& C(\|f_{\bar \lambda,\bar a}-f_{\ast}\|_{L_2(\Pi)}\vee \|\xi\|_{\psi_2})
\biggl[\delta \sqrt{\frac{s}{n}}\bigvee 
\nu_n(\delta,\Delta,R)\biggr]\bigvee C\nu_n^2(\delta,\Delta,R)
\end{align}
that holds with probability at least $1-e^{-s}.$

We still need to make the last bound uniform in 
$\delta \in [\delta_-,\delta_+],$ $\Delta\in [\Delta_-, \Delta_+],$ $R\in [R_-,R_+].$  
To this end, define $\delta_{j_1}:=\delta_+ 2^{-j_1}, \Delta_{j_2}:=\Delta_+ 2^{-j_2}$ and $R_{j_3}:=R_+ 2^{-j_3}$ for $j_1=0,1,\dots, J_1,$ $j_2=0,1,\dots, J_2,$ and 
$j_3=0,1,\dots, J_3.$ 
Using bound (\ref{alpha_bd}) for each $\delta_{j_1}, \Delta_{j_2}, R_{j_3}$ with $s$ replaced by 
$
\bar s:=s+\log ((J_1+1)(J_2+1)(J_3+1))
$
and applying then the union bound, we get that with probability at least $1-e^{-s}$
$
\alpha_n (\delta_{j_1},\Delta_{j_2}, R_{j_3})\leq \beta_n (\delta_{j_1},\Delta_{j_2}, R_{j_3};\bar s)
$  
for all $j_k=0,\dots, J_k, k=1,2,3.$ By monotonicity of the functions $\alpha_n, \beta_n$
in their variables this easily implies that with the same probability 
\begin{equation}
\nonumber
\alpha_n (\delta,\Delta, R) \leq 
C\l(\|f_{\bar \lambda,\bar a}-f_{\ast}\|_{L_2(\Pi)}\vee \|\xi\|_{\psi_2}\r)
\biggl[\delta \sqrt{\frac{\bar s}{n}}\bigvee 
\nu(\delta,\Delta,R)\biggr]
\bigvee
C\nu_n^2(\delta,\Delta,R).
\end{equation}
for all $\delta \in [\delta_-,\delta_+],$ $\Delta\in [\Delta_-, \Delta_+],$ $R\in [R_-,R_+]$
and for a large enough constant $C>0.$

\end{proof}

Bounding the last three terms in the right hand side of (\ref{bou_1}) is easier.
Since $\eta(X,Y)$ is a subgaussian random variable (its mean is equal to zero
and its $\psi_2$-norm is finite) and (\ref{eq:eta}) holds, 
we have the following tail bound:
\begin{equation}
\label{eta_bound}
|(P_n-P)\eta|=\biggl|n^{-1}\sum_{j=1}^n \eta(X_j,Y_j)\biggr|
\leq C(\|f_{\bar \lambda,\bar a}-f_{\ast}\|_{L_2(\Pi)}\vee \|\xi\|_{\psi_2})\sqrt{\frac{s}{n}}
\end{equation}
with probability at least $1-e^{-s}$ and with some constant $C>0.$
Moreover, using the representation 
$
Y-{\mb E}Y=f_{\ast}(X)-{\mb E}f_{\ast}(X)+\xi
$
and the assumption that $f_{\ast}(X)-{\mb E}f_{\ast}(X)\in {\mathcal L},$
we get 
\begin{align*}
\|Y-{\mathbb E}Y\|_{\psi_2} &\leq 
\|f_{\ast}(X)-{\mb E}f_{\ast}(X)\|_{\psi_2}+\|\xi\|_{\psi_2} \\
&
\leq 
c\|f_{\ast}(X)-{\mb E}f_{\ast}(X)\|_{L_2(\Pi)}+\|\xi\|_{\psi_2}\\
&
= c\|f_{\ast}-\Pi f_{\ast}\|_{L_2(\Pi)}+\|\xi\|_{\psi_2}.
\end{align*}
Since $Y-{\mathbb E}Y$ is subgaussian, it is easy to deduce that 
\begin{equation}
\label{bary}
|\bar Y_n -{\mathbb E}Y|=\biggl|n^{-1}\sum_{j=1}^n (Y_j-{\mathbb E}Y)\biggr|
\leq C(\|f_{\ast}-\Pi f_{\ast}\|_{L_2(\Pi)}\vee \|\xi\|_{\psi_2})\sqrt{\frac{s}{n}}
\end{equation}
with probability at least $1-e^{-s}$ and with some constant $C>0.$
Therefore,
\begin{equation}
\label{duo}
\l|\bar Y_n- {\mathbb E}Y\r|\cdot\l|(P_n-P)\eta\r|\leq C \l(\|f_{\ast}-\Pi f_{\ast}\|_{L_2(\Pi)}^2\vee \|f_{\bar \lambda,\bar a}-f_{\ast}\|_{L_2(\Pi)}^2\vee \|\xi\|_{\psi_2}^2\r)\frac{s}{n}
\end{equation}
with probability at least $1-2 e^{-s}.$

Since $\dotp{\bar\lambda}{X_j-\mb EX_j}$ are i.i.d. subgaussian random variables, their average $\dotp{\bar\lambda}{\bar X_n-\mb EX}$ is also subgaussian. 
This easily yields the bound 
\begin{align}
\label{barx}
\l|\dotp{\bar\lambda}{\bar X_n-\mb EX}\r| &
\leq C\|f^0_{\bar\lambda}\|_{L_2(\Pi)}\sqrt{\frac s n}\leq 
C\l(\|f_{\ast}-\Pi f_{\ast}\|_{L_2(\Pi)}\vee\|f_{\bar\lambda,\bar a}-f_\ast\|_{L_2(\Pi)}\r)\sqrt{\frac s n}
\end{align}
that holds with probability at least $1-e^{-s}$ and with some $C>0.$
Therefore, with probability at least $1-2 e^{-s}$
\begin{equation}
\label{uno}
\l|\dotp{\bar\lambda}{\bar X_n-\mb EX}\r|\l|(P_n-P)\eta\r|
\leq  C\l(\|f_{\ast}-\Pi f_{\ast}\|_{L_2(\Pi)}^2\vee \|f_{\bar \lambda,\bar a}-f_{\ast}\|_{L_2(\Pi)}^2\vee \|\xi\|_{\psi_2}^2\r)\frac{s}{n}.
\end{equation}

The proof of the next lemma is a simplified version of the proofs of Lemmas 
\ref{expect_001}, \ref{adamczak_001}. Together with (\ref{eta_bound}) it will
be used to control the term 
$$
\l|(P_n-P)\eta\r|\cdot\tau_n\biggl(\|f_{\lambda}^0-f_{\bar \lambda}^0\|_{L_2(\Pi)},\int\limits_{{\mathbb T}\setminus {\mathbb T}_{\bar w}}|\hat \lambda|d\mu ,\|\hat \lambda\|_1\biggr)
$$
in the right hand side of (\ref{bou_1}).

\begin{lemma}
\label{linear_001}
There exists a constant $C>0$ such that the following holds. 
Under the notations of Lemma \ref{adamczak_001},
with probability at least $1-e^{-s}$ and with $\bar s:=s+\log ((J_1+1)(J_2+1)(J_3+1))$
satisfying the condition $\bar s \sqrt{\log n}\leq \sqrt{n}$,
\begin{equation}
\nonumber
\tau_n(\delta,\Delta,R)=\sup\Bigl\{\l|(\Pi_n-\Pi)(f_{\lambda}^0-f_{\bar \lambda}^0)\r|:\lambda \in \Lambda(\delta, \Delta, R)\Bigr\}
\leq 
C
\biggl[\delta \sqrt{\frac{\bar s}{n}}\bigvee \nu_n(\delta,\Delta,R)\biggr]
\end{equation}
uniformly for all $\delta \in [\delta_-,\delta_+],$ $\Delta\in [\Delta_-, \Delta_+],$ $R\in [R_-,R_+].$ 
\end{lemma}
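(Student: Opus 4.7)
The plan is to follow the same three-step pattern used in the proofs of Lemmas \ref{expect_001} and \ref{adamczak_001}, but in a simplified form since no multiplier $\eta$ appears in $\tau_n$. First, I would bound $\mb E \tau_n(\delta,\Delta,R)$ using Mendelson's generic chaining bound for subgaussian empirical processes (Theorem \ref{th:mendelson1}): since
$\mathcal F(\delta,\Delta,R)=\{f_{\lambda}^0-f_{\bar\lambda}^0:\lambda\in\Lambda(\delta,\Delta,R)\}\subset \mathcal L,$
we have
$$
\mb E\tau_n(\delta,\Delta,R)\leq C\,\frac{\gamma_2(\mathcal F;\psi_2)}{\sqrt n}.
$$
I would then reuse the bound on $\gamma_2(\mathcal F;\psi_2)$ already established in the proof of Lemma \ref{expect_001} (see (\ref{gamma2_A})), namely
$$
\gamma_2(\mathcal F;\psi_2)\leq C\bigl[\delta\sqrt d\vee (R+\|\bar\lambda\|_1)\gamma_2(\rho)\vee \Delta\,S(\mb T)\bigr],
$$
for any subspace $L\subset \mathcal L$ of dimension $d$ with $\rho=\rho(L)$. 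Taking the infimum over $L$ yields $\mb E\tau_n(\delta,\Delta,R)\leq C\,\nu_n(\delta,\Delta,R)$.

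Next, I would concentrate $\tau_n$ around its expectation via Adamczak's version of Talagrand's inequality (\ref{adamczak}). The required inputs are already available: for $\lambda\in\Lambda(\delta,\Delta,R)$,
$$
\|f_\lambda^0-f_{\bar\lambda}^0\|_{L_2(P)}\leq \delta,
$$
and the envelope of $\mathcal F(\delta,\Delta,R)$ was bounded in (\ref{psipsi}):
$$
\Bigl\|\sup_{\lambda\in\Lambda(\delta,\Delta,R)}|f_\lambda^0(X)-f_{\bar\lambda}^0(X)|\Bigr\|_{\psi_2}
\leq C\bigl[\delta\sqrt d+(R+\|\bar\lambda\|_1)\gamma_2(\rho)+\Delta\,S(\mb T)\bigr].
$$
Applying (\ref{adamczak}) together with the second statement of Proposition \ref{psi_2} to convert the $\psi_2$-envelope into the required form, I would obtain with probability at least $1-e^{-s}$
$$
\tau_n(\delta,\Delta,R)\leq C\Bigl[\nu_n(\delta,\Delta,R)+\delta\sqrt{\tfrac{s}{n}}+\bigl(\delta\sqrt d+(R+\|\bar\lambda\|_1)\gamma_2(\rho)+\Delta\,S(\mb T)\bigr)\tfrac{s\log n}{n}\Bigr].
$$
Using the hypothesis $\bar s\sqrt{\log n}\leq \sqrt n$ (so in particular $s\log n/\sqrt n$ is bounded by a constant), the third term is dominated by $\nu_n(\delta,\Delta,R)$, leaving
$$
\tau_n(\delta,\Delta,R)\leq C\bigl[\delta\sqrt{s/n}\vee \nu_n(\delta,\Delta,R)\bigr].
$$

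Finally, I would pass from a fixed triple to a uniform bound over $\delta\in[\delta_-,\delta_+]$, $\Delta\in[\Delta_-,\Delta_+]$, $R\in[R_-,R_+]$ by the same dyadic discretization device used in Lemma \ref{adamczak_001}: set $\delta_{j_1}=\delta_+ 2^{-j_1}$, $\Delta_{j_2}=\Delta_+ 2^{-j_2}$, $R_{j_3}=R_+ 2^{-j_3}$ for $0\leq j_k\leq J_k$, apply the fixed-triple bound with $s$ replaced by $\bar s=s+\log((J_1+1)(J_2+1)(J_3+1))$, and take a union bound over $(J_1+1)(J_2+1)(J_3+1)$ events. Monotonicity of $\tau_n$ and of $\nu_n$ in $\delta,\Delta,R$ then upgrades the discrete bound to a uniform one on the full rectangle, yielding the claimed inequality. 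The main obstacle, as in Lemma \ref{adamczak_001}, is ensuring that the envelope term from Adamczak's inequality can indeed be absorbed into $\nu_n$ under the hypothesis on $\bar s$; once this is verified, the rest of the argument is routine and strictly simpler than the corresponding steps for $\alpha_n$ because there is no $\eta$-multiplier to track.
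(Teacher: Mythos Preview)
Your approach is correct and matches the paper's intended argument (the paper only says the proof is ``a simplified version of the proofs of Lemmas \ref{expect_001}, \ref{adamczak_001}''). One small refinement: for the expectation bound it is cleaner not to invoke Theorem \ref{th:mendelson1} at all, since that theorem produces an extra $\gamma_2^2(\mathcal F;\psi_2)/n$ term that does not appear in the lemma's conclusion. The key simplification here is that the process is \emph{linear}: $(\Pi_n-\Pi)(f_\lambda^0-f_{\bar\lambda}^0)=\langle \lambda-\bar\lambda,\bar X_n-\mb EX\rangle$, and $Y_n:=\sqrt{n}(\bar X_n-\mb EX)$ is a centered subgaussian process with the same covariance as $X$, so Lemma \ref{subg} applied to $Y_n$ gives directly $\mb E\tau_n(\delta,\Delta,R)\leq C\nu_n(\delta,\Delta,R)$ without any squared term. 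With this in place, the Adamczak step, envelope bound (\ref{psipsi}), and dyadic union bound proceed exactly as you describe.
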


\textit{Step 3. Bounds for $(\Pi-\Pi_n)\l(f_{\hat \lambda, \hat a}-f_{\bar \lambda, \bar a}\r)^2$.} 

We will need the following representation (that is a consequence of (\ref{decomp})):
\begin{align}
\nonumber
(\Pi-\Pi_n)&(f_{\hat \lambda, \hat a}-f_{\bar \lambda, \bar a})^2=
(\Pi-\Pi_n)(f_{\hat \lambda}^0-f_{\bar \lambda}^0)^2+
2(\Pi-\Pi_n)(f_{\hat \lambda}^0-f_{\bar \lambda}^0)
(\bar Y_n-{\mathbb E}Y)+ \\
&
\nonumber
+2(\Pi-\Pi_n)(f_{\hat \lambda}^0-f_{\bar \lambda}^0)\dotp{\bar \lambda}{{\mathbb E}X-\bar X_n}
+
2\Bigl[(\Pi-\Pi_n)(f_{\hat \lambda}^0-f_{\bar \lambda}^0)\Bigr]^2
\\
&
:=(\Pi-\Pi_n)(f_{\hat \lambda}^0-f_{\bar \lambda}^0)^2+\zeta_n(\hat \lambda).
\end{align}
Using bounds (\ref{bary}), (\ref{barx}) and Lemma \ref{linear_001}, it yields
that with probability at least $1-3 e^{-s}$ for the same $\delta, \Delta, R$
\begin{align}
\label{trio}
\sup&
\Bigl\{\l|\zeta_n(\lambda)\r|:\lambda \in \Lambda(\delta, \Delta, R)\Bigr\}
\leq C\biggl[\delta \sqrt{\frac{\bar s}{n}}\bigvee \nu_n(\delta,\Delta,R)\biggr]^2\bigvee
\\
&\nonumber
\bigvee C\l(\|f_{\ast}-\Pi f_{\ast}\|_{L_2(\Pi)}\vee \|f_{\bar\lambda,\bar a}-f_\ast\|_{L_2(\Pi)}\vee\|\xi\|_{\psi_2}\r)\sqrt{\frac{s}{n}}
\biggl[\delta \sqrt{\frac{\bar s}{n}}\bigvee \nu_n(\delta,\Delta,R)\biggr].
\end{align}
Next, we have to estimate 
\index{bf@$\psi_n(\delta,\Delta,R)$}
$$
\psi_n(\delta,\Delta,R):= \sup_{\lambda \in \Lambda(\delta, \Delta, R)}
\Bigl|(\Pi_n-\Pi)(f_{\lambda}^0-f_{\bar \lambda}^0)^2\Bigr|.
$$

\begin{lemma}
\label{squared_class}
There exists a constant $C>0$ such that the following holds. 
Under the notations of Lemma \ref{adamczak_001},
with probability at least $1-e^{-s}$
\begin{equation}
\nonumber
\psi_n (\delta,\Delta, R)\leq 
C \delta
\biggl[\delta \sqrt{\frac{\bar s}{n}}\vee \nu_n(\delta,\Delta,R)\biggr]
\bigvee C \nu_n^2(\delta,\Delta,R)
\end{equation}
uniformly for all $\delta \in [\delta_-,\delta_+],$ $\Delta\in [\Delta_-, \Delta_+],$ $R\in [R_-,R_+].$
\end{lemma}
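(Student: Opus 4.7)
The plan is to treat $\psi_n(\delta,\Delta,R)$ as the sup of an empirical process over the class of squared functions $\mathcal F^2 := \{(f_\lambda^0 - f_{\bar\lambda}^0)^2 : \lambda \in \Lambda(\delta,\Delta,R)\}$, and to mirror exactly the two-step structure (expectation bound via generic chaining, then tail bound via Adamczak's inequality, then dyadic discretization) that was used in the proofs of Lemmas~\ref{expect_001} and~\ref{adamczak_001}. The key point is that $f_\lambda^0 - f_{\bar\lambda}^0 \in \mathcal L$, so its $\psi_2$-norm is within an absolute constant of its $L_2(\Pi)$-norm, which is at most $\delta$ for $\lambda \in \Lambda(\delta,\Delta,R)$.

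For the expectation, I would write $(f_\lambda^0 - f_{\bar\lambda}^0)^2$ as the product of $f_\lambda^0 - f_{\bar\lambda}^0$ with itself, and apply Mendelson's generic chaining bound (Theorem~\ref{th:mendelson1}(i)) with multiplier taken from the class itself. Since $\sup_{\lambda \in \Lambda(\delta,\Delta,R)} \|f_\lambda^0 - f_{\bar\lambda}^0\|_{\psi_2} \leq c\,\delta$, this gives
\[
\mathbb{E}\psi_n(\delta,\Delta,R) \;\leq\; C\biggl[\frac{\delta\,\gamma_2(\mathcal F;\psi_2)}{\sqrt{n}} \;\vee\; \frac{\gamma_2^2(\mathcal F;\psi_2)}{n}\biggr].
\]
Bound~(\ref{gamma2_A}), already established while proving Lemma~\ref{expect_001}, controls $\gamma_2(\mathcal F;\psi_2)$ by $C[\delta\sqrt{d} \vee (R+\|\bar\lambda\|_1)\gamma_2(\rho) \vee \Delta\,S(\mathbb T)]$ for an arbitrary finite-dimensional subspace $L \subset \mathcal L$; taking the infimum over $L$ yields $\gamma_2(\mathcal F;\psi_2) \leq C\sqrt{n}\,\nu_n(\delta,\Delta,R)$, hence $\mathbb{E}\psi_n \leq C[\delta\,\nu_n(\delta,\Delta,R) \vee \nu_n^2(\delta,\Delta,R)]$.

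To pass from expectation to tail bound for fixed $(\delta,\Delta,R)$, I would invoke Adamczak's version of Talagrand's inequality (\ref{adamczak}) for $\mathcal F^2$. The variance parameter satisfies $\|(f_\lambda^0 - f_{\bar\lambda}^0)^2\|_{L_2(P)} = \|f_\lambda^0 - f_{\bar\lambda}^0\|_{L_4(\Pi)}^2 \leq c\,\|f_\lambda^0 - f_{\bar\lambda}^0\|_{\psi_2}^2 \leq c'\delta^2$, while the envelope is controlled in $\psi_1$ by the square of~(\ref{psipsi}) from the proof of Lemma~\ref{adamczak_001}, giving envelope at most $C[\delta\sqrt{d} + (R+\|\bar\lambda\|_1)\gamma_2(\rho) + \Delta\,S(\mathbb T)]^2$. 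Combining and using the assumption $\bar s \sqrt{\log n} \leq \sqrt{n}$ to absorb the envelope-times-$s\log n/n$ term into $C\nu_n^2(\delta,\Delta,R)$, one arrives at $\psi_n \leq C\delta[\delta\sqrt{s/n} \vee \nu_n(\delta,\Delta,R)] \vee C\nu_n^2(\delta,\Delta,R)$ with probability at least $1-e^{-s}$. Uniformity over $(\delta,\Delta,R)$ in the prescribed rectangle then follows by the same dyadic discretization and union bound argument used to conclude Lemma~\ref{adamczak_001}, replacing $s$ by $\bar s$ and using monotonicity of $\psi_n$ and of the bounding expression in each of its three arguments.

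The main obstacle I anticipate is the first step: strictly speaking, Mendelson's multiplier bound is stated for $(P_n - P)(\eta f)$ with a fixed multiplier $\eta$, whereas here the ``multiplier'' varies inside the same class $\mathcal F$. If a direct appeal to the cited formulation is not available, the argument can be salvaged by a symmetrization-plus-contraction step exploiting the subgaussian envelope of $\mathcal F$, or equivalently by applying the multiplier bound conditionally and then passing to the supremum, which costs only a constant factor because the $\psi_2$-diameter of $\mathcal F$ is itself $O(\delta)$; the final bound for $\mathbb{E}\psi_n$ is unchanged.
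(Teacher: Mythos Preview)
Your two–step plan (Mendelson for the expectation, Adamczak for the tail, then dyadic union bound) differs from the paper's proof, which bypasses both of those steps by invoking the Dirksen--Bednorz exponential inequality for squared subgaussian classes (Theorem~\ref{th:dirksen1}). That result gives, with probability at least $1-e^{-s}$,
\[
\psi_n(\delta,\Delta,R)\;\le\;C\Bigl[\sup_{f\in\mathcal F}\|f\|_{\psi_2}\,\tfrac{\gamma_2(\mathcal F;\psi_2)}{\sqrt n}+\tfrac{\gamma_2^2(\mathcal F;\psi_2)}{n}+\sup_{f\in\mathcal F}\|f\|_{\psi_2}^2\bigl(\sqrt{\tfrac{s}{n}}\vee\tfrac{s}{n}\bigr)\Bigr],
\]
and then one substitutes $\sup_{f\in\mathcal F}\|f\|_{\psi_2}\le c\delta$ together with the bound~\eqref{gamma2_A} on $\gamma_2(\mathcal F;\psi_2)$, exactly as you planned; the discretization step is identical to yours.

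Your route has two difficulties, one of which you flagged and one you did not. The flagged one (Theorem~\ref{th:mendelson1} has a \emph{fixed} multiplier $\eta$, whereas here both factors vary over $\mathcal F$) is real: the ``symmetrization-plus-contraction'' patch does not go through as stated, because $x\mapsto x^2$ is not Lipschitz on an unbounded range and the envelope of $\mathcal F$ is not $O(\delta)$ but $O(\sqrt n\,\nu_n)$ by~\eqref{psipsi}. Making this rigorous essentially requires re-deriving a squared-class chaining bound, i.e.\ Theorem~\ref{th:dirksen1} itself. The second difficulty is in your Adamczak step: the $\psi_1$-envelope of $\mathcal F^2$ is the square of~\eqref{psipsi}, hence of order $n\,\nu_n^2$ after optimizing over $L$, so the envelope contribution is $C\,\nu_n^2\cdot s\log n$, not $C\nu_n^2$. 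The hypothesis $\bar s\sqrt{\log n}\le\sqrt n$ (equivalently $\bar s\log n\le c\sqrt n$) does \emph{not} make $s\log n$ bounded, so the absorption you claim fails and you would end up with an extra $s\log n$ factor on the $\nu_n^2$ term---strictly weaker than the lemma as stated. The paper's one-shot appeal to Theorem~\ref{th:dirksen1} avoids both issues.
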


\begin{proof}

The proof is based on the inequality due to S. Dirksen and W. Bednorz (see Theorem \ref{th:dirksen1} in the appendix). 
To this end, we need to estimate several quantities appearing in that bound. 
First, note that, since $\Big\{f(X): \ f\in \m F(\delta,\Delta,R)\Big\}$ is a subset of a subgaussian space,
$$
\sup_{f\in {\mathcal F}(\delta,\Delta,R)}\|f\|_{\psi_2}\leq 
c\sup_{f\in {\mathcal F}(\delta,\Delta,R)}\|f\|_{L_2(\Pi)}\leq 
c\delta.
$$
Together with the bound (\ref{gamma2_A}) on $\gamma_2\l({\mathcal F};\psi_2\r),$ 
Theorem \ref{th:dirksen1} implies that with probability $\geq 1-e^{-s}$,
\begin{align*}
\psi_n(\delta,\Delta,R)\leq &
C \delta \biggl[\delta \sqrt{\frac{d}{n}}\bigvee (R\vee\|\bar\lambda\|_1)\frac{\gamma_2(\rho)}{\sqrt{n}}
\bigvee \Delta \frac{S({\mathbb T})}{\sqrt{n}}\biggr] \bigvee \\
&\bigvee
C\biggl[\delta\sqrt{\frac{d}{n}}\bigvee (R\vee\|\bar\lambda\|_1)\frac{\gamma_2(\rho)}{\sqrt{n}}
\bigvee \Delta \frac{S({\mathbb T})}{\sqrt{n}}\biggr]^2\bigvee C\delta^2\bigg[\sqrt{\frac s n}\vee \frac s n\bigg].
\end{align*}
It remains to combine the discretization argument as in the proof of Lemma \ref{adamczak_001} with 
an application of the union bound to get an estimate for $\psi_n (\delta,\Delta, R)$
that holds uniformly in $\delta, \Delta, R$ with a high probability. 
As a result, we get that 
\begin{align*}
\psi_n (\delta,\Delta, R)\leq 
C \delta
\biggl[\delta \sqrt{\frac{\bar s}{n}}\vee \nu_n(\delta,\Delta,R)\biggr]
\bigvee
C \nu_n^2(\delta,\Delta,R)
\end{align*}
with probability at least $1-e^{-s}$ for all $\delta \in [\delta_-,\delta_+],$ $\Delta\in [\Delta_-, \Delta_+],$ $R\in [R_-,R_+]$ uniformly, 
and for a large enough constant $C>0.$
\end{proof}

\textit{Step 4. Upper bound on $\|\hat \lambda\|_1.$}

\begin{lemma}
\label{norms}
There exist constants $C,D>0$ such that the following holds.
For all $s\geq 1$ and $\eps$ satisfying the assumptions $s\log n \leq \sqrt{n}$ and   
$
\eps\geq D\|\xi\|_{\psi_2}S({\mathbb T})\sqrt{\frac{s}{n}},
$
with probability at least $1-5e^{-s}$, 
$$
\|\hat \lambda\|_1\leq C\l(\frac{q(\eps)}{\eps}+\frac{\sigma_Y^2 s}{n \eps}\r).
$$
\end{lemma}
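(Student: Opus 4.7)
The proof rests on the defining optimality inequality
\[
F_n(\hat\lambda_\eps,\hat a_\eps)\le F_n(\lambda_\eps,a_\eps).
\]
Subtracting $P_n(\ell\bullet f_\ast)$ from both sides and introducing the excess-loss function
\[
g_{\lambda,a}:=(\ell\bullet f_{\lambda,a})-(\ell\bullet f_\ast)=(f_{\lambda,a}-f_\ast)^2+2\xi(f_\ast-f_{\lambda,a}),
\]
one obtains $\eps\|\hat\lambda_\eps\|_1+P_n g_{\hat\lambda_\eps,\hat a_\eps}\le P_n g_{\lambda_\eps,a_\eps}+\eps\|\lambda_\eps\|_1$. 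Splitting $P_n=P+(P_n-P)$, using the identity $P g_{\lambda,a}=\|f_{\lambda,a}-f_\ast\|_{L_2(\Pi)}^2$ (so that $P g_{\hat\lambda_\eps,\hat a_\eps}\ge 0$ and $P g_{\lambda_\eps,a_\eps}\le q(\eps)$), and the elementary bound $\eps\|\lambda_\eps\|_1\le q(\eps)$, this reduces to
\[
\eps\|\hat\lambda_\eps\|_1\le 2q(\eps)+|(P_n-P)g_{\lambda_\eps,a_\eps}|+|(P_n-P)g_{\hat\lambda_\eps,\hat a_\eps}|.
\]

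The first residual is a single-function concentration. The two summands of $g_{\lambda_\eps,a_\eps}$ have $\psi_1$-norms of order $q(\eps)$ and $\|\xi\|_{\psi_2}\sqrt{q(\eps)}$ and variances of order $q(\eps)^2$ and $\sigma_\xi^2 q(\eps)$ respectively. Bernstein's inequality applied to each, the AM--GM step $\sigma_\xi\sqrt{q(\eps)}\sqrt{s/n}\le\tfrac12 q(\eps)+\tfrac12\sigma_\xi^2 s/n$, and the assumption $s\log n\le\sqrt n$ (which dominates the sub-exponential tail term by the sub-Gaussian one) will yield
\[
|(P_n-P)g_{\lambda_\eps,a_\eps}|\le \tfrac12 q(\eps)+C\sigma_Y^2 s/n
\]
on an event of probability at least $1-2e^{-s}$.

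The harder task is to bound $|(P_n-P)g_{\hat\lambda_\eps,\hat a_\eps}|$ uniformly, since $(\hat\lambda_\eps,\hat a_\eps)$ is random. Decompose $g_{\lambda,a}$ into its quadratic part $(f_{\lambda,a}-f_\ast)^2$ and its noise-cross part $2\xi(f_\ast-f_{\lambda,a})$. With reference oracle $(\bar\lambda,\bar a)=(\lambda_\eps,a_\eps)$ and any $\bar w\in\partial\|\lambda_\eps\|_1$, Lemmas \ref{adamczak_001} and \ref{squared_class} supply uniform control of the two corresponding empirical processes over $\{\lambda\in\mb D:\|\lambda\|_1\le R\}$; choosing the trivial subspace $L=\{0\}$, so that $\gamma_2(\rho(L))\le\gamma_2(\mb T;d_X)\lesssim S(\mb T)$, the dominant contribution is of order $\sigma_Y\,R\,S(\mb T)\sqrt{s/n}$ plus lower-order terms controlled by $q(\eps)+\sigma_Y^2 s/n$. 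A coarse a priori bound $\|\hat\lambda_\eps\|_1\le 2\sigma_Y^2/\eps$, obtained from $F_n(\hat\lambda_\eps,\hat a_\eps)\le F_n(0,\bar Y_n)=P_n((Y-\bar Y_n)^2)$ together with Bernstein applied to $(Y-\mb EY)^2$, controls $R$ from above, and a dyadic peeling over $R$ combined with the standing assumption $\eps\ge D\|\xi\|_{\psi_2}S(\mb T)\sqrt{s/n}$ absorbs the linear-in-$R$ piece into $\tfrac\eps 2\|\hat\lambda_\eps\|_1$. This will give
\[
|(P_n-P)g_{\hat\lambda_\eps,\hat a_\eps}|\le \tfrac\eps 2\|\hat\lambda_\eps\|_1+Cq(\eps)+C\sigma_Y^2 s/n
\]
on an event of probability at least $1-3e^{-s}$.

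Combining these two estimates gives
\[
\eps\|\hat\lambda_\eps\|_1\le Cq(\eps)+C\sigma_Y^2 s/n+\tfrac\eps 2\|\hat\lambda_\eps\|_1
\]
on an event of probability at least $1-5e^{-s}$, and rearranging yields the advertised bound. The main obstacle is the uniform step: arranging the generic-chaining estimates so that the dominant contribution of $(P_n-P)g_{\hat\lambda_\eps,\hat a_\eps}$ is linear in $\|\hat\lambda_\eps\|_1$ and then calibrating the lower bound on $\eps$ so that this contribution is absorbed by the penalty. The a priori crude bound $\|\hat\lambda_\eps\|_1\le 2\sigma_Y^2/\eps$ is essential to make the peeling argument terminate in $O(\log n)$ scales.
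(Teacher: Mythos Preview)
Your strategy can be made to work, but it is considerably heavier than the paper's argument, and the appeal to Lemmas \ref{adamczak_001} and \ref{squared_class} is not quite accurate as stated: those lemmas control $(P_n-P)\eta(f_\lambda^0-f_{\bar\lambda}^0)$ and $(\Pi_n-\Pi)(f_\lambda^0-f_{\bar\lambda}^0)^2$, i.e.\ processes centered at the oracle $\bar\lambda$ with multiplier $\eta=Y-f_{\bar\lambda,\bar a}(X)$, whereas you need $(P_n-P)\xi(f_\ast-f_{\lambda,a})$ and $(P_n-P)(f_{\lambda,a}-f_\ast)^2$, centered at $f_\ast$ with multiplier $\xi$. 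Bridging this gap is doable (expand around $f_{\bar\lambda,\bar a}$ and collect remainders) but is real extra work you have not done.

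The paper avoids the uniform step entirely. Starting from $P_n(\ell\bullet f_{\hat\lambda,\hat a})+\eps\|\hat\lambda\|_1\le P_n(\ell\bullet f_{\lambda,a})+\eps\|\lambda\|_1$ for a \emph{fixed} oracle $\lambda$, one rewrites
\[
P_n(\ell\bullet f_{\hat\lambda,\hat a})-P_n(\ell\bullet f_{\lambda,a})
=\|f_{\hat\lambda,\hat a}-f_\ast\|_{L_2(\Pi_n)}^2-\|f_{\lambda,a}-f_\ast\|_{L_2(\Pi_n)}^2-2P_n\xi(f_{\hat\lambda,\hat a}-f_{\lambda,a}),
\]
drops the nonnegative empirical norm $\|f_{\hat\lambda,\hat a}-f_\ast\|_{L_2(\Pi_n)}^2$, and is left with only $(\Pi_n-\Pi)(f_{\lambda,a}-f_\ast)^2$ for the \emph{fixed} $\lambda$ (a single Bernstein) plus the cross term $P_n\xi(f_{\hat\lambda,\hat a}-f_{\lambda,a})$. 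The crucial trick for the latter is the pointwise bound
\[
\bigl|P_n\xi(f_{\hat\lambda,\hat a}-f_{\lambda,a})\bigr|\le \|\hat\lambda-\lambda\|_1\,\Bigl\|n^{-1}\sum_{j}\xi_j(X_j-\mb EX)\Bigr\|_\infty+\text{(lower order)},
\]
so the only ``uniform'' object is the single random variable $\|n^{-1}\sum_j\xi_j(X_j-\mb EX)\|_\infty$, controlled once by a multiplier inequality plus Adamczak's concentration to give $C\|\xi\|_{\psi_2}S(\mb T)\sqrt{s/n}$. The hypothesis $\eps\ge D\|\xi\|_{\psi_2}S(\mb T)\sqrt{s/n}$ then absorbs the term $\|\hat\lambda-\lambda\|_1\cdot C\|\xi\|_{\psi_2}S(\mb T)\sqrt{s/n}$ directly into $\tfrac{\eps}{2}\|\hat\lambda\|_1$. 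No peeling, no a priori crude bound on $\|\hat\lambda\|_1$, and no uniform generic-chaining lemma is needed. Your approach reaches the same destination, but the paper's route is shorter and explains more transparently why the lower bound on $\eps$ takes exactly the form it does.
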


\begin{proof}
By the definition of $\hat \lambda,$ for all $\lambda\in \mb D, a\in \mb R$
\begin{align}
\label{def_min}
&
P_n(\ell \bullet f_{\hat \lambda, \hat a})+\eps\|\hat\lambda\|_1\leq 
P_n(\ell \bullet f_{\lambda,a})+\eps\|\lambda\|_1.
\end{align}
We will take $a=a(\lambda)=\mb EY-\langle \lambda, \mb E X\rangle$ everywhere below.
Let $\xi(x,y)=y-f_{\ast}(x)$ (then, $\xi_j=\xi(X_j,Y_j)$). Since 
$$
\ell \bullet f_{\hat \lambda,\hat a}-\ell \bullet f_{\lambda,a}=
(f_{\hat \lambda,\hat a}+f_{\lambda,a}-2f_{\ast}-2\xi)(f_{\hat \lambda,\hat a}-f_{\lambda,a}),
$$
it is easy to conclude that 
$$
P_n(\ell \bullet f_{\hat \lambda,\hat a})-P_n(\ell \bullet f_{\lambda,a})=
\|f_{\hat \lambda, \hat a}-f_{\ast}\|_{L_2(\Pi_n)}^2-
\|f_{\lambda, a}-f_{\ast}\|_{L_2(\Pi_n)}^2 -2 P_n \xi  (f_{\hat \lambda,\hat a}-f_{\lambda,a}).
$$
Thus, (\ref{def_min}) implies that
\begin{align}
\label{bound_zz}
&\|f_{\hat \lambda, \hat a}-f_{\ast}\|_{L_2(\Pi_n)}^2+ \eps\|\hat\lambda\|_1\leq
\\
\nonumber
&
\|f_{\lambda, a}-f_{\ast}\|_{L_2(\Pi)}^2+ 
(\Pi_n-\Pi)(f_{\lambda,a}-f_{\ast})^2 +2 P_n\l[ \xi  (f_{\hat \lambda,\hat a}-f_{\lambda,a})\r]
+\eps\|\lambda\|_1.
\end{align}
Using Bernstein's inequality for the random variable with finite $\|\cdot\|_{\psi_1}$-norm (see \cite{Koltchinskii2011Oracle-inequali00}, section A.2) we get that with probability at least $1-e^{-s}$, for $s\leq n$
\begin{equation}
\label{bound_zzz}
|(\Pi_n-\Pi)(f_{\lambda,a}-f_{\ast})^2|\leq C_1\|f_{\lambda,a}-f_{\ast}\|^2_{L_2(\Pi)}\l[\sqrt{\frac s n}\bigvee \frac s n\r]\leq C_1\|f_{\lambda,a}-f_{\ast}\|^2_{L_2(\Pi)}\sqrt{\frac{s}{n}},
\end{equation}
where we also used the fact that
$$
\|(f_{\lambda,a}-f_{\ast})^2\|_{\psi_1}=
\|f_{\lambda,a}-f_{\ast}\|_{\psi_2}^2\leq
c\|f_{\lambda,a}-f_{\ast}\|_{L_2(\Pi)}^2.
$$
Next, we apply representation (\ref{decomp}) to term $f_{\hat \lambda, \hat a}-f_{\lambda,a}$
in $P_n \xi  (f_{\hat \lambda,\hat a}-f_{\lambda,a})$ to get the following bound:
\begin{align}
\label{a110}
\l|P_n\xi(f_{\hat\lambda,\hat a}-f_{\lambda,a})\r| & \leq \|\hat\lambda-\lambda\|_1
 \biggl\|\frac1 n \sum_{j=1}^n \xi_j (X_j-\mb EX)\biggr\|_{\infty}
\\
 &
 \nonumber
+
\l|\frac 1 n \sum_{j=1}^n \xi_j\r|\l|\bar Y_n-\mb EY +\langle \hat \lambda, {\mathbb E}X-\bar X_n\rangle\r|.
\end{align}

To bound the first term in the right hand side of (\ref{a110}), we use a general multiplier inequality (see \cite{Vaart1996Weak-convergenc00}, Lemma 2.9.1):
$$
\mb E \biggl\|\frac{1}{\sqrt{n}} \sum_{j=1}^n \xi_j (X_j-\mb EX)\biggr\|_{\infty} \leq 2\sqrt{2}\|\xi\|_{2,1}
\max_{1\leq k\leq n}\mb E \biggl\|\frac{1}{\sqrt k} \sum_{j=1}^k \eps_j (X_j-\mb EX)\biggr\|_{\infty},
$$
where $\|\xi\|_{2,1}:=\int_{0}^{\infty}\sqrt{{\mb P}\{\xi \geq u\}}du.$
Note that the process $t\mapsto \frac{1}{\sqrt{k}} \sum\limits_{j=1}^k \eps_j (X_j(t)-\mb EX(t))$ is subgaussian for every $k$ with respect to the distance $d_X.$ Therefore,
$$
\mb E \biggl\|\frac{1}{\sqrt{k}} \sum_{j=1}^k \eps_j (X_j-\mb EX)\biggr\|_{\infty}\leq 
C_1 S(\mb T),
$$
which yields 
\begin{align}
\label{a2}
&
\mb E \biggl\|\frac{1}{n} \sum_{j=1}^n \xi_j (X_j-\mb EX)\biggr\|_{\infty}\leq 
C_2\|\xi\|_{\psi_2}\frac{S(\mb T)}{\sqrt{n}},
\end{align}
where we also used the bound $\|\xi\|_{2,1}\leq c\|\xi\|_{\psi_2}.$

Adamczak's inequality (\ref{adamczak}) implies that with probability $\geq 1-e^{-s}$
\begin{align}
\label{adamcz_A}
\biggl\|\frac1 n \sum_{j=1}^n \xi_j (X_j -&\mb EX)\biggr\|_{\infty}\leq \\
& \nonumber
\leq C
\bigg[
\|\xi\|_{\psi_2}\frac{ S(\mb T)}{\sqrt{n}}+ 
\sigma_\xi \sup_{t\in {\mathbb T}}\sqrt{\Var \l(X(t)\r)}\sqrt{\frac s n}
+\|\xi\|_{\psi_2} S(\mb T)\frac{s\log n}{n} \bigg] \\
&\nonumber
\leq C'\|\xi\|_{\psi_2}
 \l[
S({\mathbb T})\sqrt{\frac {s}{n}}\bigvee
S(\mb T)\frac{s\log n}{n}\r]  
\leq
C\|\xi\|_{\psi_2} S({\mathbb T})\sqrt{\frac{s}{n}},
\end{align}
where we also used the bound 
\begin{align}
\label{eq:sup_var}
&
\sup_{t\in \mb T}\sqrt{{\rm Var}(X(t))}\leq 
{\mb E}^{1/2}\sup_{t\in \mb T}|X(t)-\mb EX(t)|^2 \leq CS(\mb T).
\end{align} 
To estimate the second term in (\ref{a110}), we use inequality (\ref{bary})
and also the following tail bounds:
with probability at least $1-e^{-s}$,
\begin{equation}
\label{barxi}
\l|\frac 1 n\sum_{j=1}^n \xi_j\r|\leq C\|\xi\|_{\psi_2}\sqrt{\frac{s}{n}}
\end{equation}
and, with the same probability,  
\begin{equation}
\label{barxxx}
\l\|\bar X_n-\mb EX\r\|_\infty\leq CS(\mb T)\sqrt{\frac s n}.
\end{equation}
Together with (\ref{bary}), these bounds imply that, for some $C>0,$ with probability at least $1-3 e^{-s}$
\begin{align}
\label{term_2}
\l|\frac 1 n \sum_{j=1}^n \xi_j\r|\Big|\bar Y_n-\mb EY &+\langle \hat \lambda, {\mathbb E}X-\bar X_n\rangle\Big| \\
&\nonumber
\leq 
C\|\xi\|_{\psi_2}\l[S({\mathbb T})\frac{s}{n}\|\hat \lambda\|_1 + 
\frac{(\|f_{\ast}-\Pi f_{\ast}\|_{L_2(\Pi)}\vee \|\xi\|_{\psi_2})s}{n}\r].
\end{align}

It follows from bounds (\ref{bound_zz}), (\ref{bound_zzz}), (\ref{a110}), (\ref{adamcz_A})
and (\ref{term_2}) that with probability at least $1-5 e^{-s}$ 
\begin{align*}
\eps\|\hat\lambda\|_1 & \leq \|f_{\lambda,a}-f_{\ast}\|_{L_2(\Pi)}^2+
C_1\|f_{\lambda,a}-f_{\ast}\|^2_{L_2(\Pi)}\sqrt{\frac{s}{n}}
+\eps\|\lambda\|_1
 \\
& 
+C'\|\xi\|_{\psi_2} S({\mathbb T})\sqrt{\frac{s}{n}}\|\hat \lambda-\lambda\|_1
+C\|\xi\|_{\psi_2}S({\mathbb T})\frac{s}{n}\|\hat \lambda\|_1 \\
&
+C\frac{(\|f_{\ast}-\Pi f_{\ast}\|_{L_2(\Pi)}^2\vee \|\xi\|_{\psi_2}^2) s}{n}.
\end{align*}
If constant $D$ in the assumption on $\eps$ is large enough and $s\leq n,$ it implies that 
with some $C>0$
\begin{equation}
\nonumber
\frac{\eps}{2}\|\hat\lambda\|_1\leq
C\|f_{\lambda,a}-f_{\ast}\|_{L_2(\Pi)}^2+
2\eps\|\lambda\|_1 + 
C\frac{(\|f_{\ast}-\Pi f_{\ast}\|_{L_2(\Pi)}^2\vee \|\xi\|_{\psi_2}^2)s}{n},
\end{equation}
and the result immediately follows.
\end{proof}

\textit{Step 5. Putting all the bounds together.} 

We have all the necessary estimates to complete the proof. Let $E$ denote the event on 
which the bounds of Lemma \ref{adamczak_001}, Lemma \ref{linear_001}, Lemma \ref{squared_class}
and also bounds (\ref{eta_bound}), (\ref{bary}), (\ref{barx}), (\ref{bound_zzz}), (\ref{adamcz_A}), (\ref{barxi}) and (\ref{barxxx}) hold. The probability of this 
event is at least $1-10e^{-s}.$ In what follows, we assume that event $E$ occurs. 
Note that in this case the bound of Lemma \ref{norms} also holds. 
Denote 
$$
\hat \delta := \|f_{\hat \lambda}^0-f_{\bar \lambda}^0\|_{L_2(\Pi)}, \ 
\hat \Delta := \int\limits_{{\mathbb T}\setminus {\mathbb T}_w}|\hat \lambda|d\mu,  \ 
\hat R:=\|\hat \lambda\|_1.
$$
Suppose that 
\begin{equation}
\label{lower_upper}
\hat \delta \in [\delta_-,\delta_+], \hat \Delta\in [\Delta_-, \Delta_+], \hat R\in [R_-,R_+].
\end{equation}
It follows from bound (\ref{bou_1}), Lemma \ref{adamczak_001} and bounds (\ref{duo}) -- (\ref{uno}) that 
\begin{align}
\label{eq:process_1}
\frac{1}{C}(P_n&-P)\l[\eta\l(f_{\hat \lambda, \hat a}-f_{\bar \lambda, \bar a}\r)\r] \leq 
\\
\nonumber
& \leq(\|f_{\bar \lambda,\bar a}-f_{\ast}\|_{L_2(\Pi)}\vee \|\xi\|_{\psi_2})
\biggl[\hat \delta \sqrt{\frac{\bar s}{n}}\bigvee \nu_n(\hat \delta,\hat \Delta,\hat R)\biggr]
\bigvee \\
&\nonumber
\bigvee \nu_n^2(\hat \delta,\hat \Delta, \hat R)\bigvee
\l(\|f_{\ast}-\Pi f_{\ast}\|_{L_2(\Pi)}^2\vee \|f_{\bar \lambda,\bar a}-f_{\ast}\|_{L_2(\Pi)}^2\vee \|\xi\|_{\psi_2}^2\r)\frac{s}{n} \\
\nonumber
&\bigvee
\l(\|f_{\bar \lambda,\bar a}-f_{\ast}\|_{L_2(\Pi)}\vee \|\xi\|_{\psi_2}\r) \sqrt{\frac{s}{n}}
\biggl[\hat \delta \sqrt{\frac{\bar s}{n}}\bigvee \nu_n(\hat \delta,\hat \Delta,\hat R)\biggr]
\end{align}
for some absolute constant $C>0.$  
Similarly, Lemma \ref{squared_class} and bound (\ref{trio}) imply
that  
\begin{align}
\label{eq:process_2}
\frac{1}{C}(\Pi&-\Pi_n)(f_{\hat \lambda, \hat a}-f_{\bar \lambda, \bar a})^2
\leq \\
&\nonumber
\hat \delta
\biggl[\hat \delta \sqrt{\frac{\bar s}{n}}\bigvee \nu_n(\hat \delta, \hat \Delta, \hat R)\biggr]
\bigvee
 \nu_n^2(\hat \delta,\hat \Delta, \hat R)
\bigvee \\
&\nonumber
\l(\|f_{\ast}-\Pi f_{\ast}\|_{L_2(\Pi)}\vee  \|f_{\bar\lambda,\bar a}-f_\ast\|_{L_2(\Pi)}\vee\|\xi\|_{\psi_2}\r)\sqrt{\frac{s}{n}}
\biggl[\hat \delta \sqrt{\frac{\bar s}{n}}\bigvee \nu_n(\hat \delta,\hat \Delta,\hat R)\biggr]
\bigvee \\
&\nonumber
\biggl[\hat \delta \sqrt{\frac{\bar s}{n}}\bigvee \nu_n(\hat \delta,\hat \Delta,\hat R)\biggr]^2.
\end{align}
The last two inequalities will be replaced by simplified upper bounds. 
To this end, we use elementary inequalities 
such as $ab\leq \frac{a^2}{2c}+ \frac{c b^2}{2},$ for instance:
$$
C(\|f_{\bar \lambda,\bar a}-f_{\ast}\|_{L_2(\Pi)}\vee \|\xi\|_{\psi_2})
\hat \delta \sqrt{\frac{\bar s}{n}} \leq 
\frac{2C^2}{2}(\|f_{\bar \lambda,\bar a}-f_{\ast}\|_{L_2(\Pi)}^2\vee \|\xi\|_{\psi_2}^2)\frac{\bar s}{n}
+\frac{1}{8}\hat \delta^2. 
$$
Also recall that by (\ref{barlambda_1}), (\ref{sigmaY}) and the assumption that
$\xi \in {\mathcal L},$ 
$$\|f_{\ast}-\Pi f_{\ast}\|_{L_2(\Pi)}\vee 
\|f_{\bar \lambda,\bar a}-f_{\ast}\|_{L_2(\Pi)}\vee \|\xi\|_{\psi_2}\leq \sigma_Y.
$$
Whenever it is more convenient, we can replace 
the maximum $\bigvee$ by the sum, or vice versa (with a proper change of constant $C$), we can drop
repetitive terms in the maximum, etc. With such simple transformations, it is easy to get the 
following bound (with some constant $C>0$ and under the assumption that $\bar s\leq n$):   
\begin{align}
\label{compressed}
(P_n&-P)\eta(f_{\hat \lambda, \hat a}-f_{\bar \lambda, \bar a}) + 
(\Pi-\Pi_n)(f_{\hat \lambda, \hat a}-f_{\bar \lambda, \bar a})^2
\leq \\
&\nonumber 
\leq\frac{1}{8}\hat \delta^2 + C \hat \delta^2 \sqrt{\frac{\bar s}{n}}+
C\sigma_Y\nu_n (\hat \delta,\hat \Delta,\hat R)
+C \nu_n^2 (\hat \delta,\hat \Delta,\hat R)+
C\frac{\sigma_Y^2\bar s}{n}. 
\end{align}
Note that 
$$
\nu_n (\hat \delta,\hat \Delta,\hat R)=
\inf_{L\subset {\mathcal L}}\biggl[\hat \delta \sqrt{\frac{{\rm dim}(L)}{n}}\bigvee
(\hat R\vee \|\bar \lambda\|_1)\frac{\gamma_2(\rho(L))}{\sqrt{n}}\bigvee \hat \Delta \frac{S({\mathbb T})}{\sqrt{n}}\biggr]
\leq (\hat R\vee \|\bar\lambda\|_1) \frac{S({\mathbb T})}{\sqrt{n}},
$$
where we used the bounds $\hat \Delta \leq \hat R,$ $\gamma_2(\rho)\leq S({\mathbb T})$ 
and computed the expression in the right hand side of the definition of $\nu_n$ for a trivial
subspace of zero dimension. 
Using Lemma \ref{norms}, we get the following bound:
$$
\hat R \vee \|\bar \lambda\|_1 \leq 
c\biggl(\frac{q(\eps)}{\eps}\bigvee 
\frac{\sigma_Y^2 s}{n\eps}\biggr)\bigvee \|\bar \lambda\|_1,
$$  
which holds with probability at least $1-e^{-s}.$
%
%
Therefore, 
\begin{align*}
\nu_n^2 (\hat \delta,\hat \Delta,\hat R)&\leq 
c\biggl(\frac{q(\eps)}{\eps}\bigvee 
\frac{\sigma_Y^2 s}{n\eps}\biggr)\frac{S(\mb T)}{\sqrt n}\nu_n (\hat \delta,\hat \Delta,\hat R)+
\|\bar \lambda\|_1\frac{S(\mb T)}{\sqrt n}\nu_n (\hat \delta,\hat \Delta,\hat R)\leq \\
& 
c\biggl(\frac{q(\eps)}{\eps}\bigvee 
\frac{\sigma_Y^2 s}{n\eps}\biggr)\frac{S(\mb T)}{\sqrt n}\nu_n (\hat \delta,\hat \Delta,\hat R)+
\frac 1 2 \frac{\|\bar\lambda\|_1^2 S^2(\mb T)}{n}+\frac 1 2 \nu_n^2 (\hat \delta,\hat \Delta,\hat R),
\end{align*}
and inequality (\ref{compressed}) easily yields
\begin{align}
\label{compressed_1}
(P_n&-P)\eta(f_{\hat \lambda, \hat a}-f_{\bar \lambda, \bar a}) + 
(\Pi-\Pi_n)(f_{\hat \lambda, \hat a}-f_{\bar \lambda, \bar a})^2
\leq \frac{1}{8}\hat \delta^2 + C \hat \delta^2 \sqrt{\frac{\bar s}{n}}+ \\
&\nonumber 
+C\sigma_Y\nu_n (\hat \delta,\hat \Delta,\hat R)+
C \biggl(\frac{q(\eps)}{\eps}\bigvee 
\frac{\sigma_Y^2s}{n\eps}\biggr)\frac{S({\mathbb T})}{\sqrt{n}}
\nu_n(\hat \delta,\hat \Delta,\hat R)+\\
&\nonumber 
+C\frac{\|\bar\lambda\|_1^2 S^2(\mb T)}{n}+
C\frac{\sigma_Y^2 \bar s}{n}. 
\end{align}

\begin{remark}
\label{rem:bounded_norm}
Note that under an additional assumption that 
$$
\mb D\subset \l\{\lambda\in L_1(\mu): \  \|\lambda\|_1\leq \frac{C\sigma_Y\sqrt{n}}{S(\mathbb T)}\r\}
$$
(in particular, $\big\|\bar\lambda\big\|_1\leq C\frac{\sigma_Y\sqrt n}{S({\mathbb T})} \big),$ 
we have 
\begin{align*}
\nu_n^2 (\hat \delta,\hat \Delta,\hat R)&
\leq 
c\biggl(\frac{q(\eps)}{\eps}\bigvee 
\frac{\sigma_Y^2 s}{n\eps}\biggr)\frac{S(\mb T)}{\sqrt n}\nu_n (\hat \delta,\hat \Delta,\hat R)+
\|\bar \lambda\|_1\frac{S(\mb T)}{\sqrt n}\nu_n (\hat \delta,\hat \Delta,\hat R) \\
&\leq
c\biggl(\frac{q(\eps)}{\eps}\bigvee 
\frac{\sigma_Y^2 s}{n\eps}\biggr)\frac{S(\mb T)}{\sqrt n}\nu_n (\hat \delta,\hat \Delta,\hat R)+
C\sigma_Y\nu_n (\hat \delta,\hat \Delta,\hat R),
\end{align*}
so that the term $\frac{\|\bar\lambda\|_1^2 S^2(\mb T)}{n}$ disappears from (\ref{compressed_1}). In this case, the remainder of the proof yields Theorem \ref{th:bounded_norm}. 
\end{remark}

Under the assumption (\ref{eps-condition}) on $\eps$ and the inequality
$s\leq c n$ (which easily follows from the the main conditions of the theorem), 
we have  
$$ 
\frac{s\sigma_Y}{n\eps}\frac{S({\mathbb T})}{\sqrt{n}}\leq c_1
$$
with some constant $c_1>0.$
Also, condition (\ref{eps-condition}) and the inequality $q(\eps)\leq \|f_\ast-\Pi f_\ast\|_{L_2(\Pi)}^2\leq \sigma_Y^2$
implies that 
$$
\frac{q(\eps)}{\eps} \frac{S({\mathbb T})}{\sqrt{n}}
\leq \fr c \sigma_Y.
$$
Hence, with some constant $C>0$ and for any subspace $L\subset {\mathcal L}_X$
with ${\rm dim}(L)=d$ and $\rho(L)=\rho,$
\begin{align}
\label{compressed_2}
(P_n-P)&\eta(f_{\hat \lambda, \hat a}-f_{\bar \lambda, \bar a}) + 
(\Pi-\Pi_n)(f_{\hat \lambda, \hat a}-f_{\bar \lambda, \bar a})^2
\leq
\\
\nonumber 
&
\leq\frac{1}{8}\hat \delta^2 + C \hat \delta^2 \sqrt{\frac{\bar s}{n}}+
C\sigma_Y
\biggl[\hat \delta \sqrt{\frac{d}{n}}\bigvee (\hat R\vee \|\bar \lambda\|_1)\frac{\gamma_2(\rho)}{\sqrt{n}}
\bigvee \hat \Delta \frac{S({\mathbb T})}{\sqrt{n}}\biggr]+
\\
\nonumber 
&
+C\frac{\|\bar \lambda\|_1^2 S^2({\mathbb T})}{n}+
C\frac{\sigma_Y^2 \bar s}{n}. 
\end{align}

We will now substitute (\ref{compressed_2}) in the right hand side of bound (\ref{r3'}).
Recall that $\mb T_{\bar w}=\{t: \bar w(t)\geq 1/2\}$. 
Since, by monotonicity 
of subdifferentials, $(\hat w(t)-w(t))(\hat \lambda(t)-\lambda(t))\geq 0$ for all $t\in \mb T$, and $\bar w, \hat w$
take their values in $[-1,1]$ by definition, we also have that  
\begin{equation}
\label{outside}
\dotp{\hat w-\bar w}{\hat\lambda-\bar\lambda}\geq \frac12
\int\limits_{\mb T\setminus \mb T_{\bar w}}
|\hat\lambda|d\mu.
\end{equation}
Taking this into account, we get  
\begin{align}
\label{bound_X}
\|f_{\hat \lambda, \hat a}-f_{\ast}\|_{L_2(\Pi)}^2 &
+\|f_{\hat \lambda, \hat a}-f_{\bar \lambda, \bar a}\|_{L_2(\Pi)}^2 
+\frac{\eps}{2}\dotp{\hat w-\bar w}{\hat\lambda-\bar\lambda}
+\frac{\eps}{4}
\int\limits_{\mb T\setminus \mb T_{\bar w}}
|\hat\lambda|d\mu\leq \\
&\nonumber
\leq  
\|f_{\bar \lambda, \bar a}-f_{\ast}\|_{L_2(\Pi)}^2+
\eps\dotp{\bar w}{\bar\lambda-\hat\lambda}+ 
\frac{1}{8}\hat \delta^2 + C \hat \delta^2 \sqrt{\frac{\bar s}{n}}+
\\
\nonumber
&
+C\sigma_Y
\biggl[\hat \delta \sqrt{\frac{d}{n}}\bigvee (\hat R \vee 
\|\bar \lambda\|_1 )\frac{\gamma_2(\rho)}{\sqrt{n}}
\bigvee \hat \Delta \frac{S({\mathbb T})}{\sqrt{n}}\biggr]+ \\
&\nonumber 
+C\frac{\|\bar \lambda\|_1^2 S^2({\mathbb T})}{n}+
C\frac{\sigma_Y^2 \bar s}{n}. 
\end{align}
Note also that 
$$
\|\hat \lambda\|_1\leq \|\bar \lambda\|_1 + 
\dotp{\bar w}{\hat \lambda - \bar \lambda}
+ \dotp{\hat w- \bar w}{\hat \lambda - \bar \lambda},
$$
which will be used to control $\hat R\vee \|\bar \lambda\|_1=\|\hat \lambda\|_1\vee \|\bar \lambda\|_1.$
Then, bound (\ref{bound_X}) implies the following (with a different value of $C$):
\begin{align}
\label{bound_Y}
\|f_{\hat \lambda, \hat a}-f_{\ast}\|_{L_2(\Pi)}^2 &
+\|f_{\hat \lambda, \hat a}-f_{\bar \lambda, \bar a}\|_{L_2(\Pi)}^2 
+\frac{\eps}{2}\dotp{\hat w-\bar w}{\hat\lambda-\bar\lambda} 
+\frac{\eps}{4}\hat \Delta\leq \\
&\nonumber
\leq  
\|f_{\bar \lambda, \bar a}-f_{\ast}\|_{L_2(\Pi)}^2+
\eps\dotp{\bar w}{\bar\lambda-\hat\lambda}
+\frac{1}{4}\hat \delta^2 + C \hat \delta^2 \sqrt{\frac{\bar s}{n}}+ \\
&\nonumber
+C\frac{\sigma_Y^2 d}{n}
+C\sigma_Y
\|\bar \lambda\|_1 \frac{\gamma_2(\rho)}{\sqrt{n}}
+
\hat \Delta C\sigma_Y\frac{S({\mathbb T})}{\sqrt{n}}+\\
&\nonumber
+ C\sigma_Y\frac{\gamma_2(\rho)}{\sqrt{n}}\l(\dotp{\bar w}{\hat \lambda - \bar \lambda}\vee 0\r) 
+ C\sigma_Y\frac{\gamma_2(\rho)}{\sqrt{n}}\dotp{\hat w- \bar w}{\hat \lambda - \bar \lambda}+\\
&\nonumber 
+C\frac{\|\bar \lambda\|_1^2 S^2({\mathbb T})}{n}+
C\frac{\sigma_Y^2 \bar s}{n}. 
\end{align}
If constant $D$ in the condition on $\eps$ is large enough, we have 
$
C\sigma_Y\frac{\gamma_2(\rho)}{\sqrt{n}}\leq  C
\sigma_Y\frac{S({\mathbb T})}{\sqrt{n}}\leq 
\eps/8,
$
which implies 
\begin{align}
\label{bound_Z}
\|f_{\hat \lambda, \hat a}-f_{\ast}\|_{L_2(\Pi)}^2 &
+\|f_{\hat \lambda, \hat a}-f_{\bar \lambda, \bar a}\|_{L_2(\Pi)}^2 
+\frac{\eps}{4}\dotp{\hat w-\bar w}{\hat\lambda-\bar\lambda} 
+\frac{\eps}{8}\hat \Delta\leq \\
&\nonumber
\leq  
\|f_{\bar \lambda, \bar a}-f_{\ast}\|_{L_2(\Pi)}^2+
\frac{9}{8}\eps \l(\dotp{\bar w}{\bar\lambda-\hat\lambda}\vee 0\r)
+\frac{1}{4}\hat \delta^2 + C \hat \delta^2 \sqrt{\frac{\bar s}{n}}+\\
&\nonumber
+C\frac{\sigma_Y^2 d}{n}
+C\sigma_Y
\|\bar \lambda\|_1 \frac{\gamma_2(\rho)}{\sqrt{n}}
+C\frac{\|\bar \lambda\|_1^2 S^2({\mathbb T})}{n}+
C\frac{\sigma_Y^2 \bar s}{n}. 
\end{align}
Finally, note that 
$
\hat \delta = 
\|f_{\hat \lambda}^0-f_{\bar \lambda}^0\|_{L_2(\Pi)}
\leq 
\|f_{\hat \lambda, \hat a}-f_{\bar \lambda, \bar a}\|_{L_2(\Pi)}. 
$
Because of this, under the assumption that $C,\bar s$ and $n$ are such that $C\sqrt{\frac{\bar s}{n}}\leq 1/4,$ 
we get from (\ref{bound_Z})
\begin{align}
\label{bound_W}
\|f_{\hat \lambda, \hat a}-f_{\ast}\|_{L_2(\Pi)}^2 &
+
\frac{1}{2}\|f_{\hat \lambda, \hat a}-f_{\bar \lambda, \bar a}\|_{L_2(\Pi)}^2 
+\frac{\eps}{4}\dotp{\hat w-\bar w}{\hat\lambda-\bar\lambda} 
+\frac{\eps}{8}
\int\limits_{\mb T\setminus \mb T_{\bar w}}
|\hat\lambda|d\mu\leq \\
&\nonumber
\leq  
\|f_{\bar \lambda, \bar a}-f_{\ast}\|_{L_2(\Pi)}^2+
\frac{9}{8}\eps \l(\dotp{\bar w}{\bar\lambda-\hat\lambda}\vee 0\r)+
C\frac{\sigma_Y^2 d}{n}
+\\
&\nonumber
+C\sigma_Y
\|\bar \lambda\|_1 \frac{\gamma_2(\rho)}{\sqrt{n}}+
C\frac{\|\bar \lambda\|_1^2 S^2({\mathbb T})}{n}+
C\frac{\sigma_Y^2 \bar s}{n}. 
\end{align}
First, assume that 
\begin{align}
\label{case_X}
\frac{7}{8}\eps \l(\dotp{\bar w}{\bar\lambda-\hat\lambda}\vee 0\r)\geq 
C\frac{\sigma_Y^2 d}{n}
+C\sigma_Y
\|\bar \lambda\|_1 \frac{\gamma_2(\rho)}{\sqrt{n}}
+C\frac{\|\bar \lambda\|_1^2 S^2({\mathbb T})}{n}+
C\frac{\sigma_Y^2 \bar s}{n}. 
\end{align}
In this case, bound (\ref{bound_W}) implies that 
\begin{align}
\label{bound_W'}
\|f_{\hat \lambda, \hat a}-f_{\ast}\|_{L_2(\Pi)}^2 &
+
\frac{1}{2}\|f_{\hat \lambda, \hat a}-f_{\bar \lambda, \bar a}\|_{L_2(\Pi)}^2 
+\frac{\eps}{8}
\int\limits_{\mb T\setminus \mb T_{\bar w}}
|\hat\lambda|d\mu\leq \\
&\nonumber
\leq  
\|f_{\bar \lambda, \bar a}-f_{\ast}\|_{L_2(\Pi)}^2+
2\eps \dotp{\bar w}{\bar\lambda-\hat \lambda} .
\end{align}
If 
$
\|f_{\hat \lambda, \hat a}-f_{\ast}\|_{L_2(\Pi)}^2\leq \|f_{\bar \lambda, \bar a}-f_{\ast}\|_{L_2(\Pi)}^2,
$
the inequality of the theorem trivially holds. 
Otherwise, (\ref{bound_W'}) implies that 
$$
\int\limits_{\mb T\setminus \mb T_{\bar w}}
|\hat\lambda -\bar \lambda|d\mu \leq 16\dotp{\bar w}{\bar\lambda -\hat \lambda},
$$
which means that $\bar \lambda -\hat \lambda \in C_{\bar w}^{(16)}$ and 
$$
\dotp{\bar w}{\bar\lambda - \hat \lambda}\leq 
\fr a(\bar w)\l\|f^0_{\hat \lambda}-f^0_{\bar \lambda}\r\|_{L_2(\Pi)}
\leq 
\fr a(\bar w)\|f_{\hat \lambda, \hat a}-f_{\bar \lambda,\bar a}\|_{L_2(\Pi)}
$$
by the definition of $\fr a(\cdot)=\fr a^{(16)}(\cdot).$
Therefore, we have 
\begin{align}
\|f_{\hat \lambda, \hat a}-f_{\ast}\|_{L_2(\Pi)}^2 &
+
\frac{1}{2}\|f_{\hat \lambda, \hat a}-f_{\bar \lambda, \bar a}\|_{L_2(\Pi)}^2 +
\frac{\eps}{8}\int\limits_{\mb T\setminus \mb T_{\bar w}}
|\hat\lambda|d\mu \leq \\
&\nonumber
\leq  
\|f_{\bar \lambda, \bar a}-f_{\ast}\|_{L_2(\Pi)}^2+
2\eps \fr a(\bar w) \|f_{\hat \lambda, \hat a}-f_{\bar \lambda,\bar a}\|_{L_2(\Pi)}\leq \\
&\nonumber
\leq  
\|f_{\bar \lambda, \bar a}-f_{\ast}\|_{L_2(\Pi)}^2+
2\fr a^2(\bar w)\eps^2+ \frac{1}{2}\|f_{\hat \lambda, \hat a}-f_{\bar \lambda,\bar a}\|_{L_2(\Pi)}^2,
\end{align}
which again implies the bound of the theorem. 

If condition (\ref{case_X}) does not hold, then bound (\ref{bound_W}) implies that with 
some constant $C>0$
\begin{align}
\label{bound_V}
\|f_{\hat \lambda, \hat a}-f_{\ast}\|_{L_2(\Pi)}^2 &
+\frac{\eps}{8}\int\limits_{\mb T\setminus \mb T_{\bar w}}|\hat\lambda|d\mu
\leq  
\|f_{\bar \lambda, \bar a}-f_{\ast}\|_{L_2(\Pi)}^2+
C\frac{\sigma_Y^2 d}{n}+ \\
& \nonumber
+C\sigma_Y
\|\bar \lambda\|_1 \frac{\gamma_2(\rho)}{\sqrt{n}}
+C\frac{\|\bar \lambda\|_1^2 S^2({\mathbb T})}{n}+
C\frac{\sigma_Y^2 \bar s}{n}, 
\end{align}
which gives the bound of the theorem in this case. 
\index{bg@$\delta_{-}, \delta_{+}, \Delta_{-}, \Delta_{+}, R_{-}, R_{+}$}
To complete the proof, it remains to choose the values of quantities $\delta_{-}, \delta_{+}, \Delta_{-}, \Delta_{+}$ and $R_{-}, R_{+}$ and to explain how to establish the bound 
of the theorem in the case when conditions (\ref{lower_upper}) do not hold. 
We will 
choose the values 
\begin{align*}
&
\delta_{+}:= C_1 \sigma_Y \sqrt{n}, \ 
\delta_{-}:= \frac{C_1\sigma_Y }{\sqrt{n}}, \\
&
R_{+}=\Delta_{+}=\frac{C_1 \sigma_Y\sqrt{n}}{S({\mathbb T})}, 
R_{-}=\Delta_{-}=\frac{C_1 \sigma_Y}{S({\mathbb T})\sqrt{n}},
\end{align*}
where $C_1$ is a large enough constant. Recall that  
$
\bar s = s+\log ((J_1+1)(J_2+1)(J_3+1))
$
and, for our choice of $\delta_-,\delta_+, \Delta_-,\Delta_+, R_-,R_+$
we have $J_1=J_2=J_3=\lfloor\log_2 n\rfloor+1.$ 
Therefore, $\bar s=s+ 3\log (\lfloor\log_2 n\rfloor+2).$
Since $q(\eps)\leq \|f_{\ast}-\Pi f_{\ast}\|_{L_2(\Pi)}^2,$
it easily follows from Lemma \ref{norms} that 
$$
\|\hat \lambda\|_1 \leq C\biggl[\frac{q(\eps)}{\eps}+\frac{\sigma_Y^2 s}{n\eps}\biggr]
\leq 2C_2 \frac{\sigma_Y^2}{\eps}\leq C_3 \frac{\sigma_Y}{S({\mathbb T})}\sqrt{n}\leq R_+, 
$$ 
provided that constant $C_1$ is large enough. It is also easy to see from (\ref{barlambda_2})
that $\|\bar \lambda\|_1\leq R_+.$ Thus, $\hat R \vee \|\bar \lambda\|_1\leq R_+,$ and also $\hat \Delta\leq \hat R\leq R_{+}=\Delta_{+}.$ 
In addition, 
\begin{align*}
\hat \delta &
= \|f_{\hat \lambda}^0-f_{\bar \lambda}^0\|_{L_2(\Pi)}
= {\mathbb E}^{1/2}\dotp{\hat \lambda-\bar \lambda}{X-{\mathbb E}X}^2
\leq \|\hat \lambda-\bar \lambda\|_1 \sup_{t\in {\mathbb T}}\sqrt{{\rm Var}(X(t))}\leq \\
&
\leq C_2(\hat R\vee \|\bar \lambda\|_1) S(\mb T)\leq C_3 \frac{\sigma_Y}{S({\mathbb T})}\sqrt{n}S({\mathbb T})
\leq \delta_{+},
\end{align*}
again, provided that constant $C_1$ is large enough. Here, we also used 
the bound (\ref{eq:sup_var}) to estimate $\sup\limits_{t\in \mb T}\sqrt{\Var(X(t))}$. 

Thus, conditions $\hat \delta\leq \delta_{+}, \hat \Delta\leq \Delta_{+}, 
\hat R\leq R_{+}$ hold on the event $E.$
If some of the conditions $\hat \delta \geq \delta_{-},$ $\hat \Delta \geq \Delta_{-},$
$\hat R\geq R_{-}$ are violated, we can still use bound (\ref{bound_Y}) with quantities $\hat \delta, \hat \Delta, \hat R$ that fall outside the intervals being replaced in its right hand side by the corresponding upper bound $\delta_{-}, \Delta_{-}, R_{-}.$ It is easy to check that the inequality of the theorem
still holds in this case with a proper constant $C.$

It now remains to replace $s$ by $s+3$ (so that ${\mb P}(E)\geq 1-10e^{-s-3}\geq 1-e^{-s}$) to get that 
the bound of the theorem holds with probability at least $1-e^{-s}.$

\subsection{Proof of Theorem \ref{th:slow_rate}.}

Most of the necessary ingredients have been already developed in the proof of Theorem \ref{th:main_AA}.
Let $(\bar \lambda, \bar a)$ be a couple that minimizes the right hand side 
of bound (\ref{slow_oracle}). 
As before, if the infimum is not attained, the proof can be easily modified. 
We also have that (plugging $(0,\Pi f_\ast)$ in the right hand side of (\ref{slow_oracle}))
\begin{align*}
&
\|f_{\bar\lambda,\bar a}-f_\ast\|_{L_2(\Pi)}^2\leq \|f_\ast-\Pi f_{\ast}\|^2_{L_2(\Pi)},\ \ 
&
\|\bar\lambda\|_1\leq \frac 2 3 \frac{\|f_\ast-\Pi f_{\ast}\|_{L_2(\Pi)}^2}{\eps}.
\end{align*}
The following inequality is equivalent to (\ref{r3'}):
\begin{align}
\label{eq:sl0}
\|f_{\hat \lambda, \hat a}-f_{\ast}\|_{L_2(\Pi)}^2 &
+\|f_{\hat \lambda, \hat a}-f_{\bar \lambda, \bar a}\|_{L_2(\Pi)}^2 
+\eps\dotp{\hat w}{\hat\lambda-\bar\lambda}\leq 
\|f_{\bar \lambda, \bar a}-f_{\ast}\|_{L_2(\Pi)}^2+\\ 
&\nonumber
+
2(P_n-P)\eta(f_{\hat \lambda, \hat a}-f_{\bar \lambda, \bar a})
+2(\Pi-\Pi_n)\l(f_{\hat \lambda, \hat a}-f_{\bar \lambda, \bar a}\r)^2.
\end{align}
Note that 
\begin{align}
\label{eq:sl1}
&
\eps\dotp{\hat w}{\hat\lambda-\bar\lambda}\geq \eps\l(\|\hat\lambda\|_1-\|\bar\lambda\|_1\r).
\end{align}
To bound the empirical processes on the right hand side of (\ref{eq:sl0}), we use inequalities (\ref{eq:process_1}) and (\ref{eq:process_2}) which imply that (see (\ref{compressed_2}) above for details) with some constant $C>0$ and for any subspace $L\subset {\mathcal L}$
with ${\rm dim}(L)=d$ and $\rho(L)=\rho,$
\begin{align}
\label{eq:process_3}
(P_n-P)&\eta(f_{\hat \lambda, \hat a}-f_{\bar \lambda, \bar a}) + 
(\Pi-\Pi_n)(f_{\hat \lambda, \hat a}-f_{\bar \lambda, \bar a})^2
\leq
\\
\nonumber 
&
\leq\frac{1}{8}\hat \delta^2 + C \hat \delta^2 \sqrt{\frac{\bar s}{n}}+
C\sigma_Y
\biggl[\hat \delta \sqrt{\frac{d}{n}}\bigvee (\hat R \vee \|\bar \lambda\|_1)\frac{\gamma_2(\rho)}{\sqrt{n}}
\bigvee \hat \Delta \frac{S({\mathbb T})}{\sqrt{n}}\biggr]+
\\
\nonumber 
&
+C\frac{\|\bar \lambda\|_1^2 S^2({\mathbb T})}{n}+
C\frac{\sigma_Y^2 \bar s}{n}
\end{align}
holds on the event $E$ (defined in the proof of Theorem \ref{th:main_AA}) of probability at least $\geq 1-10e^{-s}$,
where
$$
\hat \delta := \l\|f_{\hat \lambda}^0-f_{\bar \lambda}^0\r\|_{L_2(\Pi)}
\leq \l\|f_{\hat \lambda, \hat a}-f_{\bar \lambda, \bar a}\r\|_{L_2(\Pi)}, \ 
\hat \Delta := \int\limits_{{\mathbb T}\setminus {\mathbb T}_w}|\hat \lambda|d\mu,  \ 
\hat R:=\|\hat \lambda\|_1
$$
and we assume that bounds (\ref{lower_upper}) hold. 
Using the inequalities $\gamma_2(\rho)\leq S(\mb T)$, $\hat\Delta\leq\hat R$ and choosing $L$ to be the trivial subspace of dimension $0$, we get 
\begin{align}
\label{eq:sl3}
(P_n-P)&\eta(f_{\hat \lambda, \hat a}-f_{\bar \lambda, \bar a}) + 
(\Pi-\Pi_n)(f_{\hat \lambda, \hat a}-f_{\bar \lambda, \bar a})^2
\leq
\\
\nonumber 
&
\leq\frac{1}{8}\hat \delta^2 + C \hat \delta^2 \sqrt{\frac{\bar s}{n}}+
C\sigma_Y
(\|\bar\lambda\|_1\vee \|\hat\lambda_\eps\|_1)\frac{S(\mb T)}{\sqrt n}+
\\
\nonumber 
&
+C\frac{\|\bar \lambda\|_1^2 S^2({\mathbb T})}{n}+
C\frac{\sigma_Y^2 \bar s}{n},
\end{align}
Substituting (\ref{eq:sl3}) and (\ref{eq:sl1}) back in (\ref{eq:sl0}), we get that with some $C>0$
\begin{align}
\label{eq:sl5}
\|f_{\hat \lambda, \hat a}-f_{\ast}\|_{L_2(\Pi)}^2 &
+\|f_{\hat \lambda, \hat a}-f_{\bar \lambda, \bar a}\|_{L_2(\Pi)}^2 
+\eps\|\hat\lambda_\eps\|_1\leq 
\|f_{\bar \lambda, \bar a}-f_{\ast}\|_{L_2(\Pi)}^2+\\ 
&\nonumber
+\eps\|\bar\lambda\|_1+\frac{1}{4} \l\|f_{\hat \lambda,\hat a}-f_{\bar \lambda,\bar a}\r\|^2_{L_2(\Pi)} + 
C \l\|f_{\hat \lambda,\hat a}-f_{\bar \lambda,\bar a}\r\|^2_{L_2(\Pi)} \sqrt{\frac{\bar s}{n}}+\\
&\nonumber
+C\sigma_Y
(\|\bar\lambda\|_1+ \|\hat\lambda_\eps\|_1)\frac{S(\mb T)}{\sqrt n}
+C\frac{\|\bar \lambda\|_1^2 S^2({\mathbb T})}{n}+
C\frac{\sigma_Y^2 \bar s}{n}.
\end{align}
If the constant $D$ in condition (\ref{eps-condition_sl}) is large enough, we have 
$C\sigma_Y\frac{S(\mb T)}{\sqrt n}\leq \frac{\eps}{4}$ and, 
since $\|\bar\lambda\|_1\leq \frac{2\|f_\ast-\Pi f_{\ast}\|^2_{L_2(\Pi)}}{3\eps}\leq \frac{2\sigma_Y^2}{3\eps}$, 
$$
C\frac{\|\bar \lambda\|_1^2 S^2({\mathbb T})}{n}
\leq C\eps\|\bar\lambda\|_1 \frac{2\|f_\ast-\Pi f_{\ast}\|^2_{L_2(\Pi)}S^2(\mb T)}{3\eps^2 n}
\leq 
\frac \eps 4 \|\bar\lambda\|_1.
$$
Moreover, if $C\sqrt{\frac{\bar s}{n}}\leq \frac 3 4$, (\ref{eq:sl5}) yields
$$
\|f_{\hat \lambda, \hat a}-f_{\ast}\|_{L_2(\Pi)}^2 
+\frac 3 4\eps\|\hat\lambda_\eps\|_1\leq 
\|f_{\bar \lambda, \bar a}-f_{\ast}\|_{L_2(\Pi)}^2+\frac 3 2 \eps\|\bar\lambda\|_1+C\frac{\sigma_Y^2 \bar s}{n}.
$$
The case when (\ref{lower_upper}) does not hold can be handled exactly as 
at the end of the proof of Theorem \ref{th:main_AA}.

\subsection{Proof of Proposition \ref{interpolation}.}
\label{pf:interpolation}

For simplicity, we consider the case $d=1$. 
Extension to arbitrary dimension follows the same proof pattern. 

Note that by (\ref{bochner}),
\begin{align}
\label{variance}
&
\Var\l(\sum_{j=1}^N u_j X(t_j)\r)=\sum_{1\leq j,l\leq N}k(t_j-t_l)u_j u_l=\int_\mb R \l|\sum_{j=1}^N e^{i t_j z }u_j\r|^2 v(z)dz.
\end{align}
Clearly, the function $q(z)=\l|\sum\limits_{j=1}^N e^{i t_j z }u_j\r|^2$ is periodic with period $2\pi N$; 
let $I:=\int\limits_{-\pi N}^{\pi N}q(z)v(z)dz$ and $0\ne m\in \mb Z$.  
Together with (\ref{density}), this gives 
\begin{align*}
&
\int\limits_{2\pi mN-\pi N}^{2\pi mN+\pi N}q(z)v(z)dz=\int\limits_{2\pi mN-\pi N}^{2\pi mN+\pi N}q(z)v(z-2\pi mN)\frac{v(z)}{v(z-2\pi mN)}dz \\
& 
\leq \sup_{|y-2\pi mN|\leq \pi N}\frac{v(y)}{v(y-2\pi mN)}\int\limits_{-\pi N}^{\pi N}q(z)v(z)dz\leq \frac{C}{(|m|-1/2)^2}\cdot I.
\end{align*}
Hence
\begin{align*}
\int\limits_{-\pi N}^{\pi N} \l|\sum_{j=1}^N e^{i t_j z }u_j\r|^2 v(z)dz &
\leq 
\int_\mb R \l|\sum_{j=1}^N e^{i t_j z }u_j\r|^2 v(z)dz
\leq \\
&
\leq \underbrace{C\sum_{m\in \mb Z}\frac{1}{(|m|-1/2)^2}}_{C_2}\int\limits_{-\pi N}^{\pi N}\l|\sum_{j=1}^N e^{i t_j z }u_j\r|^2 v(z)dz.
\end{align*}
Recall that our goal is to bound $\|\vec w\|_K$ for $\vec w\in \partial \|\lambda\|_1$ where $\lambda\in \mb R^N$. 
It will be convenient to represent $\vec w=(w(t_1),\ldots,w(t_N))^T$ as a restriction of a smooth, compactly supported function $w(t), \ t\in \mb R$  on 
a grid $\m G_N$. 
Clearly, $w(t)$ is not unique, and we will be interested in interpolation of ``minimal energy'', as explained below.  

Note that the map $\ell_2(\mb Z)\ni x\mapsto \hat x_N\in L_2([-\pi N ,\pi N ],dy), 
\hat x_N(y):=\frac{1}{\sqrt{2\pi N}}\sum\limits_{j\in \mb Z} x_j e^{i \frac{j}{N} y}$ is an isometry. 
With the convention $u_j=0, \ j\notin\l\{1,\ldots,N\r\}$, this implies
\begin{align*}
\dotp{\vec w}{\vec u}_2 &
=\sum_{j\in \mb Z} w\l(\frac{2\pi j}{N}\r) u_j=\dotp{\hat w_N}{\hat u_N}_{L_2([-\pi N,\pi N],dy)}=
\dotp{\frac{\hat w_N}{\sqrt{Nv}}}{\hat u_N\sqrt{Nv}}_{L_2([-\pi N,\pi N],dy)} \\
&
\leq \frac{1}{\sqrt N}\l(\int\limits_{-\pi N}^{\pi N}\frac{\l|\hat w_N(y)\r|^2}{v(y)}dy \int\limits_{-\pi N}^{\pi N} N\l|\hat u_N(y)\r|^2 v(y)dy \r)^{1/2} \\
&
\leq
\frac{c}{\sqrt N}\l(\int\limits_{-\pi N}^{\pi N}(1+y^2)^p\l| \hat w_N(y)\r|^2 dy \int\limits_{-\infty}^{\infty}N\l|\hat u_N(y)\r|^2 v(y)dy\r)^{1/2},
\end{align*}
hence by (\ref{variance})
\begin{align}
\label{fourier}
&
\|\vec w\|^2_K\leq \frac{C}{N}\int\limits_{-\pi N}^{\pi N}(1+y^2)^p\l| \hat w_N(y)\r|^2 dy.
\end{align}
Next, define $w_N(y):=\frac{1}{\sqrt {2\pi N}}\int\limits_{-\pi N}^{\pi N}e^{- i t\cdot y} \hat w_N(t) dt$. 
A simple direct computation gives 
$$
w_N(y)=\sum_{j\in \mb Z} w\l(\frac{2\pi j}{N}\r)\sinc(\pi N (y-j/N)),
$$ 
where $\sinc(x)=\frac{\sin x}{x}$. 
In other words, $w_N(y)$ is the {\it spectral approximation} of $w(y)$. 
Define $w^{(p)}_N(y):=\frac{e^{-i\frac{\pi p}{2}}}{\sqrt{2\pi N}}\int\limits_{-\pi N}^{\pi N}e^{- i t\cdot y}\,t^p \hat w_N(t) dt$ (note that for $p\in \mb N$ this is just the $p$'th derivative of $w_N(y)$).  
By the isometric property of Fourier transform, this gives
$$
\frac 1 N \int\limits_{-\pi N}^{\pi N} t^{2p} |\hat w_N(t)|^2 dt = C\int_\mb R |w_N^{(p)}(t)|^2 dt,
$$
hence (\ref{fourier}), together with the triangle inequality, implies
\begin{align}
\label{result}
&
\|\vec w\|_K^2\leq C_1\|w_N\|^2_{\mb W^{2,p}(\mb R)}\leq 2C_1\l(\|w\|^2_{\mb W^{2,p}(\mb R)}+\l\|w-w_N\r\|^2_{\mb W^{2,p}(\mb R)}\r).
\end{align}
We will need the following important fact (it will be used for $m=p$): 

\begin{theorem}[\cite{Bal2009Numerical-metho00}, Theorem 5.4]
\label{norm}
Assume that $w\in \mb W^{2,p}(\mb R)$ and $m\leq p$. 
Then
$$
\l\|w_N^{(m)}-w^{(m)}\r\|_{L_2(\mb R)}\leq C(p,m) N^{-(p-m)}\l\|w\r\|_{\mb W^{2,p}(\mb R)},
$$
where $C(p,m)$ is independent of $w$ and $N$. 
\end{theorem}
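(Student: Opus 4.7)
The plan is to pass to the Fourier side and exploit the classical aliasing description of sinc interpolation. The first step is to establish the identity
$$
\widehat{w_N}(\xi) = \mb 1_{[-\pi N, \pi N]}(\xi)\sum_{k\in\mb Z}\hat w(\xi + 2\pi N k),
$$
which expresses that sampling $w$ on the grid $2\pi j/N$ periodises its Fourier transform with period $2\pi N$, and sinc interpolation selects the central period; this is essentially a rewriting of the Poisson summation formula and is the key structural input. With it in hand, Plancherel's identity splits $\l\|w^{(m)}-w_N^{(m)}\r\|_{L_2}^2$ into a \emph{truncation} contribution over $|\xi|>\pi N$ (where $\widehat{w_N}$ vanishes) and an \emph{aliasing} contribution over $|\xi|\leq \pi N$ (where the error is $\sum_{k\neq 0}\hat w(\xi+2\pi N k)$), which I would handle separately.

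For the truncation part I would just use $|\xi|^{2m}\leq (\pi N)^{-2(p-m)}|\xi|^{2p}$ valid when $|\xi|>\pi N$, giving
$$
\int_{|\xi|>\pi N}|\xi|^{2m}|\hat w(\xi)|^2 d\xi \leq C N^{-2(p-m)}\|w\|_{\mb W^{2,p}(\mb R)}^2.
$$
For the aliasing part I would apply Cauchy--Schwarz to the sum over $k\neq 0$ of $\hat w(\xi+2\pi N k)$, splitting off a weight $(1+|\xi+2\pi N k|^2)^{-p}$ from its reciprocal. For $|\xi|\leq \pi N$ and $k\neq 0$ the elementary bound $|\xi+2\pi N k|\geq \pi N|k|$ controls the reciprocal-weight factor by $CN^{-2p}\sum_{k\neq 0}|k|^{-2p}$, which is finite because of $p>1/2$. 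The weighted factor, after integration in $\xi$ and the change of variables $\eta=\xi+2\pi Nk$ inside each summand, reconstructs $\int(1+|\eta|^2)^p|\hat w(\eta)|^2 d\eta\leq C\|w\|_{\mb W^{2,p}(\mb R)}^2$, using that the tiles $[-\pi N,\pi N]+2\pi Nk$ for $k\neq 0$ tessellate $\mb R\setminus[-\pi N,\pi N]$ exactly. Finally $|\xi|^{2m}\leq (\pi N)^{2m}$ on the integration region delivers the same $N^{-2(p-m)}\|w\|_{\mb W^{2,p}(\mb R)}^2$ rate for this piece.

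The principal technical obstacle is the aliasing identity itself together with the correct application of Cauchy--Schwarz inside a sum over lattice translates; once the Fourier-side picture is correctly set up, the rest is a standard dyadic frequency-decay computation. The extension from $\mb R$ to $\mb R^d$ is routine with lattice $2\pi N\mb Z^d$ replacing $2\pi N\mb Z$, and the summability requirement $p>d/2$, which already appears in Proposition \ref{stationary_sobolev} (and which guarantees that pointwise sampling makes sense in the first place), is exactly what the argument needs.
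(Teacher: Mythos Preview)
The paper does not supply its own proof of this statement: it is quoted from an external reference and invoked as a black box inside the proof of Proposition~\ref{interpolation}. There is therefore nothing in the paper to compare your argument against.

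That said, your approach is the standard and correct one. The aliasing identity is precisely Poisson summation adapted to sinc interpolation, and the truncation/aliasing split on the Fourier side is the canonical decomposition for this kind of estimate. Your Cauchy--Schwarz step on the aliasing sum, followed by the shifts $\eta=\xi+2\pi Nk$ that reassemble the weighted $L_2$ integrand into $\|w\|_{\mb W^{2,p}}^2$, is exactly right; the convergence of $\sum_{k\neq 0}|k|^{-2p}$ does require $p>1/2$, which is implicit here since one needs the Sobolev embedding $\mb W^{2,p}(\mb R)\hookrightarrow C(\mb R)$ for the point values $w(2\pi j/N)$ to make sense. One small caveat: the Fourier and grid normalizations in this section of the paper are somewhat nonstandard, so if you write the details out in full you should derive the aliasing identity directly from the paper's definitions of $\hat w_N$ and $w_N$ rather than importing the textbook version with a possibly mismatched period; this only affects constants, not the rate $N^{-(p-m)}$.
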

Together with (\ref{result}) this implies the claim of the proposition.

\subsection{Proofs of Theorems \ref{th:main_stationary}, \ref{th:main_stationary_A}
and \ref{th:main_stationary_B}.}

Recall that for every $\lambda\in {\mb D}_{\Delta}$, for each $j\in J_{\lambda}$ we either 
have that $\lambda (t)\geq 0$ for all $t\in \mb T_j$ (in this case, set $\sigma_j=+1$),
or $\lambda (t)\leq 0$ for all $t\in \mb T_j$ (set $\sigma_j=-1$). 
Clearly, the function 
$$
w:= \sum_{j\in J_{\lambda}}\sigma_j I_{{\mb T}_j}
$$
satisfies the conditions $|w(t)|\leq 1, t\in \mb T$ and 
$w(t):={\rm sign}(\lambda(t))$ if $\lambda (t)\neq 0.$
Therefore, $w\in \partial \|\lambda\|_1.$ In what follows, 
we will use such $w$ as a version of subgradient of $\lambda\in \mb D_{\Delta}.$

We will start by providing upper bounds on RKHS-norms of $w_j, j\in J_{\lambda}.$

\begin{lemma}
\label{rkhs-b}
Suppose that, for each $j=1,\dots, N,$ the set $\mb T_j$ is contained 
in a ball $B(t_j;r)$ with some center $t_j\in \mb R^d$ and of radius $r.$
Suppose also that  
\begin{equation}
\label{lower_spectrum}
v_j(t)\geq \frac{c}{(1+|t|^2)^{p}}, t\in {\mb R}^d, j=1,\dots, N.
\end{equation}
Then 
$$
\|w_j\|_{K_j}\leq C r^{d/2}(1+r^{-p}), j\in J_{\lambda}.
$$
\end{lemma}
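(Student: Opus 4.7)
The plan is to apply Proposition \ref{stationary_sobolev} separately to each process $X^{(j)}$ on the domain $\mb T_j$, reducing the estimation of $\|w_j\|_{K_j}$ to the construction of a suitable smooth extension $\tilde w_j : \mb R^d \to \mb R$ of $w_j$ to all of $\mb R^d$. The proof of Proposition \ref{stationary_sobolev} uses only the lower bound (\ref{spectral_lower}) on the spectral density, which is precisely the assumption (\ref{lower_spectrum}) made here; so that proposition applies verbatim with $(X^{(j)}, K_j, v_j, \mb T_j)$ in place of $(X, K, v, \mb T)$, giving
$$
\|w_j\|_{K_j}\leq C\inf_{\tilde w\in \Omega(w_j)}\|\tilde w\|_{\mb W^{2,p}(\mb R^d)}.
$$
Recall that, for the subgradient used in this subsection, $w_j = \sigma_j I_{\mb T_j}$ takes the constant value $\sigma_j\in\{\pm 1\}$ on $\mb T_j$, so any admissible extension must be equal to $\sigma_j$ on $\mb T_j$.

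For the construction of $\tilde w_j$, I would fix once and for all a bump function $\phi\in C_c^\infty(\mb R^d)$ with $\phi\equiv 1$ on $B(0;1)$, $\supp(\phi)\subset B(0;2)$ and $|\phi|\leq 1$, and set $\tilde w_j(t):= \sigma_j\,\phi\bigl((t-t_j)/r\bigr)$. Since $\mb T_j\subset B(t_j;r)$, we have $\tilde w_j\equiv \sigma_j$ on $\mb T_j$, so $\tilde w_j\in\Omega(w_j)$, and it only remains to bound $\|\tilde w_j\|_{\mb W^{2,p}(\mb R^d)}$. This is a routine Fourier calculation: from
$$
\widehat{\tilde w_j}(\xi)= \sigma_j\, r^d\, e^{-i\langle t_j,\xi\rangle}\hat\phi(r\xi),
$$
and the change of variables $\eta = r\xi$, one gets
$$
\|\tilde w_j\|_{\mb W^{2,p}(\mb R^d)}^2 = r^d\int_{\mb R^d}(1+r^{-2}|\eta|^2)^p\,|\hat\phi(\eta)|^2\,d\eta.
$$

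Applying the elementary inequality $(1+x)^p\leq C_p(1+x^p)$ for $x\geq 0$ splits the integral into two pieces controlled by $\|\hat\phi\|_{L_2}^2$ and $\int |\eta|^{2p}|\hat\phi(\eta)|^2 d\eta$, both finite because $\phi\in C_c^\infty$, yielding
$$
\|\tilde w_j\|_{\mb W^{2,p}(\mb R^d)}^2\leq C_\phi\, r^d(1+r^{-2p}).
$$
The equivalence $1+r^{-2p}\asymp (1+r^{-p})^2$ for $r>0$ then gives $\|\tilde w_j\|_{\mb W^{2,p}(\mb R^d)}\leq C\, r^{d/2}(1+r^{-p})$, and combining with the first display above yields the asserted bound $\|w_j\|_{K_j}\leq C\, r^{d/2}(1+r^{-p})$. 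There is no real obstacle here; the only point requiring care is checking that the isotropic rescaling $t\mapsto (t-t_j)/r$ produces the stated scaling in the fractional Sobolev norm, and that the proof of Proposition \ref{stationary_sobolev} transfers to the setting $(X^{(j)}, \mb T_j)$ without modification.
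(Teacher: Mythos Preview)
Your proof is correct and follows essentially the same approach as the paper: apply Proposition~\ref{stationary_sobolev} to reduce to a Sobolev-norm bound on an extension, then build a smooth extension supported in a ball of radius $\sim r$ around $t_j$ and compute its $\mb W^{2,p}$-norm via Fourier scaling. The only cosmetic difference is that the paper uses a mollified indicator $\tilde w_j=\sigma_j(\phi_r\ast I_{B(t_j,2r)})$ whereas you use a rescaled bump $\sigma_j\phi((\cdot-t_j)/r)$; both constructions yield the same $r^{d/2}(1+r^{-p})$ scaling by the same change-of-variables computation.
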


\begin{proof}
Note that for arbitrary functions $w_j$ defined on $\mb T_j,$ 
\begin{equation}
\label{RKHS_SOB}
\|w_j\|_{K_j}\leq C \inf_{\tilde w_j\in \Omega (w_j)}\|\tilde w_j\|_{{\mb W}^{2,p}({\mb R}^d)},
\end{equation}
where $\Omega (w_j)$ is the set of all extensions of $w_j$ onto $\mb R^d$
(see Proposition \ref{stationary_sobolev}). 
To control the RKHS-norms of $w_j,$ consider an arbitrary
nonnegative $C^{\infty}$-function $\phi$ supported in the unit ball $\{t:|t|\leq 1\}$
such that $\int_{{\mb R}^d}\phi (t)dt=1.$ Denote 
$
\phi_r (t):=r^{-d}\phi\biggl(\frac{t}{r}\biggr)
$
and let 
$$
\tilde w_j(t):= \sigma_j \int_{{\mb R^d}}\phi_r(t-s)I_{B(t_j,2r)}(s)ds
=\sigma_j (\phi_r \ast I_{B(t_j,2r)})(t),\ 
t\in \mb R^d.
$$
It is immediate that for $t\in \mb T_j,$ $\tilde w_j(t)=w_j(t),$ so, $\tilde w_j\in \Omega (w_j).$
Thus, we have 
$$
\|w_j\|_{K_j}\leq C\|\tilde w_j\|_{{\mb W}^{p,2}(\mb R^d)}
=C\|\phi_r \ast I_{B(t_j,2r)}\|_{{\mb W}^{p,2}(\mb R^d)}
\leq C' \Bigl\|(1+|t|^2)^{p/2}\widehat{\phi_r}\widehat{I_{B(t_j,2r)}}\Bigr\|_{L_2(\mb R^d)}.
$$
Since, by an easy computation, 
$$
\Bigl\|\widehat{I_{B(t_j,2r)}}\Bigr\|_{L_{\infty}}\leq \mu (B(t_j,2r))\leq c' r^d
$$
and 
$$
\Bigl\|(1+|t|^2)^{p/2}\widehat{\phi_r}\Bigr\|_{L_2(\mb R^d)}
\leq C_1 r^{-d/2}(1+r^{-p}), 
$$
we conclude that 
$
\|w_j\|_{K_j}\leq C r^{d/2}(1+r^{-p}), j\in J_{\lambda}
$
for some constant $C.$

\end{proof}

The next lemma provides bounds on $S(\mb T,d_X)$ and $\gamma_2(\delta;d_X).$

\begin{lemma}
\label{gamma_dva}
Let $\mb T$ be a bounded measurable subset of $\mb R^d$ and let $X(t), t\in \mb R^d$ be a centered subgaussian stationary random field
with spectral measure $\nu$ and spectral density $v.$
Suppose that bound (\ref{cover_T}) holds for some $R\geq 1.$
Suppose also that 
\begin{equation}
\label{upper_spectrum}
v(t)\leq \frac{B}{(1+|t|^2)^{p}}
\end{equation}
for some $p>d/2, B>0.$ 
Then, there exists a constant $C>0$ depending on $d, p, B$ such that 
$$
S(\mb T;d_X)\leq C\sqrt{\log N\vee |\log r|}
$$
and 
$$
\gamma_2(\mb T;d_X;\delta)\leq C\delta 
\sqrt{\log \frac{CR^{(p-d/2)\wedge 1}}{\delta}\bigvee \log N}.
$$ 
\end{lemma}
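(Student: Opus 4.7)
The plan is to extract Hölder continuity of $d_X$ from the spectral density bound, convert the Euclidean covering numbers into $d_X$-covering numbers, and feed these into the usual Dudley-type entropy integral for $\gamma_2$.

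\textbf{Step 1 (Hölder estimate for $d_X$).} Using the spectral representation and symmetry,
\begin{equation*}
d_X^2(t,s) \;=\; 2\int_{\mb R^d}\bigl(1-\cos\langle t-s,u\rangle\bigr)v(u)\,du.
\end{equation*}
Apply the elementary inequality $1-\cos x \leq \min(x^2/2,2)$ and split the integral at $|u| = 1/h$ with $h:=|t-s|$. For the low-frequency part, spherical coordinates and the hypothesis (\ref{upper_spectrum}) give $\int_{|u|\leq 1/h}h^2|u|^2(1+|u|^2)^{-p}du\lesssim h^2\int_0^{1/h}\rho^{d+1}(1+\rho^2)^{-p}d\rho$, which is $O(h^2)$ when $p>1+d/2$ and $O(h^{2p-d})$ when $d/2<p<1+d/2$ (with a logarithmic factor in the borderline case, which can be absorbed by taking a slightly smaller exponent). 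The high-frequency part $\int_{|u|>1/h}(1+|u|^2)^{-p}du$ is $O(h^{2p-d})$. Combining yields the Hölder bound
\begin{equation*}
d_X(t,s)\;\leq\;C\,|t-s|^{\alpha}, \qquad \alpha:=(p-d/2)\wedge 1,
\end{equation*}
with $C=C(B,d,p)$. The handling of the borderline $p=1+d/2$ is the only delicate estimation in the whole proof and is the main obstacle.

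\textbf{Step 2 (covering numbers in $d_X$).} Since any Euclidean $(\delta/C)^{1/\alpha}$-net is a $d_X$ $\delta$-net, (\ref{cover_T}) yields, for $\delta\in(0,CR^\alpha)$,
\begin{equation*}
N(\mb T;d_X,\delta)\;\leq\;\Bigl(\tfrac{C'R^\alpha}{\delta}\Bigr)^{d/\alpha}\vee N,
\qquad
\log N(\mb T;d_X,\delta)\;\leq\;C\bigl[\log(R^\alpha/\delta)\vee\log N\bigr].
\end{equation*}
The transition between the two regimes occurs at $\eps_0\asymp R^\alpha N^{-\alpha/d}$.

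\textbf{Step 3 (truncated entropy integral).} From the standard bound $\gamma_2(\mb T;d_X;\delta)\leq C\int_0^\delta\sqrt{\log N(\mb T;d_X,\eps)}\,d\eps$ (obtained by truncating the dyadic entropy sum $\sum_n 2^{n/2}(e_n\wedge\delta)$), substitute the estimate from Step 2 and split the integration at $\eps_0$. On $(0,\eps_0)$ the change of variables $t=\log(R^\alpha/\eps)$ produces
\begin{equation*}
\int_0^{\delta\wedge\eps_0}\sqrt{\log(R^\alpha/\eps)}\,d\eps\;\leq\;C\,(\delta\wedge\eps_0)\sqrt{\log(CR^\alpha/(\delta\wedge\eps_0))};
\end{equation*}
on $(\eps_0,\delta)$ the integrand is bounded by $C\sqrt{\log N}$ and contributes at most $C\delta\sqrt{\log N}$. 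Using $\eps_0\sqrt{\log(CR^\alpha/\eps_0)}\asymp \eps_0\sqrt{\log N}$, both pieces merge into
\begin{equation*}
\gamma_2(\mb T;d_X;\delta)\;\leq\;C\,\delta\,\sqrt{\log(CR^\alpha/\delta)\vee\log N},
\end{equation*}
which is the second claim of the lemma.

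\textbf{Step 4 ($S(\mb T;d_X)$).} The $d_X$-diameter satisfies $D\leq\sqrt{2k(0)}\leq C$ since $k(0)=\int v\leq B\int(1+|u|^2)^{-p}du<\infty$, so $\inf_{t\in\mb T}\sqrt{\Var X(t)}\leq C$. Taking $\delta=D$ in Step 3 and invoking $R=\kappa N^{1/d}r$, one has $\log R\leq C(1+\log N+|\log r|)$, and therefore $\log(CR^\alpha/D)\vee\log N\leq C\,(\log N\vee|\log r|\vee 1)$. Hence $\gamma_2(\mb T;d_X)\leq C\sqrt{\log N\vee|\log r|}$, giving $S(\mb T;d_X)\leq C\sqrt{\log N\vee|\log r|}$ by definition and (\ref{eq:chaining1}).
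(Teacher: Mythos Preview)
Your proof is correct and follows essentially the same route as the paper: derive the H\"older estimate $d_X(t,s)\le C|t-s|^{(p-d/2)\wedge 1}$ from the spectral bound by splitting the integral into low- and high-frequency parts (the paper splits at a general $A$ and optimizes, you take $A=1/h$ directly), convert the Euclidean covering assumption (\ref{cover_T}) into a $d_X$-covering bound, and then bound $\gamma_2(\delta)$ by Dudley's entropy integral; the bound on $S(\mb T;d_X)$ follows by plugging in $\delta$ equal to the $d_X$-diameter, which is bounded because $k(0)=\int v<\infty$, and using $R=\kappa N^{1/d}r$. Your handling of the borderline $p=1+d/2$ by taking a slightly smaller exponent $\alpha'<1$ is legitimate (since $R\ge 1$ the resulting $\log(CR^{\alpha'}/\delta)$ is dominated by the claimed $\log(CR/\delta)$), whereas the paper keeps the logarithmic factor explicitly and notes it is absorbed in the log-covering bound; both are equivalent.
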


\begin{proof}
By the spectral representation of covariance, for all $t_1,t_2\in \mb R^d$ and for $A>0,$
\begin{align}
\label{bddx}
d_X^2(t_1,t_2)= {\Var}(X(t_1)-X(t_2)) = \int_{\mb R^d}\biggl|e^{i\langle t_1-t_2,s \rangle}-1\biggr|^2 v(s)ds \leq &
\\
\nonumber
B|t_1-t_2|^2\int_{|s|\leq A} \frac{|s|^2}{(1+|s|^2)^{p}}ds+
B\int_{|s|>A} \frac{1}{(1+|s|^2)^{p}}ds.
\end{align}
If $2p>d+2,$ we take $A=\infty$ and get 
$
d_X^2(t_1,t_2)\leq C' |t_1-t_2|^2
$
for some $C'>0$ that depends on $p$ and $d.$
If $2p=d+2,$ a simple computation of the integrals in the right hand side of (\ref{bddx})
and minimizing the resulting bound with respect to $A$ yields
$$
d_X^2(t_1,t_2)\leq C' |t_1-t_2|^2 \biggl(\log \biggl(\frac{1}{|t_1-t_2|}\biggr)\bigvee 1\biggr).
$$
Finally, if $2p<d+2,$ then a similar argument yields the bound  
$
d_X^2(t_1,t_2)\leq C' |t_1-t_2|^{2p-d}.
$
Using bound (\ref{cover_T}), it is easy to show that in each of these 
three cases we have
$$
\log N(\mb T;d_X;\eps)\leq 
C\biggl(\log\frac{CR^{(p-d/2)\wedge 1}}{\eps}\vee \log N\biggr), \eps \in (0,CR^{(p-d/2)\wedge 1}).
$$
The bound on $\gamma_2(\mb T;d_X;\delta)$ now follows by controlling the generic 
chaining complexity in terms of Dudley's entropy integral.
We also have that, under condition (\ref{upper_spectrum}), diameter $D(\mb T;d_X)$ admits the following estimate:
$$
D^2(\mb T;d_X)\leq 2\sup_{t\in \mb T}{\Var}(X(t)) =2 
\int_{\mb R^d}v(s)ds \leq C'', 
$$ 
where $C''$ is a constant depending on $d,p,B.$ Bound on $S(\mb T;d_X)$ now follows 
from the bound on $\gamma_2(\mb T;d_X;\delta)$ by substituting $\delta=\sqrt{C''}.$ 

\end{proof}

We will also need a bound on Kolmogorov's width of the set of random variables 
$X_{\mb T}$ given in the following lemma.

\begin{lemma}
\label{kolm_width}
Let $\mb T$ be a bounded measurable subset of $\mb R^d$ satisfying condition (\ref{cover_T}) and let $X(t), t\in \mb R^d$ be a centered subgaussian stationary random field
with spectral measure $\nu$ and spectral density $v.$
Suppose that 
\begin{equation}
\label{upper_spectrum2}
v(t)\leq \frac{B}{(1+|t|^2)^{p}}
\end{equation}
for some $p>d/2, B>0.$
Then, there exists a constant $C>0$ depending only on $d, p$ and $B$
such that for all $m\geq C N$
\begin{equation}
\label{ko_di}
\rho_m (X_{\mb T})\leq C \l(\frac{R}{m^{1/d}}\r)^{p-d/2}.
\end{equation}
\end{lemma}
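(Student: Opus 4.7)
My plan is to exploit the spectral representation $X(t)-\mb E X(t) = \int_{\mb R^d} e^{i\langle t,s\rangle}\,dZ(s)$ with $\mb E|dZ(s)|^2=v(s)\,ds$, which turns the problem of approximating $X_{\mb T}$ in $L_2(\mb P)$ into approximating the family $\{e^{i\langle t,\cdot\rangle}\}_{t\in\mb T}$ in $L_2(v\,ds)$. The approximating subspace will combine a spectral cut-off at frequencies $|s|\le A$ with a local Taylor expansion of order $K-1$ around the points of a Euclidean $\delta$-net in $\mb T$, where $K$ is any fixed integer strictly larger than $p-d/2$ (e.g.\ $K=\lfloor p-d/2\rfloor+1$). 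This strict inequality is the crucial ingredient and is what forces the use of a Taylor expansion rather than a pure net argument.

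Concretely, I would take a $\delta$-net $\{t_i\}_{i=1}^M\subset\mb T$ with $M\le (R/\delta)^d\vee N$ (guaranteed by (\ref{cover_T})) and let $L$ be the real span of the real and imaginary parts of
\[
Y_{i,\alpha}:=\int_{|s|\le A}e^{i\langle t_i,s\rangle}(is)^\alpha\,dZ(s),\qquad 1\le i\le M,\ |\alpha|\le K-1,
\]
so that $\dim L\le 2M\binom{K+d-1}{d}\le C_{p,d}(R/\delta)^d$ as soon as $\delta\le R/N^{1/d}$. The subspace $L$ sits a priori in $L_2(\mb P)$ rather than in $\m L_X$, but replacing $L$ by $P_{\m L_X}L$ preserves the dimension and can only decrease the approximation error for vectors of $\m L_X$, so this causes no loss for the bound on $\rho_m$.

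For each $t\in\mb T$ I would pick $t_i$ with $|t-t_i|\le\delta$, split the spectral integral at $|s|=A$, and Taylor-expand $e^{i\langle t-t_i,s\rangle}$ inside the low-frequency piece. The leading terms are $\mb R$-linear combinations of the $Y_{i,\alpha}$ (hence lie in $L$), and the residual decomposes into the high-frequency tail, whose $L_2(\mb P)$-norm is at most $\l(\int_{|s|>A}v(s)\,ds\r)^{1/2}\le C A^{-(p-d/2)}$ by the upper bound on $v$, and the Taylor remainder, bounded by $\frac{\delta^K}{K!}\l(\int_{|s|\le A}|s|^{2K}v(s)\,ds\r)^{1/2}$. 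The choice $K>p-d/2$ makes the second integral behave like $A^{2K-2p+d}$, so that the Taylor remainder is $\le C(\delta A)^K A^{-(p-d/2)}/K!$; then setting $A:=1/\delta$ balances the two contributions at $C\delta^{p-d/2}$.

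Finally, taking $\delta:=cR/m^{1/d}$ with $c$ small enough that $2c^{-d}\binom{K+d-1}{d}\le 1$ (permissible because $m\ge CN$ forces $\delta\le R/N^{1/d}$, so the net bound $(R/\delta)^d$ dominates the alternative $N$) yields $\dim L\le m$ and hence $\rho_m(X_{\mb T})\le C(R/m^{1/d})^{p-d/2}$. The one delicate point, and the place where the proof deviates from a naive net-plus-truncation argument, is the Taylor order $K$: a pure net argument ($K=1$) gives only the exponent $\min(p-d/2,1)$, so for smoother processes (when $p-d/2\ge 1$) one must push $K$ past $p-d/2$ to recover the claimed exponent, and the moment integral $\int_{|s|\le A}|s|^{2K}v(s)\,ds$ governed by the upper bound on $v$ is exactly what permits this.
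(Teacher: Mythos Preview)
Your proposal is correct and follows essentially the same route as the paper: spectral representation, a Euclidean $\delta$-net of $\mb T$, Taylor expansion of the exponential around each net point combined with a frequency cut-off at $|s|\sim 1/\delta$, and the observation that the Taylor order must exceed $p-d/2$ to recover the full exponent. The only cosmetic differences are that the paper writes the cut-off as $A_l/\delta$ with an $l$-dependent constant $A_l\asymp l$ (and takes $l\ge(2p-d)\vee d$ rather than the minimal $K>p-d/2$), and does not spell out the projection onto $\mathcal L_X$, which you handle correctly.
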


\begin{remark}
Note that bound (\ref{ko_di}) is sharp (up to a constant). A matching lower bound can be proved via an argument based on replacing the spectral density $v$ by a smaller density $\bar v$ that is constant in a cube of a proper size and zero outside of the cube. For such a smaller density, it is possible to find a grid of points of sufficiently large cardinality $m$ such that the values of the stationary random field with spectral density $\bar v$ at the points of the grid are uncorrelated. Bounding the corresponding Kolmogorov's width from below can be 
now reduced to bounding Kolmogorov numbers of the embedding of $\ell_1^m$ into  
$\ell_2^m,$ see Gluskin \cite{Gluskin} for the solution of the last problem. The authors are very thankful to M. Lifshits for pointing out this beautiful argument. 
\end{remark}

\begin{proof}
We will construct an approximation of the set of random variables  
$X_{\mb T}=\{X(t): t\in \mb T\}$ by a finite dimensional subspace
of subgaussian random variables $L\subset {\mathcal L}_X.$ 
Since $X$ is a stationary random field, the following 
spectral representation holds 
$$
X(t)=\int_{\mb R^d} e^{i \langle t,s\rangle} Z(ds),  
$$
where $Z$ is an orthogonal random measure such that
$$
{\mb E}Z(A)\overline{Z(B)} = \nu(A\cap B), A,B\in {\mathcal B}_{\mb R^d}.
$$
By a standard isometry argument, to approximate the random variable $X(t)$
in the space $L_2(\mb P),$ it is enough to approximate the function $e^{2\pi i \langle t, \cdot\rangle}$ 
in the space $L_2({\mb R}^d, \nu).$ For $\delta\leq r,$ consider a $\delta$-net 
of the set $\mb T$ that consists of $N'\leq \Bigl(\frac{R}{\delta}\Bigr)^d$ points
$\tau_1,\dots, \tau_{N'}.$ To construct an approximation of the exponential function, 
we will use Taylor expansion of order $l$ in a  
$\delta$-neighborhood of each of the points $\tau_k.$ We use the the following standard bound 
on the remainder of Taylor expansion: 
\begin{equation}
\label{exp_appr}
\Bigl|e^{i \langle h, s\rangle}-Q_{l}(h;s)\Bigr|\leq \frac{|h|^{l}|s|^{l}}{l!},\ \
Q_l(h;s):=\sum_{j=0}^{l-1} \frac{i^j \langle h, s\rangle^j }{j!}.
\end{equation}
For $t\in B(\tau_k;\delta),$ 
$$e^{i \langle t, s\rangle}
=e^{i \langle \tau_k, s\rangle}e^{i \langle t-\tau_k, s\rangle}=
e^{i \langle \tau_k, s\rangle}Q_{l}(t-\tau_k;s) +
e^{i \langle \tau_k, s\rangle}(e^{i \langle t-\tau_k, s\rangle}-Q_{l}(t-\tau_k;s)).
$$
Denote (for some $A>0$ to be chosen later) 
$$
\zeta_l^{(k)}(h):= {\rm Re}\biggl(
\int_{\mb R^d}e^{i \langle \tau_k, s\rangle}Q_l(h;s)I(|s|\leq A\delta^{-1})Z(ds)\biggr).
$$
By spectral isometry (using the fact that $X$ is real valued), we get that for all $k=1,\dots, N'$ and all $t\in B(\tau_k;\delta)$ (thus, for all $t\in \mb T$) 
\begin{align}
\label{isom}
{\mb E}\Bigl|X(t)-\zeta_l^{(k)}(t-\tau_k)\Bigr|^2
\leq 
{\mb E}\biggl|X(t)-\int_{\mb R^d}e^{i \langle \tau_k, s\rangle}Q_l(h;s)I(|s|\leq A\delta^{-1})Z(ds)\biggr|^2 \leq &
\\
\nonumber 
\int\limits_{|s|\leq A\delta^{-1}}\Bigl|e^{i \langle t, s\rangle}-
e^{i \langle \tau_k, s\rangle}Q_{l}(t-\tau_k;s)\Bigr|^2 v(s)ds
+
\int_{|s|>A\delta^{-1}}v(s)ds.
\end{align}
Under condition (\ref{upper_spectrum2}) and the assumption $p>\frac{d}{2},$ using (\ref{exp_appr}), we get that with some constant $C>0$ depending only 
on $B,d $ and for all $k=1,\dots, N'$ and $l\geq (2p-d)\vee 1$ 
$$
\int_{|s|\leq A\delta^{-1}}\Bigl|e^{i \langle t, s\rangle}-
e^{i \langle \tau_k, s\rangle}Q_{l}(t-\tau_k;s)\Bigr|^2 v(s)ds\leq \frac{\delta^{2l}}{(l!)^2}
\int_{|s|\leq A\delta^{-1}}|s|^{2l}v(s)ds \leq 
$$
$$
B\frac{\delta^{2l}}{(l!)^2}
\int_{|s|\leq A\delta^{-1}} \frac{|s|^{2l}}{(1+|s|^2)^{p}}ds
\leq C\frac{\delta^{2p-d}}{A^{2p-d-2l}(2l-2p+d)(l!)^2}.
$$
We also have 
$$
\int_{|s|>A\delta^{-1}}v(s)ds\leq B\int_{|s|>A\delta^{-1}}\frac{1}{(1+|s|^2)^{p}}ds
\leq C\frac{\delta^{2p-d}}{(2p-d)A^{2p-d}}.
$$
We will now set 
$$
A:=A_l := (2l)^{1/(2l)}\Bigl(l!\Bigr)^{1/l}. 
$$
Then, (\ref{isom}) easily implies that with some constant $C$ depending only on $p$ and $d$
$$
{\mb E}\Bigl|X(t)-\zeta_l^{(k)}(t-\tau_k)\Bigr|^2 \leq 
C\biggl(\frac{\delta}{A_l}\biggr)^{2p-d}.
$$
Using Stirling's approximation, it is easy to see that $A_l\geq \frac{l}{e},$ 
implying that 
\begin{equation}
\label{approx_X}
{\mb E}\Bigl|X(t)-\zeta_l^{(k)}(t-\tau_k)\Bigr|^2 \leq 
C\biggl(\frac{\delta}{l}\biggr)^{2p-d}.
\end{equation}

Note that $Q_l(h;\cdot)$ is polynomial of degree $l-1$ of $d$ variables, hence, the family of functions 
$$
\Bigl\{e^{i \langle \tau_k, \cdot\rangle}Q_l(h;\cdot)I(|\cdot|\leq A\delta^{-1}):h\in \mb R^d\Bigr\}
$$ 
belongls to a (complex) linear space of dimension 
$
{l-1+d \choose d} \leq (l+d-1)^d. 
$
This immediately implies that the family of random variables $\{\zeta_l^{(k)}(h): h\in \mb R^d\}$ belongs to a linear subspace of 
${\mathcal L}_X$ whose dimension is at most $2(l+d-1)^d.$ 
Therefore, $\Bigl\{\zeta_l^{(k)}(t-\tau_k): t\in B(\tau_k;\delta), k=1,\dots,N'\Bigr\}$ belongs to a subspace of $\mathcal L$ of dimension 
$\leq 2(l+d-1)^d N' \leq 2(l+d-1)^d\Bigl(\frac{R}{\delta}\Bigr)^d$. 
Let $m \geq 2 (l+d-1)^d$ and let 
$$
\delta= 2^{1/d}(l+d-1)\frac{R}{m^{1/d}}.
$$
Assuming that $m\geq C_1 N,$ where $C_1:=2(l+d-1)^d \kappa^d,$
we have $\delta \leq r.$
Then $2(l+d-1)^d\Bigl(\frac{R}{\delta}\Bigr)^d=m$ and it follows from (\ref{approx_X}) that 
$$
\rho_m (X_{\mb T})\leq C\biggl(\frac{l+d-1}{l}\biggr)^{p-d/2} \frac{R^{p-d/2}}{m^{p/d-1/2}},
$$
with some constant $C$ depending on $B,d,p.$ The claim of the lemma follows by substituting 
the smallest $l\geq (2p-d)\vee d.$

\end{proof}

We will now provide an upper bound on the ``approximate dimension'' $d(w;\lambda)$ needed to complete the proof of the theorem. 
To this end, recall that we assume that for all $j=1,\dots, N$ the set $\mb T_j$ belongs 
to a ball of radius $r\geq N^{-1/d}$ and $R=\kappa N^{1/d}r,\ R\geq 2.$
Also, for an oracle $\lambda,$ $R(\lambda)=\kappa(N(\lambda))^{1/d}r,$ so, we have $r\leq R(\lambda)\leq R.$ 
In what follows, $C, C',$ etc are constants depending on $B,d,p.$
First, let us upper bound $\gamma_2(\rho_m(w))=\gamma_2(\rho_m(X_{T_{w}})).$ 
Using Lemmas \ref{gamma_dva} and \ref{kolm_width}, we get that for all $m\geq C_1 N(\lambda)$
\begin{align}
\label{eq:gamma_2}
\gamma_2(\rho_m(w))\leq C\frac{(R(\lambda))^{p-d/2}}{m^{p/d-1/2}}
\sqrt{\log \biggl(\frac{CR^{p-d/2}m^{p/d-1/2}}{(R(\lambda))^{p-d/2}}\biggr)\bigvee \log N}. 
\end{align}
Since $\frac{R}{R(\lambda)}=\frac{\kappa N^{1/d}r}{\kappa (N(\lambda))^{1/2}r}\leq N^{1/d},$
it is easy to conclude that 
$$
\gamma_2(\rho_m(w))\leq 
C\frac{(R(\lambda))^{p-d/2}}{m^{p/d-1/2}}\sqrt{\log m}\bigvee C\frac{(R(\lambda))^{p-d/2}}{m^{p/d-1/2}}\sqrt{\log N}. 
$$
To provide an upper bound on $d(w,\lambda),$ we first find the smallest 
$m$ satisfying the inequality 
$$
\frac{\sigma_Y^2 m}{n}
\geq C\frac{\|\lambda\|_1}{\sqrt{n}}\frac{(R(\lambda))^{p-d/2}}{m^{p/d-1/2}}\sqrt{\log m}
\bigvee
C\frac{\|\lambda\|_1}{\sqrt{n}}\frac{(R(\lambda))^{p-d/2}}{m^{p/d-1/2}}\sqrt{\log N}.
$$
This is equivalent to the bound 
\begin{align}
\label{belowm}
m\geq  C\frac{n^{d/(2p+d)}\|\lambda\|_1^{2d/(2p+d)}R(\lambda)^{d(2p-d)/(2p+d)}}
{\sigma_Y^{4d/(2p+d)}}\Bigl((\log m)^{d/(2p+d)}\bigvee (\log N)^{d/(2p+d)}\Bigr)
\end{align}
Note that in the oracle inequality of Theorem \ref{th:main_AA}, it is enough 
to restrict oracles $\lambda$ to the ball  
\begin{equation}
\nonumber
\|\lambda\|_1 \leq 
C'\|f_{\ast}-\Pi f_{\ast}\|_{L_2(\Pi)} n^{1/2}
\end{equation}
for some constant $C'>0$ (see bound (\ref{barlambda_2}) in the proof of this theorem).
Recall that also 
$$N^{-1/d}\leq r\leq R(\lambda)\leq R=\kappa N^{1/d}r.$$ 
Therefore, bound (\ref{belowm})
easily implies that  
\begin{equation}
\label{belm}
m\geq C\frac{(n\|\lambda\|_1^2)^{d/(2p+d)}R(\lambda)^{d(2p-d)/(2p+d)}}
{\sigma_Y^{4d/(2p+d)}}
\Bigl(\log N\vee \log n \vee |\log \sigma_Y|\vee |\log r|\Bigr)^{d/(2p+d)}.
\end{equation}
It easily follows from the definition of $d(w,\lambda)$ that either we have $d(w,\lambda)\leq C_1 N(\lambda),$ or $d(w,\lambda)\leq m$ for any $m$ satisfying (\ref{belm}).
Therefore, with some constant $C>0$
\begin{align}
\label{eq:dim}
& d(w;\lambda) \leq CN(\lambda)\bigvee \\  
\nonumber
& C\frac{(n\|\lambda\|_1^2)^{d/(2p+d)}R(\lambda)^{d(2p-d)/(2p+d)}}
{\sigma_Y^{4d/(2p+d)}}
\Bigl(\log N\vee \log n \vee |\log \sigma_Y|\vee |\log r|\Bigr)^{d/(2p+d)}.
\end{align}
To complete the proof, it is enough to substitute this bound on $d(w;\lambda)$
in the oracle inequality of Theorem \ref{th:main_AA}. 
Bounds (\ref{a_beta}) and 
Lemma \ref{rkhs-b} should be used to control the alignment coefficient ${\fr a}(w).$

As to the proof of Theorem \ref{th:main_stationary_A}, the main difference is in the bounds on the alignment coefficient ${\fr a}(w).$ 
For a given oracle $\lambda\in {\mb D}_r$ and a covering  
$B(t_1;r), \dots, B(t_{N(\lambda)};r)$ of $\supp(\lambda)$, let 
$\sigma_j$ be the sign of $\lambda$ on $B(t_j;r)\cap \supp(\lambda)$ and  
$
\tilde w_j :=\sigma_j (\phi_r\ast I_{B(t_j;2r)}), j=1,\dots, N(\lambda)
$
(see the notations of the proof of Lemma \ref{rkhs-b}). 
It is easy to see that $\sum\limits_{j\in J_{\lambda}}\tilde w_j$ is 
an extension of a subgradient $w\in \partial\|\lambda\|_1.$ 
Thus, by Proposition \ref{stationary_sobolev},
$$
{\fr a}^2(w)\leq \|w\|_K^2\leq \biggl\|\sum_{j=1}^{N(\lambda)}\tilde w_j\biggr\|_{{\mb W}^{2,p}}^2.
$$ 
Since functions $\tilde w_j$ have disjoint support, we can further bound this using Proposition \ref{additive_sob} and of Lemma \ref{rkhs-b} as  
\begin{align*}
{\fr a}^2(w)\leq C\l[\sum_{j=1}^{N(\lambda)}\|\tilde w_j\|_{{\mb W}^{2,p}}^2+\frac{1}{r^{2\alpha}}\sum_{j=1}^{N(\lambda)}\|\tilde w_j\|_{{\mb W}^{2,\lfloor p\rfloor}}^2\r]
\leq C\l(r^{d}+r^{d-2p}\r) N(\lambda).
\end{align*}

Finally, to prove the result of Theorem \ref{th:main_stationary_B}, we need to bound the alignment coefficient $\fr a(w)$ as follows. 
Let $\phi$ be an arbitrary nonnegative $C^{\infty}$-function supported in the unit ball $\{t:|t|\leq 1\}$ 
such that for all $t\in \mb R^d$, $\phi(t)\leq \phi(0)=1$.
Given $\lambda\in \bar{\mb D}$ and $r\leq \delta(\lambda)$, let 
$$
\tilde w_j=\sign(\lambda_j) \phi\l(\frac{t-t_j}{r}\r), \ j\in J(\lambda).
$$
Clearly, restriction of $w=\sum\limits_{j\in J(\lambda)}\tilde w_j$ to the grid $\m G_N$ is an element of $\partial \|\lambda\|_1$. 
By Proposition \ref{interpolation}, we have 
$$
{\fr a}^2(w)\leq \|w\|_K^2\leq \biggl\|\sum_{j=1}^{N(\lambda)}\tilde w_j\biggr\|_{{\mb W}^{2,p}}^2.
$$
By Proposition \ref{additive_sob} and a simple computation is spirit of Lemma \ref{rkhs-b}
$$
{\fr a}^2(w)\leq C\l(r^{d}+r^{d-2p}\r) N(\lambda).
$$
It is easy to see that $\gamma_2(\rho_m(w))$ and $d(w;\lambda)$ can be bounded above by their ``continuous'' counterparts for $\mb T=[0,2\pi]^d$, in particular, inequalities (\ref{eq:gamma_2}) and (\ref{eq:dim}) hold. 
To complete the proof, it is enough to substitute bounds on $d(w;\lambda)$ and $\fr a^2(w)$
in the oracle inequality of Theorem \ref{th:main_AA} and optimize the resulting expression with respect to $r$. 
Choose $r_\ast(\lambda)$ as $r_\ast(\lambda)=\min(\tilde r, \delta(\lambda))$ with $\tilde r$ defined as
$$
\tilde r^{2p-d}=
\l(\frac{N(\lambda)^2}{n}\r)^{\frac{d}{2p+2d}}
\frac{\sigma_Y^{\frac{2d}{p+d}}s^{\frac{2p+d}{2p+2d}}}{\l(L\|\lambda\|_1^2\r)^{\frac{d}{2p+2d}}},
$$
where $L=\log n\vee\log N\vee |\log \sigma_Y|.$ 
The claim now follows from simple algebra.

\begin{acknowledgements}
The authors are very thankful to Mikhail Lifshits and Mauro Maggioni for insightful discussions and their valuable input. 
\end{acknowledgements}

\bibliographystyle{abbrv}
\bibliography{bibliography}

\begin{thebibliography}{10}

\bibitem{law2008tail}
R.~Adamczak.
\newblock A tail inequality for suprema of unbounded empirical processes with
  applications to markov chains.
\newblock {\em Electron. J. Probab}, 13:1000--1034, 2008.

\bibitem{adams1975sobolev}
R.~Adams.
\newblock {\em Sobolev spaces}.
\newblock Academic Press, New York, 1975.

\bibitem{Bal2009Numerical-metho00}
G.~Bal.
\newblock Numerical methods for {PDE}s.
\newblock {\em Lecture notes}, 2009.
\newblock Available at
  \url{http://www.columbia.edu/~gb2030/COURSES/E6302/NumAnal.pdf}.

\bibitem{bartlett2009}
P.~Bartlett, S.~Mendelson, and J.~Neeman.
\newblock $\ell_1$-regularized linear regression: persistence and oracle
  inequalities.
\newblock {\em Probability theory and related fields}, 154:193--224, 2012.

\bibitem{bednorz2014}
W.~Bednorz.
\newblock Concentration via chaining method and its applications.
\newblock {\em arXiv:1405.0676v2}, 2014.

\bibitem{bickel2009simultaneous}
P.~Bickel, Y.~Ritov, and A.~Tsybakov.
\newblock Simultaneous analysis of {L}asso and {D}antzig selector.
\newblock {\em The Annals of Statistics}, 37(4):1705--1732, 2009.

\bibitem{Bogachev2007Measure-theory.00}
V.~I. Bogachev.
\newblock {\em Measure theory. {V}ol. {I}, {II}}.
\newblock Springer-Verlag, Berlin, 2007.

\bibitem{Buhlmann2011Geer}
P.~B{\"u}hlmann and S.~van~de Geer.
\newblock {\em Statistics for {H}igh-{D}imensional {D}ata}.
\newblock Springer- Verlag, Berlin-Heidelberg, 2011.

\bibitem{bunea2007sparsity}
F.~Bunea, A.~Tsybakov, and M.~Wegkamp.
\newblock Sparsity oracle inequalities for the {L}asso.
\newblock {\em Electronic Journal of Statistics}, 1:169--194, 2007.

\bibitem{cai2006prediction}
T.~Cai and P.~Hall.
\newblock Prediction in functional linear regression.
\newblock {\em The Annals of Statistics}, 34(5):2159--2179, 2006.

\bibitem{candes2008restricted}
E.~Cand{\`e}s.
\newblock The restricted isometry property and its implications for compressed
  sensing.
\newblock {\em Comptes Rendus Mathematique}, 346(9):589--592, 2008.

\bibitem{candes2012fernandez}
E.~Cand{\`e}s and C.~Fernandez-Granda.
\newblock Towards a mathematical theory of super-resolution.
\newblock {\em Communications on Pure and Applied Mathematics}, 2013.

\bibitem{candes2006stable}
E.~Cand{\`e}s, J.~Romberg, and T.~Tao.
\newblock Stable signal recovery from incomplete and inaccurate measurements.
\newblock {\em Communications on pure and applied mathematics},
  59(8):1207--1223, 2006.

\bibitem{crambes2009smoothing}
C.~Crambes, A.~Kneip, and P.~Sarda.
\newblock Smoothing splines estimators for functional linear regression.
\newblock {\em The Annals of Statistics}, 37(1):35--72, 2009.

\bibitem{dirksen2013tail}
S.~Dirksen.
\newblock Tail bounds via generic chaining.
\newblock {\em arXiv:1309.3522}, 2013.

\bibitem{Gluskin}
E.~Gluskin.
\newblock Norms of random matrices and widths of finite-dimensional sets.
\newblock {\em Mat. Sb.}, 120(162)(2):180--189, 1983.

\bibitem{Hebiri2013How-Correlation00}
M.~Hebiri and J.~Lederer.
\newblock How correlations influence {L}asso prediction.
\newblock {\em IEEE Transactions on Information Theory}, 59(3):1846--1854,
  2013.

\bibitem{ioffe1974theory}
A.~D. Ioffe and V.~M. Tikhomirov.
\newblock Theory of extremal problems, 1974.

\bibitem{james2011sparseness}
G.~James.
\newblock Sparseness and functional data analysis.
\newblock {\em The Oxford handbook of functional data analysis. Oxford
  University Press, New York}, pages 298--323, 2011.

\bibitem{james2009functional}
G.~James, J.~Wang, and J.~Zhu.
\newblock Functional linear regression that's interpretable.
\newblock {\em The Annals of Statistics}, 37(5A):2083--2108, 2009.

\bibitem{koltchinskii2009dantzig}
V.~Koltchinskii.
\newblock The {D}antzig selector and sparsity oracle inequalities.
\newblock {\em Bernoulli}, 15(3):799--828, 2009.

\bibitem{Koltchinskii2009Sparse-recovery00}
V.~Koltchinskii.
\newblock Sparse recovery in convex hulls via entropy penalization.
\newblock {\em Ann. Statist.}, 37(3):1332--1359, 2009.

\bibitem{Koltchinskii2007Sparsity-in-Pen00}
V.~Koltchinskii.
\newblock Sparsity in penalized empirical risk minimization.
\newblock {\em Annales Inst. H. Poincar{\'e}, Probabilites et Statistique},
  45(1):7--57, 2009.

\bibitem{Koltchinskii2011Oracle-inequali00}
V.~Koltchinskii.
\newblock {\em Oracle inequalities in empirical risk minimization and sparse
  recovery problems}.
\newblock Springer, 2011.
\newblock Lectures from the 38th Probability Summer School held in Saint-Flour,
  2008, {\'E}cole d'{\'E}t{\'e} de Probabilit{\'e}s de Saint-Flour.

\bibitem{koltchinskii2011nuclear}
V.~Koltchinskii, K.~Lounici, and A.~Tsybakov.
\newblock Nuclear-norm penalization and optimal rates for noisy low-rank matrix
  completion.
\newblock {\em The Annals of Statistics}, 39(5):2302--2329, 2011.

\bibitem{koltchinskii2010sparse}
V.~Koltchinskii and S.~Minsker.
\newblock Sparse recovery in convex hulls of infinite dictionaries.
\newblock {\em In COLT 2010, 23rd Conference on Learning Theory}, pages
  420--432, 2010.

\bibitem{lang1993real}
S.~Lang.
\newblock {\em Real and functional analysis}, volume 142.
\newblock Springer, 1993.

\bibitem{Ledoux1991Probability-in-00}
M.~Ledoux and M.~Talagrand.
\newblock {\em Probability in {B}anach spaces}, volume~23.
\newblock Springer-Verlag, Berlin, 1991.

\bibitem{lifshits1995gaussian}
M.~A. Lifshits.
\newblock {\em Gaussian random functions}, volume 322.
\newblock Springer, 1995.

\bibitem{massart2010meynet}
P.~Massart and C.~Meynet.
\newblock The {L}asso as an $\ell_1$-ball model selection procedure.
\newblock {\em Electronic Journal of Statistics}, 5:669--687, 2011.

\bibitem{mendelson2012oracle}
S.~Mendelson.
\newblock Oracle inequalities and the isomorphic method.
\newblock Preprint, 2012. Available at
  \url{http://maths-people.anu.edu.au/~mendelso/papers/subgaussian-12-01-2012.pdf}.

\bibitem{mendelson2010empirical}
S.~Mendelson.
\newblock Empirical processes with a bounded $\psi_1$ diameter.
\newblock {\em Geometric and Functional Analysis}, 20(4):988--1027, 2010.

\bibitem{muller2005generalized}
H.~M{\"u}ller and U.~Stadtm{\"u}ller.
\newblock Generalized functional linear models.
\newblock {\em The Annals of Statistics}, 33(2):774--805, 2005.

\bibitem{ramsay2006functional}
J.~Ramsay.
\newblock {\em Functional data analysis}.
\newblock Wiley Online Library, 2006.

\bibitem{ramsay2002applied}
J.~Ramsay and B.~Silverman.
\newblock {\em Applied functional data analysis: methods and case studies},
  volume~77.
\newblock Springer New York, 2002.

\bibitem{ritter1995multivariate}
K.~Ritter, G.~Wasilkowski, and H.~Wo{\'z}niakowski.
\newblock Multivariate integration and approximation for random fields
  satisfying {S}acks-{Y}lvisaker conditions.
\newblock {\em The Annals of Applied Probability}, pages 518--540, 1995.

\bibitem{sacks1966designs}
J.~Sacks and D.~Ylvisaker.
\newblock Designs for regression problems with correlated errors.
\newblock {\em The Annals of Mathematical Statistics}, 37(1):66--89, 1966.

\bibitem{talagrand2005generic}
M.~Talagrand.
\newblock {\em The generic chaining}.
\newblock Springer, 2005.

\bibitem{tibshirani1996regression}
R.~Tibshirani.
\newblock Regression shrinkage and selection via the {L}asso.
\newblock {\em Journal of the Royal Statistical Society. Series B
  (Methodological)}, pages 267--288, 1996.

\bibitem{van2008high}
S.~Van De~Geer.
\newblock High-dimensional generalized linear models and the {L}asso.
\newblock {\em The Annals of Statistics}, 36(2):614--645, 2008.

\bibitem{van2012lasso}
S.~van~de Geer and J.~Lederer.
\newblock The {L}asso, correlated design, and improved oracle inequalities.
\newblock {\em IMS Collections: A Festschrift in Honor of Jon Wellner. IMS},
  pages 3468--3497, 2012.

\bibitem{van2008reproducing}
A.~van~der Vaart and J.~van Zanten.
\newblock Reproducing kernel {H}ilbert spaces of {G}aussian priors.
\newblock {\em IMS Collections}, 3:200--222, 2008.

\bibitem{Vaart1996Weak-convergenc00}
A.~W. van~der Vaart and J.~A. Wellner.
\newblock {\em Weak convergence and empirical processes}.
\newblock Springer Series in Statistics. Springer-Verlag, New York, 1996.

\bibitem{yuan2010reproducing}
M.~Yuan and T.~Cai.
\newblock A reproducing kernel {H}ilbert space approach to functional linear
  regression.
\newblock {\em The Annals of Statistics}, 38(6):3412--3444, 2010.

\end{thebibliography}
\nocite{van2008reproducing}
\nocite{Ledoux1991Probability-in-00}
\nocite{Koltchinskii2011Oracle-inequali00}
\nocite{Bogachev2007Measure-theory.00}
\nocite{james2009functional}
\nocite{ramsay2006functional}
\nocite{ramsay2002applied}
\nocite{crambes2009smoothing}
\nocite{yuan2010reproducing}
\nocite{cai2006prediction}

\appendix

\section{Technical background and remaining proofs.}

\subsection{Existence of solutions of optimization problems.}

We provide below sufficient conditions for existence of solutions to the problems (\ref{true}) and (\ref{empirical}).

\begin{theorem}
\label{existence}
Let $\mb D$ be a convex, weakly compact subset of $L_1(\mu)$. 
Then 
\begin{enumerate}
\item $F(\lambda,a), \ F_n(\lambda,a)$ are weakly lower semicontinous;
\item Solutions to problems (\ref{true}) and (\ref{empirical}), denoted by $\lambda_\eps$ and $\hat\lambda_\eps$, exist.
\end{enumerate}
\end{theorem}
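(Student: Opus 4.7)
My plan is to establish weak lower semicontinuity of both functionals by exploiting their convex structure and strong continuity, then extract minimizers via a compactness argument after reducing the intercept to a bounded interval.

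For part (1), observe that $(\lambda,a)\mapsto f_{\lambda,a}(x)=a+\int_{\mb T}x(t)\lambda(t)\mu(dt)$ is an affine functional on $L_1(\mu)\times\mb R$ for each fixed $x\in L_\infty(\mb T,\mu)$. Under Assumption \ref{assumption1}, $X$ has a uniformly bounded version and $\big\|\|X\|_\infty\big\|_{\psi_2}<\infty$; since $\mu$ is finite, each $X_j$ takes values in $L_\infty(\mu)$ a.s. Hence $(\lambda,a)\mapsto (Y_j-f_{\lambda,a}(X_j))^2$ is convex and strongly continuous a.s., and averaging gives strong continuity (and convexity) of $P_n(\ell\bullet f_{\lambda,a})$. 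For $P(\ell\bullet f_{\lambda,a})$, the same pointwise continuity combined with dominated convergence yields strong continuity: on any strongly convergent sequence $(\lambda_n,a_n)\to(\lambda,a)$ the $\|\lambda_n\|_1$ are bounded, so the integrands are dominated by $2Y^2+2(|a_n|+\|X\|_\infty\|\lambda_n\|_1)^2$, which lies in $L_1(\mb P)$ thanks to the $\psi_2$-integrability of $Y$ and of $\|X\|_\infty$. The term $\varepsilon\|\lambda\|_1$ is convex and strongly continuous. Thus $F$ and $F_n$ are convex and strongly continuous on $L_1(\mu)\times\mb R$, and Mazur's theorem (a convex, strongly l.s.c.\ functional on a Banach space is weakly l.s.c.) gives (1).

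For part (2), I will reduce the minimization to a weakly compact set. Since $\mb D$ is weakly compact in $L_1(\mu)$, it is norm-bounded, say $\|\lambda\|_1\leq M$ for $\lambda\in\mb D$. Formula (\ref{g1}) gives $a(\lambda)=\mb EY-\langle\lambda,\mb EX\rangle$ and $\hat a(\lambda)=\bar Y_n-\langle\lambda,\bar X_n\rangle$. Both $\mb EX$ and $\bar X_n(\omega)$ are uniformly bounded on $\mb T$ -- the former because $\sup_t|\mb EX(t)|\leq\mb E\|X\|_\infty<\infty$, the latter a.s.\ by a.s.\ uniform boundedness of each $X_j$ -- so there exists $K<\infty$ (a.s.\ finite for $\hat a$) with $|a(\lambda)|,\,|\hat a(\lambda)|\leq K$ for every $\lambda\in\mb D$. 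Since $a\mapsto F(\lambda,a)$ and $a\mapsto F_n(\lambda,a)$ are convex parabolas minimized at $a(\lambda)$ and $\hat a(\lambda)$ respectively, the original problems are equivalent to minimizing $F$, $F_n$ over the restricted domain $\mb D\times[-K,K]$.

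The set $\mb D\times[-K,K]$ is compact in the product of the weak topology on $L_1(\mu)$ with the usual topology on $\mb R$, and $F$, $F_n$ are lower semicontinuous on it by part (1). The standard fact that a lower semicontinuous function on a compact topological space attains its infimum (take any minimizing net, pass to a convergent subnet, and apply weak l.s.c.) then produces the desired solutions $(\lambda_\varepsilon,a_\varepsilon)$ and $(\hat\lambda_\varepsilon,\hat a_\varepsilon)$. The main technical subtlety will be that the weak topology on $L_1(\mu)$ is generally not metrizable, so the min-attainment step is phrased via nets (or, if $L_1(\mu)$ is separable, via the metrizability of weakly compact subsets, leading to a purely sequential argument); either way, the combination of weak l.s.c.\ with weak compactness seals the existence.
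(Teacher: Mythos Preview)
Your proposal is correct and follows essentially the same route as the paper: establish strong continuity of $F$ and $F_n$, invoke convexity plus Mazur's theorem to upgrade to weak lower semicontinuity, and then combine weak l.s.c.\ with weak compactness of $\mb D$ to extract a minimizer. The only cosmetic differences are that the paper bounds $P(\ell\bullet f_{\lambda_k})-P(\ell\bullet f_{\lambda_0})$ by a direct H\"older computation (with the optimal intercept $a(\lambda)$ substituted) rather than dominated convergence, and that the paper simply passes to a weakly convergent subsequence of a minimizing sequence without spelling out the reduction of $a$ to a bounded interval or the nets-versus-sequences issue you flag; your treatment of those points is, if anything, a bit more careful.
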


\begin{proof}
We prove the statement for $F(\lambda)$, and the result for $F_n(\lambda)$ follows similarly. 
The functional $\lambda\mapsto \|\lambda\|_1$ is continuous.
Assume $\|\lambda_k-\lambda_0\|_1\to 0$. 
Using H\"{o}lder's inequality, we get
\begin{align*}
& P(\ell \bullet f_{\lambda_k,a(\lambda_k)})-P(\ell \bullet f_{\lambda_0,a(\lambda_0)})=
\mb E(Y-f_{\lambda_k,a(\lambda_k)}(X))^2-\mb E(Y-f_{\lambda_0,a(\lambda_0)}(X))^2\\
&
=\mb E\l[\l(2Y-f_{\lambda_k,a(\lambda_k)}(X)-f_{\lambda_0,a(\lambda_0)}(X)\r)\int\limits_\mb T (\lambda_0-\lambda_k)(X-\mb EX)d\mu  \r] \\
&
\leq \mb E^{1/2}\l(\int\limits_\mb T (\lambda_0-\lambda_k)(X-\mb EX)d\mu\r)^2 \mb E^{1/2}\l(2Y-(f_{\lambda_k,a(\lambda_k)}+f_{\lambda_0,a(\lambda_0)})(X)\r)^2 \\
&
\leq 
\|\lambda_k-\lambda_0\|_1 \mb E^{1/2} \|X-\mb EX\|^2_{\infty}\l(2\sqrt{\Var(Y)}+\|\lambda_k+\lambda_0\|_1\mb E^{1/2} \|X-\mb EX\|^2_{\infty}\r)\to 0,
\end{align*}
where in the last step we used the fact that 
$$
\l|\int\limits_\mb T \lambda(t) (X(t)-\mb EX(t))\mu(dt)\r|\leq \|\lambda\|_1 \sup\limits_{t\in\mb T} |X(t)-\mb EX(t)|.
$$
Thus, $F(\lambda)$ is continuous, hence it is lower semi-continuos. 
In turn, this is equivalent to the fact that the level sets $\m L_t=\l\{\lambda: F(\lambda)\leq t\r\}$ are closed. 
Moreover, they are convex since $F$ is. 
Mazur's theorem (see \cite{lang1993real}, Theorem 2.1) implies that they are also closed in weak topology, so $F$ is weakly lower semi-continuos. \\
Now it is easy to show existence of solutions. 
Given a minimizing sequence $\l\{\lambda_k\r\}\subset \mb D$, we can extract a weakly convergent subsequence 
$$
\lambda_{k_l}{\buildrel\sigma\over\longrightarrow}\lambda_{\infty}.
$$ 
It remains to note that by weak compactness and lower semi-continuity
$\lambda_{\infty}\in \mathbb{D}$ and $ -\infty<F(\lambda_{\infty})\leq \liminf\limits_{l\to\infty} F\left(\lambda_{k_l}\right), $ which means that $\lambda_\infty$ is the solution. 

\end{proof}
When $\mb T$ is finite, then one can clearly take $\mb D=L_1(\mb T,\mu)\subseteq\mb R^{|T|}$, and Theorem \ref{existence} is not needed to prove existence of 
$\hat\lambda_\eps$.
However, in general the unit ball in $L_1(\mb T,\mu)$ is not weakly compact, so one way to proceed is to choose $\mb D$ to be uniformly integrable (which implies weak compactness, see Theorem 4.7.18 in \cite{Bogachev2007Measure-theory.00}). 
A possible choice is 
$$
\mb D=\l\{\lambda: \ \l|\int\limits_\mb T \max\l(|\lambda(t)|\log |\lambda(t)|,0\r)d\mu(t)\r|\leq L\r\} \text{ for some } L>0.
$$

\subsection{Orlicz norms.}
\label{orlicz_n}
Let $\psi: \mb R_+\mapsto \mb R_+$ be a convex nondecreasing function with $\psi(0)=0$.
\index{bh@Orlicz norm $\|\cdot\|_\psi$}
\begin{definition}\label{orlicz}
The Orlicz norm of a random variable $\eta$ on a probability space $(\Omega, \Sigma, {\mb P})$ is defined via
$$
\l\|\eta\r\|_{\psi}:=\inf\left\{C>0: \ \mb E\psi\left(\frac{|\eta|}{C}\right)\leq 1 \right\}
$$
\end{definition}

By $\|\cdot\|_{\psi_1}, \ \|\cdot\|_{\psi_2}$ we denote the Orlicz norms for 
$\psi_1(x):=e^x-1$ and $\psi_2(x):=e^{x^2}-1$, respectively;  the following inequalities are elementary: 

\begin{align}
\label{i1}
&
\|\eta\|_{\psi_1}\leq \sqrt{\log 2}\|\eta\|_{\psi_2},\\
\label{i2}
&
\|\eta^2\|_{\psi_1}=\|\eta\|^2_{\psi_2}, \\
& \label{i3}
\|\xi\eta\|_{\psi_1}\leq \|\xi\|_{\psi_2}\|\eta\|_{\psi_2}.
\end{align}
It is easy to check from the definition that every subgaussian random variable $\eta$ 
(meaning that $\mb E e^{s\eta}\leq e^{\Gamma\sigma^2_\eta s^2}, \ s\in\mb R$)
satisfies the following property:
\begin{align}
\label{p1}
\l\|\eta\r\|^2_{\psi_2}\leq 8\Gamma\sigma^2_\eta.
\end{align}

In what follows, we use the same notations for Orlicz norms on other probability spaces 
(for instance, $C_{bu}(\mb T;d_X)$ with its Borel $\sigma$-algebra and probability 
measure $\Pi$). 

\subsection{Bounds for subgaussian processes
and Talagrand's generic chaining complexities.} 
\label{sec:dudley}
\begin{theorem}
\label{tal1}
Let $\l\{Z(t), \ t\in \mb T\r\}$ be a centered subgaussian process. 
Then, for all $u\geq0, \ t_0\in \mb T$,
\begin{align*}
&
1) \ \mb P\l(\sup_t \l(Z(t)-Z(t_0)\r)\geq 2u\cdot\gamma_2(\mb T,d_Z)\right)\leq Ce^{-u^2/4}, \\
&
2) \  \mb E\|Z\|_\infty\leq \mb E|Z(t_0)|+L\gamma_2(\mb T,d_Z),
\end{align*}
where $d_Z(t,s)=\sqrt{\Var(Z(t)-Z(s))}$.
\end{theorem}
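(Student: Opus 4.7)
The plan is to use Talagrand's generic chaining method. I would begin by fixing an admissible sequence of partitions $\{\Delta_n\}_{n\geq 0}$ of $\mb T$ with ${\rm card}\,\Delta_0=1,\ {\rm card}\,\Delta_n\leq 2^{2^n},$ that is within a factor $2$ of attaining the infimum in the definition of $\gamma_2(\mb T;d_Z)$. For each $t\in \mb T$ and each $n\geq 0,$ pick a ``chaining point'' $\pi_n(t)\in \Delta_n(t);$ since $\Delta_0$ is a singleton, we may and do take $\pi_0(t)\equiv t_0.$ Under the continuity hypotheses in force, the chaining identity
\[
Z(t)-Z(t_0)=\sum_{n\geq 1}\bigl(Z(\pi_n(t))-Z(\pi_{n-1}(t))\bigr)
\]
holds for every $t\in \mb T,$ with the series converging almost surely.

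Next, I would exploit the subgaussian structure: since $Z(s)-Z(u)$ is centered subgaussian with parameter proportional to $d_Z^2(s,u),$
\[
\mb P\bigl\{|Z(s)-Z(u)|\geq x\, d_Z(s,u)\bigr\}\leq 2\exp(-c x^2),\qquad x\geq 0,
\]
for an absolute constant $c>0.$ Fix $u\geq 0$ and consider, at level $n\geq 1,$ the event
\[
A_n:=\Bigl\{\exists\, t\in \mb T:\ |Z(\pi_n(t))-Z(\pi_{n-1}(t))|\geq u\cdot 2^{n/2}\,D(\Delta_{n-1}(t))\Bigr\}.
\]
Since $\pi_n(t)$ and $\pi_{n-1}(t)$ both lie in $\Delta_{n-1}(t),$ we have $d_Z(\pi_n(t),\pi_{n-1}(t))\leq D(\Delta_{n-1}(t)),$ and the number of distinct pairs $(\pi_n(t),\pi_{n-1}(t))$ as $t$ varies is bounded by $|\Delta_n|\cdot|\Delta_{n-1}|\leq 2^{2^{n+1}}.$ A union bound therefore yields
\[
\mb P(A_n)\leq 2\cdot 2^{2^{n+1}}\exp(-c u^2 2^n)\leq 2\exp\bigl(2^n(2\log 2-c u^2)\bigr),
\]
which decays geometrically in $n$ once $u$ exceeds an absolute threshold. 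Summing over $n\geq 1$ and tuning constants gives
\[
\mb P\bigl(\bigcup_{n\geq 1}A_n\bigr)\leq C\exp(-u^2/4).
\]
On the complement of $\bigcup_n A_n,$ applying the chaining identity and using $\pi_0(t)=t_0$ gives
\[
\sup_{t\in \mb T}|Z(t)-Z(t_0)|\leq u\sum_{n\geq 1}2^{n/2}\sup_{t\in \mb T}D(\Delta_{n-1}(t))\leq 2u\,\gamma_2(\mb T;d_Z),
\]
after re-indexing and absorbing the factor $\sqrt 2$ (and the factor $2$ lost by working with a near-optimal admissible sequence) into the constant in front of $u.$ This establishes statement (1) for $u$ above an absolute threshold; for smaller $u$ the inequality is trivial upon enlarging $C.$

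For statement (2), I would integrate the tail: using $\mb E\|Z\|_\infty\leq \mb E|Z(t_0)|+\mb E\sup_t|Z(t)-Z(t_0)|$ and the layer-cake formula,
\[
\mb E\sup_{t\in \mb T}|Z(t)-Z(t_0)|=\int_0^\infty \mb P\{\sup_{t\in \mb T}|Z(t)-Z(t_0)|>v\}\,dv\leq L\,\gamma_2(\mb T;d_Z),
\]
where the last inequality follows by splitting the integral at $v=2u_0\gamma_2(\mb T;d_Z)$ for $u_0$ above the threshold and applying the tail bound on $[2u_0\gamma_2(\mb T;d_Z),\infty).$ The main technical point is the bookkeeping in the union bound (balancing $2^{2^{n+1}}$ against $\exp(-cu^2 2^n)$), but this is standard Talagrand chaining and presents no real obstacle; the more delicate issue is ensuring that the series and suprema are measurable and finite, which is guaranteed by Assumption~\ref{assumption1} together with the separability/continuity of the chosen version of $Z.$
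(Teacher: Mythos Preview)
Your proposal is correct and is precisely the standard generic chaining argument; the paper itself does not give a proof but simply refers to Chapter~1.2 of Talagrand's monograph \cite{talagrand2005generic}, which is exactly the argument you sketch. The only (cosmetic) point to watch is that the specific constants $2$ and $1/4$ in the statement are not literally what falls out of your bookkeeping, but since the theorem is quoted with unspecified absolute constants $C$ and $L$ elsewhere, this is harmless.
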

\begin{proof}
See Chapter 1.2 in \cite{talagrand2005generic}.

\end{proof}
A simple corollary is the following inequality:
\begin{align}
\label{g33}
\mb P\l(\|Z\|_\infty\geq C\sqrt{t}\l(\gamma_2(\mb T,d_Z)+\inf_{t\in \mb T}\sqrt{\Var(Z(t))}\r)\r)\leq e^{-t}.
\end{align}
We mention another result which is useful in our investigation:
\begin{proposition}
\label{psi_2}
Let $Z$ be a centered subgaussian stochastic process such that 
$$
\gamma_2(\mb T,d_Z)<\infty
$$ 
and let $Z_1,\ldots,Z_n$ be iid copies of $Z$.
Then for any $t_0\in \mb T$
\begin{align*}
&
1) \ \l(\log 2\r)^{-1/2}\big\|\|Z\|_\infty\big\|_{\psi_1}\leq\big\|\|Z\|_\infty\big\|_{\psi_2}\leq  \|Z(t_0)\|_{\psi_2}+L\gamma_2(\mb T,d_Z),\\
&
2) \ \big\|\max_{j=1\ldots n}\|Z_j\|_\infty\big\|_{\psi_1}\leq C\log n \big\|\|Z\|_\infty\big\|_{\psi_1}.
\end{align*}
\end{proposition}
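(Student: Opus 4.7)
\textbf{Proof plan for Proposition \ref{psi_2}.}

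The first inequality in (1) is immediate from (\ref{i1}), so the substantive part of (1) is the upper bound on $\big\|\|Z\|_\infty\big\|_{\psi_2}$. My plan is to decompose
\[
\|Z\|_\infty = \sup_{t\in\mb T}|Z(t)| \le |Z(t_0)| + \sup_{t\in\mb T}|Z(t)-Z(t_0)|
\]
and invoke the triangle inequality for $\|\cdot\|_{\psi_2}$, reducing everything to showing
\[
\Bigl\|\sup_{t\in\mb T}|Z(t)-Z(t_0)|\Bigr\|_{\psi_2}\le L\,\gamma_2(\mb T,d_Z).
\]
To obtain this, I would apply statement (1) of Theorem \ref{tal1} to both $Z$ and $-Z$ (which are both centered subgaussian with the same $d_Z$). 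This yields, for all $u\ge 0$,
\[
\mb P\Bigl(\sup_t|Z(t)-Z(t_0)|\ge 2u\,\gamma_2(\mb T,d_Z)\Bigr)\le 2C e^{-u^2/4}.
\]
Such a subgaussian tail bound is equivalent, up to a universal constant, to a bound on the $\psi_2$-norm; concretely, one computes $\mb E\,\psi_2\!\left(\sup_t|Z(t)-Z(t_0)|/(K\gamma_2(\mb T,d_Z))\right)$ by integrating the tail, and checks it is $\le 1$ for a large enough universal $K$. Absorbing the constants into $L$ yields the claim.

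For part (2), the statement is the standard maximal inequality for $\psi_1$-norms (a version of Lemma 2.2.2 in van der Vaart--Wellner). My plan is to give the short direct computation: set $\xi_j:=\|Z_j\|_\infty$ and $M:=\big\|\xi\big\|_{\psi_1}$ (all $\xi_j$ are identically distributed). For $C>0$,
\[
\mb E\,e^{\max_j \xi_j / C} \le \sum_{j=1}^n \mb E\,e^{\xi_j/C} = n\,\mb E\,e^{\xi/C}.
\]
Using Jensen's inequality with the concave map $x\mapsto x^{1/K}$ gives $\mb E\,e^{\xi/(KM)}\le (\mb E\,e^{\xi/M})^{1/K}\le 2^{1/K}$. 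Taking $C=KM$ with $K=c\log(n+1)$ for a suitable absolute $c>0$ makes $n\cdot 2^{1/K}\le 2$, so that $\mb E\,\psi_1(\max_j \xi_j/C)\le 1$; by the definition of the Orlicz norm this gives $\|\max_j \xi_j\|_{\psi_1}\le c\log(n+1)\cdot M$, absorbing the mild $\log(n+1)$ vs $\log n$ difference into the constant $C$.

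The only mildly delicate step is converting the chaining tail bound from Theorem \ref{tal1} into the $\psi_2$-norm estimate in (1); this is standard but requires carrying the factor $2u$ through the tail-to-Orlicz calculation and verifying that the universal constant $C$ in Theorem \ref{tal1} only affects the final constant $L$. Everything else reduces to Orlicz-norm algebra (inequalities (\ref{i1})--(\ref{i3})), Jensen's inequality, and the union bound, so no new probabilistic input is needed beyond Theorem \ref{tal1}.
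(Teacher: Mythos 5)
Your treatment of part (1) is fine and is exactly what the paper intends: its proof is the one-line remark that (1) is ``a straightforward corollary of Talagrand's result and integration-by-parts formula,'' i.e.\ precisely your decomposition $\|Z\|_\infty\le |Z(t_0)|+\sup_t|Z(t)-Z(t_0)|$, the two-sided tail bound from Theorem \ref{tal1} applied to $Z$ and $-Z$, and the standard tail-to-$\psi_2$ integration.

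Part (2), however, contains a genuine error. You bound $\mathbb{E}\,e^{\max_j\xi_j/C}\le n\,\mathbb{E}\,e^{\xi/C}$ and then, after Jensen, claim that $K=c\log(n+1)$ makes $n\cdot 2^{1/K}\le 2$. This is impossible: $2^{1/K}\ge 1$ for every $K>0$, so $n\cdot 2^{1/K}\ge n>2$ as soon as $n\ge 3$, no matter how large $K$ is. The point is that a union bound performed at the level of exponential moments retains the factor $n$ intact, and rescaling the argument of the exponential (Jensen) can only push $\mathbb{E}\,e^{\xi/C}$ down towards $1$, never below it; working instead with $\mathbb{E}\,\psi_1=\mathbb{E}\,e^{\cdot}-1$ and summing gives only $n\bigl(2^{1/K}-1\bigr)$, which forces $K\sim n$ rather than $K\sim\log n$. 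So as written your argument does not yield the stated $C\log n$ bound.

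The conclusion is of course true, and the paper simply cites Lemma 2.2.2 of van der Vaart--Wellner for it. If you want a self-contained argument, either (a) pass to tail probabilities: $\|\xi\|_{\psi_1}=M$ gives $\mathbb{P}(\xi\ge t)\le 2e^{-t/M}$ by Markov, hence $\mathbb{P}(\max_j\xi_j\ge t)\le \min\l(1,\,2n\,e^{-t/M}\r)$, and then compute $\mathbb{E}\,e^{\max_j\xi_j/(KM\log(2n))}$ by integrating the tail, splitting the integral at $t_0=M\log(2n)$; both pieces are $O(1/K)$ plus $1$, so an absolute $K$ suffices; or (b) reproduce the van der Vaart--Wellner argument, whose key step is not a union bound on moments but the case split on whether $|\xi_i|/\bigl(y\,\psi^{-1}(n)\bigr)$ exceeds $1$, using $\psi(x)\psi(y)\le\psi(cxy)$ for $x,y\ge1$ to divide by $\psi(\psi^{-1}(n))=n$ rather than multiply by $n$. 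Either route produces the $\log n$ factor correctly; your current step does not.
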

\begin{proof}
First statement is a straightforward corollary of Talagrand's result and integration-by-parts formula. 
For the proof of the second claim, see \citep{Vaart1996Weak-convergenc00}, Lemma 2.2.2.

\end{proof}

In the case when $Z(t), t\in {\mb T}$ is a centered Gaussian process,
a famous result of Talagrand (see Theorem 2.1.1 in  \cite{talagrand2005generic})  states that
\begin{align}
\label{tal2}
&
\frac{1}{K}\gamma_2(\mb T;d_Z)\leq \mb E\sup_{t\in \mb T}Z(t)\leq
K \gamma_2(\mb T;d_Z)
\end{align}
for some universal constant $K$. 
Moreover, the upper bound also holds for the centered subgaussian process $Z.$

In practice, a useful way to estimate the generic chaining complexity $\gamma_2(\mb T;d_Z)$ and its ``local version'' $\gamma_2(\delta)$ 
is to evaluate Dudley's entropy integral:
\begin{theorem}
\label{dudley}
The following inequality holds for all $\delta\leq \sup\limits_{t,s\in \mb T} d_X(t,s)$:
$$
\gamma_2(\delta)\leq (2\sqrt 2-1)^{-1}\int\limits_0^{\delta}\sqrt{\log N(\mb T,d_Z,\eps/4)}d\eps,
$$
where $N(\mb T,d_Z,\eps)$ is the minimal number of balls of radius $\eps$ required to cover $\mb T$.
\end{theorem}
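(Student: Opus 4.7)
My plan is to construct an admissible sequence of partitions $\{\Delta_n\}$ of $(\mb T, d_Z)$ from minimal $\eps$-nets at dyadic scales, and then compare the resulting chaining sum with Dudley's integral by summation by parts. The $\delta$-truncation in $\gamma_2(\delta)$ simply replaces any ``head'' terms whose cell diameter exceeds $\delta$ by $\delta$, which contributes only a geometric-series correction absorbable into the final constant.

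For each $n \geq 1$, I set $r_n := \inf\{r > 0 : N(\mb T, d_Z, r) \leq 2^{2^{n-1}}\}$, pick a minimal $r_n$-net $A_n \subset \mb T$, and let $\Delta_n$ be the common refinement of the Voronoi partitions induced by $A_1, \dots, A_n$ (with $\Delta_0 = \{\mb T\}$). The cardinality bound $\mathrm{card}(\Delta_n) \leq \prod_{k=1}^{n} 2^{2^{k-1}} \leq 2^{2^n}$ makes the sequence admissible, while the cell of $\Delta_n$ containing $t$ still has $d_Z$-diameter at most $2 r_n$. Consequently
$$
\gamma_2(\delta) \;\leq\; \sup_{t \in \mb T}\sum_{n \geq 0} 2^{n/2}\bigl(D(\Delta_n(t)) \wedge \delta\bigr) \;\leq\; \sum_{n \geq 0} 2^{n/2}\bigl(2 r_n \wedge \delta\bigr),
$$
and it remains to bound the right-hand side by the integral.

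For the comparison, the definition of $r_n$ gives $N(\mb T, d_Z, \eps) > 2^{2^{n-1}}$ for every $\eps$ slightly below $r_n$. Hence on $(r_{n+1}, r_n) \cap (0, \delta)$ one has $\sqrt{\log N(\mb T, d_Z, \eps/4)} \geq c \cdot 2^{n/2}$, where the shift $\eps/4$ inside the covering number absorbs simultaneously the factor of $2$ between Voronoi-cell diameter and net radius and the slack needed for strict separation of consecutive scales. Integrating over these disjoint intervals and combining with the summation-by-parts identity
$$
\sum_{n \geq 0} 2^{n/2}(r_n - r_{n+1}) \;=\; (1 - 1/\sqrt 2)\sum_{n \geq 1} 2^{n/2} r_n + r_0
$$
inverts into an upper bound on $\sum_{n} 2^{n/2} r_n$, and hence on $\gamma_2(\delta)$, in terms of Dudley's integral.

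The only delicate point is pinning down the sharp constant $(2\sqrt 2 - 1)^{-1}$. This requires (i) choosing the shift factor $4$ inside the covering number optimally so that the pointwise lower bound on $\sqrt{\log N(\mb T, d_Z, \eps/4)}$ lives exactly on the correct interval $(r_{n+1}, r_n)$, and (ii) carefully tracking the geometric series of ratio $1/\sqrt 2$ produced by the summation by parts together with the truncation-correction terms. The chaining construction itself is standard (essentially the one in Section 1.2 of \cite{talagrand2005generic}); the novelty is entirely in the constant bookkeeping.
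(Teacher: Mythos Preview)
Your proposal is correct and follows essentially the same route as the paper, which simply defers to Proposition~1.2.1 in \cite{talagrand2005generic}; you have in fact sketched that very argument (construct an admissible sequence from minimal nets at entropy-dyadic scales, bound cell diameters by $2r_n$, then compare $\sum_n 2^{n/2} r_n$ with the entropy integral via summation by parts). The paper provides no details beyond the reference, so your outline is already more explicit than what appears there.
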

\begin{proof}
This well-known bound can be obtained by repeating the argument of Proposition 1.2.1 in \cite{talagrand2005generic}.
\end{proof}

The following immediate corollary covers two important examples.
\begin{corollary} 
\label{entropy}
$\text{ }$
\begin{enumerate}
\item 
If $\card(\mb T)=N$, then 
$$
\gamma_2(\delta)\leq C\delta \sqrt{\log N};
$$
\item 
If the covering numbers grow polynomially, i.e. $N(\mb T,d_X,\eps)\leq C_1\l(\frac{A}{\eps}\r)^V$, then
$$
\gamma_2(\delta)\leq C_2 \delta  \sqrt{V\log\frac A \delta}.
$$
\end{enumerate}
\end{corollary}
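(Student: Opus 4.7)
The plan is to derive both bounds directly from the Dudley entropy integral estimate in Theorem \ref{dudley}, which states
$$
\gamma_2(\delta)\leq (2\sqrt{2}-1)^{-1}\int_0^{\delta}\sqrt{\log N(\mb T,d_X,\eps/4)}\,d\eps.
$$
Both parts reduce to evaluating (or crudely bounding) this integral under the given covering number assumption.

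For part (1), when $\card(\mb T)=N$, the covering number satisfies $N(\mb T,d_X,\eps/4)\leq N$ for every $\eps>0$ (trivially, each point covers itself). Plugging this into the Dudley bound gives
$$
\gamma_2(\delta)\leq (2\sqrt{2}-1)^{-1}\int_0^{\delta}\sqrt{\log N}\,d\eps = C\delta\sqrt{\log N},
$$
which is the desired estimate.

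For part (2), I would substitute the polynomial covering estimate $N(\mb T,d_X,\eps/4)\leq C_1(4A/\eps)^V$ into the Dudley integral to obtain
$$
\gamma_2(\delta)\leq C\int_0^{\delta}\sqrt{\log C_1 + V\log(4A/\eps)}\,d\eps.
$$
Since the interesting regime is $\delta\lesssim A$ (otherwise the statement is vacuous — the claim holds only for $\delta\leq\sup d_X$ and one may as well assume $A/\delta\geq e$), one has $\log(4A/\eps)\geq \log(4A/\delta)\geq c>0$ throughout the integration range, so the $\log C_1$ term can be absorbed into the main term at the cost of a constant. The change of variable $\eps=\delta u$ with $u\in(0,1]$ turns the integral into
$$
\delta\int_0^1 \sqrt{V\log(4A/(\delta u))}\,du \;\leq\; \delta\sqrt{V}\int_0^1\sqrt{\log(4A/\delta)+\log(1/u)}\,du.
$$
Using $\sqrt{a+b}\leq \sqrt{a}+\sqrt{b}$ and the elementary fact that $\int_0^1\sqrt{\log(1/u)}\,du$ is a finite absolute constant (a Gaussian-type tail computation), this bounds the integral by $C\delta\sqrt{V\log(A/\delta)}$, yielding the claimed estimate $\gamma_2(\delta)\leq C_2\delta\sqrt{V\log(A/\delta)}$.

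Neither step presents a real obstacle: both are routine integral bounds once Theorem \ref{dudley} is in hand. The only mildly delicate point in (2) is handling the integrable logarithmic singularity of $\sqrt{\log(1/u)}$ near $u=0$, which is standard. The cleanest presentation just writes out the integral, performs the substitution $\eps=\delta u$, and cites the elementary bound on $\int_0^1\sqrt{\log(1/u)}\,du$.
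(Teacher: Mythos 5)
Your proposal is correct and follows exactly the route the paper intends: the corollary is stated as an immediate consequence of Theorem \ref{dudley}, and you simply carry out the routine Dudley integral computation (constant integrand for a finite set, and the substitution $\eps=\delta u$ together with the finiteness of $\int_0^1\sqrt{\log(1/u)}\,du$ in the polynomial case). Nothing further is needed.
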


\subsection{Empirical processes.}
We state a version of generic chaining bounds for empirical processes due to S. Mendelson, S. Dirksen and W. Bednorz which are used in our proofs. 
Let ${\mathcal F}$ be a class of functions defined 
on a measurable space $(S,\mathcal A).$ 
Suppose ${\mathcal F}$ is symmetric, that is, $f\in {\mathcal F}$ implies $-f\in {\mathcal F}$ (in applications, we often deal with the classes that do not satisfy this assumption and then replace $\mathcal F$ by ${\mathcal F}\cup {-\mathcal F}$). 
Let $(X,\xi), (X_1,\xi), \dots, (X_n,\xi)$ be i.i.d. random variables with values in $S\times \mb R$ such that ${\mb E}f(X)=0, f\in {\mathcal F}$ and $\xi$ is a subgaussian random variable. 
Let $\Pi$ be the marginal distribution of $X.$ It will be used as a measure on $(S,{\mathcal A}).$

\begin{theorem}
\label{th:mendelson1}
There exists an absolute constant $C>0$ such that 
$$
\mb E \sup_{f\in {\mathcal F}}\biggl|n^{-1}\sum_{j=1}^n \xi_j f(X_j)-\mb E\xi f(X)\biggr|
\leq C \biggl[\|\xi\|_{\psi_2}\frac{\gamma_2({\mathcal F};\psi_2)}{\sqrt{n}}\bigvee 
\frac{\gamma_2^2({\mathcal F};\psi_2)}{n}
\biggr].
$$
\end{theorem}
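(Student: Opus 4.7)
\textbf{Proof plan for Theorem \ref{th:mendelson1}.}
The strategy is generic chaining applied to the multiplier process $Z_f := n^{-1}\sum_{j=1}^n \xi_j f(X_j) - \mathbb{E}\xi f(X)$, exploiting the fact that its increments exhibit a \emph{mixed tail} (subgaussian for small deviations, subexponential for large ones). Fix $f, g \in \mathcal{F}$ and write $Z_f - Z_g = n^{-1}\sum_j W_j$ with $W_j := \xi_j(f-g)(X_j) - \mathbb{E}\xi(f-g)(X)$ i.i.d.\ and centered. By H\"older's inequality for Orlicz norms (see (\ref{i3})), $\|W_j\|_{\psi_1}\leq C\|\xi\|_{\psi_2}\|(f-g)(X)\|_{\psi_2} =: K(f,g)$. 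Bernstein's inequality for sums of centered $\psi_1$-random variables then yields, for every $u\geq 0$,
\[
\mathbb{P}\Bigl\{|Z_f - Z_g| \geq u\,K(f,g)/\sqrt{n}\;+\;u^2\,K(f,g)/n\Bigr\}\leq 2 e^{-u^2}.
\]
This places us exactly in the framework of increments controlled by two metrics, $d_2(f,g)=K(f,g)/\sqrt{n}$ (subgaussian part) and $d_1(f,g)=K(f,g)/n$ (subexponential part), both being rescalings of $\|\cdot\|_{\psi_2}$ on~$\mathcal{F}$.

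Next, I would fix an admissible sequence of partitions $\{\Delta_k\}_{k\geq 0}$ of $\mathcal{F}$ with $\mathrm{card}(\Delta_k)\leq 2^{2^k}$ that is essentially optimal for $\gamma_2(\mathcal{F};\psi_2)=:\gamma$, so that $\sup_{f}\sum_{k\geq 0}2^{k/2}D(\Delta_k(f))\leq 2\gamma$ (diameters in the $\psi_2$-norm). Choose representatives $\pi_k(f)\in \Delta_k(f)$ with $\pi_0$ a single point $f_0\in\mathcal{F}$, and telescope $Z_f - Z_{f_0}=\sum_{k\geq 0}(Z_{\pi_{k+1}(f)} - Z_{\pi_k(f)})$. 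At chaining level $k$, apply the mixed-tail increment bound with $u=u_k:=C(2^{k/2}+\sqrt{s})$ and a union bound over the at most $|\Delta_k|\cdot|\Delta_{k+1}|\leq 2^{2^{k+2}}$ pairs. The failure probabilities are summable and give a total exceptional set of probability $\leq e^{-s}$, on whose complement
\[
\sup_f|Z_f - Z_{f_0}|\;\lesssim\;\frac{(1+\sqrt{s})\|\xi\|_{\psi_2}}{\sqrt{n}}\sup_f\sum_{k\geq 0}2^{k/2}D_k(f)\;+\;\frac{(1+s)\|\xi\|_{\psi_2}}{n}\sup_f\sum_{k\geq 0}2^{k}D_k(f),
\]
where $D_k(f):=D(\Delta_k(f))$. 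Integrating in $s$ and centering at $f_0$ (using symmetry of $\mathcal{F}$ to drop the $Z_{f_0}$ term up to a factor of~$2$) bounds the first contribution by $C\|\xi\|_{\psi_2}\gamma/\sqrt{n}$, which is the first term in the theorem.

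The main obstacle is the second sum, $\sup_f\sum_k 2^k D_k(f)$, which is a $\gamma_1(\mathcal{F};\psi_2)$-type quantity and in general is strictly larger than $\gamma$. The key trick, following Mendelson, is that because both metrics in the mixed tail are just rescalings of a \emph{single} norm, one can split the chain at the critical level $k^\ast$ where the subgaussian and subexponential contributions in the increment bound balance, i.e.\ where $u_{k^\ast}\sim\sqrt{n}$. For $k\leq k^\ast$ the subexponential part is dominated by the subgaussian part and is already absorbed into the first term; for $k>k^\ast$ admissibility forces the diameters $D_k(f)$ to be $\lesssim \gamma 2^{-k/2}$, and a geometric-sum argument (or equivalently, a second chaining of the residual applied to the squared class, whose increments satisfy $\|(f-g)^2\|_{\psi_1}=\|f-g\|_{\psi_2}^2$) converts the remaining contribution into a bound of order $\gamma^2/n$. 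This yields the second term of the theorem. An alternative route that packages the same idea is to condition on $(X_1,\ldots,X_n)$, use the subgaussian property of the $\xi_j$ to get $\mathbb{E}_{\xi}\sup_f|Z_f|\lesssim \|\xi\|_{\psi_2}\gamma_2(\mathcal{F};L_2(P_n))/\sqrt{n}$, and then control the empirical $\gamma_2$-functional by $\gamma_2(\mathcal{F};\psi_2)+\gamma_2^2(\mathcal{F};\psi_2)/\sqrt{n}\cdot(\,\cdot\,)$ via a Mendelson-type empirical quadratic process bound; dividing by $\sqrt{n}$ in the conditional step reproduces the stated inequality. Either route localizes the technical difficulty in the passage from $\gamma_1$ to $\gamma_2^2/n$, which is the only nontrivial step of the proof.
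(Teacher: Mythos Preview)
The paper does not prove this theorem: it is stated in the appendix as a known result and immediately attributed to Corollary~3.9 in Mendelson~\cite{mendelson2012oracle}, with no argument given. Your sketch is therefore not comparable to a proof in the paper---rather, it is an outline of the argument behind the cited result.

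As such an outline, your plan is broadly correct and follows Mendelson's strategy: Bernstein for $\psi_1$ increments gives a mixed subgaussian/subexponential tail, and generic chaining with two metrics converts this into the stated bound. Your ``alternative route'' (condition on the $X_j$'s, bound the conditional subgaussian process by $\|\xi\|_{\psi_2}\gamma_2(\mathcal F;L_2(P_n))/\sqrt n$, then control the random $\gamma_2$ via a quadratic empirical process bound of the Mendelson/Dirksen type) is in fact the cleaner path and is closest to how the cited corollary is actually derived. In your first route, the step ``for $k>k^\ast$ admissibility forces $D_k(f)\lesssim \gamma\,2^{-k/2}$'' is not literally true for an arbitrary near-optimal admissible sequence; one needs either a regularization of the sequence or, more directly, the observation that when the $\psi_1$ and $\psi_2$ metrics in the mixed tail are proportional, the subexponential contribution to the chaining sum is controlled by $\gamma_2^2/n$ via a Cauchy--Schwarz/geometric-sum argument on the \emph{same} admissible sequence. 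This is the only soft spot in your sketch; the rest is sound.
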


This inequality follows from Corollary 3.9 in \cite{mendelson2012oracle}. 
We will often combine it with a version of Talagrand's concentration inequality for unbounded function classes due to Adamczak \cite{law2008tail}
(stated in a convenient form for our purposes).  
Let ${\mathcal F}$ be a class of functions defined on a measurable space $(S,\mathcal A)$ and let $X,X_1,\dots, X_n$ be i.i.d. random variables 
sampled from distribution $P$ on $(S,{\mathcal A}).$ Let $F$ be a measurable 
envelope for ${\mathcal F},$ that is $F$ is a measurable function on $S$
such that $|f(x)|\leq F(x), x\in S, f\in {\mathcal F}.$
Then, there exists a universal constant $K>0$ such that 
\begin{align}
\label{adamczak}
&
\sup_{f\in {\mathcal F}}\l|\frac 1 n \sum_{j=1}^n f(X_j)-{\mb E}f(X)\r|\leq 
K\biggl[{\mb E}\sup_{f\in {\mathcal F}}\l|\frac 1 n \sum_{j=1}^n f(X_j)-{\mb E}f(X)\r|
\\ 
&
\nonumber
+\sup_{f\in {\mathcal F}}\sqrt{\Var(f(X))}\sqrt{\frac{s}{n}}+
\biggl\|\max_{1\leq j\leq n}|F(X_j)|\biggr\|_{\psi_1} \frac{s}{n}\biggr]
\end{align}
with probability $\geq 1-e^{-s}$.

Finally, we state a recent sharp bound for the empirical processes due to S. Dirksen \cite{dirksen2013tail} and W. Bednorz \cite{bednorz2014} (earlier versions
of exponential generic chaining bounds for similar empirical processes are due to Mendelson \cite{mendelson2010empirical}, \cite{mendelson2012oracle}). 
Assume that $\{f(X), \ f\in \m F\}$ is a subset of the subgaussian space $\m L$. 
\begin{theorem}
\label{th:dirksen1}
There exists an absolute constant $C>0$ such that 
\begin{align*}
\sup_{f\in {\mathcal F}}\biggl|n^{-1}\sum_{j=1}^n f^2(X_j)-{\mb E}f^2(X)\biggr|
\leq 
C &\bigg[\sup\limits_{f\in \m F}\|f\|_{\psi_2}\frac{\gamma_2(\m F;\psi_2)}{\sqrt n}+\frac{\gamma_2^2({\mathcal F};\psi_2)}{n}\\
&
+\sup\limits_{f\in \m F}\|f\|^2_{\psi_2}\Big(\sqrt{\frac s n}\vee\frac{s}{n}\Big)\bigg]
\end{align*}
with probability $\geq 1-e^{-s}$. 
\end{theorem}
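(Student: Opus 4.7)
The plan is to realize the supremum as a chaining problem for a process with \emph{mixed} (Gaussian and exponential) increments, then invoke the generic chaining tail bound due to Dirksen and Bednorz that handles exactly this regime. I consider the centered process $Z_f := n^{-1}\sum_{i=1}^n f^2(X_i) - \mb E f^2(X)$, $f\in \mathcal{F}$, and control its increments via the polarization identity $f^2-g^2=(f-g)(f+g)$. Setting $W_{fg}(X) := (f^2-g^2)(X) - \mb E(f^2-g^2)(X)$, the hypothesis $\{h(X):h\in \mathcal{F}\}\subset \mathcal{L}$ makes both $(f-g)(X)$ and $(f+g)(X)$ subgaussian, so (\ref{i3}) yields
\[
\|W_{fg}\|_{\psi_1}\;\lesssim\; \|f-g\|_{\psi_2}\,\|f+g\|_{\psi_2}\;\lesssim\; \sup_{h\in\mathcal{F}}\|h\|_{\psi_2}\cdot \|f-g\|_{\psi_2},
\]
and an analogous estimate for $\|W_{fg}\|_{L_2}$ via the equivalence of $L_2$ and $\psi_2$ norms on the subgaussian space.

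Bernstein's inequality for sums of centered variables with finite $\psi_1$-norm then produces the mixed-tail estimate
\[
\Prob{|Z_f-Z_g|\geq t}\;\leq\; 2\exp\Bigl(-c\,\min\bigl(\tfrac{nt^2}{\sigma_{fg}^2},\,\tfrac{nt}{M_{fg}}\bigr)\Bigr),
\]
with $\sigma_{fg}\asymp M_{fg}\asymp \sup_h\|h\|_{\psi_2}\,\|f-g\|_{\psi_2}$. Accordingly $\mathcal{F}$ carries two natural pseudo-metrics, a Gaussian one $d_2(f,g)=c_1 n^{-1/2}\sup_h\|h\|_{\psi_2}\|f-g\|_{\psi_2}$ and an exponential one $d_\infty(f,g)=c_2 n^{-1}\sup_h\|h\|_{\psi_2}\|f-g\|_{\psi_2}$, both proportional to $\|\cdot\|_{\psi_2}$, so chaining functionals with respect to them factor cleanly into $\gamma_\alpha(\mathcal{F};\psi_2)$ times the appropriate scaling.

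I would then apply the Dirksen--Bednorz generic chaining theorem for mixed tails, which yields
\[
\mb E\sup_{f\in\mathcal{F}}|Z_f-Z_{f_0}|\;\lesssim\; \gamma_2(\mathcal{F},d_2)+\gamma_1(\mathcal{F},d_\infty),
\]
together with a deviation of order $\sqrt{s}\,\mathrm{diam}(\mathcal{F},d_2)+s\,\mathrm{diam}(\mathcal{F},d_\infty)$ at probability $1-e^{-s}$. Substituting the factorized scalings, the first summand becomes $\sup_h\|h\|_{\psi_2}\gamma_2(\mathcal{F};\psi_2)/\sqrt n$---the first term of the stated bound. The second summand requires the sharper estimate $\gamma_1(\mathcal{F},d_\infty)\lesssim \gamma_2^2(\mathcal{F};\psi_2)/n$ discussed below, which produces the $\gamma_2^2(\mathcal{F};\psi_2)/n$ term. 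The residual $\sup_h\|h\|_{\psi_2}^2(\sqrt{s/n}\vee s/n)$ is produced by a single Bernstein bound for $Z_{f_0}$ at level $s$ (noting that $f_0^2(X)-\mb E f_0^2(X)$ has $\psi_1$-norm of order $\sup_h\|h\|_{\psi_2}^2$) together with the chaining deviation $\sqrt{s}\,\mathrm{diam}(d_2)+s\,\mathrm{diam}(d_\infty)$.

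The principal obstacle is the bound $\gamma_1(\mathcal{F}, d_\infty)\lesssim \gamma_2^2(\mathcal{F};\psi_2)/n$, which is strictly better than the crude $\gamma_1\leq \gamma_2\cdot\mathrm{diam}$ one would obtain from a trivial Orlicz inequality. This is the heart of the Dirksen/Bednorz argument and crucially uses the \emph{product} structure of $W_{fg}$: rather than factoring $\sup_h\|h\|_{\psi_2}$ out of $\|f+g\|_{\psi_2}$ at each exponential-regime chaining link (which would waste a diameter factor), one runs a second, nested chaining against the factor $f+g$, showing that locally it is of size $\|f-g\|_{\psi_2}$ plus a chainable remainder. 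The net effect is that each exponential-regime link is charged only $\|f_k-f_{k+1}\|_{\psi_2}^2$ rather than $\sup_h\|h\|_{\psi_2}\cdot\|f_k-f_{k+1}\|_{\psi_2}$, and Talagrand's partitioning then propagates this quadratic local behavior into a $\gamma_2^2$ global bound. This self-improving chaining is specific to squared/quadratic classes and is the technical novelty distinguishing the theorem from direct applications of standard chaining bounds.
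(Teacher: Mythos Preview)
Your sketch is a reasonable outline of the Dirksen--Bednorz argument, but note that the paper does not prove this theorem at all: it is stated in the appendix as a known result and the proof is simply deferred to Theorem~5.5 in \cite{dirksen2013tail}. So there is nothing to compare your proposal against on the paper's side.

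That said, a brief remark on your sketch itself. The overall architecture you describe---polarization $f^2-g^2=(f-g)(f+g)$, Bernstein yielding mixed subgaussian/subexponential increments, and the Dirksen--Bednorz chaining for mixed tails---is indeed the route taken in \cite{dirksen2013tail}. Your identification of the crux, namely that one needs $\gamma_1(\mathcal{F},d_\infty)\lesssim \gamma_2^2(\mathcal{F};\psi_2)/n$ rather than the crude $\gamma_1\lesssim \gamma_2\cdot\mathrm{diam}$, is correct, and your informal explanation (that the second factor $f+g$ should be chained rather than bounded by a diameter) captures the spirit of why the quadratic structure is essential. If you were to turn this into a full proof you would need to make that ``nested chaining'' step precise; in Dirksen's paper this is handled by working directly with increments of the form $\|f-g\|_{\psi_2}^2$ in the exponential regime from the outset (via a more careful application of Bernstein that separates the two regimes before factoring), rather than by a literal second chaining pass. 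But as a high-level summary of what the cited result does, your proposal is accurate.
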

For a proof and discussion of this bound, see Theorem 5.5 in \cite{dirksen2013tail}. 
Note that in (ii), the generic chaining complexity $\gamma_2(\m F;\psi_2)$ in the right-hand side is for the class $\m F$ itself rather than $\m F^2$. 

\subsection{Sobolev norms.}

\index{bi@Sobolev space $\mb W^{2,p}(\mb R^d)$}
For any $p\in \mb R_+$, define the Sobolev space $\mb W^{2,p}(\mb R^d)$ as 
$$
\mb W^{2,p}(\mb R^d)=\l\{f\in L^2(\mb R^d): \ \|f\|^2_{\mb W^{2,p}(\mb R^d)}:=\int_{\mb R^d} (1+|t|^2)^{p}|\hat f(t)|^2 dt<\infty\r\},
$$
where $\hat f$ is the Fourier transform of $f$. 
It is well known that for $p\in \mb Z_+$, this coincides with another definition of Sobolev spaces (in terms of partial derivatives). 

Assume that $f\in \mb W^{2,p}(\mb R^d)$ for $p\in \mb Z_+$ is such that $f=\sum\limits_{j=1}^k f_j$, where $f_j,\ j=1\ldots k$ have disjoint supports. 
Clearly, in this case we have $\|f\|^2_{\mb W^{2,p}(\mb R^d)}=\sum\limits_{j=1}^k \|f_j\|^2_{\mb W^{2,p}(\mb R^d)}$.
When $p$ is not an integer, we will use the following proposition. 
\begin{proposition}
\label{additive_sob}
Assume that $p\in \mb R_+$, $\alpha:=p-\lfloor p\rfloor>0$ and $f\in \mb W^{2,p}(\mb R^d)$ is such that 
$f=\sum\limits_{j=1}^k f_j$, where $f_j,\ j=1\ldots k$ have disjoint supports and 
$\min\limits_{1\leq i<j\leq k}{\dist}(\supp(f_i),\supp(f_j))\geq r>0$, where $\dist$ is the Euclidean distance. 
Then
$$
\|f\|^2_{\mb W^{2,p}(\mb R^d)}\leq C(d,p)\l[\sum\limits_{j=1}^k \|f_j\|^2_{\mb W^{2,p}(\mb R^d)}
+\frac{1}{r^{2\alpha}}\sum\limits_{j=1}^k \|f_j\|^2_{\mb W^{2,\lfloor p\rfloor}(\mb R^d)}\r].
$$
\end{proposition}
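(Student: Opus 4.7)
The plan is to use the Gagliardo--Slobodeckij (intrinsic) characterization of $\mb W^{2,p}(\mb R^d)$ for non-integer $p$, which states that
$$
\|f\|_{\mb W^{2,p}(\mb R^d)}^2 \asymp \|f\|_{\mb W^{2,\lfloor p\rfloor}(\mb R^d)}^2 + \sum_{|\beta|=\lfloor p\rfloor} [\partial^\beta f]_\alpha^2,
\quad [g]_\alpha^2 := \iint_{\mb R^d\times \mb R^d}\frac{|g(x)-g(y)|^2}{|x-y|^{d+2\alpha}}\,dx\,dy,
$$
up to a constant depending only on $d,p$. This reduces the problem to controlling the two ingredients separately.

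First I would handle the integer-order term. Because $\partial^\beta$ is a local operator and the $f_j$ have disjoint supports, the derivatives $\partial^\beta f_j$ again have disjoint supports, so $\|f\|_{\mb W^{2,\lfloor p\rfloor}}^2 = \sum_j \|f_j\|_{\mb W^{2,\lfloor p\rfloor}}^2$; this is the term that will be controlled by the first sum on the right-hand side (in fact, by the second sum, without the $r^{-2\alpha}$ factor).

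The main work is in the fractional seminorms. Fix $|\beta| = \lfloor p\rfloor$ and write $g := \partial^\beta f = \sum_j g_j$ with $g_j := \partial^\beta f_j$ supported in $S_j := \supp(f_j)$; set $S := \bigcup_j S_j$. At any fixed $x$ at most one $g_j(x)$ is nonzero (same for $y$), so I would split $\mb R^d\times \mb R^d$ according to which $S_j$ (or $S^c$) each argument belongs to. This gives three types of contributions to $[g]_\alpha^2$: (i) diagonal blocks $S_i\times S_i$, which yield exactly $\int_{S_i\times S_i}\frac{|g_i(x)-g_i(y)|^2}{|x-y|^{d+2\alpha}}\,dx\,dy \le [g_i]_\alpha^2$; (ii) mixed terms on $S_i\times S^c$ and $S^c\times S_i$, which equal $\int_{S_i}\int_{S^c}\frac{|g_i(x)|^2}{|x-y|^{d+2\alpha}}\,dx\,dy$ and, since $S^c\subset \mb R^d\setminus S_i$, are also absorbed into $[g_i]_\alpha^2$; and (iii) off-diagonal blocks $S_i\times S_j$ for $i\ne j$, which I would bound via the elementary inequality $|g_i(x)-g_j(y)|^2\le 2|g_i(x)|^2+2|g_j(y)|^2$ and the separation hypothesis $|x-y|\ge r$. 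Summing the latter over $j\ne i$ and using the fact that $\bigcup_{j\ne i} S_j \subset \{y: |x-y|\ge r\}$ for every $x\in S_i$, the explicit integral
$$
\int_{|z|\ge r}\frac{dz}{|z|^{d+2\alpha}} = \frac{\omega_{d-1}}{2\alpha}\,r^{-2\alpha}
$$
reduces this contribution to $C(d,\alpha)\,r^{-2\alpha}\sum_i \|g_i\|_{L_2(\mb R^d)}^2$. Bounding $\|g_i\|_{L_2}^2 \le \|f_i\|_{\mb W^{2,\lfloor p\rfloor}}^2$ and summing over $|\beta|=\lfloor p\rfloor$ and combining with the integer-order term yields the claim.

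The only delicate point is the bookkeeping around the sets $S^c$ in case (ii): one must verify that the same ``fictitious boundary'' contribution $\int_{S_i}\int_{\mb R^d\setminus S_i}\frac{|g_i(x)|^2}{|x-y|^{d+2\alpha}}\,dy\,dx$ that appears when expanding $[g_i]_\alpha^2$ as $\int\int_{\mb R^d\times\mb R^d}$ already dominates the corresponding pieces in $[g]_\alpha^2$ coming from $S_i\times S^c$, so no extra factor of $r^{-2\alpha}$ is needed for those terms. Once this is in place, the rest is a direct computation.
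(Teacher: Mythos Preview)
Your proof is correct and follows essentially the same approach as the paper: both pass to the Gagliardo--Slobodeckij characterization, note that the integer-order part decomposes exactly, and split the fractional seminorm into a piece that is absorbed by $\sum_j [\partial^\beta f_j]_\alpha^2$ and a ``far'' piece bounded via $|a-b|^2\le 2a^2+2b^2$ together with $\int_{|z|\ge r}|z|^{-d-2\alpha}\,dz = C(d,\alpha)r^{-2\alpha}$. The only cosmetic difference is the organization of the split: the paper changes variables $(x,y)\mapsto(t,u)=(x-y,y)$ and separates the $t$-integral into $\{|t|<r\}$ versus $\{|t|\ge r\}$, whereas you partition $\mb R^d\times\mb R^d$ according to which support each argument lies in; the two decompositions lead to the same estimates.
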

\begin{proof}
We will need to use equivalence of certain norms defined on Sobolev spaces $\mb W^{2,p}(\mb R^d)$. 
Let 
$$
'\|f\|^2_{\mb W^{2,p}(\mb R^d)}:=\|f\|^2_{\mb W^{2,\lfloor p\rfloor}}+
\max_{|m|=\lfloor p\rfloor}\iint\limits_{\mb R^d\times \mb R^d}\frac{\l(\partial^m f(x)-\partial^m f(y)\r)^2}{|x-y|^{2\alpha+d}}dxdy,
$$
where we use the usual multi-index notation. 
It is known that $\|\cdot\|_{\mb W^{2,p}}$ and $'\|\cdot\|_{\mb W^{2,p}}$ are equivalent (e.g., see \cite{adams1975sobolev} p. 219). 
Let $f=\sum\limits_{j=1}^k f_i$ satisfy conditions of the proposition. 
Making the change of variables $x=t+u, y=u$ in the expression of $'\|\cdot\|_{\mb W^{2,p}(\mb R^d)}$, we have 
\begin{align*}
'\|f\|_{\mb W^{2,p}}^2&=\|f\|^2_{\mb W^{2,\lfloor p\rfloor}} +
\max_{|m|=\lfloor p\rfloor} \iint\limits_{\mb R^d\times \mb R^d}\l(\partial^m f(t+u)-\partial^m f(u)\r)^2du\frac{dt}{|t|^{2\alpha+d}}
\\
&=\sum_{j=1}^k \|f_j\|^2_{\mb W^{2,\lfloor p\rfloor}}+
\max_{|m|=\lfloor p\rfloor}\Bigg[\iint\limits_{\mb R^d\times B(0,r)}\l(\partial^m f(t+u)-\partial^m f(u)\r)^2du\frac{dt}{|t|^{2\alpha+d}} \\
&
+\iint\limits_{\mb R^d\times \bar B(0,r)}\l(\partial^m f(t+u)-\partial^m f(u)\r)^2du\frac{dt}{|t|^{2\alpha+d}}\Bigg].
\end{align*}
It remains to notice that 
\begin{align*}
\iint\limits_{\mb R^d\times B(0,r)}\big(\partial^m f(t+u)-&\partial^m f(u)\big)^2du\frac{dt}{|t|^{2\alpha+d}} \\
&
=\sum_{j=1}^k \iint\limits_{\mb R^d\times B(0,r)}\big(\partial^m f_j(t+u)-\partial^m f_j(u)\big)^2du\frac{dt}{|t|^{2\alpha+d}}\\
&
\leq \sum_{j=1}^k \iint\limits_{\mb R^d\times \mb R^d}\frac{\l(\partial^m f_j(x)-\partial^m f_j(y)\r)^2}{|x-y|^{2\alpha+d}}dxdy
\end{align*}
and, since $(f(t+u)-f(u))^2\leq 2 f^2(t+u)+2f^2(u)$, 
\begin{align*}
\iint\limits_{\mb R^d\times \bar B(0,r)}\l(\partial^m f(t+u)-\partial^m f(u)\r)^2du\frac{dt}{|t|^{2\alpha+d}}& 
\leq 
2\|\partial^m f\|^2_{L_2(\mb R^d)}\int_{|t|\geq r}\frac{dt}{|t|^{d+2\alpha}}\\ 
&
=C_1(d,\alpha)\frac{\|\partial^m f\|^2_{L_2(\mb R^d)}}{r^{2\alpha}},
\end{align*}
where $C_1(d,\alpha)=\frac{2\pi^{d/2}(d+2\alpha)}{\alpha\Gamma(d/2)}$, and the claim easily follows. 

\end{proof}

\subsection{Proof of Proposition \ref{prop:rip}.}
\label{sec:rip}
Let $J_1,J_2$ be two disjoint subsets of $\l\{1,\ldots,N\r\}$, and define
$$
r(J_1;J_2):=\sup_{u,v} \l|\frac{\dotp{\sum\limits_{j\in J_1} f_{u_j}}{\sum\limits_{j\in J_2} f_{v_j}}_{L_2(\Pi)}}
{\sqrt{\sum\limits_{j\in J_1} \|f_{u_j}\|_{L_2(\Pi)}^2\sum\limits_{j\in J_2} \|f_{v_j}\|_{L_2(\Pi)}^2}}\r|  
$$ 
where the supremum is taken over all $u=\sum\limits_{j\in J_1}u_j, \ v=\sum\limits_{j\in J_2}v_j$ such that $\supp(u_j)\subseteq \mb T_j$, $\supp(v_j)\subseteq \mb T_j$ and $\sum\limits_{j\in J_1} \|f_{u_j}\|_{L_2(\Pi)}^2\ne 0, \ \sum\limits_{j\in J_2} \|f_{v_j}\|_{L_2(\Pi)}^2\ne 0$.

Next, let 
$
\rho_d:=\max\l\{ r(J_1;J_2): \ J_1\cap J_2=\emptyset, \ \card(J_1)+\card(J_2)\leq 3d\r\}.
$
In what follows, we set $\lambda(u_j):=\l\|f_{u_j}\r\|_{L_2(\Pi)}$ and $h(u_j):=\frac{f_{u_j}}{\l\|f_{u_j}\r\|_{L_2(\Pi)}}$.
\begin{lemma}
The following inequality holds: $\rho_d\leq \delta_{3d}$. 
\end{lemma}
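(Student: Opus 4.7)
My plan is to reduce the bound on $\rho_d$ to the polarization identity applied to the sum and difference of two suitably normalized test functions, and then invoke the RIP of order $3d$ on each. Fix disjoint $J_1, J_2\subset\{1,\dots,N\}$ with $\card(J_1)+\card(J_2)\leq 3d$ and admissible $u=\sum_{j\in J_1}u_j$, $v=\sum_{j\in J_2}v_j$ with $\supp(u_j),\supp(v_j)\subseteq \mb T_j$. By homogeneity of the quotient defining $r(J_1;J_2)$, I may normalize so that $\sum_{j\in J_1}\|f_{u_j}\|_{L_2(\Pi)}^2=\sum_{j\in J_2}\|f_{v_j}\|_{L_2(\Pi)}^2=1$, in which case the target reduces to bounding $|\langle f_u, f_v\rangle_{L_2(\Pi)}|$ by $\delta_{3d}$.

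The key identity is the real polarization
\[
4\dotp{f_u}{f_v}_{L_2(\Pi)}=\|f_u+f_v\|_{L_2(\Pi)}^2-\|f_u-f_v\|_{L_2(\Pi)}^2.
\]
Since $J_1\cap J_2=\emptyset$, the function $f_u\pm f_v=\sum_{j\in J_1}f_{u_j}\pm\sum_{j\in J_2}f_{v_j}$ is a linear combination of at most $\card(J_1)+\card(J_2)\leq 3d$ components indexed by distinct parts $\mb T_j$ of the partition. Writing $\alpha_j:=\|f_{u_j}\|_{L_2(\Pi)}$ for $j\in J_1$ and $\beta_j:=\|f_{v_j}\|_{L_2(\Pi)}$ for $j\in J_2$ and normalizing $\tilde u_j:=u_j/\alpha_j$, $\tilde v_j:=v_j/\beta_j$ so that each $f_{\tilde u_j}, f_{\tilde v_j}$ has unit variance, the RIP of order $3d$ yields
\[
(1-\delta_{3d})\Bigl(\sum_{j\in J_1}\alpha_j^2+\sum_{j\in J_2}\beta_j^2\Bigr)\leq \|f_u\pm f_v\|_{L_2(\Pi)}^2\leq(1+\delta_{3d})\Bigl(\sum_{j\in J_1}\alpha_j^2+\sum_{j\in J_2}\beta_j^2\Bigr).
\]
With the normalization, the parenthesis equals $2$, so $\|f_u+f_v\|_{L_2(\Pi)}^2\leq 2(1+\delta_{3d})$ and $\|f_u-f_v\|_{L_2(\Pi)}^2\geq 2(1-\delta_{3d})$, giving $4\dotp{f_u}{f_v}_{L_2(\Pi)}\leq 4\delta_{3d}$; the matching lower bound follows by swapping the roles of $f_u+f_v$ and $f_u-f_v$, so $|\dotp{f_u}{f_v}_{L_2(\Pi)}|\leq\delta_{3d}$.

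Taking the supremum over admissible $u,v$ gives $r(J_1;J_2)\leq\delta_{3d}$ for every disjoint pair with $\card(J_1)+\card(J_2)\leq 3d$, hence $\rho_d\leq\delta_{3d}$, as required. There is no genuine obstacle here: the only thing to verify carefully is that in the chain of reductions the RIP definition (spectrum of the Gram matrix) is equivalent to the two-sided inequality for $\|\sum c_j f_{\tilde u_j}\|_{L_2(\Pi)}^2$ used above, and that the number of active components $\card(J_1)+\card(J_2)$ indeed fits within the order $3d$ of the RIP constant—both of which are immediate from the definitions.
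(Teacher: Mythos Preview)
Your proof is correct and is precisely the standard polarization argument of Lemma~2.1 in \cite{candes2008restricted}, which is exactly what the paper cites for this lemma rather than giving its own proof. The only minor point worth making explicit is that indices with $\alpha_j=0$ or $\beta_j=0$ should be dropped before forming the normalized $\tilde u_j,\tilde v_j$, and that the spectral RIP condition at order $3d$ automatically controls smaller index sets by Cauchy interlacing (so $\card(J_1)+\card(J_2)\le 3d$ suffices even if strict inequality holds); both are routine.
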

\begin{proof}
See Lemma 2.1 in \cite{candes2008restricted}.

\end{proof}
Set $J_0:=J$, $\lambda^{(0)}:=\l\{\lambda(u_j), \ j\in J_0\r\}$, and let 
$(\lambda(u_{\pi(1)}),\ldots, \lambda(u_{\pi_{N-d}}))$ be the vector $\l(\lambda(u_j), \ j\in J_0^c\r)$ sorted in the decreasing order, so that $\pi$ is some permutation. 
We further define
$J_1:=(\pi(1),\ldots,\pi(d))$, $J_2:=(\pi(d+1),\ldots,\pi(2d))$, etc., and $\lambda^{(k)}=(\lambda(u_j), \ j\in J_k)$.
Everywhere below, $\|\cdot\|_1, \ \|\cdot\|_2$ denote the usual vector $p$-norms. 

First, we will show that 
\begin{align}
\label{eq:cone_norm}
\sum\limits_{k\geq 2}\|\lambda^{(k)}\|_2\leq b\|\lambda^{(0)}\|_2:=b\sqrt{\sum\limits_{j\in J_0}\lambda^2(u_j)}.
\end{align}
Indeed, for all $j\in J_{k}, \ k\geq 2$ we have 
$|\lambda(u_j)|\leq \frac 1 d \sum\limits_{i\in J_{k-1}}|\lambda(u_i)|$, implying that 
$\|\lambda^{(k)}\|_2\leq \frac{1}{\sqrt d}\|\lambda^{(k-1)}\|_1$. 
Summing up, we get
$$
\sum_{k\geq 2}\|\lambda^{(k)}\|_2\leq \frac{1}{\sqrt{d}}\sum_{j\notin J_0 }|\lambda(u_j)|\leq \frac{b}{\sqrt{d}}\sum_{j\in J_0}|\lambda(u_j)|,
$$
where the last inequality follows from the definition of the cone $C_{b,J}$. 
Inequality (\ref{eq:cone_norm})  follows since $\frac{b}{\sqrt{d}}\sum\limits_{j\in J_0}|\lambda(u_j)|\leq b\sqrt{\sum\limits_{j\in J_0}\lambda^2(u_j)}$.

Let $P_J$ be the $L_2(\Pi)$-orthogonal projection onto $L_J$, the linear span of $\l\{h(u_j), \ j\in J\r\}$. 
The following sequence of inequalities establishes the claim of Proposition \ref{prop:rip}:
\begin{align*}
&
\|\sum_{j=1}^N \lambda(u_j) h(u_j)\|_{L_2(\Pi)}\geq 
\|P_{J_0\cup J_1}\sum_{j=1}^N \lambda(u_j) h(u_j)\|_{L_2(\Pi)}\geq \\
&
\geq\|\sum_{j\in J_0\cup J_1}\lambda(u_j) h(u_j)\|_{L_2(\Pi)} - \sum_{k\geq 2}\|P_{J_0\cup J_1}\sum_{j\in J_k} \lambda(u_j) h(u_j)\|_{L_2(\Pi)}\geq \\
&
\geq \|\sum_{j\in J_0\cup J_1}\lambda(u_j) h(u_j)\|_{L_2(\Pi)}-\rho_d  \sum_{k\geq 2}\|\lambda^{(k)}\|_2 
\underbrace{\sup\limits_{v\in L_{J_0\cup J_1}, \|v\|_{L_2(\Pi)}=1} \|v\|_2}_{\leq 1/(1-\delta_{2d})}\geq \\
&
\geq
\|\sum_{j\in J_0\cup J_1}\lambda(u_j) h(u_j)\|_{L_2(\Pi)}-\frac{\rho_d}{1-\delta_{2d}} \sum_{k\geq 2}\|\lambda^{(k)}\|_2 \geq \\
&
\geq \|\sum_{j\in J_0\cup J_1}\lambda(u_j) h(u_j)\|_{L_2(\Pi)}-\frac{\delta_{3d} b}{1-\delta_{2d}}\sqrt{\sum\limits_{j\in J_0\cup J_1}\lambda^2(u_j)}\geq \\
&
\geq
\|\sum_{j\in J_0\cup J_1}\lambda(u_j) h(u_j)\|_{L_2(\Pi)}-\frac{\delta_{3d} b}{(1-\delta_{2d})^2}\l\|\sum_{j\in J_0\cup J_1}\lambda(u_j) h(u_j)\r\|_{L_2(\Pi)}= \\
&
=\l(1-\delta_{3d} \frac{b}{(1-\delta_{2d})^2}\r)\l\|\sum_{j\in J_0\cup J_1}\lambda(u_j) h(u_j)\r\|_{L_2(\Pi)}\geq \\
&
\geq \l(1-\delta_{3d} \frac{b}{(1-\delta_{2d})^2}\r)(1-\delta_{2d})\sqrt{\sum_{j\in J_0}\lambda^2(u_j)},
\end{align*}
hence $\beta_2^{(b)}(J)\leq \frac{1-\delta_{2d}}{(1-\delta_{2d})^2-b\delta_{3d}}$.

It remains to show that $\delta_{3d}<\frac{1}{2+b}$ is sufficient for $\beta_2^{(b)}<\infty$. 
Since $\delta_{2d}\leq \delta_{3d}$, it is enough to show that $\delta_{3d}<\frac{1}{2+b}$ implies $(1-\delta_{3d})^2-b\delta_{3d}>0$. 
The latter is satisfied whenever 
$\delta_{3d}< \frac{2+b}{2}\l(1-\sqrt{1-\frac{4}{(2+b)^2}}\r)$. 
Elementary inequality $\sqrt{1-x}\leq 1-\frac x 2, \ x\in [0,1]$ gives 
$\frac{2+b}{2}\l(1-\sqrt{1-\frac{4}{(2+b)^2}}\r)\geq \frac{1}{2+b}$, and the result follows.


\printindex

\end{document}